\def\NN{\mathbb N}
\def\uu{\mathcal U}
\def\uuu{\overline{\mathcal U}}
\def\vv{\mathcal V}
\def\bb{\mathcal B}
\def\ff{\mathcal F}
\def\gg{\mathcal G}
\def\cc{\mathcal C}
\newcommand{\set}[1]{\left\lbrace #1\right\rbrace}
\providecommand{\abs}[1]{\left\lvert#1\right\rvert}
\newcommand{\remove}[1]{ }
\newcommand{\qtq}[1]{\quad \text{#1}\quad }
\newtheorem{theorem}{Theorem}[section]
\newtheorem{proposition}[theorem]{Proposition}
\newtheorem{lemma}[theorem]{Lemma}
\newtheorem{corollary}[theorem]{Corollary}
\theoremstyle{definition}
\theoremstyle{remark}
\newtheorem{remark}[theorem]{Remark}
\newtheorem{example}[theorem]{Example}
\numberwithin{equation}{section}
\numberwithin{figure}{section}
\begin{document}
\title[Univoque graphs and multiple expansions]{Univoque graphs and multiple expansions}
\author{Yuru Zou}
\address{College of Mathematics and Computational Science,
Shenzhen University,
Shenzhen 518060,
People’s Republic of China}
\email{yuruzou@szu.edu.cn}
\author{Jian Lu}
\address{College of Mathematics and Computational Science,
Shenzhen University,
Shenzhen 518060,
People’s Republic of China}
\email{jianlu@szu.edu.cn}
\subjclass[2010]{Primary: 11A63, Secondary:  37B10, 11K55, 28A80}
\keywords{Expansions in non-integer bases, univoque graphs, multiple expansions, strong connectedness, Hausdorff dimension}
\author{Vilmos Komornik*}
\address{College of Mathematics and Statistics,
Shenzhen University,
Shenzhen 518060,
People’s Republic of China,
and
Département de mathématique,
Université de Strasbourg,
7 rue René Descartes,
67084 Strasbourg Cedex, France}
\email{komornik@math.unistra.fr}


\thanks{*Corresponding author; email address: komornik@math.unistra.fr.}
\thanks{This work was supported by the National Natural Science Foundation
of China (NSFC)  \#11871348, \#61972265. The third author thanks  the first and second authors  for their hospitality and for the pleasant working conditions during his visit of  Shenzhen University.}

\begin{abstract}\mbox{}
Unique expansions in non-integer bases $q$ have been investigated in many papers during the last thirty years.
They are often conveniently generated by labeled directed graphs.
In the first part of this paper we give a precise description of the set of sequences generated by these graphs.

Using the description of univoque graphs, the second part of the paper is devoted to the study of multiple expansions.
Contrary to the unique expansions, we prove for each $j\ge 2$ that the set $\uu_q^j$ of numbers having exactly $j$ expansions is  closed only if it is empty.

Furthermore, generalizing an important example of Sidorov \cite{S2009}, we prove for a large class of bases that the Hausdorff dimension of $\uu_q^j$ is independent of $j$.

In the last two sections our results are illustrated by many  examples.
\end{abstract}
\maketitle

\section{Introduction}\label{s1}

Given a positive integer $M$ and a real number $q>1$, by an \emph{expansion} of a real number $x$ in  \emph{base} $q$ over the \emph{alphabet} $A:=\set{0,1,\ldots,M}$ we mean a sequence $(c_i)\in A^{\infty}$ satisfying the equality
\begin{equation*}
(c_i)_q:=\sum_{i=1}^{\infty}\frac{c_i}{q^i}=x.
\end{equation*}
By a classical theorem of R\'enyi \cite{R1957} we have
\begin{equation*}
\set{(c_i)_q\ :\ (c_i)\in A^{\infty}}\subset\left[0,\frac{M}{q-1}\right],
\end{equation*}
with equality if and only if $q\in(1,M+1]$.
More precisely, he proved that every $x\in\left[0,\frac{M}{q-1}\right]$ has a lexicographically largest, called \emph{greedy} expansion.
In the sequel we  denote by $\beta(q)$ the greedy expansion of $x=1$ in base $q$.

In the classical integer base case $q=M+1$ every $x\in [0,1]$ has one or two expansions, and the second possibility occurs only for countably many rational numbers.
If $q>M+1$, then every expansion is unique.
On the other hand, if $q<M+1$, then Lebesgue almost every number in $\left[0,\frac{M}{q-1}\right]$ has a continuum of expansions \cite{S2003}.
Nevertheless, the \emph{univoque sets} $\uu_q$ of numbers $x$ having a unique expansion in base $q$ have many interesting properties: see \cite{DK2009} and \cite{KKL2017} for the topological and fractal  structure of these sets, respectively.

We recall from \cite{DK2009} that the set $\uu_q'$ of unique expansions (sequences) of the numbers $x\in\uu_q$ is a subshift of finite type for almost every $q$.
Jiang and Dajani \cite{DJ2017} have constructed a natural labeled graph $\gg(q)$ generating $\uu_q'$ up to a countable set, and used it to complete some of the results of \cite{DK2009}.
Our first aim is to describe more precisely the structure of these graphs, and to determine exactly the set of sequences generated by them.
This allows us to explain various former results on univoque expansions in a transparent way, instead of the former combinatorial and less intuitive arguments.

In order to state these results we recall some notions from \cite{KL2007,
DK2009,
K2012,
DKL2016}.
A sequence or expansion $(c_i)$ is called \emph{finite} if it has a last non-zero digit, and \emph{infinite} otherwise.
Equivalently, a sequence is infinite if it contains infinitely many non-zero digits, or if it is identically zero.
(Calling the zero sequence infinite simplifies many statements in the present theory.)
Furthermore, a sequence or expansion $(c_i)$ is called \emph{doubly infinite} if both $(c_i)$ and its \emph{reflection} $\overline{(c_i)}:=(M-c_i)$ is infinite.
Equivalently, a sequence $(c_i)$ is doubly infinite if it contains both infinitely many digits $c_i>0$ and infinitely many digits $c_i<M$, or if it is equal to one of the sequences $0^{\infty}$ and $M^{\infty}$.
For a word $w=c_1\cdots c_{n-1}c_n$ we  write  $w^+=c_1\cdots c_{n-1}c_n^+$ with $c_n^+:=c_n+1$ if $c_n<M$, and $w^-=c_1\cdots c_{n-1}c_n^-$ with $c_n^-:=c_n-1$ if $c_n>0$.

We denote by $\uu$ and $\vv$ the sets of bases $q$ in which the number $x=1$ has a unique expansion or a unique doubly infinite expansion, respectively.
Then $\vv$ is closed, and $\uu\subsetneq\uuu\subsetneq\vv$, where $\uuu$ denotes the topological closure of $\uu$.
All three sets have zero Lebesgue measure and Hausdorff dimension one, and both difference sets $\uuu\setminus\uu$ and $\vv\setminus\uuu$ are countably infinite.
We denote by $q_{GR}$ the smallest element of $\vv$, and by $q_{KL}$  the smallest element of $\uuu$.
In fact, we have $q_{KL}\in\uu$.

For example, if $M=1$, then $q_{GR}\approx 1.61803$ is the \emph{Golden Ratio} $\varphi\approx 1.61803$,  while $q_{KL}\approx 1.78723$; see \cite{KL1998}.
We have also $2\in\uu$, and the \emph{Tribonacci number} $\varphi_3\approx 1.83929$, i.e., the positive root of the equation $q^3=q^2+q+1$ belongs to $\uuu\setminus\uu$.

Although the set $\vv$ has been introduced in \cite{KL2007} for aesthetical reasons, it plays an important role in other branches of mathematics as well; see, e.g., Bonnano et al. \cite{BCIT2013}, Dajani et al. \cite{DK2007}. 

Some of the results on $\uu$ have been extended by L\"u et al. \cite{LTW2014} to the sets of bases $\uu(x)$ in which a given number $x\in(0,1)$ has a unique expansion.

We denote by $\uu', \uuu', \vv'$ the sets of the (unique) doubly infinite expansions of $x=1$ in the bases belonging to  $\uu, \uuu, \vv$, respectively.

For any fixed $q\in (1,M+1)$, we denote by $\uu_q$ and $\vv_q$ the sets of numbers $x$ having a unique expansion or a unique doubly infinite expansion, respectively.
We recall from \cite{DK2009,K2012} that
$\vv_q$ is closed and $\uu_q\subset\overline{\uu_q}\subset\vv_q$. Here  $\overline{\uu_q}$ denotes the topological closure of $\uu_q$.
We denote by $\uu_q', \overline{\uu_q}',  \vv_q'$ the corresponding sets of (unique) doubly infinite expansions.

If $q\in\vv\setminus\uu$, then $x=1$ has a finite greedy expansion  in base $q$, say $\beta(q)=\alpha_1\cdots\alpha_N^+\ 0^{\infty}$ (see Section \ref{s2}  for more details), and
consider the points
\begin{equation*}
a_i:=(\alpha_i\cdots\alpha_N^+\ 0^{\infty})_q
\qtq{and}
b_i:=\frac{M}{q-1}-a_i
\end{equation*}
for $i=1,\ldots,N+1$, so that $a_{N+1}=0$ and $b_{N+1}=\frac{M}{q-1}$.

If $x\in \left[\frac{j}{q},\frac{j-1}{q}+\frac{M}{q^2-q}\right]$ for some $j\in\set{1,\ldots,M}$, then $x$ has an expansion starting with the digit $j-1$, and another expansion starting with the digit $j$.
On the other hand, if $x$ does not belong to the \emph{switch region}
\begin{equation*}
S_q:=\bigcup_{j=1}^M\left[\frac{j}{q},\frac{j-1}{q}+\frac{M}{q^2-q}\right]
=:\bigcup_{j=1}^M[\theta_j,\eta_j],
\end{equation*}
then all expansions of $x$ share the same first digit.
Note that $a_N=\theta_{\alpha_N^+}$ and $b_N=\eta_{\overline{\alpha_N}}$.

Let us consider the following labeled graph $\gg(q)$.
Its vertices are the connected components of the set
\begin{equation*}
\left(0,\frac{M}{q-1}\right)\setminus (S_q\cup\set{a_1,\dots, a_N,b_1,\ldots, b_N})
\end{equation*}
(they are disjoint open intervals).
Furthermore, the edges of $\gg(q)$ are the triplets $(I,k,J)$ where $I,J$ are vertices, $k\in\set{0,\ldots,M}$, and $T_k(I)\supset J$ for the affine map $T_k(x):=qx-k$.
See Section \ref{s3} for more details.

We say that a sequence $(c_i)$ is generated by $\gg(q)$ is there exists an infinite path
\begin{equation*}
I_1\xrightarrow{c_1}
I_2\xrightarrow{c_2}
I_3\xrightarrow{c_3}
\cdots
\end{equation*}
in $\gg(q)$.
We denote by $\gg_q'$ the set of sequences generated by $\gg(q)$, and we set
\begin{equation*}
\gg_q:=\set{(c_i)_q\ :\ (c_i)\in\gg_q'}.
\end{equation*}
Now we may state our first  theorem.

\begin{theorem}\label{t11}\mbox{}

\begin{enumerate}[\upshape (i)]
\item If $q\in\uuu\setminus\uu$, then $\gg_q=\vv_q$ and $\gg_q'=\vv_q'$.
\item If $q\in\vv\setminus\uuu$, then $\gg_q=\uu_q$ and $\gg_q'=\uu_q'$.
\end{enumerate}
\end{theorem}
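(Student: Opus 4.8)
The plan is to translate the geometric definition of $\gg(q)$ into a symbolic one, compare it with the lexicographic descriptions of $\uu_q'$ and $\vv_q'$, and then settle the two borderline families of sequences on which these descriptions differ. Throughout I write $\alpha:=(\alpha_1\cdots\alpha_N)^{\infty}$ for the quasi-greedy expansion of $1$ (recall that $\beta(q)=\alpha_1\cdots\alpha_N^+\,0^{\infty}$), $\overline{\alpha}$ for its reflection, and $\sigma$ for the shift, $\sigma^n(c_i):=(c_{n+i})_{i\ge1}$. The first step is to show that the decomposition of $\left(0,\frac{M}{q-1}\right)$ into the vertices of $\gg(q)$, the switch intervals $[\theta_j,\eta_j]$ and the cut points $\set{a_1,\dots,a_N,b_1,\dots,b_N}$ is Markov for the branch maps $T_k$. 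The identities $T_{\alpha_i}(a_i)=a_{i+1}$ and $T_{\overline{\alpha_i}}(b_i)=b_{i+1}$ (with $T_{\alpha_N}(a_N)=a_1$ and $T_{\overline{\alpha_N}}(b_N)=b_1$), together with $T_j(\theta_j)=0$, $T_{j-1}(\theta_j)=1$ and their reflections, show that each $T_k$ maps the set of cut points into itself or outside $\left(0,\frac{M}{q-1}\right)$; hence $T_k(I)$ is a union of vertices up to endpoints whenever $I$ is a vertex. It follows that an infinite path $I_1\xrightarrow{c_1}I_2\xrightarrow{c_2}\cdots$ realizing $(c_i)$ exists if and only if the tail values $x_n:=(c_nc_{n+1}\cdots)_q$ all lie in the closure of the vertex set and the consecutive ones are linked by the covering relation; informally, $(c_i)\in\gg_q'$ iff no $x_n$ is interior to a switch interval and the itinerary through the cut points is carried by edges of $\gg(q)$.

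The second step converts the condition ``no $x_n$ is interior to $[\theta_j,\eta_j]$'' into lexicographic inequalities. Being interior to a switch interval amounts to a strict violation $\sigma^n(c_i)>\alpha$ following a digit $c_n<M$, or $\overline{\sigma^n(c_i)}>\alpha$ following a digit $c_n>0$, while landing on an endpoint corresponds to equality. This is the standard greedy/quasi-greedy lexicographic lemma; combined with the characterizations of $\uu_q'$ (strict inequalities) and $\vv_q'$ (non-strict inequalities) recalled from \cite{DK2009,K2012}, it yields immediately the inclusions $\uu_q'\subseteq\gg_q'\subseteq\vv_q'$. Everything then reduces to the two boundary families: the sequences with $\sigma^n(c_i)=\alpha$ for some $n$ (equivalently $x_{n+1}=a_1=1$) and those with $\overline{\sigma^n(c_i)}=\alpha$ (equivalently $x_{n+1}=b_1=\frac{M}{q-1}-1$). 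Deciding whether these lie in $\gg_q'$ is precisely deciding between $\vv_q'$ and $\uu_q'$.

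The third and decisive step makes that decision. A tail equal to $\alpha$ forces the continuation to be the cyclic itinerary $a_1\to a_2\to\cdots\to a_N\to a_1$ with labels $\alpha_1\cdots\alpha_N$, which is realized in $\gg(q)$ if and only if every covering $T_{\alpha_i}(V_i)\supset V_{i+1}$ holds, where $V_i$ is the vertex abutting $a_i$ on the side from which the expansion continues as $\alpha$; the reflected statement with the $b_i$ governs the family $\overline{\sigma^n(c_i)}=\alpha$. I would verify these coverings by comparing the endpoints $a_i,b_i,\theta_j,\eta_j$ and converting the resulting endpoint inequalities, through the identity $a_{k+1}=(\sigma^k\alpha)_q$ and its reflection, into comparisons of $\overline{\alpha}$ with the shifts $\sigma^k\alpha$. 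The conclusion is that all required coverings hold exactly when $\overline{\alpha}<\sigma^k\alpha$ for every $k\ge1$ — for a finite $\beta(q)$ this is precisely $q\in\uuu$ — and that they fail exactly when $\overline{\alpha}=\sigma^k\alpha$ for some $k$, equivalently when the cut point $b_1$ collides with some $a_{k+1}$, which characterizes $q\in\vv\setminus\uuu$. Thus for $q\in\uuu\setminus\uu$ both boundary families are generated, so $\gg_q'=\vv_q'$ and $\gg_q=\vv_q$; for $q\in\vv\setminus\uuu$ neither is, so $\gg_q'=\uu_q'$ and $\gg_q=\uu_q$.

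I expect the main obstacle to be exactly this last step: turning the geometric covering conditions $T_{\alpha_i}(V_i)\supset V_{i+1}$ and their reflections into the sharp alternative between $\overline{\alpha}<\sigma^k\alpha$ and $\overline{\alpha}=\sigma^k\alpha$. The delicate bookkeeping is which side of each cut point the vertex $V_i$ sits on, and the degenerate coincidences $a_i=b_j$, since it is exactly at an equality $\overline{\alpha}=\sigma^k\alpha$ that an image endpoint falls onto a vertex endpoint and the required covering is lost; controlling these coincidences is what genuinely separates case (i) from case (ii).
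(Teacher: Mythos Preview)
Your overall strategy—establishing the sandwich $\uu_q'\subseteq\gg_q'\subseteq\vv_q'$ via the switch-region criterion and then deciding whether the boundary sequences (those with a tail equal to $\alpha(q)$ or $\overline{\alpha(q)}$) are generated—is correct and is also the paper's line of attack. For case~(i) you are essentially outlining what the paper proves as Lemma~\ref{l41}: the two cycles through $(a_i^-,a_i)$ and $(b_i,b_i^+)$ exist and carry the labels $\alpha_1\cdots\alpha_N$ and $\overline{\alpha_1\cdots\alpha_N}$. The verification that the last edge in each of the connecting paths is actually an edge of $\gg(q)$ is not automatic (it needs the endpoint inequality $T_{\alpha_{N-1}}(a_{N-1}^+)>\eta_{\alpha_N^+}$, the paper's Lemma~\ref{l311}), but nothing in your sketch is wrong here.

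The genuine gap is in case~(ii). You argue that when $b_1=a_{k+1}$ the canonical cycle through the $(a_i^-,a_i)$ breaks (indeed $a_n=\eta_{\alpha_n^+}$, so $(a_n^-,a_n)$ is a switch interval rather than a vertex), and you conclude that $\alpha(q)\notin\gg_q'$. But the failure of one particular cycle does not exclude other paths generating $\alpha(q)$: for $q=q_m$ with $m\ge 2$ the graph $\gg(q_m)$ \emph{does} contain nontrivial cycles $\cc_2,\ldots,\cc_m$ (this is the content of Theorem~\ref{t14}), and each $\cc_j$ generates the periodic sequence $\alpha(q_{j-1})$. You must show that none of these, and no path threading through them, produces $\alpha(q_m)$ itself. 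The paper does not attempt this directly. Instead it first proves the structural Theorems~\ref{t13} and~\ref{t14}: $\gg(q_1)\cong\gg(q_0)$, and $\gg(q_{m+1})$ is an isomorphic copy of $\gg(q_m)$ together with a new cycle $\cc_{m+1}$ with no outgoing edges. This recursive picture is then used to prove Lemma~\ref{l71}: any path in $\gg(q_m)$ carrying the word $\alpha_1\cdots\alpha_n^+$ must start from a vertex inside $(a_1,\tfrac{M}{q-1})$. Combined with Proposition~\ref{p313}(i) (the only incoming edges to that region carry label $M$), a path generating $\overline{\alpha_1\cdots\alpha_n^+}\,\alpha_1\cdots\alpha_n^+$ would force $\overline{\alpha_1\cdots\alpha_n^+}=M^n$, which is impossible. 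So your diagnosis that ``$\overline{\alpha}=\sigma^k\alpha$ for some $k$'' is the discriminant between (i) and (ii) is correct, but converting it into ``no path in $\gg(q)$ generates $\alpha(q)$'' is where essentially all of the work in Sections~\ref{s5}--\ref{s7} lies; your sketch provides no mechanism for this step.
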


\begin{remark}\label{r12}\mbox{}

\begin{enumerate}[\upshape (i)]
\item We may also state Theorem \ref{t11} in a uniform way in the form $\gg_q=\overline{\uu_q}$ and $\gg_q'=\overline{\uu_q}'$ for all $q\in\vv\setminus\uu$.
Indeed, we recall from \cite{DK2009,K2012} that $\uu_q$ is closed if $q\in\vv\setminus\uuu$, and $\overline{\uu_q}=\vv_q$ if $q\in\uuu\setminus\uu$.
\item Although $\vv\setminus\uu$ is a countable discrete set, Theorem \ref{t11} allows us to describe the univoque sequences for each $q\in(1,\infty)\setminus\vv$, i.e., for almost all bases $q$.
Indeed, we know from \cite{EJK1990,GS2001, KL2002,
B2014,DKL2016} that $\uu_q'=\gg_{q_{GR}}'=\set{0^{\infty},M^{\infty}}$ for every $q\in(1,q_{GR})$, and $\uu_q'=\set{0,\ldots,M}^{\infty}$ for every $q>M+1$.
Furthermore, for each $q\in(q_{GR},M+1)\setminus\vv$ there exist two consecutive elements $p<r$ of $\vv$ such that $p<q<r$, and then  $\uu_q'=\overline{\uu_p}'$ by \cite[Theorem 1.7]{DK2009}.
We also recall from \cite{DK2009} that $\uu_q'=\overline{\uu_q}'$ for all $q\in(1,\infty)\setminus\vv$.
\end{enumerate}
\end{remark}

The proof of Theorem \ref{t11} will rely on many properties of the sets $\uu_q$, established in \cite{DK2009}, and  on the fine structure of the graphs $\gg(q)$.
In order to formulate the following two theorems we need the following more precise description of the topological structure of the sets $\uu$, $\uuu$ and $\vv$.
The open set $(1,M+1)\setminus\uuu$ has infinitely many connected components (disjoint open intervals) $(q_0,q_0^*)$, where $q_0$ runs over $\set{1}\cup(\uuu\setminus\uu)$, and $q_0^*$ runs over a countable subset of $\uu$.
Next, each connected component $(q_0,q_0^*)$ contains countably many elements of $\vv$, forming an increasing sequence $q_1<q_2<\cdots ,$ converging to $q_0^*$.
Now we are ready to state our next two theorems.
Consider an arbitrary connected component $(q_0,q_0^*)$ of $(1,M+1)\setminus\uuu$, and the increasing sequence $(q_m)$ of the elements of $\vv$ in this interval.
Write $\beta(q_0)=\alpha_1\cdots\alpha_n^+\ 0^{\infty}$.

\begin{theorem}\label{t13}
$\gg(q_1)$ is isomorphic to $\gg(q_0)$.
\end{theorem}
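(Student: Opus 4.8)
The plan is to reduce the statement to a combinatorial comparison governed by the two greedy expansions, and then to match the interval pictures defining the two graphs. The first step is to determine $\beta(q_1)$ explicitly in terms of $\beta(q_0)=\alpha_1\cdots\alpha_n^+\,0^\infty$. Since $q_0\in\uuu\setminus\uu$ is the left endpoint of the component and $q_1$ is the smallest element of $\vv$ above it, the fine structure of $\vv$ (the de Vries--Komornik reflection/doubling at a point of $\uuu\setminus\uu$) yields
\begin{equation*}
\beta(q_1)=\alpha_1\cdots\alpha_n^+\,\overline{\alpha_1}\cdots\overline{\alpha_n}\,0^\infty ,
\end{equation*}
where $\overline{\alpha_i}=M-\alpha_i$. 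I would derive this from the lexicographic characterization of $\vv$ together with the definition of the components of $(1,M+1)\setminus\uuu$, or invoke it from \cite{DK2009,KL2007}. In particular the greedy length of $q_1$ is $2n$, so $q_1$ carries $2n$ points $a_i$ and $2n$ points $b_i$, i.e.\ $2n$ more cut points than $q_0$.

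Next I would describe the vertices of both graphs and build an order-preserving bijection $\phi$ between them. The key tool is the reflection $R(x):=\tfrac{M}{q-1}-x$, which satisfies $R\circ T_k=T_{M-k}\circ R$ and exchanges $a_i\leftrightarrow b_i$, so that each $\gg(q)$ is invariant under $(I,k,J)\mapsto(R(I),M-k,R(J))$. Using the doubling formula I would show that the $n$ ``new'' special points $a_{n+1},\dots,a_{2n}$ arising from the reflected block $\overline{\alpha_1}\cdots\overline{\alpha_n}$ (together with their reflections $b_{n+1},\dots,b_{2n}$) fall inside, or on the boundary of, the switch region $S_{q_1}$. Thus, although $q_1$ has twice as many cut points as $q_0$, exactly the extra $2n$ of them are absorbed into $S_{q_1}$ and create no new vertices; consequently the complement $\big(0,\tfrac{M}{q_1-1}\big)\setminus(S_{q_1}\cup\{a_i,b_i\})$ has the same number of connected components as the corresponding complement for $q_0$, and listing them in increasing order defines $\phi$.

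Finally I would verify that $\phi$ is label-preserving on edges, that is, that $(I,k,J)$ is an edge of $\gg(q_1)$ if and only if $(\phi(I),k,\phi(J))$ is an edge of $\gg(q_0)$, by comparing the images $T_k(I)$ in the two pictures. Here it is convenient to pass to the symbolic side: by Theorem \ref{t11} we have $\gg_{q_0}'=\vv_{q_0}'$ and $\gg_{q_1}'=\uu_{q_1}'$, and the doubling formula is consistent with the equality of languages $\uu_{q_1}'=\vv_{q_0}'$, since the strict admissibility conditions defining $\uu_{q_1}'$ (governed by the quasi-greedy expansion of $q_1$) unwind to exactly the non-strict conditions defining $\vv_{q_0}'$ (governed by that of $q_0$); this serves both as a guide for $\phi$ and as a consistency check, though by itself it does not give the isomorphism because $\gg(q)$ need not be a canonical (right-resolving) presentation. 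The main obstacle is precisely the verification in the two previous paragraphs: since $q_0\neq q_1$, the switch-region endpoints $\theta_j=j/q$, $\eta_j=(j-1)/q+\tfrac{M}{q^2-q}$ and the points $a_i,b_i$ genuinely differ in the two pictures, so establishing that the reflected-block cut points land in $S_{q_1}$ and that every transition $T_k(I)\supset J$ is preserved cannot be reduced to a naive affine identification; I expect to replace the direct interval arithmetic by the lexicographic (greedy/quasi-greedy) description of membership, which should make both the absorption of the new cut points and the matching of labeled edges tractable.
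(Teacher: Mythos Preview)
Your overall strategy—set up an order-preserving bijection between the cut points and check that edges match—is exactly what the paper does. But your key mechanism for why the vertex counts agree is wrong. You claim the ``new'' cut points $a_{n+1},\dots,a_{2n}$ for $q_1$ (and their reflections) get absorbed into the switch region $S_{q_1}$. They do not: by Lemma~\ref{l37}\,(iii) only $\tilde a_n$ and $\tilde a_{2n}$ lie on $\partial S_{q_1}$, and the remaining $\tilde a_{n+1},\dots,\tilde a_{2n-1}$ are genuine interior cut points. The reason the two graphs have the same number of vertices is different: since $q_1\in\vv\setminus\uuu$ with the doubled expansion $\alpha_1\cdots\alpha_n^+\,\overline{\alpha_1\cdots\alpha_n}\,0^\infty$, Lemma~\ref{l37}\,(i) gives $\tilde b_j=\tilde a_{n+j}$ and $\tilde b_{n+j}=\tilde a_j$ for $1\le j\le n$. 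So the set $\{\tilde a_i,\tilde b_i\}$ collapses to just $\{\tilde a_1,\dots,\tilde a_{2n}\}$, and the ``new'' $\tilde a_{n+i}$ are precisely what replace the old $b_i$. The paper's bijection is therefore
\[
f(a_i)=\tilde a_i,\quad f(b_i)=\tilde a_{n+i}\quad(1\le i\le n-1),\qquad f(\theta_j)=\tilde\theta_j,\quad f(\eta_j)=\tilde\eta_j,
\]
and the substantial work (Lemma~\ref{l51}) is proving that $f$ is increasing, via half a dozen lexicographic implications using Proposition~\ref{p25}\,(v). Once $f$ is increasing, $F((x,y)):=(f(x),f(y))$ is the graph isomorphism, and the edge check is automatic because the first digit of the greedy expansion of an endpoint is preserved by $f$.

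A second caution: you invoke Theorem~\ref{t11} (specifically $\gg_{q_1}'=\uu_{q_1}'$) as a guide. Be careful—in this paper Theorem~\ref{t11}\,(ii) is proved in Section~\ref{s7} and its proof (Lemma~\ref{l71}) \emph{uses} Theorem~\ref{t13}. So leaning on that equality, even heuristically, risks circularity within the paper's logical order.
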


In the following theorem we denote by $\abs{V}$ the number of elements of a set $V$.

\begin{theorem}\label{t14}
There exists an infinite graph $\hat\gg(q_0^*)$ and a partition $V_1, V_2,\ldots$ of its vertices having the following properties:
\begin{enumerate}[\upshape (i)]
\item $\abs{V_m}=2^{m-1}n$ for every $m\ge 1$;
\item the subgraph $\cc_m$ spanned by $V_m$ is purely cyclical for every $m\ge 2$;
\item for each $m\ge 1$, the subgraph spanned by $V_1\cup\cdots\cup V_m$ is isomorphic to $\gg(q_m)$;
\item there is a path from $V_j$ to $V_k$ if and only if $j\le k$.
\end{enumerate}
\end{theorem}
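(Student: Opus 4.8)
The plan is to realize $\hat\gg(q_0^*)$ as the direct limit of the finite graphs $\gg(q_1),\gg(q_2),\ldots$ along a chain of induced-subgraph embeddings $\gg(q_m)\hookrightarrow\gg(q_{m+1})$, and then to read all four assertions off the structure of these embeddings. Once the embeddings are genuinely \emph{induced} (no spurious edge appears among old vertices when passing to $\gg(q_{m+1})$), property (iii) is essentially tautological for the layering in which $V_m$ is the set of vertices newly created at level $m$; so the theorem reduces to constructing the embeddings and analysing the complementary vertex sets $V_{m+1}$. The induction is anchored by Theorem~\ref{t13}, which supplies the case $m=1$ of (iii): since $\gg(q_1)\cong\gg(q_0)$, one may take $V_1$ to be the vertex set of a copy of $\gg(q_1)$, and the resulting cardinality is the number $n$ of (i).

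First I would produce the embeddings from the monotonicity of unique expansions. For $q_m<q_{m+1}$ both in $\vv\setminus\uuu$, Theorem~\ref{t11} gives $\gg_{q_m}'=\uu_{q_m}'$ and $\gg_{q_{m+1}}'=\uu_{q_{m+1}}'$, and since raising $q$ raises the quasi-greedy expansion of $1$, the admissibility conditions relax and the sets $\uu_q'$ are nested, so $\gg_{q_m}'\subseteq\gg_{q_{m+1}}'$. I would lift this inclusion of generated languages to an embedding of the generating automata via the interval description of the vertices: the vertices of $\gg(q)$ are the components of $(0,\tfrac{M}{q-1})\setminus(S_q\cup\set{a_1,\dots,a_N,b_1,\dots,b_N})$, and as $q$ crosses from $q_m$ to $q_{m+1}$ these components deform continuously while a controlled family of new splitting points appears. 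The combinatorial engine controlling \emph{how many} new vertices appear, and their transition structure, is the period-doubling description of $\vv$ inside a component from \cite{DK2009}: passing from level $m$ to level $m+1$ doubles the period of the relevant quasi-greedy expansion, which is exactly what I would use to obtain $\abs{V_{m+1}}=2^{m}n$ and so to prove (i) by induction.

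The structural heart is the analysis of the new layer $\cc_{m+1}$. I would show that the vertices of $V_{m+1}$ code precisely the shift-orbit of the periodic sequence that becomes admissible exactly at $q_{m+1}$ (together with its reflection, which lies on the same orbit by the symmetry $x\mapsto\tfrac{M}{q-1}-x$). On this orbit the dynamics is deterministic, each vertex having a unique successor and a unique predecessor inside $V_{m+1}$, so $\cc_{m+1}$ is a disjoint union of cycles, i.e.\ purely cyclical; this is (ii). For (iv) I would use the same monotonicity in directed form: the orbit created at level $k$ lies strictly below, in the lexicographic and interval order, the orbits of all earlier levels, so an admissible path may pass from an earlier layer into a later one but never the reverse. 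Concretely, every edge joining distinct layers runs from $V_j$ to $V_k$ with $j<k$, and the forward edges produced at each embedding step make each later layer reachable from each earlier one; together with (ii) this yields ``a path from $V_j$ to $V_k$ exists iff $j\le k$''.

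The main obstacle is the renormalization lemma underlying the second and third paragraphs: converting the symbolic self-similarity of the period-doubling into a precise, edge-preserving identification of $\gg(q_m)$ with an induced subgraph of $\gg(q_{m+1})$, while simultaneously proving that the complement is a single purely cyclical layer attached to the old graph only by forward edges. The delicate point is the bookkeeping of which intervals in $(0,\tfrac{M}{q-1})\setminus(S_q\cup\set{a_1,\dots,a_N,b_1,\dots,b_N})$ persist, split, or merge as $q$ crosses $q_{m+1}$, and the verification that the lexicographic admissibility conditions characterising $\uu_q'$ transfer consistently across the embedding. Once this lemma is in place, the properties (i)--(iv) follow by the routine induction sketched above, with Theorem~\ref{t13} as the base case.
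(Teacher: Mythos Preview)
Your architectural plan---realize $\hat\gg(q_0^*)$ as a direct limit along induced-subgraph embeddings $\gg(q_m)\hookrightarrow\gg(q_{m+1})$, with $V_{m+1}$ the newly created vertices---is exactly the paper's framework. But the mechanism you propose for producing the embeddings has a real gap. The language inclusion $\gg_{q_m}'=\uu_{q_m}'\subseteq\uu_{q_{m+1}}'=\gg_{q_{m+1}}'$ is correct, but an inclusion of generated languages does not by itself yield an \emph{induced} embedding of the generating graphs: two distinct automata can generate the same language, and in particular there is no canonical way to recover the interval structure of $\gg(q_m)$ from its language. Likewise, the intervals defining the vertices do not ``deform continuously'' in any usable sense: the points $a_i,b_i$ are attached to different bases $q_m$ and $q_{m+1}$, and the number of such points doubles. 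What is actually needed is an explicit combinatorial bijection between the endpoint sets.

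This is what the paper supplies, and it is where the work lies. One defines a map $f$ on endpoints by $f(a_i)=\tilde a_i$, $f(\theta_j)=\tilde\theta_j$, $f(\eta_j)=\tilde\eta_j$ (with one exceptional value at $\eta_{\alpha_n^+}$), proves by direct lexicographic comparison of greedy expansions that $f$ is strictly increasing, and then checks edge preservation by computing the commutators $f\circ T_k$ versus $T_k\circ f$ on each endpoint type (Lemmas~\ref{l62} and~\ref{l63}). The cycle structure of $\cc_{m+1}$ is established separately (Lemma~\ref{l61}) by an explicit analysis showing that the intervals $(\tilde a_{3n+i},\tilde a_i)$ and $(\tilde a_{2n+i},\tilde a_{n+i})$ are genuine vertices of $\gg(q_{m+1})$ (no further $\tilde a_j$ falls inside), that each has a unique outgoing edge, and that these edges close up into a single $2n$-cycle. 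Your heuristic that the new layer ``lies strictly below'' the old one in the interval order is not correct: Lemma~\ref{l63}(ii) shows that each old vertex with right endpoint $a_i$ splits into two, one old and one new, so the new vertices are interleaved with the old ones. The one-way connectivity in (iv) comes not from an order argument but from the explicit verification that $\cc_{m+1}$ has no outgoing edges (Lemma~\ref{l61}(iii)).

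In short, you have correctly identified the ``renormalization lemma'' as the crux, but the tools you propose (language monotonicity, continuous deformation) will not prove it; the paper's route is an explicit endpoint bijection with order- and edge-preservation checked by hand via greedy expansions.
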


We write $\hat\gg(q_0^*)$ instead of $\gg(q_0^*)$ to emphasize that $\hat\gg(q_0^*)$ is a graph of a different structure because $q_0^*\notin\vv\setminus\uu$.
Figures 1.1--1.3  show  the graphs $\gg(q_m)$ for $m=0,1,2$ where $M=1$ and $q_0$ is the Tribonacci number.

\begin{figure}[ht]\label{f11}
\centering
\includegraphics[scale=0.4]{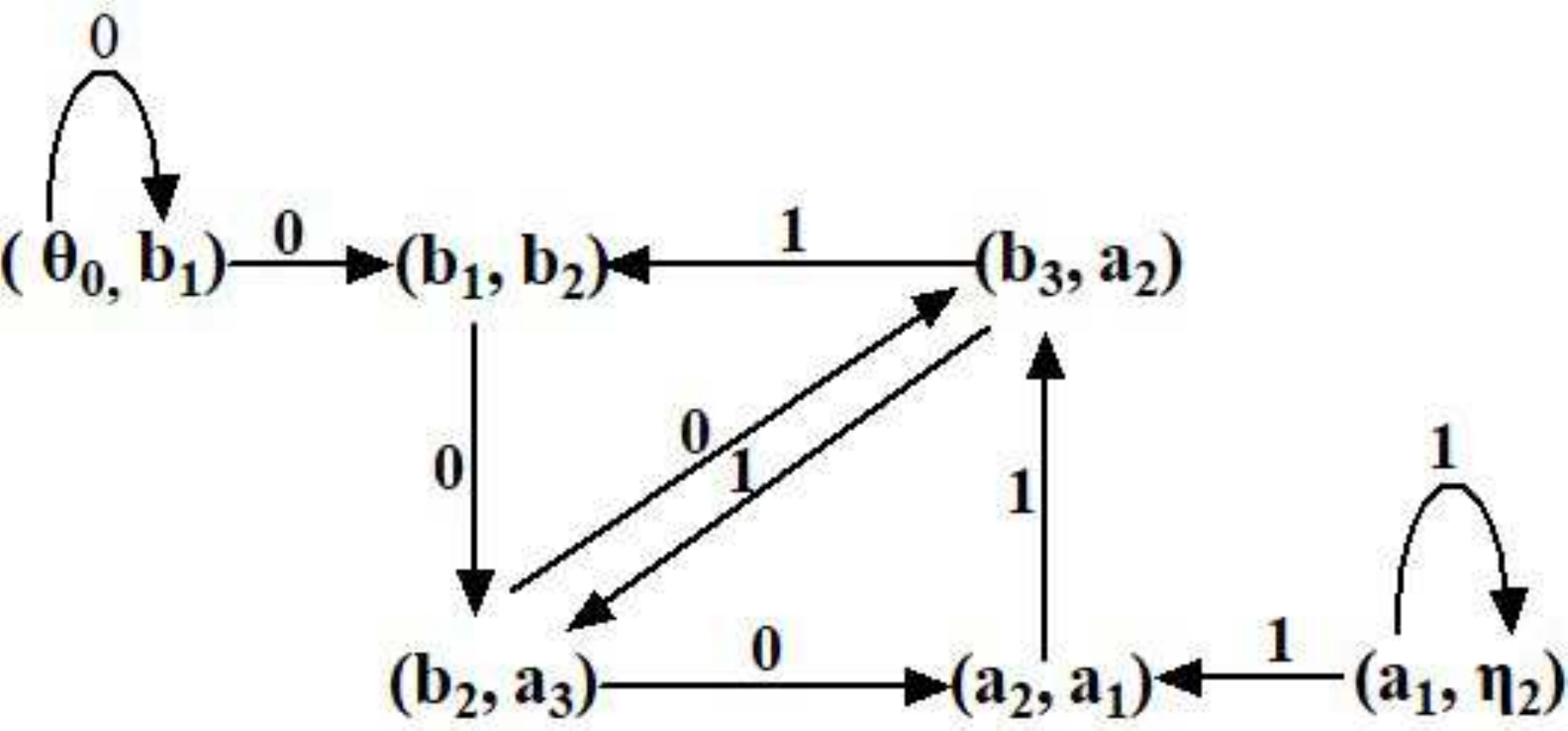}
\caption{The graph $\gg(q_0)$ for the Tribonacci number $q_0$  associated with  $\beta(q_0)=111\ 0^{\infty}$.}
\end{figure}

\begin{figure}[ht]\label{f12}
\centering
\includegraphics[scale=0.4]{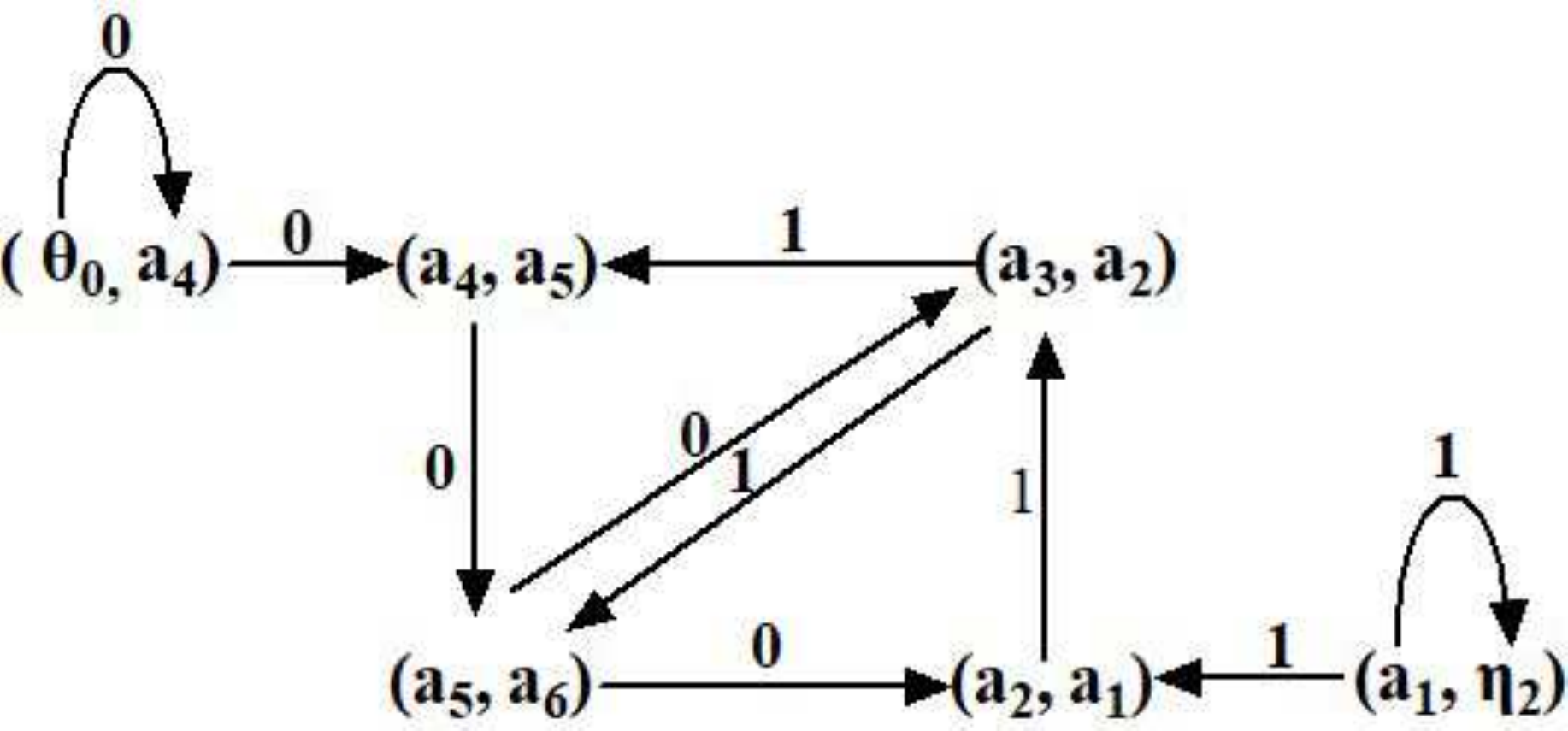}
\caption{The graph $\gg(q_1)$ associated with  $\beta(q_1)=111\ 001\ 0^{\infty}$.}
\end{figure}

\begin{figure}[ht]\label{f13}
\centering
\includegraphics[scale=0.4]{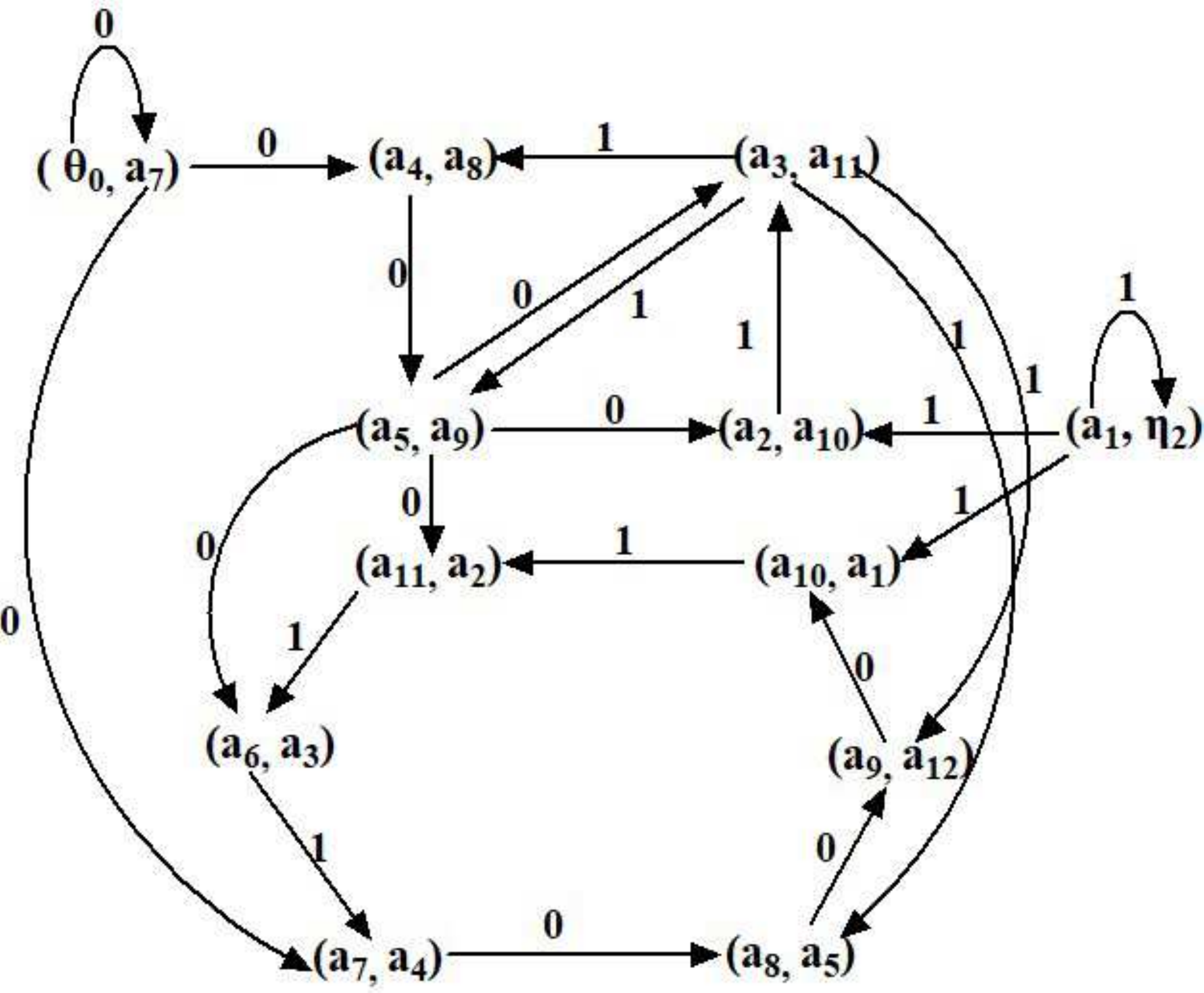}
\caption{The graph $\gg(q_2)$ associated with  $\beta(q_2)=111\ 001\ 000\ 111\ 0^{\infty}$.}
\end{figure}
\medskip

The second part of the present work is devoted to the study of the Hausdorff dimension of the sets
\begin{equation*}
\uu_q^j:=\set{x\ :\ x \text{ has exactly $j$ expansions}},\quad j=1,2,\ldots, \aleph_0 \qtq{or}2^{\aleph_0}.
\end{equation*}
This is well understood today for $\uu_q^1$ \cite{DK2009,KKL2017}, but the theory is far for complete if $j\ge 2$.
However, a number of important theorems have been obtained by Erd{\H{o}}s et al., Sidorov, Baker, Kong et al., Zou et al., Komornik et al.
\cite{EJ1992,
S2009,
BS2014,
B2014,
B2015,
KLZ2017,
ZK2015,
ZWLB2016,
KK2018}.
In order to mention some of these results concerning the two-digit case $M=1$, let us denote by $\bb_j$ the set of bases $q$ for which $\uu_q^j$ is non-empty.
We have $\bb_{2^{\aleph_0}}=(1,M+1)$. Furthermore,
\begin{equation*}
\min\bb_{\aleph_0}=\varphi,\quad
\min\bb_2=q_s\qtq{and}
\min\bb_j=q_f\qtq{for}j=3,4,\ldots,
\end{equation*}
where $\varphi<q_s<q_f$ denote the unique positive zeros of the polynomials
\begin{equation*}
q^2-q-1,\quad
q^4-2q^2-q-1\qtq{and}
q^3-2q^2+q-1,
\end{equation*}
respectively, so that
\begin{equation*}
\varphi\approx 1.61803,\quad
q_s\approx 1.71064\qtq{and} q_f\approx 1.75488.
\end{equation*}

\begin{remark}\label{r15}
Using the notation of Theorem \ref{t13}, if $M=1$ and $q_0=1$, then $q_1=\varphi$ and $q_2=q_f$.
\end{remark}

Although $\bb_{\aleph_0}$ has a smallest element, it is not closed, because it has a smallest accumulation point: $q_f$ , and $q_f\notin\bb_{\aleph_0}$.
Furthermore, $\bb_{\aleph_0}\cap(1,q_f]$ is an infinite discrete set.

On the other hand, $\bb_2$ is a compact set, having infinitely many isolated points and infinitely many accumulation points in $(1,q_{KL})$, where $q_{KL}\approx 1.78723$ denotes the \emph{Komornik--Loreti} constant, and
each derived set $\bb_2^{(1)}, \bb_2^{(1)}, \ldots$ of $\bb_2$ has the same property.
The smallest accumulation point of $\bb_2$ is $q_f$.

Turning back to the general case $M\ge 1$, we recall from \cite[Theorems 1.3, 1.4 and 1.5]{DK2009} that $\uu_q^1$ is closed if and only if $q\notin\uuu$.
The situation for $j>1$ (including the cases $j=\aleph_0$ and $j=2^{\aleph_0}$) is much simpler:

\begin{theorem}\label{t16}
If $\uu_q^j$ is closed for some $q>1$ and $j>1$, then it is empty.
\end{theorem}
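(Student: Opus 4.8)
The plan is to prove the contrapositive: if $\uu_q^j\neq\emptyset$ for some $j>1$, then $\uu_q^j$ is not closed. Throughout, write $\mathcal N_q(x)$ for the number of expansions of $x$ in base $q$. The engine is the functional equation coming from the switch region: if $x\notin S_q$ its first digit $d$ is forced and $\mathcal N_q(x)=\mathcal N_q(qx-d)$, whereas if $x\in(\theta_i,\eta_i)$ lies in a single switch interval then $\mathcal N_q(x)=\mathcal N_q(qx-(i-1))+\mathcal N_q(qx-i)$ (with extra summands where switch intervals overlap). I would first dispose of the case $j=2^{\aleph_0}$. For $q\ge M+1$ the set $\uu_q^{2^{\aleph_0}}$ is empty and there is nothing to prove; for $q\in(1,M+1)$ Lebesgue-almost every $x$ has $2^{\aleph_0}$ expansions \cite{S2003}, so $\uu_q^{2^{\aleph_0}}$ is dense, while $\mathcal N_q(0)=1$, so points of $\uu_q^{2^{\aleph_0}}$ accumulate at the point $0\notin\uu_q^{2^{\aleph_0}}$ and the set is not closed.

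For the remaining range $2\le j\le\aleph_0$ I would reduce to a single branch. Given $x\in\uu_q^j$, follow the unique forced initial segment of its expansion up to the first iterate $v$ that lands in $S_q$; such an iterate exists because $j\ge 2$, and since every digit before it is common to all expansions we have $\mathcal N_q(v)=\mathcal N_q(x)=j$. We may assume $v\in(\theta_i,\eta_i)$, treating the boundary and the overlapping cases as degenerate subcases. In a neighborhood of the base point $x$ is an increasing affine function of $v$ with the forced prefix frozen, so it suffices to exhibit, arbitrarily close to $v$, points still lying in $\uu_q^j$ that converge to a point whose number of expansions differs from $j$.

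The mechanism I would use is to drive one child of the branch to an endpoint of a digit interval, where a whole family of expansions collapses. Writing $w:=qv-i\in\bigl(0,\frac{M}{q-1}-1\bigr)$, the two children are $w$ and $w+1$, and $j=\mathcal N_q(w)+\mathcal N_q(w+1)$. I would construct $w_n\to 0$ (equivalently $v_n\to\theta_i^+$) along which the pair $(\mathcal N_q(w_n),\mathcal N_q(w_n+1))$ stays constant, so that the corresponding $v_n$ remain in $\uu_q^j$, while in the limit $w=0$ gives $\mathcal N_q(0)=1$ and $w+1=1$, whence $\mathcal N_q(\theta_i)=1+\mathcal N_q(1)$. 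The endpoint $\theta_i$ then leaves $\uu_q^j$ as soon as $1+\mathcal N_q(1)\neq j$, producing the desired accumulation point outside $\uu_q^j$.

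The main obstacle is exactly the rigidity of this final step. The two children are locked together by $(qv-(i-1))=(qv-i)+1$, so they cannot be steered independently, and the count of the limit hinges on $\mathcal N_q(1)$, i.e.\ on whether $1\in\uu_q$, equivalently whether $q\in\uu$. Thus the construction must split into cases according to the position of $q$ relative to $\uu$, $\uuu$ and $\vv$: when $q\in\uu$ the value $1$ is itself univoque and the naive limit at $\theta_i$ does not change the count, so $\theta_i$ must be replaced by a more carefully chosen accumulation point, and when $j=\aleph_0$ one must work with a branch node carrying an infinite subtree rather than two univoque children. Here I would invoke the fine structure from \cite{DK2009,K2012}: that $\uu_q$ is closed precisely when $q\notin\uuu$, that $\overline{\uu_q}=\vv_q$ when $q\in\uuu\setminus\uu$, and the explicit description of the countable set $\overline{\uu_q}\setminus\uu_q$, in order to locate a boundary value at which the count provably jumps and to verify that exactly the intended expansions are created or destroyed in the limit.
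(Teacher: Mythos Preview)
Your treatment of the case $j=2^{\aleph_0}$ is fine, but for $2\le j\le\aleph_0$ the proposal is not a proof: it is a plan whose central step is never carried out. You write that you ``would construct $w_n\to 0$ along which the pair $(\mathcal N_q(w_n),\mathcal N_q(w_n+1))$ stays constant,'' but you give no such construction, and in the next paragraph you yourself identify why it is hard: the two children are rigidly linked by the relation $w+1=(qv-(i-1))$, so you cannot vary one count while freezing the other. You then defer the whole case analysis ($q\in\uu$ versus $q\notin\uu$, finite $j$ versus $j=\aleph_0$) to an unspecified invocation of results from \cite{DK2009,K2012}. None of the actual work is done; in particular, for $q\in\uu$ your own endpoint limit $\mathcal N_q(\theta_i)=1+\mathcal N_q(1)=2$ can equal $j$, so the accumulation point you aim for need not leave $\uu_q^j$.

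The paper bypasses all of this with a one-line scaling argument you are missing. If $x\in\uu_q^j$ and $x\in(0,1)$, then for every $k\ge 1$ the point $q^{-k}x$ also has exactly $j$ expansions: prefixing $0^k$ to any expansion of $x$ gives an expansion of $q^{-k}x$, and conversely every expansion of $q^{-k}x$ must start with $0^k$ since $q^{-k}x<q^{-k}$. Hence $q^{-k}x\in\uu_q^j$ and $q^{-k}x\to 0\notin\uu_q^j$, so $\uu_q^j$ is not closed. By reflection the same holds if $\uu_q^j$ meets $\bigl(\tfrac{M}{q-1}-1,\tfrac{M}{q-1}\bigr)$. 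The only residual case is when $\uu_q^j\subset[1,\tfrac{M}{q-1}-1]$, which forces $q=q_{GR}$ with $M$ even and $\uu_q^j=\{1\}$; a short explicit count then shows $1\in\uu_q^{\aleph_0}$ and $\tfrac{1}{m+1}\in\uu_q^{\aleph_0}$ as well, contradicting $\uu_q^j=\{1\}$. This avoids entirely the linked-children obstruction that stalls your approach.
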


\begin{remark}\label{r17}
Denoting by $\overline{\uu_q^j}$ the topological closure of $\uu_q^j$, the  proof of Theorem \ref{t16} will show that if $\uu_q^j$ is non-empty for some $j>1$, then $0\in\overline{\uu_q^j}\setminus\uu_q^j$, and then by reflection $\frac{M}{q-1}\in\overline{\uu_q^j}\setminus\uu_q^j$.
We do not know whether $\uu_q^j\cup\set{0,\frac{M}{q-1}}$ may be a closed set when $\uu_q^j$ is non-empty.
\end{remark}

Next we are going to  investigate the Hausdorff dimension of the sets $\uu_q^j$. We refer to \cite{Falconer_1990} for the rigorous definition of the Hausdorff dimension.
Once again, the Hausdorff dimension of $\uu_q^1$ is well understood today \cite{GS2001, KLD2010,
KL2015,
KKL2017,
AK2018}: we have $\dim\uu_q^1=\dim \overline{\uu_q^1}$ for every $q$,  the function $q\mapsto\dim\uu_q^1$ is continuous on $(1,\infty)$, $\dim\uu_q^1=0$ for $q\in(0,q_{KL}]$, $\dim\uu_q^1=1$ for $q=M+1$, and $0<\dim\uu_q^1<1$ otherwise.
Finally, the  function $q\mapsto\dim\uu_q^1$ has a strange ``Devil's staircase'' behavior: it is continuous, and it has a strictly negative derivative almost everywhere, but
\begin{equation*}
q_{KL}<M+1
\qtq{and}
\dim\uu_{q_{KL}}^1=0<1=\dim\uu_{M+1}^1.
\end{equation*}

For $j\ge 2$ the sets $\uu_q^j$ behave differently.
For example, we have $\dim\uu_q^2<\dim \overline{\uu_q^2}$ in the classical case $q=M+1$ because $\uu_q^2$ is countable, while $\overline{\uu_q^2}=[0,1]$, so that $\dim\uu_q^2=0$ and $\dim \overline{\uu_q^2}=1$.

Another interesting observation was made in \cite[p. 410]{DK2009}: if $q\in\uu$, then $\uu_q^1\subset\overline{\uu_q^2}$.

Let us return to the sets $\uu_q^j$ without closure.
If $x\in\uu_q^j$ with $1<j<2^{\aleph_0}$, then a  bifurcation argument of Sidorov \cite{S2009} shows that $x$ has two expansions $(c_i)$ and $(d_i)$ such that the expansions $(c_{n+i})$ and $(d_{n+i})$ are unique for some $n\ge 1$, i.e., $(c_{n+i})_q,(d_{n+i})_q\in \uu_q^1$, and this implies the inequalities
\begin{equation}\label{11}
\dim_H\uu_q^j\le \dim_H\uu_q^1\qtq{for all}1<j<2^{\aleph_0}.
\end{equation}
If $q\le q_{KL}$, then all these dimensions are equal to zero, and hence equality holds in \eqref{11}.
On the other hand, all the inequalities \eqref{11} are strict if $q\ge M+1$ because all but countably many expansions are unique, so that $\dim_H\uu_q^1=1$, and $\dim_H\uu_q^j=0$ for all $j\ge 2$.

Concerning the intermediary case $q\in(q_{KL},M+1)$, an intriguing example was found by Sidorov  \cite{S2009}: if $M=1$ and $q\approx 1.83929$ is the \emph{Tribonacci number}, i.e., the positive root of the equation $q^3=q^2+q+1$, then
\begin{equation}\label{12}
\dim_H\uu_q^j=\dim_H\uu_q^1\qtq{for all}j\ge 2.
\end{equation}
The main purpose of this work is to extend the validity of the equalities \eqref{12} to a large infinite set of bases in $(q_{KL},M+1)$.
We need some notations.
First, following \cite{KL2007} we introduce the set $\vv'$ of sequences $(c_i)$ satisfying the following two lexicographic conditions:
\begin{equation*}
\begin{split}
&c_{n+1}c_{n+2}\cdots\le c_1c_2\cdots\qtq{whenever}c_n<M;\\
&\overline{c_{n+1}c_{n+2}\cdots}\cdots\le c_1c_2\cdots\qtq{whenever}c_n>0.
\end{split}
\end{equation*}

Now we turn back to the graphs $\gg(q)$.
We denote by $\tilde\gg(q)$ the subgraph of $\gg(q)$, obtained  by keeping only the vertices that are subintervals of $(b_1,a_1)$.
Furthermore, we denote by $\tilde \gg_{q}'$ the set of sequences generated by $\tilde\gg(q)$, and we set $\tilde \gg_{q}:=\set{(c_i)_q: (c_i)\in \tilde \gg_{q}'}$.

We recall from \cite[Definition 2.2.13]{LM1996} that a graph is called \emph{ strongly connected} if for every ordered pair $(I,J)$ of vertices $I, J$  there exists a path in the graph starting at $I$ and terminating at $J$.

\begin{theorem}\label{t18}
Let $q_0\in \uuu\setminus\uu$, and assume that $\tilde\gg(q_0)$ is strongly connected.
Write $\beta(q_0)=a_1\cdots a_{N-1}a_N^+\ 0^{\infty}$, and introduce a sequence of bases $q_0=r_0<r_1<r_2<\cdots$ by the formula
\begin{equation*}
\beta(r_k)=\alpha_1\cdots \alpha_{N-1}\alpha_N^+\left(\overline{\alpha_1\cdots \alpha_N}\right)^k\ 0^{\infty},\quad k=0,1,,\ldots .
\end{equation*}
The the equalities \eqref{12} hold for all bases $r_k$.
\end{theorem}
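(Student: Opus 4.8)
The upper bound is already available for us: since $1<j<2^{\aleph_0}$ both for every finite $j$ and for $j=\aleph_0$, the bifurcation estimate \eqref{11} gives $\dim_H\uu_{r_k}^j\le\dim_H\uu_{r_k}^1$. Hence the whole content of \eqref{12} for $q=r_k$ reduces to the reverse inequality $\dim_H\uu_{r_k}^j\ge\dim_H\uu_{r_k}^1$ for each $j\ge 2$. The plan is to obtain this lower bound by an \emph{embedding}: I would exhibit, for each $j$, a finite word $P_j$ such that prepending $P_j$ to \emph{any} univoque expansion coming from the strongly connected core of $\gg(r_k)$ produces a number with \emph{exactly} $j$ expansions. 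Since prepending a fixed word of length $L_j$ acts on the corresponding reals as the affine map $x\mapsto c_j+r_k^{-L_j}x$, it preserves Hausdorff dimension; thus the image of a full-dimensional family of univoque numbers is a full-dimensional subset of $\uu_{r_k}^j$, which yields the inequality.

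First I would pin down where the dimension of $\uu_{r_k}^1$ lives. By Theorem \ref{t11} the univoque sequences in base $r_k$ are exactly those generated by $\gg(r_k)$ (using part (ii) for $k\ge 1$, and for $k=0$ part (i) together with $\uu_{q_0}^1=\uu_{q_0}$ and $\dim_H\uu_{q_0}=\dim_H\overline{\uu_{q_0}}$). Building on the graph isomorphisms of Theorems \ref{t13}--\ref{t14}, I would show that the subgraph $\tilde\gg(r_k)$ inherits strong connectedness from the hypothesis on $\tilde\gg(q_0)$ and that it is the dimension-maximizing part of $\gg(r_k)$, i.e. $\dim_H\tilde\gg_{r_k}=\dim_H\uu_{r_k}^1$. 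Strong connectedness is what makes this work: the associated subshift is then irreducible, its Hausdorff dimension equals $\log\rho/\log r_k$ for the Perron eigenvalue $\rho$ of the adjacency matrix, and no larger-dimensional piece of $\gg(r_k)$ lies outside the core. It therefore suffices to embed the limit set of $\tilde\gg(r_k)$ into $\uu_{r_k}^j$.

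The heart of the argument is the construction of $P_j$. I would first design a single \emph{switch block} $G$, assembled from the word $\alpha_1\cdots\alpha_N$ and its reflection $\overline{\alpha_1\cdots\alpha_N}$ dictated by $\beta(r_k)$, with the following property: when $G$ is read in front of a sequence produced by the strongly connected core, the partial value reaches a switch point $\theta_m$ (equivalently one of the special points $a_i,b_i$ of $\gg(r_k)$) exactly once, creating a binary choice whose non-taken branch is forced to be a \emph{unique} expansion, while the taken branch returns the path to a vertex of $\tilde\gg(r_k)$ from which the univoque tail continues without any further branching. Prepending one copy of $G$ thus adds exactly one expansion; concatenating $j-1$ copies gives a word $P_j$ producing exactly $j$ expansions for every admissible univoque tail, and the case $j=\aleph_0$ is obtained by an analogous infinite nest of switch blocks whose side-branches accumulate to a single extra expansion. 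The self-similar, reflection-based form of $\beta(r_k)$ is precisely what makes such a block $G$ available and keeps every intermediate value inside the admissible region $(b_1,a_1)$ carrying $\tilde\gg(r_k)$.

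The main obstacle is the exactness of the count. One must verify, using the two lexicographic inequalities defining $\vv'$ and the switch-region description of branching, that prepending $P_j$ creates \emph{no} expansions beyond the $j-1$ intended ones and that each non-taken side-branch is genuinely univoque, \emph{uniformly} over all tails ranging in the core of $\tilde\gg(r_k)$. This uniform control is exactly where the strong connectedness of $\tilde\gg(q_0)$ is indispensable, as it lets me route the path, after each switch block, back onto a full-dimensional family of univoque continuations while ruling out accidental re-entries into the switch region. Once this is established, injectivity of the prefixing map is immediate, the affine scaling preserves Hausdorff dimension, and combining with \eqref{11} gives \eqref{12} for every $r_k$.
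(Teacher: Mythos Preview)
Your high-level plan (upper bound from \eqref{11}; lower bound by prefixing a fixed word to a full-dimensional family of univoque tails) matches the paper's strategy, and the paper's explicit prefix is simply $P_m=1\,0^{(m-1)N}$ (Lemma~\ref{l91}). However, two of your supporting claims do not hold, and without them the argument does not close.

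First, the assertion that $\tilde\gg(r_k)$ inherits strong connectedness from $\tilde\gg(q_0)$ via Theorems~\ref{t13}--\ref{t14} is unfounded for $k\ge 2$. Those theorems concern the sequence $q_0,q_1,q_2,\ldots$ inside a \emph{single} connected component $(q_0,q_0^*)$ of $(1,M+1)\setminus\uuu$. By Lemma~\ref{l94} we have $r_1=q_1\in\vv\setminus\uuu$, but $r_k\in\uuu\setminus\uu$ and $r_k>q_0^*$ for every $k\ge 2$ (Remark~\ref{r19}(iii)); thus each such $r_k$ is the left endpoint of a \emph{different} component, and nothing in Theorems~\ref{t13}--\ref{t14} relates $\tilde\gg(r_k)$ to $\tilde\gg(q_0)$. (Relatedly, your appeal to Theorem~\ref{t11}(ii) for $k\ge 1$ is only correct for $k=1$; for $k\ge 2$ part~(i) applies.) The paper never analyses $\tilde\gg(r_k)$ for $k\ge 2$ at all.

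Second, and this is the essential structural difference, the paper does \emph{not} produce the full-dimensional univoque family inside base~$r_k$ directly. It fixes once and for all the family $\ff'$ of sequences $(c_i)$ that start with $\overline{\alpha_1\cdots\alpha_N}$ and satisfy the $\tilde\vv_{q_0}'$ inequalities relative to $\alpha(q_0)$ (not $\alpha(r_k)$); Lemma~\ref{l95} then verifies the extra condition \eqref{97} for the base $r_k$, so Corollary~\ref{c92} gives $\dim\uu_{r_k}^m\ge\dim\ff_{r_k}$ for all $m$. The matching upper step $\dim\ff_{r_k}=\dim\uu_{r_k}^1$ is obtained through the entropy chain \eqref{101}, whose key ingredient is the constancy of $h(\vv_q')$ on $[q_0,p_R]$ (Proposition~\ref{p910}, relying on the Alcaraz Barrera--Baker--Kong transitivity/entropy-plateau theorem). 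Your outline contains no substitute for this entropy-constancy step; without it, even a correct embedding only yields $\dim\uu_{r_k}^m\ge\dim\ff_{r_k}$ with $\ff'$ tied to $q_0$, and you would still need to identify $\dim\ff_{r_k}$ with $\dim\uu_{r_k}^1$.
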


\begin{remark}\label{r19}\mbox{}
\begin{enumerate}[\upshape (i)]
\item The sequence $\alpha_1\cdots \alpha_{N-1}\alpha_N^+\left(\overline{\alpha_1\cdots \alpha_N}\right)^k\ 0^{\infty}$ is the greedy expansion of $x=1$ in base $r_k$ indeed, so that the notation is correct.

\item Theorem \ref{t18} contains Sidorov's results \cite[Proposition 4.6, Remark 4.7]{S2009} because if $M=1$ and $q_0$ is the Tribonacci number, then $\tilde\gg(q_0)$ is strongly connected: see Figure 1.1.

\item  With the notations of Theorem \ref{t14} we have $r_0=q_0$, $r_1=q_1$, and $r_k>q_0^*$ for all $k\ge 2$, so that Theorem \ref{t18} does not apply for the bases $q_2, q_3,\ldots .$
The inequality $r_k>q_0^*$ follows from the fact that $r_k\in\uuu\setminus\uu$ for all $k\ne 1$, see Lemma \ref{l94}.

\item In Section \ref{s12} we will give a necessary and sufficient condition for the strong connectedness of $\tilde\gg(q_0)$ (Proposition \ref{p121}), and we will compare this with a related, but different theorem of Alcaraz Barrera, Baker and Kong \cite{ABBK2019}.
\end{enumerate}
\end{remark}

We may also establish the equalities \eqref{12} in some cases where the conditions of Theorem \ref{t18} are not satisfied.

We denote by $\tilde\gg_1(q)$ the subgraph of $\tilde\gg(q)$,  formed by the vertices of the form $(a_i^-, a_i)$ and $(b_i, b_i^+)$, i.e., whose right endpoints belong to $\set{a_1,\ldots,a_N}$ or whose left endpoints belong to $\set{b_1,\ldots,b_N}$.
Furthermore, we denote by $\tilde \gg_{1,q}'$ the set of sequences generated by $\tilde\gg_1(q)$, and we set $\tilde \gg_{1,q}:=\set{(c_i)_q: (c_i)\in \tilde \gg_{1,q}'}$.

\begin{theorem}\label{t110}\mbox{}
If $q_0\in \uuu\setminus\uu$, and  $\dim\tilde\gg_{1, q_0}=\dim\uu_{q_0}$, then \eqref{12} holds for all bases $q=r_k$ with the notations of Theorem \ref{t18}.
\end{theorem}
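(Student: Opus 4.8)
The plan is to reduce the statement to the reverse of the inequality \eqref{11} and then to reproduce the construction underlying Theorem \ref{t18}, with the strong connectedness of $\tilde\gg(q_0)$ replaced throughout by the dimension hypothesis $\dim\tilde\gg_{1,q_0}=\dim\uu_{q_0}$. By the bifurcation argument of Sidorov recalled in \eqref{11}, for every $j$ with $1<j<2^{\aleph_0}$ we already have $\dim_H\uu_{r_k}^j\le\dim_H\uu_{r_k}^1$. Hence it suffices, for each fixed base $r_k$ and each $j$ with $2\le j<2^{\aleph_0}$, to exhibit a subset $E\subset\uu_{r_k}^j$ with $\dim_H E\ge\dim_H\uu_{r_k}^1$; the finite values of $j$ are the crux, the case $j=\aleph_0$ following in the same way.

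Such a set $E$ is produced as in Theorem \ref{t18} by building a Cantor set of numbers whose $j$ expansions share a long common prefix running along a path in (the $r_k$-analogue of) $\tilde\gg(q_0)$, splitting at a vertex abutting one of the critical points $a_i$ or $b_i$, after which the finitely many branches continue as \emph{unique} expansions. The growth rate of the admissible prefixes equals the Perron root of the relevant graph, and this is what fixes $\dim_H E$; in Theorem \ref{t18} the strong connectedness of $\tilde\gg(q_0)$ guarantees that the entropy-carrying part of the walk can always be routed to a splitting vertex and back, so that $E$ attains the full dimension $\dim_H\uu_{r_k}^1$ while each of its elements has exactly $j$ expansions. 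The key observation is that the splitting vertices used here are precisely the vertices of $\tilde\gg_1(q_0)$, namely those of the form $(a_i^-,a_i)$ and $(b_i,b_i^+)$, since these are the ones touching the critical points where a controlled branching into a prescribed number of unique tails can be installed. Thus the only purpose served by the strong connectedness of $\tilde\gg(q_0)$ is to force the full-dimensional walk through such critical-adjacent vertices.

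Under the weaker hypothesis $\dim\tilde\gg_{1,q_0}=\dim\uu_{q_0}$ this purpose is achieved directly: the subgraph $\tilde\gg_1(q_0)$ already carries the full entropy, so it contains a \emph{strongly connected} component $\mathcal K$ whose Perron root equals that of the univoque graph. One then runs the entire construction inside (the $r_k$-analogue of) $\mathcal K$, where the entropy-carrying blocks and the splitting blocks all live on critical-adjacent vertices; both the required growth rate and the availability of splitting vertices are then guaranteed, and no global connectivity assumption is needed. This yields $\dim_H\uu_{r_k}^j\ge\dim\tilde\gg_{1,r_k}$.

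Two points remain, and I expect them to be the main obstacle. First, the hypothesis is stated at $q_0$ whereas the construction needs the identity $\dim\tilde\gg_{1,r_k}=\dim_H\uu_{r_k}^1$ at each $r_k$; this requires transferring the entropy equality $\dim\tilde\gg_{1,q_0}=\dim\uu_{q_0}$ along the bases $r_k$, using the combinatorial relationship between $\gg(q_0)$ and $\gg(r_k)$ coming from the quasi-periodic form $\beta(r_k)=\alpha_1\cdots\alpha_{N-1}\alpha_N^+(\overline{\alpha_1\cdots\alpha_N})^k\,0^\infty$ (the machinery behind Theorems \ref{t13}--\ref{t14} and Lemma \ref{l94}). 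One shows the relevant subgraphs are isomorphic, so the Perron roots, hence the entropies, are preserved; dividing by $\log r_k$ gives the required identity at each $r_k$. Second, one must verify that the constructed numbers have \emph{exactly} $j$ expansions: this is a lexicographic check that each of the $j$ tails issuing from the splitting block is genuinely a unique expansion and that no further branching occurs, so that the branch count is exactly $j-1$. The dimension bound itself is soft, but pinning the number of expansions to be neither larger nor smaller than $j$ demands careful control of the splitting gadget at the critical-adjacent vertices — exactly the control that the structure of $\tilde\gg_1(q_0)$ is designed to provide.
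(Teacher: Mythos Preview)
Your high-level plan --- bound $\dim\uu_{r_k}^j$ from below by the dimension of a suitable set $\ff_{r_k}$ of numbers with exactly $j$ expansions, then check $\dim\ff_{r_k}=\dim\uu_{r_k}$ using the hypothesis --- matches the paper. But the mechanism you describe for producing the $j$ expansions is not what actually works, and you are missing a key structural fact that makes the argument go through.

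First, the paper proves (Lemma \ref{l101}) that $\tilde\gg_1(q_0)$ is \emph{always} strongly connected and always generates the words $\alpha_1\cdots\alpha_N$ and $\overline{\alpha_1\cdots\alpha_N}$. There is no need to pass to a strongly connected component $\mathcal K$; the whole of $\tilde\gg_1(q_0)$ already is one. The proof is short: Lemma \ref{l41} supplies two $N$-cycles on the vertices $(a_i^-,a_i)$ and $(b_i,b_i^+)$, and one checks that these two cycles communicate.

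Second, the $j$ expansions are not obtained by a ``splitting gadget'' at vertices abutting $a_i$ or $b_i$. The construction (Lemma \ref{l91}, Corollary \ref{c92}) is purely arithmetic: for $(c_i)\in\ff'$ one puts $x_m:=(10^{(m-1)N}c_1c_2\cdots)_q$ and observes that the prefix $10^{(m-1)N}$ can be rewritten in exactly $m$ ways, namely as $0(\alpha_1\cdots\alpha_N)^k\alpha_1\cdots\alpha_N^+0^{(m-2-k)N}$ for $k=0,\ldots,m-2$ together with the original. All $m$ expansions share the \emph{same} tail $(c_i)$; the branching happens entirely in a fixed finite prefix, not in the graph walk. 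Proving there are no further expansions reduces to checking that $\alpha_1\cdots\alpha_N^+0^{(m-1)N}c_1c_2\cdots$ is greedy and $(\alpha_1\cdots\alpha_N)^{m-1}\alpha_1\cdots\alpha_N^+c_1c_2\cdots$ is lazy. The delicate step is the greedy one, which requires
\[
\alpha_{k+1}\cdots\alpha_N^+c_1c_2\cdots\le\alpha(q)\quad\text{whenever }1\le k<N\text{ and }\alpha_k<M,
\]
and Lemma \ref{l93} shows this holds provided $(c_i)$ begins with $\overline{\alpha_1\cdots\alpha_N}$. \emph{This} is why $\ff'$ is taken to be the sequences of $\tilde\gg_{1,q_0}'$ starting with $\overline{\alpha_1\cdots\alpha_N}$ --- not because those vertices are splitting points, but because the cycle on the $(b_i,b_i^+)$ supplies the required initial block. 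Strong connectedness of $\tilde\gg_1(q_0)$ then gives $\dim\ff_{q_0}=\dim\tilde\gg_{1,q_0}$, which equals $\dim\uu_{q_0}$ by hypothesis.

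For $r_k$ with $k\ge 1$, the transfer you worry about is handled by Lemma \ref{l95} (the same $\ff'$ still satisfies the lexicographic conditions relative to $\alpha(r_k)$) and by Proposition \ref{p910} (the entropy $h(\vv_q')$ is constant on $[q_0,p_R]$, so $\dim\uu_{r_k}=h(\tilde\gg_{1,q_0}')/\log r_k=\dim\ff_{r_k}$). No graph isomorphism between $\gg(q_0)$ and $\gg(r_k)$ is needed or used.
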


\begin{example}
If $M=4$ and $\beta(q_0)=322\ 0^\infty$, then  $\tilde \gg(q_0)$ is not strongly connected (see Figure 1.4), and $\dim\tilde \gg_{1,q_0}=\dim \uu_{q_0}$.
Hence Theorem \ref{t18} does not apply, but the equalities \eqref{12} hold by Theorem \ref{t110} for $q_0=r_0\in\uuu\setminus\uu$ and $q=r_k$ with $\beta(r_k)=322\ (123)^k\ 0^\infty$.
See Example \ref{e1210} for the details.
\end{example}

\begin{figure}[ht]\label{f14}
\centering
\includegraphics[scale=0.45 ]{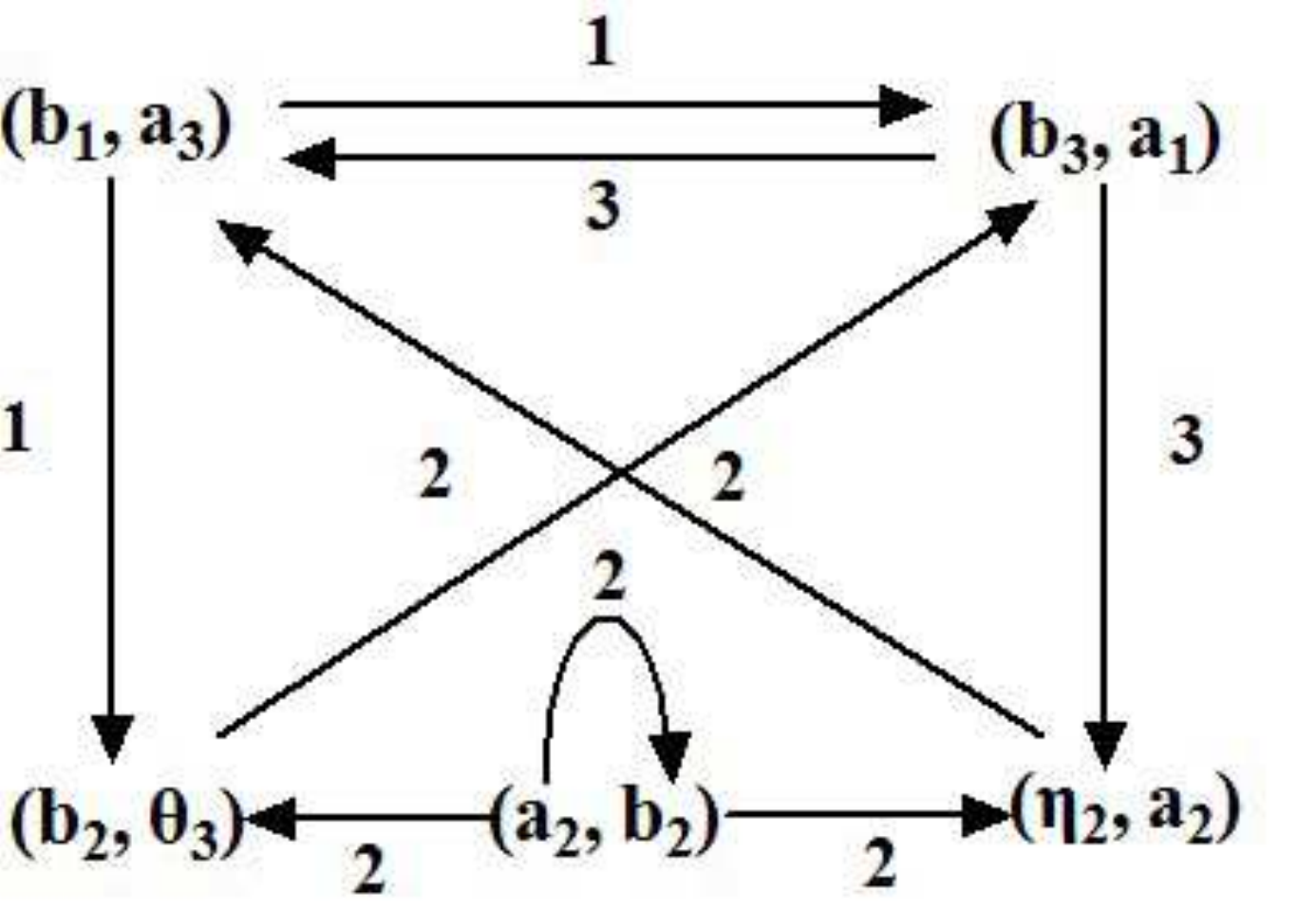}
\caption{The graph $\tilde\gg(q_0)$ associated with  $\beta(q_0)=322\ 0^{\infty}$.}
\end{figure}

It is an interesting open question whether  \eqref{12} holds for  all $q\in\vv\setminus\uu$.
We explain in Remark \ref{r1002} below why our proof breaks down in the general case.

The structure of the paper is the following.
In the next section we collect, for the reader's convenience, a number of earlier results that we will need in the sequel.
The univoque graph is introduced in Section \ref{s3}, and many preliminary results are established.
The main theorems are proved in Sections \ref{s4}--\ref{s10}.
In the last two sections we illustrate our theorems by a number of concrete examples.

\section{A review of quasi-greedy expansions, and a list of notations}\label{s2}

For the reader's convenience we briefly recall  some notions and basic results on expansions that we need later.
For a systematic treatment we refer to the papers \cite{BK2007,
K2011,
DK2016,
DKL2016}.
At the end of the section we give a list of the principal notations used in this paper (some of them are explained more precisely in the following sections).

\subsection{A review of some earlier results}\label{ss21}
We fix a positive integer $M$, and by a \emph{sequence} we mean an element of $\set{0,\ldots,M}^{\NN}$.
A sequence will usually be denoted by $(c_i)$ or $c_1c_2\cdots ,$ and we also use the notation from symbolic dynamics; for example, the sequence $101010\cdots$ is also denoted by $(10)^{\infty}$.

For any fixed base $1<q\le M+1$, every $x\in[0,\frac{M}{q-1}]$ has a lexicographically largest expansion $b(x,q)=(b_i)$, obtained by the greedy algorithm, and a lexicographically largest infinite expansion $a(x,q)=(a_i)$.
They are called the \emph{greedy} and \emph{quasi-greedy expansions} of $x$ in base $q$, respectively.
If the greedy expansion is infinite, then they coincide; otherwise $b(x,q)$ has a last nonzero digit $b_n$, and then $a(x,q)=b_1\cdots b_{n-1}b_n^-a(1,q)$.

The case $x=1$ being particularly important, we introduce the shorter notations $\beta(q):=b(1,q)$ and $\alpha(q):=a(1,q)$.

There is a very useful lexicographic characterization of greedy and quasi-greedy expansions in terms of the sequence $\alpha(q)$:

\begin{proposition}\label{p21}
Fix $q\in (1, M+1]$ arbitrarily.
\begin{enumerate}[\upshape (i)]
\item The map $x\mapsto b(x,q)$ is an increasing bijection between the interval $[0,\frac{M}{q-1}]$ and the sequences $(b_i)$ satisfying the lexicographic inequalities
\begin{equation*}
(b_{n+i})<\alpha(q)\qtq{whenever}b_n<M.
\end{equation*}
\item The map $x\mapsto a(x,q)$ is an increasing bijection between the interval $[0,\frac{M}{q-1}]$ and the infinite sequences $(a_i)$ satisfying the lexicographic inequalities
\begin{equation}\label{21}
(a_{n+i})\le \alpha(q)\qtq{whenever}a_n<M.
\end{equation}
\end{enumerate}
\end{proposition}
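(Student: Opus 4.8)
The plan is to prove the two parts in the order (ii) then (i), since the quasi-greedy characterization carries the cleaner non-strict inequality and underlies the strict greedy one. For each part I will establish three things: that the relevant map lands in the prescribed set of sequences (\emph{necessity} of the lexicographic conditions), that every sequence in that set is the (quasi-)greedy expansion of its value (\emph{sufficiency}), and that the map is strictly increasing, which together with injectivity (the value is recovered by applying $(\cdot)_q$) and surjectivity onto the set upgrades the correspondence to an increasing bijection. Two elementary facts about the algorithm will be used throughout. First, \emph{self-similarity}: the tail $(b_{n+i})_{i\ge1}$ of $b(x,q)$ is exactly the greedy expansion $b(y_n,q)$ of the rescaled remainder $y_n:=q^n(x-\sum_{i\le n}b_iq^{-i})\in[0,\frac{M}{q-1}]$, and likewise for $a(x,q)$. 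Second, \emph{monotonicity}: $x<y$ implies $b(x,q)<b(y,q)$ and $a(x,q)<a(y,q)$ lexicographically, which follows by comparing the digits chosen by the algorithm step by step (a larger target can only enlarge the greedily chosen digit) together with injectivity.

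For part (ii), necessity is then immediate: if $a_n<M$ the maximality in the quasi-greedy algorithm forces the remainder to satisfy $y_n\le1$, so by self-similarity and monotonicity $(a_{n+i})=a(y_n,q)\le a(1,q)=\alpha(q)$. Applying this with $x=1$ yields the \emph{self-admissibility} $\sigma^n\alpha(q)\le\alpha(q)$ at every $n$ with $\alpha_n<M$ (here $\sigma$ denotes the shift), which I will need later. For sufficiency, take an infinite $(a_i)$ with $(a_{n+i})\le\alpha(q)$ whenever $a_n<M$, put $x=(a_i)_q$, and suppose $a(x,q)>(a_i)$ with first discrepancy at $n$, so $a_n<M$. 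Comparing the two expansions on the common remainder $R_{n-1}$ gives $a_n+(\sigma^n a)_q=a(x,q)_n+(\sigma^n a(x,q))_q$; the hypothesis forces $(\sigma^n a)_q\le1$, and the inequality $a(x,q)_n\ge a_n+1$ then pins down $(\sigma^n a)_q=1$ and $\sigma^n a(x,q)=0^\infty$, making $a(x,q)$ \emph{finite} --- impossible for a quasi-greedy expansion. Hence $(a_i)=a(x,q)$, and (ii) follows.

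Part (i) reduces to the comparison lemma \emph{$y<1\Rightarrow b(y,q)<\alpha(q)$}: granting it, if $b_n<M$ in $(b_i)=b(x,q)$ then the greedy maximality gives $y_n<1$ (now \emph{strict}), so $(b_{n+i})=b(y_n,q)<\alpha(q)$, which is necessity; sufficiency is obtained by the same remainder comparison as above, where the forced tail $\sigma^n b$ of value $1$ satisfying the strict condition is shown --- by induction on its length in the finite case and by part (ii) in the infinite case --- to coincide with the largest expansion $\beta(q)$ of $1$, contradicting $\sigma^n b<\alpha(q)\le\beta(q)$. To prove the comparison lemma I split on whether $\beta(q)$ is infinite, in which case $\alpha(q)=\beta(q)$ and $b(y,q)<\beta(q)=\alpha(q)$ is just monotonicity, or finite, say $\beta(q)=\beta_1\cdots\beta_m0^\infty$, in which case $\alpha(q)=(\beta_1\cdots\beta_{m-1}\beta_m^-)^\infty$ and I track the first index where $b(y,q)$ descends below $\beta(q)$.

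The main obstacle is exactly this finite-$\beta(q)$ case of the comparison lemma: converting the available strict inequality $b(y,q)<\beta(q)$ into the desired $b(y,q)<\alpha(q)$, where now $\alpha(q)<\beta(q)$. The delicate situation is a ``tie'' in which $b(y,q)$ agrees with $\beta(q)$ up to position $m$ and carries $\beta_m^-$ there, so that the comparison with the periodic $\alpha(q)$ hinges entirely on the tail. This is precisely where the self-admissibility $\sigma^k\alpha(q)\le\alpha(q)$ established in part (ii), together with the stated relation $a(x,q)=b_1\cdots b_{n-1}b_n^-a(1,q)$ between greedy and quasi-greedy expansions, is invoked to show the tie cannot produce $b(y,q)\ge\alpha(q)$; every remaining case is settled by locating the first descent below $\beta(q)$. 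Once the comparison lemma is in place, monotonicity and the necessity and sufficiency directions assemble into the two claimed increasing bijections.
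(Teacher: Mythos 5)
The paper itself gives no proof of Proposition \ref{p21}: it is recalled as a known result, with the reader referred to \cite{BK2007,K2011,DK2016,DKL2016}. Measured against the standard proofs in those sources, your architecture is the right one --- proving (ii) before (i), necessity via self-similarity and monotonicity of the algorithms, sufficiency by comparing a candidate sequence with the (quasi-)greedy expansion of its value, and the reduction of (i) to the lemma ``$y<1\Rightarrow b(y,q)<\alpha(q)$'', whose tie case you correctly settle with the relation $a(y,q)=b_1\cdots b_{n-1}b_n^-\alpha(q)$ and the self-admissibility $\sigma^k\alpha(q)\le\alpha(q)$.

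There is, however, a genuine gap at the one step that carries all the analytic content. In the sufficiency argument for (ii) you assert that ``the hypothesis forces $(\sigma^n a)_q\le 1$'', and you invoke the same value bound again in the sufficiency direction of (i) and in your finite-case induction. A lexicographic inequality does not by itself bound values: for $M=1$ and any $q\in(1,\varphi)$, where $\varphi$ is the Golden Ratio, one has $01^\infty<\alpha(q)$ lexicographically (since $\alpha(q)$ begins with $10$), yet $(01^\infty)_q=\frac{1}{q(q-1)}>1$. What you actually need is the lemma that a sequence $(c_i)$ satisfying $\sigma^n c\le\alpha(q)$ for $n=0$ and for every $n$ with $c_n<M$ has $(c_i)_q\le 1$. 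This is true, but it is not a formal consequence of any single comparison; it is proved by iteration: at the first index $n_1$ where $(c_i)$ drops below $\alpha(q)$ one has $c_{n_1}\le\alpha_{n_1}-1$, hence
\begin{equation*}
(c_i)_q\le\left(\alpha_1\cdots\alpha_{n_1-1}(\alpha_{n_1}-1)\right)_q+q^{-n_1}\left(\sigma^{n_1}c\right)_q,
\end{equation*}
and since $\sigma^{n_1}c$ satisfies the same hypotheses one repeats the step on the tail, using $\sum_{i\le k}\alpha_iq^{-i}\le 1$ at every level and letting the depth of the recursion tend to infinity. This telescoping lemma is precisely the quantitative core of Parry-type characterizations, and it is what the cited sources prove first; without it, your sufficiency direction (hence surjectivity in both parts) assumes essentially what is to be proved. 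The remaining ingredients of your proposal --- monotonicity, self-similarity, the finiteness contradiction for quasi-greedy expansions, and the case analysis in the comparison lemma for part (i) --- are sound and completable as sketched.
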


We note that the condition \eqref{21} implies the a priori stronger condition
\begin{equation*}
(a_{n+i})\le \alpha(q)\qtq{whenever}a_1\cdots a_n<M^n.
\end{equation*}

An expansion of $x$ is unique if and only if it is at the same time the lexicographically largest and smallest (called \emph{lazy}) expansion of $x$.
Since $(c_i)$ is an expansion of $x$ if and only if $(M-c_i)$ is an expansion of $\frac{M}{q-1}-x$, and since this map changes the sense of lexicographic inequalities, we deduce from the preceding proposition the

\begin{corollary}\label{c22}
An expansion $(c_i)$ of a number $x$ in a base $q$ is the unique one if and only if the following two conditions are satisfied:
\begin{align*}
&(c_{n+i})<\alpha(q)\qtq{whenever}c_n<M
\intertext{and}
&\overline{(c_{n+i})}<\alpha(q)\qtq{whenever}c_n>0.
\end{align*}
\end{corollary}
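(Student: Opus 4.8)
The plan is to derive the corollary directly from Proposition \ref{p21}(i) by combining the greedy characterization with a reflection argument. The starting point is the observation, recalled just before the statement, that an expansion $(c_i)$ of $x$ is unique precisely when it is simultaneously the \emph{greedy} (lexicographically largest) and the \emph{lazy} (lexicographically smallest) expansion of $x$. Thus it suffices to translate each of these two extremal properties into one of the two stated lexicographic conditions, and then to intersect them.

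For the greedy side there is nothing to do beyond invoking Proposition \ref{p21}(i): a sequence is a greedy expansion (of its own value $(c_i)_q$) if and only if $(c_{n+i})<\alpha(q)$ whenever $c_n<M$, which is exactly the first condition.

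For the lazy side I would pass through the reflection map $(c_i)\mapsto\overline{(c_i)}=(M-c_i)$. Since $\sum_{i\ge 1}(M-c_i)/q^i=\frac{M}{q-1}-x$, this map sends expansions of $x$ to expansions of $\frac{M}{q-1}-x$, and because it reverses the lexicographic order it interchanges the largest and smallest expansions. Consequently $(c_i)$ is the lazy expansion of $x$ if and only if $\overline{(c_i)}$ is the greedy expansion of $\frac{M}{q-1}-x$. Applying Proposition \ref{p21}(i) to $\overline{(c_i)}$ then yields $(M-c_{n+i})<\alpha(q)$ whenever $M-c_n<M$, i.e. $\overline{(c_{n+i})}<\alpha(q)$ whenever $c_n>0$, which is precisely the second condition.

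Combining the two equivalences finishes the argument: $(c_i)$ is unique iff it is both greedy and lazy iff both displayed conditions hold; conversely, if both conditions hold then $(c_i)$ is at once the greedy and the lazy expansion of $x=(c_i)_q$, so these two extremal expansions coincide and $x$ admits no further expansion. There is no genuine obstacle here; the only point requiring care is the bookkeeping of the order reversal under reflection, together with the verification that the inequalities in Proposition \ref{p21}(i) are \emph{strict}, so that both displayed conditions inherit the strict sign rather than the weak sign appearing in the quasi-greedy characterization \eqref{21}.
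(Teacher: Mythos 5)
Your proposal is correct and follows essentially the same route as the paper: the paper likewise deduces the corollary by noting that uniqueness means being simultaneously greedy and lazy, characterizing the greedy side by Proposition \ref{p21}(i), and handling the lazy side via the reflection $(c_i)\mapsto\overline{(c_i)}$, which maps expansions of $x$ to expansions of $\frac{M}{q-1}-x$ and reverses lexicographic order. Your added care about the strict inequalities matches the paper's use of the greedy (not quasi-greedy) characterization.
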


There is a similar characterization of the sequences $\alpha(q)$.
Setting $\beta(1)=10^{\infty}$ and  $\alpha(1)=0^{\infty}$ for commodity, we have the following

\begin{proposition}\label{p23}\mbox{}
\begin{enumerate}[\upshape (i)]
\item The map $q\mapsto \beta(q)$ is an increasing bijection between the interval $[1,M+1]$ and the set of  sequences $(\beta_i)$ satisfying the lexicographic inequalities
\begin{equation*}
(\beta_{n+i})<(\beta_i)\qtq{whenever}\beta_n<M.
\end{equation*}

\item The map $q\mapsto \alpha(q)$ is an increasing bijection between the interval $[1,M+1]$ and the set of infinite sequences $(\alpha_i)$ satisfying the lexicographic inequalities
\begin{equation*}
(\alpha_{n+i})\le (\alpha_i)\qtq{whenever}\alpha_n<M.
\end{equation*}
Moreover, in this case these inequalities are satisfied for all $n\ge 0$.
\end{enumerate}
\end{proposition}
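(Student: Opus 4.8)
The plan is to prove, for each of (i) and (ii), three things: that $\beta(q)$ (resp.\ $\alpha(q)$) always satisfies the displayed inequalities, so the map is well defined into the stated set; that the map is strictly increasing, hence injective; and that it is surjective. Necessity and monotonicity are quick consequences of Proposition~\ref{p21}, so I would dispose of them first and reserve the effort for surjectivity. For \emph{necessity}, apply Proposition~\ref{p21} with $x=1$. Part (ii) gives immediately $(\alpha_{n+i})\le\alpha(q)=(\alpha_i)$ whenever $\alpha_n<M$, which is exactly the condition in (ii). For (i), part (i) gives $(\beta_{n+i})<\alpha(q)$ whenever $\beta_n<M$; since $\alpha(q)$ is the lexicographically largest \emph{infinite} expansion of $1$ while $\beta(q)$ is the largest expansion of $1$ among all, we have $\alpha(q)\le\beta(q)$, whence $(\beta_{n+i})<\alpha(q)\le(\beta_i)$.

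For \emph{monotonicity} I would run the greedy algorithms for two bases $1\le q<q'$ in parallel: with $r_0=r_0'=1$, $\beta_n=\min(M,\lfloor qr_{n-1}\rfloor)$, $r_n=qr_{n-1}-\beta_n$, and similarly primed. As long as the digits produced so far coincide the remainders satisfy $r_n'>r_n$ (starting from $r_1'=q'-\beta_1>q-\beta_1=r_1$), because $q'r_{n-1}'>qr_{n-1}$ as soon as $r_{n-1}>0$; hence the next digit $\min(M,\lfloor q'r_{n-1}'\rfloor)\ge\min(M,\lfloor qr_{n-1}\rfloor)$, so the two digit strings can only diverge with $\beta(q')$ lexicographically above $\beta(q)$. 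They cannot coincide for every $n$, for that would force $(\beta(q))_{q'}=1=(\beta(q))_q$, contradicting that $q\mapsto(c_i)_q$ is strictly decreasing for any fixed nonzero sequence; and if some $r_{n-1}=0$ then $\beta(q)$ has terminated while $r_{n-1}'>0$ persists and eventually produces a positive digit. Thus $\beta(q)<\beta(q')$, and the quasi-greedy algorithm is handled identically. This gives injectivity and ``increasing''; the point $q=1$ is covered by the conventions $\beta(1)=10^\infty$, $\alpha(1)=0^\infty$, each admissible and not attained on $(1,M+1]$.

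Surjectivity is the heart of the matter and the main obstacle. Given an admissible sequence I first locate its base by the intermediate value theorem: if $(\alpha_i)$ is infinite and $\ne0^\infty$ it has infinitely many positive digits, so $f(q):=(\alpha_i)_q$ is continuous and strictly decreasing with $f(q)\to+\infty$ as $q\downarrow1$ and $f(M+1)\le1$, yielding a unique $q^*\in(1,M+1]$ with $(\alpha_i)_{q^*}=1$. For (ii) it remains to identify $(\alpha_i)$ with $\alpha(q^*)$: as $\alpha(q^*)$ is the largest infinite expansion of $1$ we have $(\alpha_i)\le\alpha(q^*)$, so admissibility gives $(\alpha_{n+i})\le(\alpha_i)\le\alpha(q^*)$ whenever $\alpha_n<M$, placing $(\alpha_i)$ in the range of Proposition~\ref{p21}(ii) at base $q^*$; being an infinite expansion of value $1$, the bijectivity there forces $(\alpha_i)=a(1,q^*)=\alpha(q^*)$. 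For (i), given a strictly admissible $(\beta_i)$ I pass to its ``quasi-greedy-ization'' $(\gamma_i)$, equal to $(\beta_i)$ when $(\beta_i)$ is infinite and to $(\beta_1\cdots\beta_{m-1}\beta_m^-)^\infty$ when $(\beta_i)=\beta_1\cdots\beta_m0^\infty$ with $\beta_m\ge1$. The delicate lemma — and the step I expect to be the real obstacle — is that $(\gamma_i)$ is \emph{$\alpha$-admissible}; this is a purely lexicographic check from the strict inequalities on $(\beta_i)$, trivial when $(\beta_i)$ is infinite but requiring care in the finite case. Granting it, (ii) supplies $q^*$ with $\alpha(q^*)=(\gamma_i)$; a short computation gives $(\beta_i)_{q^*}=1$ (in the finite case from $\beta_m-\beta_m^-=1$ together with the geometric identity behind $(\gamma_i)_{q^*}=1$), and one verifies $(\beta_{n+i})<\alpha(q^*)$ whenever $\beta_n<M$ (immediate when $(\beta_i)$ is infinite, a direct finite case-check otherwise). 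By Proposition~\ref{p21}(i) this makes $(\beta_i)$ the greedy expansion of its value $1$, i.e.\ $(\beta_i)=\beta(q^*)$.

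Finally, for the ``moreover'' in (ii), that $(\alpha_{n+i})\le(\alpha_i)$ holds for all $n\ge0$, the case $n=0$ is an equality and the case $\alpha_n=M$ I would settle by strong induction on $n$. If $\alpha_1=\cdots=\alpha_n=M$ and $\alpha\ne M^\infty$, locating the first non-$M$ digit shows $(\alpha_{n+i})<(\alpha_i)$ outright. Otherwise let $p\le n$ be the largest index with $\alpha_p<M$; the hypothesis $(\alpha_{p+i})\le(\alpha_i)$, whose left side begins with $n-p$ copies of the maximal symbol $M$, forces $\alpha_1=\cdots=\alpha_{n-p}=M$ and then $(\alpha_{n+i})\le(\alpha_{n-p+i})$, while the induction hypothesis gives $(\alpha_{n-p+i})\le(\alpha_i)$ (valid since $n-p<n$), closing the argument by transitivity.
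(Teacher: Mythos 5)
The paper itself offers no proof of Proposition~\ref{p23}: Section~\ref{s2} explicitly recalls it from the cited literature (Baiocchi--Komornik, de~Vries--Komornik--Loreti), so there is no internal argument to compare yours against. Judged on its own, your architecture --- necessity from Proposition~\ref{p21} with $x=1$ together with $\alpha(q)\le\beta(q)$, strict monotonicity by running the (quasi-)greedy algorithms in parallel, surjectivity in (ii) via the intermediate value theorem plus the lexicographic characterization, reduction of (i) to (ii) through the quasi-greedy-ization, and the strong induction for the ``moreover'' clause --- is exactly the standard route, and every step you actually carry out is correct.

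The one genuine gap is the step you flag and then ``grant'': that for a finite admissible $(\beta_i)=\beta_1\cdots\beta_m0^{\infty}$ (with $\beta_m\ge 1$) the periodic sequence $(\gamma_i)=(\beta_1\cdots\beta_{m-1}\beta_m^-)^{\infty}$ satisfies $(\gamma_{n+i})\le(\gamma_i)$ whenever $\gamma_n<M$. This is the only combinatorially nontrivial point of the whole proposition, so deferring it leaves the surjectivity of $q\mapsto\beta(q)$ unproved. The lemma is true, and the missing check is short. By periodicity it suffices to treat $1\le n\le m$; for $n=m$ one has equality. For $1\le n<m$ with $\gamma_n=\beta_n<M$, the hypothesis $(\beta_{n+i})<(\beta_i)$ is equivalent (using $\beta_m\ge1$) to the word inequality $\beta_{n+1}\cdots\beta_m\le\beta_1\cdots\beta_{m-n}$. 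If this is strict, the first discrepancy occurs at some position $j\le m-n$ where $\beta_{n+j}<\beta_j$; since passing from $\beta_m$ to $\beta_m^-$ can only decrease the left-hand word and leaves $\gamma_j=\beta_j$ unchanged (as $j\le m-n<m$), the same discrepancy gives $\gamma_{n+1}\cdots\gamma_m<\gamma_1\cdots\gamma_{m-n}$. If instead the words are equal, then $\gamma_{n+j}=\gamma_j$ for $j<m-n$ while $\gamma_m=\beta_m^-<\beta_m=\beta_{m-n}=\gamma_{m-n}$, so again $\gamma_{n+1}\cdots\gamma_m<\gamma_1\cdots\gamma_{m-n}$. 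Either way $(\gamma_{n+i})<(\gamma_i)$, which is what admissibility requires. With this inserted (and the two easier finite-case verifications you sketch, namely $(\beta_i)_{q^*}=1$ and $(\beta_{n+i})<\alpha(q^*)$, which go through by the same comparison), your proof is complete.
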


Using the sequences $\alpha(q)$ and $\beta(q)$ we may also characterize the sets $\uu$, $\uuu$ and $\vv$:

\begin{proposition}\label{p24}
Let $q\in (1,M+1]$ and write $\alpha(q)=(\alpha_i)$, $\beta(q)=(\beta_i)$.
We have
\begin{align*}
q\in\uu&\Longleftrightarrow \overline{(\beta_{n+i})}< (\beta_i)\qtq{whenever}\beta_n>0;\\
q\in\uuu&\Longleftrightarrow \overline{(\alpha_{n+i})}< (\alpha_i)\qtq{whenever}\alpha_n>0;\\
q\in\vv&\Longleftrightarrow \overline{(\alpha_{n+i})}\le (\alpha_i)\qtq{whenever}\alpha_n>0.
\end{align*}
Moreover, in each case the inequalities are satisfied for all $n\ge 0$.
\end{proposition}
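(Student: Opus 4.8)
The plan is to derive all three equivalences from the characterizations of greedy and quasi-greedy expansions in Propositions~\ref{p21} and~\ref{p23}, from Corollary~\ref{c22}, and from the reflection symmetry $(c_i)\mapsto\overline{(c_i)}=(M-c_i)$, which maps the expansions of $x$ bijectively onto those of $\frac{M}{q-1}-x$ while reversing the lexicographic order. First I would treat $\uu$. Since $\beta(q)$ is the largest expansion of $1$, the base $q$ lies in $\uu$ iff $\beta(q)$ is itself the unique expansion of $1$, so I apply Corollary~\ref{c22} to $(c_i)=\beta(q)$. Its first condition, $(\beta_{n+i})<\alpha(q)$ whenever $\beta_n<M$, holds automatically by Proposition~\ref{p21}(i). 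Moreover uniqueness forces $\beta(q)$ to be infinite: if $\beta(q)=\beta_1\cdots\beta_n0^\infty$ with $\beta_n>0$, then $\beta_1\cdots\beta_{n-1}\beta_n^-\alpha(q)$ is a second expansion of $1$. Hence $\alpha(q)=\beta(q)$, and the remaining condition of Corollary~\ref{c22} becomes $\overline{(\beta_{n+i})}<\alpha(q)=(\beta_i)$ whenever $\beta_n>0$, the asserted inequality. Conversely this inequality already forces $\beta(q)$ infinite—otherwise $\overline{(\beta_{n+i})}=M^\infty$ at the last nonzero index, contradicting $M^\infty<(\beta_i)$—so again $\alpha(q)=\beta(q)$ and Corollary~\ref{c22} yields $q\in\uu$.

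For $\vv$ I would pass to doubly infinite expansions. The quasi-greedy sequence $\alpha(q)$ is the largest infinite expansion of $1$, and for $q<M+1$ it is doubly infinite: the admissibility inequalities of Proposition~\ref{p23}(ii) force $\alpha(q)\ne M^\infty$ and prevent it from ending in $M^\infty$, so it has infinitely many digits $<M$. Thus $\alpha(q)$ is the largest doubly infinite expansion of $1$. Applying the same quasi-greedy characterization to $\frac{M}{q-1}-1$ and reflecting—using that reflection is an order-reversing involution on doubly infinite expansions—shows that a doubly infinite expansion $(c_i)$ of a number is the smallest one iff $\overline{(c_{n+i})}\le\alpha(q)$ whenever $c_n>0$. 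Since $1$ has a unique doubly infinite expansion exactly when its largest and smallest doubly infinite expansions coincide, and the largest is $\alpha(q)$, this happens iff $\alpha(q)$ also satisfies $\overline{(\alpha_{n+i})}\le(\alpha_i)$ whenever $\alpha_n>0$, which is the $\vv$ condition. The only bookkeeping point is to confirm that the extremal infinite expansions involved are genuinely doubly infinite, which follows from $q<M+1$ and the admissibility of $\alpha(q)$.

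The case $\uuu=\overline{\uu}$ is where I expect the real difficulty, since $\uuu$ is defined as a closure while the asserted condition is a pointwise strict inequality on $\alpha(q)$. Writing $\uuu_0$ for the set defined by $\overline{(\alpha_{n+i})}<(\alpha_i)$ whenever $\alpha_n>0$, the inclusion $\uu\subseteq\uuu_0$ is immediate from the $\uu$ case (there $\alpha=\beta$). For $\uuu_0\subseteq\overline{\uu}$ I would approximate any $q\in\uuu_0$ from below by genuinely univoque bases, modifying the tail of $\alpha(q)$ by the standard construction (decreasing a prefix digit and appending a reflected completion) to obtain infinite sequences that satisfy the strict $\uu$-inequalities and, via Proposition~\ref{p23}(ii), correspond to bases in $\uu$ increasing to $q$. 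For the reverse inclusion $\overline{\uu}\subseteq\uuu_0$, writing $q=\lim q_k$ with $q_k\in\uu$ and passing to the limit in the strict inequalities yields only the non-strict ones, so $q\in\vv$; the crux is then to rule out equality, i.e.\ to show that a base attaining $\overline{(\alpha_{n+i})}=(\alpha_i)$ for some $n$ with $\alpha_n>0$ is not an accumulation point of $\uu$. This is the main obstacle: it requires an explicit description of the periodic sequences realizing equality—precisely the right endpoints of the complementary intervals of $\uuu$ mentioned in the introduction—and a direct verification, using the monotonicity of $q\mapsto\alpha(q)$ from Proposition~\ref{p23}, that such bases are isolated from $\uu$ on the right. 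Finally, the strengthening that the inequalities hold for all $n\ge0$ is obtained exactly as in Proposition~\ref{p23}: the boundary index $n=0$, comparing the whole sequence with its reflection, follows from the displayed inequalities for $n\ge1$ together with the admissibility of $\alpha(q)$, respectively $\beta(q)$.
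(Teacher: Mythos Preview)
The paper does not prove Proposition~\ref{p24}: it is stated in Section~\ref{s2} as part of a review of earlier results (with references to \cite{BK2007,K2011,DK2016,DKL2016}), so there is no proof in the paper against which to compare your proposal.

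That said, one comment on your treatment of $\uuu$. You take $\uuu$ as the topological closure $\overline{\uu}$ and try to pass to the limit in strict inequalities, which, as you note, is delicate. But the paper also records (Proposition~\ref{p210}(iii)) the equivalent description of $\uuu$ as the set of bases in which $x=1$ has a unique \emph{infinite} expansion. With this as the working definition, the argument for $\uuu$ becomes entirely parallel to your argument for $\uu$: apply the lazy/greedy criterion to $\alpha(q)$ (the largest infinite expansion of $1$) instead of $\beta(q)$, observe that the greedy-side inequalities are automatic from Proposition~\ref{p23}(ii), and the remaining lazy-side condition is exactly $\overline{(\alpha_{n+i})}<(\alpha_i)$ whenever $\alpha_n>0$. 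This bypasses the closure argument and the need to analyze the periodic boundary cases.
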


We recall that $\uu$ has a smallest element: $q_{KL}$.
The set $\vv$ also has a smallest element.
Since it is equal to the Golden Ratio if $M=1$, it is called the \emph{generalized Golden Ratio} (the terminology comes from  \cite{KLP2011}), and it is denoted by $q_{GR}$.
We have
\begin{equation*}
q_{GR}=
\begin{cases}
(m+\sqrt{m^2+4m})/2\qtq{with}\beta(q_{GR})=mm\ 0^{\infty}&\text{if $M=2m-1$,}\\
m+1\qtq{with}\beta(q_{GR})=(m+1)\ 0^{\infty}&\text{if $M=2m$,}
\end{cases}
\end{equation*}
$m=1,2,\ldots .$

We will need  the following  properties of  $\alpha(q)$ where $q\in\vv\setminus\uu$:

\begin{proposition}\label{p25}
Let $q\in\vv\setminus\uu$ and write $\alpha(q)=\alpha_1\alpha_2\cdots .$
\begin{enumerate}[\upshape (i)]
\item The sequence $\alpha(q)$ is periodic, say $\alpha_1\alpha_2\cdots=(\alpha_1\cdots\alpha_N)^{\infty}$,  and $\alpha_N<M$.
\item If $\alpha(q)=(\alpha_1\cdots\alpha_N)^{\infty}$ with the shortest period $N$, and $q^+=\min(\vv\cap(q,\infty))$, then
\begin{equation*}
\alpha(q^+)=\left(\alpha_1\cdots\alpha_N^+\overline{\alpha_1\cdots\alpha_N^+}\right)^{\infty},
\end{equation*}
and hence $q^+\in\vv\setminus\uuu$.
\item If $M$ is even and $q=\min\vv$, then $\overline{\alpha_1}=\alpha_1$; otherwise we have $\overline{\alpha_1}<\alpha_1$.
\item If $q\in\uuu\setminus\uu$, then $\alpha(q)$ is also the lazy expansion of  $x=1$ in base $q$.
\item If $q\in\vv\setminus\uuu$, $q>\min\vv$ and
\begin{equation*}
\alpha(q)=\left(\alpha_1\cdots\alpha_N\overline{\alpha_1\cdots\alpha_N}\right)^{\infty}
\end{equation*}
where $N$ is chosen to be minimal, then
\begin{equation*}
\overline{\alpha_{i+1}\cdots\alpha_N}<\alpha_1\cdots\alpha_{N-i},\quad i=0,\ldots,N-1.
\end{equation*}
\end{enumerate}
\end{proposition}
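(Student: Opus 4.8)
My plan is to deduce all five assertions from the lexicographic descriptions of greedy, quasi-greedy and univoque sequences recalled in Propositions \ref{p21}, \ref{p23} and \ref{p24} and Corollary \ref{c22}, together with the reflection duality $(c_i)\mapsto(M-c_i)$, which sends the largest expansion of $x$ to the smallest expansion of $\frac{M}{q-1}-x$ and reverses lexicographic order. Throughout I write $\sigma$ for the shift.

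For (i) I would first show that the greedy expansion $\beta(q)$ is finite. If it were infinite then $\beta(q)=\alpha(q)$; in the subcase $q\in\uuu$ the defining inequalities of $\uuu$ in Proposition \ref{p24} are literally those characterising $\uu$ once $\beta=\alpha$, forcing $q\in\uu$, a contradiction; in the subcase $q\in\vv\setminus\uuu$ some shift satisfies $\overline{\sigma^{n}\alpha}=\alpha$ with $\alpha_n>0$, hence $\sigma^{n}\alpha=\overline\alpha$ and $\sigma^{2n}\alpha=\alpha$ with $\alpha_{2n}=\overline{\alpha_n}<M$, which violates the strict greedy inequality of Proposition \ref{p21}(i) and again contradicts $\beta=\alpha$. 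Thus $\beta(q)=\alpha_1\cdots\alpha_{N-1}\alpha_N^+\,0^\infty$ is finite, whence $\alpha_N<M$, and the greedy--quasi-greedy relation of Section \ref{ss21} gives $\alpha(q)=(\alpha_1\cdots\alpha_N)^\infty$.

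For (ii) the plan is to set $w:=\alpha_1\cdots\alpha_N^+$ and $\gamma:=(w\overline w)^\infty$ and prove three things: that $\gamma$ satisfies the self-bracketing inequalities of Proposition \ref{p23}(ii), so $\gamma=\alpha(r)$ for a unique $r$; that $\gamma$ meets the $\vv$-condition of Proposition \ref{p24} but not the strict $\uuu$-condition (indeed $\overline{\sigma^{N}\gamma}=\gamma$ with $\gamma_N=\alpha_N^+>0$), so $r\in\vv\setminus\uuu$; and that $r>q$, since $\gamma>\alpha(q)$ at the $N$-th digit and $q\mapsto\alpha(q)$ is increasing. The delicate point, and the main obstacle of the whole proposition, is the minimality $r=q^+$: I would argue that any $\vv$-admissible $\delta=\alpha(r')$ with $\alpha(q)<\delta$ must, at the first digit where it exceeds $\alpha(q)=(\alpha_1\cdots\alpha_N)^\infty$, agree with $w$, and then be squeezed by the combined quasi-greedy ($\sigma^{n}\delta\le\delta$) and reflection ($\overline{\sigma^{n}\delta}\le\delta$) constraints into $\delta\ge\gamma$, so that no element of $\vv$ lies in $(q,r)$.

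Assertion (iii) I would read off from the explicit value of $q_{GR}=\min\vv$ recalled before the proposition together with the monotonicity in Proposition \ref{p23}(ii): at $q=\min\vv$ one has $\alpha(q)=m^\infty$ for even $M=2m$, giving $\overline{\alpha_1}=\alpha_1$, while for odd $M$ or for $q>\min\vv$ one has $\alpha_1>M/2$; in the borderline even case, $\alpha_1=m$ with $\alpha>m^\infty$ is excluded because the first digit exceeding $m$ would force $\sigma^{n}\alpha>\alpha$, contradicting the quasi-greedy inequality. Assertion (iv) is the cleanest: since $(\overline{\alpha_i})_q=\frac{M}{q-1}-1$ and $q\in\uuu$ yields exactly $\overline{(\alpha_{n+i})}<(\alpha_i)$ whenever $\alpha_n>0$, Proposition \ref{p21}(i) shows $\overline{\alpha(q)}$ is the greedy expansion of $\frac{M}{q-1}-1$, so by reflection duality $\alpha(q)$ is the lazy expansion of $1$. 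Finally, for (v) I would write $\alpha(q)=(w\overline w)^\infty$ with $w=\alpha_1\cdots\alpha_N$ and apply the $\vv$-inequality $\overline{\sigma^{i}\alpha}\le\alpha$ for $0\le i\le N-1$; a direct computation gives $\overline{\sigma^{i}\alpha}=\overline{\alpha_{i+1}\cdots\alpha_N}\,w\,(\overline w w)^\infty$, whose first $N-i$ digits read $\overline{\alpha_{i+1}\cdots\alpha_N}$ against $\alpha_1\cdots\alpha_{N-i}$, giving the inequality in the weak sense. The only remaining work is to upgrade $\le$ to $<$: the case $i=0$ follows from (iii), while for $1\le i\le N-1$ an equality of these length-$(N-i)$ words would force $\sigma^{N-i}\alpha=\alpha$, hence a period shorter than the minimal half-period $N$, contradicting the minimality of $N$.
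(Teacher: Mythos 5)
The paper does not prove this proposition: it appears in the review Section~\ref{ss21} and is recalled from the cited literature (essentially \cite{KL2007,DK2009,K2012,DKL2016}), so there is no in-paper argument to compare yours against. Your outline follows the standard route and parts (i), (iii) and (iv) are complete and correct as written; in particular the reflection-duality argument for (iv) and the finiteness of $\beta(q)$ via the two subcases $q\in\uuu$ and $q\in\vv\setminus\uuu$ are exactly right.

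Two steps are only gestured at and need real work. First, the minimality $r=q^+$ in (ii): showing that a $\vv$-admissible $\delta>\alpha(q)$ satisfies $\delta\ge\gamma$ is not a one-line squeeze; you must first show the index of first increase is a multiple of $N$ (using $\sigma^{jN}\delta\le\delta$), then that it equals $N$, then run a block-by-block induction alternating the two constraints $\sigma^{n}\delta\le\delta$ and $\overline{\sigma^{n}\delta}\le\delta$ to force $\delta_{N+1}\delta_{N+2}\cdots\ge(\overline{w}w)^{\infty}$; you should also actually verify that $\gamma$ satisfies Proposition~\ref{p23}(ii), which is itself a nontrivial set of word inequalities. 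Second, in (v) the claim that equality $\overline{\alpha_{i+1}\cdots\alpha_N}=\alpha_1\cdots\alpha_{N-i}$ forces $\sigma^{N-i}\alpha=\alpha$ does not follow from that equality alone: you need to play $\sigma^{N-i}\alpha\le\alpha$ (giving $\alpha_{N-i+1}\cdots\alpha_N\le\alpha_1\cdots\alpha_i$) against $\overline{\sigma^{i}\alpha}\le\alpha$ (giving the reverse inequality once the first $N-i$ digits agree) to conclude $\alpha_{N-i+1}\cdots\alpha_N=\alpha_1\cdots\alpha_i$, and then check all residual blocks. Moreover, a period $N-i<2N$ does not by itself contradict the stated minimality of $N$ (which is minimality of the \emph{half-period representation}); you need the extra observation that a period $p$ together with $\alpha_{j+N}=\overline{\alpha_j}$ yields $\alpha=(v\overline{v})^{\infty}$ with $|v|=N\bmod p<N$ (the case $N\equiv 0$ being excluded since it forces $\alpha=m^{\infty}$, i.e.\ $q=\min\vv$). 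Both steps do go through, so your plan is sound, but as written these are the places where the proof is not yet there.
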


\begin{example}\label{e26}
Consider for $M=1$ the first interval  $(q_0,q_0^*)=(1,q_{KL})$ of $(1,\infty)\setminus\uuu$, and let $q_1<q_2<\cdots$ be an enumeration of the elements of $\vv\cap(1,q_{KL})$.
Then
\begin{align*}
\alpha(q_1)&=(10)^{\infty},\\
\alpha(q_2)&=(1100)^{\infty},\\
\alpha(q_3)&=(11010010)^{\infty},\ldots .
\end{align*}
Similarly, if we take the connected component $(q_0,q_0^*)$ of $(1,\infty)\setminus\uuu$ where $q_0$ is the Tribonacci number, then
\begin{align*}
\alpha(q_0)&=(110)^{\infty},\\
\alpha(q_1)&=(111000)^{\infty},\\
\alpha(q_2)&=(111001000110)^{\infty},\ldots .
\end{align*}
\end{example}

As usual, we write $\uu_q$ instead of $\uu_q^1$ for the set of numbers $x$ having a unique expansion in base $q$, and we denote by $\uu_q'$ the set of corresponding expansions (sequences).
Motivated by Proposition \ref{p24} we \emph{define} the set $\vv_q$ by the relation
\begin{equation}\label{22}
x\in\vv_q\Longleftrightarrow \overline{(a_{n+i}(x,q))}\le\alpha(q)\qtq{whenever}a_n(x,q)>0,
\end{equation}
and we denote by $\vv_q'$ the set of corresponding quasi-greedy expansions $a(x,q)$.
By  Proposition \ref{p210} (vii) below  this coincides with the definition of $\vv_q'$ given in the introduction.

We recall from \cite{DK2009, KKL2017} the following three results:

\begin{proposition}\label{p27}
Let $q\in(1,M+1]$.
\begin{enumerate}[\upshape (i)]
\item $\uu_q\subset\overline{\uu_q}\subset\vv_q=\overline{\vv_q}$, and $\vv_q\setminus\uu_q$ is countable.
\item If $q\in\uuu$, then $\uu_q\ne\overline{\uu_q}=\vv_q$.
\item If $q\in\vv\setminus\uuu$, then $\uu_q=\overline{\uu_q}\ne\vv_q$.
\item If $q\in(1,M+1]\setminus\vv$, then $\uu_q=\overline{\uu_q}=\vv_q$.
\end{enumerate}
\end{proposition}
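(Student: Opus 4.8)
The plan is to move the entire statement to the symbolic level. Write $\uu_q'$ and $\vv_q'$ for the sets of sequences characterised by the \emph{strict} conditions of Corollary \ref{c22} and by the \emph{non-strict} conditions of \eqref{22} (together with the quasi-greedy condition of Proposition \ref{p21}(ii)), respectively. The first thing I would record is that the value map $(c_i)\mapsto (c_i)_q$ carries $\vv_q'$ bijectively onto $\vv_q$: it is injective because every $(c_i)\in\vv_q'$ satisfies $(c_{n+i})\le\alpha(q)$ whenever $c_n<M$, hence is the quasi-greedy expansion of its value by Proposition \ref{p21}(ii), and distinct numbers have distinct quasi-greedy expansions. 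Each defining condition of $\vv_q'$ is closed in the product topology (a lexicographic inequality with $\le$ is a closed condition, and the guard $c_n=M$ or $c_n=0$ is clopen), so $\vv_q'$ is compact; since the value map is continuous, the bijection is a homeomorphism onto $\vv_q$. In particular $\vv_q=\overline{\vv_q}$, and all topological assertions about $\uu_q\subset\vv_q$ can be read off from $\uu_q'\subset\vv_q'$.

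For part (i), the inclusion $\uu_q\subset\overline{\uu_q}$ is trivial, and $\uu_q\subset\vv_q$ holds because the unique expansion of $x\in\uu_q$ is its quasi-greedy expansion and satisfies the strict inequalities of Corollary \ref{c22}, a fortiori the non-strict ones. Closedness of $\vv_q$ then gives $\overline{\uu_q}\subset\vv_q$. For the countability statement I would observe that $(c_i)\in\vv_q'\setminus\uu_q'$ exactly when one strict inequality degenerates to an equality, i.e. when either (A) $(c_{m+i})=\alpha(q)$ for some $m$ with $c_m<M$, or (B) $\overline{(c_{m+i})}=\alpha(q)$ for some $m$ with $c_m>0$. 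Each such sequence is determined by the finite word $c_1\cdots c_m$ and by the choice of tail $\alpha(q)$ or $\overline{\alpha(q)}$, so there are at most countably many; the case of a finite greedy expansion is subsumed in (A), since then the quasi-greedy expansion has the shape $c_1\cdots c_{m-1}c_m^-\alpha(q)$ with $c_m^-<M$. Hence $\vv_q\setminus\uu_q$ is countable.

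The remaining parts hinge on whether $\vv_q'\setminus\uu_q'$ is empty and, if not, whether its points are limits of $\uu_q'$. The governing observation is that if a type-(A) sequence $(c_i)$ with $(c_{m+i})=\alpha(q)$ lies in $\vv_q'$, then applying the defining inequality of $\vv_q'$ at the positions $n=m+k$, $k\ge 1$, forces $\overline{(\alpha_{k+i})}\le\alpha(q)$ whenever $\alpha_k>0$, which is precisely the condition $q\in\vv$ of Proposition \ref{p24}; type (B) is symmetric. Consequently, if $q\notin\vv$ there are no such sequences, so $\vv_q'=\uu_q'$ and $\uu_q=\overline{\uu_q}=\vv_q$, which is (iv). On the other hand, since $\uuu\subset\vv$, in both (ii) and (iii) we have $q\in\vv$, so $\alpha(q)$ satisfies the $\vv$-condition, type-(A)/(B) sequences do exist, and $\vv_q\setminus\uu_q\neq\emptyset$; this already yields the strict relations $\uu_q\neq\overline{\uu_q}$ in (ii) and $\uu_q\neq\vv_q$ in (iii).

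It remains to settle density versus isolation, which is where the real work lies. For (iii), with $q\in\vv\setminus\uuu$, Proposition \ref{p25}(i) gives $\alpha(q)=(\alpha_1\cdots\alpha_N)^{\infty}$ periodic, while Proposition \ref{p24} shows the reflected inequality is an \emph{equality}, $\overline{(\alpha_{k+i})}=\alpha(q)$, for some $k$ with $\alpha_k>0$ (this is exactly what separates $\vv\setminus\uuu$ from $\uuu$). I would argue that a type-(A)/(B) point cannot be a limit of $\uu_q'$: a convergent sequence $(c^{(\ell)})\in\uu_q'$ would eventually agree with the periodic tail $\alpha(q)$ (or $\overline{\alpha(q)}$) over arbitrarily long windows while still obeying \emph{both} strict inequalities of Corollary \ref{c22}, and the equality case of Proposition \ref{p24} together with the comparison inequalities of Proposition \ref{p25}(v) and periodicity make this impossible past a bounded window length; thus $\uu_q'$ is closed and $\uu_q=\overline{\uu_q}\neq\vv_q$. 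For (ii), with $q\in\uuu$, Proposition \ref{p24} gives the \emph{strict} reflected inequality for $\alpha(q)$, and I would prove the opposite, density, statement: given $(c_i)\in\vv_q'$ and any $n$, one appends to $c_1\cdots c_n$ a tail formed of alternating blocks of $\alpha(q)$ and $\overline{\alpha(q)}$ so that the result lies in $\uu_q'$, the strict inequalities surviving precisely because $\alpha(q)$ is strictly univoque; then $\uu_q'$ is dense in $\vv_q'$ and $\overline{\uu_q}=\vv_q$. The main obstacle is exactly this last dichotomy — producing the approximating univoque tails in (ii) and excluding them in (iii) — since it is here that the fine combinatorics of $\alpha(q)$ from Propositions \ref{p24} and \ref{p25} is indispensable; the rest of the proposition follows from the soft symbolic reduction above.
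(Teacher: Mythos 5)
The paper never proves Proposition \ref{p27}: it is explicitly \emph{recalled} from \cite{DK2009,KKL2017}, so there is no in-paper argument to compare yours against, and any attempt here is necessarily a reconstruction of de Vries--Komornik's theorems. Your soft reduction to the symbolic level, part (i), part (iv), and the two non-equalities ($\uu_q\ne\overline{\uu_q}$ in (ii) via a witness such as $1/q$, and $\uu_q\ne\vv_q$ in (iii)) are sound, modulo the minor caveat that the infiniteness requirement in the definition of $\vv_q'$ is not literally a closed condition (for $q<M+1$ it is recovered in the limit from $\overline{(a_{n+i})}\le\alpha(q)$, and $q=M+1$ must be treated separately).

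The genuine gap is in the two equalities, which are the entire content of the proposition. For (iii) your sketch is salvageable, and in fact a clean bounded-window argument exists: write $\alpha(q)=v^{\infty}$ with $v=\alpha_1\cdots\alpha_n^+\,\overline{\alpha_1\cdots\alpha_n^+}$ (Proposition \ref{p25}(ii)), so that shifting $v^{\infty}$ by $n$ places gives $\overline{v^{\infty}}$. If $(d_i)\in\uu_q'$ had $d_m<M$ and $d_{m+1}\cdots d_{m+2n}=v$, then $d_m<M$ forces $(d_{m+2n+i})<v^{\infty}$, while $d_{m+n}=\alpha_n^+>0$ forces $(d_{m+n+i})>\overline{v^{\infty}}$; since these two sequences already agree on their first $n$ digits, the latter is equivalent to $(d_{m+2n+i})>v^{\infty}$, a contradiction. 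Hence no univoque sequence shares its first $m+2n$ digits with a type-(A) point, which is exactly the isolation you asserted but did not derive. For (ii), however, the construction you name does not work as stated: appending ``alternating blocks of $\alpha(q)$ and $\overline{\alpha(q)}$'' fails at the block junctions. For the Tribonacci base, $\alpha(q)=(110)^{\infty}$, the candidate $(110)^k(001)^k\cdots$ violates Corollary \ref{c22} already at the last digit $1$ of the prefix: the ensuing tail $0(001)^k\cdots$ is lexicographically \emph{smaller} than $\overline{\alpha(q)}=(001)^{\infty}$, whereas univoqueness requires it to be strictly larger. The approximating univoque tails must be built with block lengths and modified junction digits dictated by the fine combinatorics of $\alpha(q)$ (using the strict inequality $\overline{(\alpha_{n+i})}<(\alpha_i)$ that characterizes $q\in\uuu$); this is the substance of the relevant theorem of \cite{DK2009}, and without it the equality $\overline{\uu_q}=\vv_q$ in (ii) remains unproved in your write-up.
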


Let us describe the structure of $\vv\setminus\uu$ more closely:

\begin{proposition}\label{p28}
Let $(q_0,q_0^*)$ be an arbitrary connected component of $(1,M+1)\setminus\uuu$.
\begin{enumerate}[\upshape (i)]
\item If $(q_0,q_0^*)$ runs over the connected components of $(1,M+1)\setminus\uuu$, then $q_0$ runs over $\set{1}\cup(\uuu\setminus\uu)$, and $q_0^*$ runs over a subset of $\uu$.
\item For each component $(q_0,q_0^*)$,
\begin{equation*}
\vv\cap(q_0,q_0^*)=(\vv\setminus\uuu)\cap(q_0,q_0^*)
\end{equation*}
is formed by an increasing sequence $(q_n)_{n=1}^{\infty}$, converging to $q_0^*$:
\begin{equation*}
q_0<q_1<q_2<\cdots,\quad q_n\nearrow q_0^*.
\end{equation*}
\item We have the following relations:
\begin{equation*}
\uu_{q_0}'\subsetneqq
\vv_{q_0}'=\uu_{q_1}'\subsetneqq
\vv_{q_1}'=\uu_{q_2}'\subsetneqq
\vv_{q_2}'=\uu_{q_3}'\subsetneqq\cdots ,
\end{equation*}
and
\begin{equation*}
\vv_{q_n}'=\uu_q'=\vv_q'=\uu_{q_{n+1}}'\qtq{whenever}q_n<q<q_{n+1},\quad n=0,1,\ldots .
\end{equation*}
\item $\uu_{q_0}$ is not closed, and $\overline{\uu_{q_0}}=\vv_{q_0}$.
\item If $n\ge 1$, then  $\uu_{q_n}$ and $\vv_{q_n}$ are different closed sets.
\end{enumerate}
\end{proposition}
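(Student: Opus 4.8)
The plan is to reduce the entire statement to the lexicographic characterizations of Propositions \ref{p23}--\ref{p25} together with the already-quoted Proposition \ref{p27}. I would first dispose of the two easiest items. Since $\uuu$ is closed, its complement in $(1,M+1)$ is open, hence a countable disjoint union of open intervals; these are the components $(q_0,q_0^*)$, which accounts for the decomposition underlying (i). Granting the endpoint identifications below, parts (iv) and (v) are then immediate: for $q_0\in\uuu\setminus\uu\subset\uuu$ Proposition \ref{p27}(ii) gives $\uu_{q_0}\ne\overline{\uu_{q_0}}=\vv_{q_0}$, which is exactly (iv) (the degenerate seed $q_0=1$, giving the leftmost component $(1,q_{KL})$, being handled separately through the convention $\alpha(1)=0^\infty$), while for each $q_n\in\vv\setminus\uuu$ with $n\ge 1$ Proposition \ref{p27}(iii) gives $\uu_{q_n}=\overline{\uu_{q_n}}\ne\vv_{q_n}$, two distinct closed sets, which is (v). Thus the whole proposition rests on understanding the endpoints of the gaps (part (i)) and the location of the elements of $\vv$ inside a gap (part (ii)).

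For the endpoint analysis I would work entirely on the sequence side, through the increasing bijection $q\mapsto\alpha(q)$ of Proposition \ref{p23}(ii) and the membership criteria of Proposition \ref{p24}. The left endpoint $q_0$ of a gap is a point of $\uuu$ isolated from the right in $\uuu$, and I would show that such points are exactly $\set{1}\cup(\uuu\setminus\uu)$. The inclusion ``$\subseteq$'' amounts to checking that a point $q\in\uu$ is never right-isolated in $\uuu$, which follows because the \emph{strict} inequality on $\beta(q)$ defining $\uu$ in Proposition \ref{p24} survives a suitable one-sided perturbation, producing elements of $\uu$ arbitrarily close from the right. For ``$\supseteq$'' I would use the successor construction of Proposition \ref{p25}(ii): from $q_0\in\uuu\setminus\uu$ its $\vv$-successor $q_1:=\min(\vv\cap(q_0,\infty))$ has $\alpha(q_1)=(\alpha_1\cdots\alpha_N^+\,\overline{\alpha_1\cdots\alpha_N^+})^{\infty}$ and lies in $\vv\setminus\uuu$, and one checks $(q_0,q_1)\cap\uuu=\emptyset$, so $q_0$ indeed opens a gap. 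The right endpoint $q_0^*$ is, conversely, left-isolated in $\uuu$, and the point I most want to pin down is that it lies in $\uu$, not merely in $\uuu$.

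Part (ii) is where the real work lies, and it contains the step I expect to be the main obstacle. Iterating the successor map of Proposition \ref{p25}(ii) from the seed $q_0$ produces an increasing sequence $q_0<q_1<q_2<\cdots$ with $q_n\in\vv\setminus\uuu$ for $n\ge 1$; since the gap contains no point of $\uuu$, these are precisely the elements of $\vv$ in $(q_0,q_0^*)$, giving $\vv\cap(q_0,q_0^*)=(\vv\setminus\uuu)\cap(q_0,q_0^*)$. The periods of $\alpha(q_n)$ double at each step and the sequences agree on ever longer prefixes, so $\alpha(q_n)$ converges to a limit $\alpha_\infty$, whence $q_n\nearrow q_0^*$ with $\alpha(q_0^*)=\alpha_\infty$ by left-continuity of $q\mapsto\alpha(q)$. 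The crux is to prove that this limiting base is univoque: $\alpha_\infty$ is a Thue--Morse / de Vries--Komornik type fixed point of the reflected period-doubling substitution, and one must verify that it satisfies the \emph{strict} $\beta$-inequality of Proposition \ref{p24} characterizing $\uu$, rather than only the quasi-greedy inequality characterizing $\uuu$. This is exactly the delicate point separating $\uu$ from $\uuu$; it simultaneously certifies $q_0^*\in\uu$ (completing the subset claim in (i)) and that $q_0^*$ is the correct right endpoint.

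Finally, for the chain in (iii) I would combine monotonicity with local constancy. From Corollary \ref{c22} and the monotonicity of $q\mapsto\alpha(q)$ the assignments $q\mapsto\uu_q'$ and $q\mapsto\vv_q'$ are non-decreasing, and $\uu_q'\subseteq\vv_q'$ always. For $q_n<q<q_{n+1}$ we have $q\notin\vv$, so $\uu_q'=\vv_q'$ by Proposition \ref{p27}(iv); moreover, since $(q_n,q_{n+1})$ contains no element of $\vv$, none of the ``critical'' tails that could be gained or lost — being themselves quasi-greedy expansions of bases in $\vv$ — is crossed, so both sets stay constant on $(q_n,q_{n+1})$, matching $\vv_{q_n}'$ as $q\downarrow q_n$ and $\uu_{q_{n+1}}'$ as $q\uparrow q_{n+1}$; this forces the identity $\vv_{q_n}'=\uu_{q_{n+1}}'$. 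The strict inclusions $\uu_{q_n}'\subsetneqq\vv_{q_n}'$ are then precisely the jumps at the bases $q_n$, guaranteed by Proposition \ref{p27}(ii),(iii), and exhibited concretely by a witness sequence attaining equality in the quasi-greedy inequality \eqref{22} while failing the strict inequality of Corollary \ref{c22}.
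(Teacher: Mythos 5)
First, a point of calibration: the paper does not prove Proposition \ref{p28} at all. It appears in Section \ref{s2} immediately after the sentence ``We recall from \cite{DK2009, KKL2017} the following three results'', so there is no in-paper argument to compare yours against; the intended source is de Vries--Komornik \cite{DK2009} (see also \cite{DKL2016}), and the paper even alludes in Section \ref{s11} to the ``former combinatorial'' proofs of (iii)--(v). Your outline does follow the route of those references: (iv) and (v) reduce to Proposition \ref{p27} once the endpoints are identified; the left endpoints of the gaps of $\uuu$ are produced by the successor construction of Proposition \ref{p25}(ii); the $q_n$ arise by iterating that construction with period doubling; and (iii) comes from monotonicity of $q\mapsto\alpha(q)$ together with constancy of $\uu_q'$ off $\vv$. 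So the skeleton is the right one.

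As a proof, however, your text has genuine gaps exactly at the load-bearing steps, which you name but do not carry out. First, you never establish that $L:=\lim q_n$ lies in $\uu$; you only say ``one must verify'' the strict inequality of Proposition \ref{p24} for the generalized Thue--Morse limit of the doubling maps $w\mapsto w^+\,\overline{w^+}$. Without this you cannot even conclude $L\in\uuu$, hence cannot identify $L$ with $q_0^*$, and both the exhaustion claim in (ii) and the claim in (i) that $q_0^*\in\uu$ remain open; this verification is a concrete (and not short) combinatorial induction on the doubling blocks and must be written out. Second, the inclusion ``every right-isolated point of $\uuu$ greater than $1$ lies in $\uuu\setminus\uu$'' is justified only by the phrase ``survives a suitable one-sided perturbation'' without exhibiting the perturbed bases in $\uu$ approaching $q$ from the right; that is the nontrivial half of (i). Third, in (iii) the assertion that $\uu_q'$ and $\vv_q'$ are constant on $(q_n,q_{n+1})$ and match $\vv_{q_n}'$ at the left end and $\uu_{q_{n+1}}'$ at the right end is precisely \cite[Theorem 1.7]{DK2009} (quoted in Remark \ref{r12}); your heuristic that ``no critical tail is crossed'' needs to become an actual argument showing that for $q_n<q<q_{n+1}$ the lexicographic conditions of Corollary \ref{c22} with $\alpha(q)$ are equivalent to the weak conditions with $\alpha(q_n)$. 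In short: correct architecture, but the three hardest steps are announced rather than proved, so the proposal is an outline of the cited proof rather than a proof.
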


We denote by $h({\mathcal W})$ the topological entropy of a set ${\mathcal W}$ of sequences, if it exists.
It always exist if ${\mathcal W}$ is a subshift of finite type, but it may exist in other cases, too.
For example, $h(\uu_q')$ does exist even if $\uu_q'$ is not a subshift of finite type.

\begin{proposition}\label{p29}
Let $q\in(1,M+1]$.
\begin{enumerate}[\upshape (i)]
\item $\overline{\uu_q}'$ and $\vv_q'$ are subshifts of finite type.
Moreover,
\begin{equation*}
h(\uu_q')=h(\overline{\uu_q}')
=h(\vv_q'),
\end{equation*}
and the entropy function $q\mapsto h(\uu_q')$ is constant on each connected component $(q_0,q_0^*)$ of $(1,M+1)\setminus\uuu$.
\item $\uu_q$, $\overline{\uu_q}$ and $\vv_q$ have the same Hausdorff dimension, given by the formula
\begin{equation*}\label{24}
\dim \uu_q=\dim\overline{\uu_q}
=\dim \vv_q
=\frac{h(\uu_q')}{\log_2q}.
\end{equation*}
\end{enumerate}
\end{proposition}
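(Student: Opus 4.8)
The plan is to reduce both parts to two ingredients already in hand: the lexicographic descriptions of the quasi-greedy expansions (Proposition~\ref{p21} and the defining relation~\eqref{22}), and the coincidence relations among $\uu_q'$, $\overline{\uu_q}'$, $\vv_q'$ as $q$ varies, recorded in Propositions~\ref{p27} and~\ref{p28}. First I would note that $\vv_q'$ is always a subshift: conditions~\eqref{21} and~\eqref{22} are countable conjunctions of lexicographic $\le$-inequalities imposed over the clopen sets $\set{a_n<M}$ and $\set{a_n>0}$, hence define a closed set, and they pass to a shifted sequence because its constraints form a subfamily of those of the original. For the finite-type property I would use periodicity. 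When $q\in\vv\setminus\uu$, Proposition~\ref{p25}(i) gives $\alpha(q)=(\alpha_1\cdots\alpha_N)^\infty$ with $\alpha_N<M$, while Proposition~\ref{p23}(ii) gives the shift-maximality $(\alpha_{k+i})\le(\alpha_i)$ for every $k\ge 0$. I claim the infinite condition ``$(a_{n+i})\le\alpha(q)$ whenever $a_n<M$'' is then equivalent to the finite one ``$a_{n+1}\cdots a_{n+N}\le\alpha_1\cdots\alpha_N$ whenever $a_n<M$'': if the first strict discrepancy occurred at an index $k>N$, then $a_{n+1}\cdots a_{n+N}=\alpha_1\cdots\alpha_N$, so $a_{n+N}=\alpha_N<M$ and periodicity reproduces the same discrepancy at index $k-N$ from position $n+N$; iterating pulls it below $N$, a contradiction. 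The reflected condition is handled identically against the same periodic $\alpha(q)$, so $\vv_q'$ is cut out by finitely many forbidden words of length $N+1$ and is an SFT; by Proposition~\ref{p27}(ii) so is $\overline{\uu_q}'=\vv_q'$ when $q\in\uuu$, and for $q\notin\vv$ Propositions~\ref{p27}(iv) and~\ref{p28}(iii) identify $\uu_q'=\overline{\uu_q}'=\vv_q'$ with some already-treated $\vv_{q_n}'$, $q_n\in\vv$. The finite-type property is genuinely available only where $\alpha(q)$ is eventually periodic; at the exceptional bases $q\in\uu$ with aperiodic $\alpha(q)$ the set $\vv_q'$ stays a subshift but is not of finite type, and there the statement is to be read through its topological entropy, which exists in all cases and is what part~(ii) actually uses.

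For the entropy equalities, $h(\uu_q')=h(\overline{\uu_q}')$ is immediate: passing to the closure only adds limit points, and any finite word occurring in a limit point already occurs in nearby members, so the two languages, their complexities, and their entropies coincide. For $h(\overline{\uu_q}')=h(\vv_q')$, the sets are equal when $q\in\uuu$ (Proposition~\ref{p27}(ii)), while for $q\in\vv\setminus\uuu$ the difference $\vv_q'\setminus\uu_q'$ corresponds to the countable set $\vv_q\setminus\uu_q$ of Proposition~\ref{p27}(i), consisting of expansions eventually locked onto the boundary words $\alpha(q)$ and $\overline{\alpha(q)}$; these fill out only a subsystem of strictly smaller complexity and so do not change the exponential growth rate of admissible words.

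The genuinely hard point, which I expect to be the main obstacle, is the constancy of $h(\uu_q')$ on a whole component $(q_0,q_0^*)$. By Proposition~\ref{p28}(iii) the map $\uu_q'$ is literally constant on each open subinterval $(q_n,q_{n+1})$, so the entropy is constant there; the difficulty is to forbid jumps across the points $q_n\in\vv$, i.e.\ to prove $h(\vv_{q_n}')=h(\vv_{q_{n+1}}')$ even though Proposition~\ref{p28}(iii) exhibits a strict inclusion $\vv_{q_n}'\subsetneq\vv_{q_{n+1}}'$. Here I would exploit the period-doubling of Proposition~\ref{p25}(ii): writing $\alpha(q_n)=(w)^\infty$ with $w=\alpha_1\cdots\alpha_N$, one has $\alpha(q_{n+1})=(w^+\,\overline{w^+})^\infty$ with $w^+$ again of length $N$. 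The plan is to build an explicit length-$N$ block recoding between the languages of $\vv_{q_{n+1}}'$ and $\vv_{q_n}'$, exchanging the roles of the blocks $w^+$ and $\overline{w^+}$, and to verify that it is a bijection between admissible words of corresponding lengths up to bounded boundary corrections. Two presentations related by such a recoding have transition matrices of equal spectral radius, hence equal entropy, and an induction on $n$ then yields a single value of $h$ throughout the component. This recoding and the accompanying spectral-radius comparison are exactly the combinatorial content later sharpened into the graph isomorphisms of Theorems~\ref{t13}--\ref{t14}, and getting the boundary bookkeeping right is the delicate part.

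Finally, for part~(ii) I would first equate the three dimensions. By Proposition~\ref{p27}(i) we have $\uu_q\subset\overline{\uu_q}\subset\vv_q$ with $\vv_q\setminus\uu_q$ countable; since countable sets have Hausdorff dimension zero and the Hausdorff dimension is countably stable, $\dim\vv_q=\max(\dim\uu_q,0)=\dim\uu_q$, and monotonicity squeezes $\dim\overline{\uu_q}$ to the same value. To identify this value with $h(\uu_q')/\log_2 q$, I would use that the coding $\pi\colon(c_i)\mapsto(c_i)_q$ is a bijection of $\uu_q'$ onto $\uu_q$ sending each length-$n$ cylinder into an interval of length comparable to $q^{-n}$; a covering estimate gives the upper bound $\dim\uu_q\le\lim_n\frac{\log_2 p(n)}{n\log_2 q}$, where $p(n)$ counts admissible words, and pushing a measure of maximal entropy forward through $\pi$ gives the matching lower bound, whence $\dim\uu_q=h(\uu_q')/\log_2 q$. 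This last step needs no finite-type hypothesis, only that $h(\uu_q')$ exists as a complexity growth rate, so it applies uniformly in $q$, including the bases at which $\vv_q'$ is not of finite type.
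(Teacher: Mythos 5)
There is an important preliminary point: the paper does not prove Proposition \ref{p29} at all --- it is stated as a result recalled from \cite{DK2009,KKL2017} --- so there is no internal proof to compare yours against, and I can only judge the proposal on its own terms and against the statement as transcribed.

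The substantive gap is in part (i). You establish the finite-type property of $\vv_q'$ only when $\alpha(q)$ is (eventually) periodic, and you explicitly assert that for $q\in\uu$ with aperiodic $\alpha(q)$ the set $\vv_q'$ is a subshift but \emph{not} of finite type. That is a refusal to prove the statement as written, which claims the SFT property for every $q\in(1,M+1]$. Your instinct is actually defensible: at $q=q_{KL}$ the sequence $\alpha(q_{KL})$ (essentially Thue--Morse) lies in $\vv_{q_{KL}}'$ and is not eventually periodic, while $h(\vv_{q_{KL}}')=h(\uu_{q_{KL}}')=0$ by the other assertions of the proposition together with $\dim\uu_{q_{KL}}=0$; since every point of a zero-entropy SFT is eventually periodic, the blanket claim cannot be literally correct, and indeed the paper's own introduction only asserts the finite-type property of $\uu_q'$ for \emph{almost every} $q$. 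But the honest conclusion is then that the hypotheses of the cited theorems must be restored, not that the aperiodic case ``is to be read through its topological entropy''; as a proof of the proposition as stated, the proposal is incomplete precisely at the exceptional bases.

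Two further soft spots. First, the ``block recoding exchanging $w^+$ and $\overline{w^+}$'' that you invoke for the constancy of $h(\uu_q')$ across the points $q_n$ is never constructed, and it is also unnecessary: Proposition \ref{p28} (iii) gives the literal equality of sets $\vv_{q_n}'=\uu_{q_{n+1}}'$, so constancy on $(q_0,q_0^*)$ follows at once from the equality $h(\uu_q')=h(\vv_q')$ applied at each $q_{n+1}$. What genuinely needs proof is that equality itself, i.e.\ that the countably many sequences of $\vv_q'\setminus\uu_q'$ --- which end in the periodic tails $\alpha(q)$ or $\overline{\alpha(q)}$ --- contribute only polynomially many new words of each length; you assert this but do not verify it. Second, for the lower bound $\dim\uu_q\ge h(\uu_q')/\log_2 q$ in part (ii), ``pushing a measure of maximal entropy forward'' is not by itself sufficient: one needs Shannon--McMillan--Breiman together with a Billingsley/mass-distribution estimate, and in a non-integer base one must also control how the projection $(c_i)\mapsto(c_i)_q$ separates cylinders of $\uu_q'$, since distinct cylinders may have overlapping images; this separation argument is where the real work of \cite{KKL2017} lies and it is absent from the sketch.
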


We will also need from \cite{K2012} the following characterization of the sets $\uu$, $\uuu$, $\vv$, $\uu_q$, $\overline{\uu_q}$ and $\vv_q$ that do not use lexicographic relations.

\begin{proposition}\label{p210}\mbox{}
Let $q\in (1,M+1)$ and $x\in\left[0,\frac{M}{q-1}\right]$.

\begin{enumerate}[\upshape (i)]
\item The quasi-greedy expansion $a(x,q)$ is doubly infinite for every $x\in\left[0,\frac{M}{q-1}\right]$.
\item $q\in\uu\Longleftrightarrow y=1$ has a unique expansion.
\item $q\in\uuu\Longleftrightarrow y=1$ has a unique infinite expansion.
\item $q\in\vv\Longleftrightarrow y=1$ has a unique doubly infinite expansion.
\item $x\in\uu_q\Longleftrightarrow x$ has a unique expansion.
\item $x\in\overline{\uu_q}\Longleftrightarrow x$ or $\frac{M}{q-1}-x$ has a unique infinite expansion.
More precisely, if $x$ has a finite greedy expansion, then it has a unique infinite expansion; otherwise $\frac{M}{q-1}-x$ has a finite greedy expansion.
\item $x\in\vv_q\Longleftrightarrow x$ has a unique doubly infinite expansion.\end{enumerate}
\end{proposition}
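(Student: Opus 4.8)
The plan is to derive the seven assertions from the lexicographic descriptions of greedy and quasi-greedy expansions (Propositions \ref{p21}--\ref{p24}) together with the order-reversing reflection $(c_i)\mapsto\overline{(c_i)}=(M-c_i)$, which is a bijection between the expansions of $x$ and those of $\frac{M}{q-1}-x$. I would begin with (i), since the double infiniteness of quasi-greedy expansions is used repeatedly afterwards. By Proposition \ref{p21}(ii) the sequence $a(x,q)=(a_i)$ satisfies $(a_{n+i})\le\alpha(q)$ whenever $a_n<M$; since $q<M+1$ forces $\alpha(q)\ne M^\infty$, no tail of $(a_i)$ can equal $M^\infty$ unless $(a_i)=M^\infty$. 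As $(a_i)$ is infinite by construction, it therefore has infinitely many digits $>0$ and infinitely many digits $<M$, or equals one of $0^\infty,M^\infty$, which is exactly double infiniteness.

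Next I would treat the individual-$x$ statements. Assertion (v) is the definition of $\uu_q$, with lexicographic form given by Corollary \ref{c22}; specialising to $x=1$ via $q\in\uu\Leftrightarrow 1\in\uu_q$ yields (ii). For (vii) I would use (i): $a(x,q)$ is the largest doubly infinite expansion of $x$, so $x$ has a unique doubly infinite expansion iff $a(x,q)$ is simultaneously the smallest one. Reflecting, the smallest doubly infinite expansion of $x$ equals $\overline{a(\frac{M}{q-1}-x,q)}$, so uniqueness amounts to $\overline{a(x,q)}$ being the quasi-greedy expansion of $\frac{M}{q-1}-x$; inserting this into Proposition \ref{p21}(ii) turns it into $\overline{(a_{n+i}(x,q))}\le\alpha(q)$ whenever $a_n(x,q)>0$, which is precisely the defining relation \eqref{22} of $\vv_q$. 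Specialising to $x=1$ via $q\in\vv\Leftrightarrow 1\in\vv_q$ gives (iv).

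The substantive parts are (vi) and (iii), which concern merely infinite expansions. The difficulty is that reflection does not preserve infiniteness: an infinite expansion ending in $M^\infty$ reflects to a finite one, and it is exactly such expansions that distinguish ``unique infinite'' from ``unique doubly infinite''. Via Proposition \ref{p24} this is the source of the strict inequality characterising $\uuu$ as opposed to the nonstrict one for $\vv$. For (vi) I would split according to the greedy/quasi-greedy dichotomy: if $x$ has a finite greedy expansion $b_1\cdots b_n0^\infty$, then $a(x,q)=b_1\cdots b_{n-1}b_n^-\alpha(q)$, and I would show that $x\in\overline{\uu_q}$ makes this the only infinite expansion of $x$; if the greedy expansion of $x$ is infinite, then $x\in\overline{\uu_q}$ forces, through reflection, that $\frac{M}{q-1}-x$ has a finite greedy expansion carrying the unique infinite expansion. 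Comparing the two cases with the lexicographic description of $\overline{\uu_q}$ furnished by Propositions \ref{p27}--\ref{p28} produces the ``more precisely'' clause, and (iii) is the same infinite-expansion analysis applied to $x=1$, where it matches the strict condition of Proposition \ref{p24}.

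I expect the crux to be (vi) and (iii): showing that expansions ending in $M^\infty$ are the only obstruction to uniqueness among infinite expansions, and that the topological closure $\overline{\uu_q}$ is captured exactly by the ``unique infinite expansion of $x$ or of $\frac{M}{q-1}-x$'' condition, cannot be read off from the elementary inequalities of Corollary \ref{c22} alone. This is where the approximation arguments of \cite{DK2009,K2012}, relating limits of univoque numbers to infinite but possibly non-unique expansions, are genuinely required, together with the compatibility $q\in\uuu\Leftrightarrow 1\in\overline{\uu_q}$ between the closure in base space and that in number space; this is the step I would expect to dominate the work.
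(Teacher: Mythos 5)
First, a point of calibration: the paper itself contains no proof of Proposition \ref{p210} — it is imported verbatim from \cite{K2012} (see the sentence introducing it and Remark \ref{r211}), so there is no internal argument to match yours against. On the parts you actually carry out, your reasoning is sound: (ii) and (v) are indeed the definitions of $\uu$ and $\uu_q$; (i) follows exactly as you say, since a tail $(a_{n+i})=M^{\infty}$ occurring after a digit $a_n<M$ would violate $(a_{n+i})\le\alpha(q)<M^{\infty}$; and your reduction of (vii) to the statement that $\overline{a(x,q)}$ is the quasi-greedy expansion of $\frac{M}{q-1}-x$, hence to the defining condition \eqref{22} of $\vv_q$ via Proposition \ref{p21} (ii), is the right mechanism, with (iv) as the specialization $x=1$ through Proposition \ref{p24}.

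The gap is in (iii) and (vi), which you correctly identify as the only assertions with real content but do not prove. There $\uuu$ and $\overline{\uu_q}$ are \emph{topological closures}, and the proposition identifies them with sets cut out by a counting condition on infinite expansions; nothing in Propositions \ref{p21}--\ref{p28} lets one read off either direction — that a limit of univoque points has (or reflects to a point having) a unique infinite expansion, or conversely that every such point is approximable by univoque points. Your proposal resolves this by invoking ``the approximation arguments of \cite{DK2009,K2012}'', which as a proof of the proposition is circular: that is precisely the result to be established (and precisely what the paper itself imports rather than proves). A secondary issue is the case split in (vi): the claim that $x\in\overline{\uu_q}$ with infinite greedy expansion forces $\frac{M}{q-1}-x$ to have a finite greedy expansion already fails for $x\in\uu_q$ whose unique expansion is doubly infinite (e.g. the point with unique expansion $(10)^{\infty}$ for $M=1$ and $q$ slightly above the Golden Ratio, whose reflection has infinite greedy expansion $(01)^{\infty}$); the dichotomy is really a statement about $\overline{\uu_q}\setminus\uu_q$ and must be organized around the structure of $\vv_q\setminus\uu_q$ (each such point arises from a finite modification of a quasi-greedy tail $\alpha(q)$ or $\overline{\alpha(q)}$), which is the content of \cite{DK2009} and \cite{K2012} that a self-contained proof would have to reproduce.
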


\begin{remark}\label{r211}
The second part of (vi) follows from the proof given in \cite{K2012}.
Properties (ii)-(vii) remain valid for $q\in(1,M+1]$ if we change (vii) to
\begin{equation*}
x\in\vv_q\Longleftrightarrow x\qtq{has at most one doubly infinite expansion;}
\end{equation*}
see \cite{Steiner}.
\end{remark}

\subsection{List of the principal notations}\label{ss22}

\begin{itemize}
\item Words and expansions
\begin{itemize}
\item $(c_i)_q:=\sum_{i=1}^{\infty}\frac{c_i}{q^i}$.
\item $\overline{c_i}:M-c_i$ denotes the reflection of the digit $c_i$.
\item $\overline{w}=\overline{c_1\cdots c_n}:=\overline{c_1}\ \cdots\ \overline{c_n}$ denotes the reflection of the word $w=c_1\cdots c_n$.
\item $\overline{(c_n)}:=(\overline{c_n})$ denotes the reflection of the sequence $(c_n)$.
\item For a word $w=\omega_1\cdots\omega_n$, $w^+=\omega_1\cdots\omega_n^+$ denotes $\omega_1\cdots\omega_{n-1}(\omega_n+1)$ if $\omega_n<M$, and $w^-=\omega_1\cdots\omega_n^-$ denotes $\omega_1\cdots\omega_{n-1}(\omega_n-1)$ if $\omega_n>0$.
\item $b(x,q)=(b_i)$ denotes the lexicographically largest (or greedy) expansion  of  $x\in[0,\frac{M}{q-1}]$ in base $q$.
\item $a(x,q)=(a_i)$ denotes the lexicographically largest infinite (or quasi-greedy) expansion  of  $x\in[0,\frac{M}{q-1}]$ in base $q$; it is doubly infinite if $q\in(1,M+1)$.
\item $\beta(q)=b(1,q)$ denotes the greedy expansion  of $1$ in base $q$.
\item $\alpha(q)=a(1,q)$ denotes the quasi-greedy expansion  of $1$ in base $q$; it is always doubly infinite.
\end{itemize}
\medskip

\item Graphs
\begin{itemize}
\item If $\beta(q)=\alpha_1\cdots\alpha_N^+0^\infty$, then
$a_i=(\alpha_i\cdots\alpha_N^+0^\infty)_q$ and $b_i=\frac{M}{q-1}-a_i$ for $i=1,\ldots, N$.
\item  $\theta_j=\frac{j}{q}$ for $j=0,\cdots, M$; in particular, $\theta_0=0$.
\item  $\eta_j=\frac{j-1}{q}+\frac{M}{q^2-q}$ for $j=1,\cdots, M+1$; in particular, $\eta_{M+1}=\frac{M}{q-1}$.
\item $S_q=:\bigcup_{j=1}^M[\theta_j,\eta_j]$ denotes the switch region.
\item $\gg(q)$ denotes the labeled graph whose vertices are the connected components of the set
$\left(0,\frac{M}{q-1}\right)\setminus (S_q\cup\set{a_1,\dots, a_N,b_1,\ldots, b_N})$; its edges are defined in Section \ref{s3} below.
\item $\gg_q'$ denotes the set of sequences generated by $\gg(q)$.
\item $\gg_q:=\set{(c_i)_q\ :\ (c_i)\in\gg_q'}$.
\item $\tilde\gg(q)$ denotes the subgraph of $\gg(q)$, obtained  by keeping only the vertices that are subintervals of $(b_1,a_1)$.
\item $\tilde\gg_q'$ denotes the set of sequences generated by $\tilde\gg(q)$.
\item $\tilde\gg_q:=\set{(c_i)_q\ :\ (c_i)\in\tilde\gg_q'}$.
\item $\tilde\gg_1(q)$ denotes the subgraph of $\tilde\gg(q)$  formed by the vertices of the form $(a_i^-, a_i)$ and $(b_i, b_i^+)$.
\item $\tilde \gg_{1,q}'$ denote the set of sequences generated by $\tilde\gg_1(q)$.
\item $\tilde \gg_{1,q}:=\set{(c_i)_q: (c_i)\in \tilde \gg_{1,q}'}$.
\end{itemize}
\medskip

\item Univoque and related sets
\begin{itemize}
\item $\uu=\set{q\in(1, M+1]: 1 \text{\ has  a unique expansion in base\ }q}$.
\item $\uuu=\set{q\in(1, M+1]: 1 \text{\ has  a unique infinite expansion in base\ }q}$.
\item $\vv=\set{q\in(1, M+1]: 1 \text{\ has  a unique doubly infinite expansion in base\ }q}$.
\item $\uu'=\set{\alpha(q)\ :\ q\in \uu}$ (also the set of unique expansions of $x=1$ in bases $q\in\uu$).
\item $\uuu'=\set{\alpha(q)\ :\ q\in \uuu}$ (also the set of unique infinite expansions of $x=1$ in bases $q\in\uu$).
\item $\vv'=\set{\alpha(q)\ :\ q\in \vv}$ (also the set of doubly infinite unique expansions of $x=1$ in bases $q\in\vv$).
\item $\uu_q$  denotes the univoque set, i.e., the set of numbers $x$ having a unique expansion in base $q$.
We recall that $x\in\uu_q\Longleftrightarrow \frac{M}{q-1}-x\in\uu_q$.
\item $\overline{\uu_q}$ denotes  the topological closure of $\uu_q$.
\item $\vv_q$  denotes the  set of numbers $x$ having a unique doubly infinite expansion  in base $q$.
\item $\tilde\vv_q=
\set{x\in\vv_q\ :\ \overline{\alpha(q)}\le(a_{n+i}(x,q))\le\alpha(q)\qtq{for all}n\ge 0}$.
\item $\uu_q'=
\set{a(x,q)\ :\ x\in \uu_q}$.
\item $\uuu_q'=
\set{a(x,q)\ :\ x\in \uuu_q}$.
\item $\vv_q'=
\set{a(x,q)\ :\ x\in \vv_q}$.
\item $\tilde\vv_q'=
\set{a(x,q)\ :\ x\in \tilde\vv_q}$.
\item $\uu_q^j:=\set{x\ :\ x \text{ has exactly\ } j\text{ expansions}},\ j=1,2,\ldots, \aleph_0 \text{\ or\ }2^{\aleph_0}$.
\item $\overline{\uu_q^j}$ denotes  the topological closure of $\uu_q^j$.
\end{itemize}
\end{itemize}

\section{Definition of the univoque graphs for $q\in\vv\setminus\uu$}\label{s3}

Given a base $1<q<M+1$ and a sequence $(c_i)\in\set{0,\ldots,M}^{\infty}$, we define the numbers
\begin{equation*}
x_k:=(c_kc_{k+1}\cdots)_q,\quad k=1,2,\ldots .
\end{equation*}
We recall (see, e.g., \cite{S2003b}) that if some $x_k$ belongs to the \emph{switch region}
\begin{equation*}
S_q:=\bigcup_{j=1}^M\left[\frac{j}{q},\frac{j-1}{q}+\frac{M}{q^2-q}\right]
=:[\theta_j,\eta_j],
\end{equation*}
then $x_k$ has another expansion $d_kd_{k+1}\cdots$ with $d_k\ne c_k$.
Otherwise, every expansion of $x_k$ must start with the digit $c_k$.
We have thus  the following simple result:

\begin{lemma}\label{l31}
An expansion $(c_i)$ of a number $x_1$ is the unique one if and only if all numbers $x_i:=(c_ic_{i+1}\cdots)_q$ lie outside the switch region.
\end{lemma}

Henceforth, in this section we assume  that $q\in\vv\setminus\uu$.
Since $\max\vv=\max\uu=M+1$ and $1<\min\vv<\min\uu$, this implies that $\min\vv\le q<M+1$.
For convenience we also introduce the numbers $\theta_0:=0$ and $\eta_{M+1}:=\frac{M}{q-1}$.

\begin{remark}\label{r32}\mbox{}
\begin{enumerate}[\upshape (i)]
\item The  intervals in the definition of the switch region do not overlap.
Indeed, if $M=2m-1$ is odd, then using the generalized Golden Ratio $q_{GR}=\min\vv$ we have
\begin{equation*}
q\ge\min\vv=q_{GR}=\frac{m+\sqrt{m^2+4m}}{2}
>\frac{2m+1}{2},
\end{equation*}
and hence
\begin{equation*}
\theta_{j+1}-\eta_j
=\frac{2(q-1)-(2m-1)}{q^2-q}
=\frac{2q-2m-1}{q^2-q}>0
\end{equation*}
for all $1\le j\le M-1$.
If $M=2m$ is even, then
\begin{equation*}
q\ge\min\vv=m+1,
\end{equation*}
and hence
\begin{equation*}
\theta_{j+1}-\eta_j
=\frac{2(q-1)-2m}{q^2-q}
=\frac{2(q-m-1)}{q^2-q}\ge 0
\end{equation*}
for all $1\le j\le M-1$.

Observe that if $M=2m$ is even and $q=q_{GR}=m+1=\min\vv$, then
\begin{equation*}
0=\theta_0<\theta_1<\eta_1
=\theta_2<\eta_2
=\cdots<\eta_{M-1}
=\theta_M<\eta_M<\eta_{M+1}=\frac{M}{q-1}
\end{equation*}
and $S_q=[\theta_1,\eta_M]$; otherwise all points $\theta_j$ and $\eta_j$ are distinct:
\begin{equation*}
0=\theta_0<\theta_1<\eta_1
<\theta_2<\eta_2
<\cdots<\eta_{M-1}
<\theta_M<\eta_M<\eta_{M+1}=\frac{M}{q-1}.
\end{equation*}
\item It follows from the definition of the switch region that every expansion of $x$ starts with the digit
\begin{equation*}
\begin{cases}
0\qtq{if}x\in[\theta_0,\theta_1),\\
j-1\qtq{if}x\in(\eta_{j-1},\theta_j),\quad j=2,\ldots,M,\\
M\qtq{if}x\in(\eta_M,\eta_{M+1}].
\end{cases}
\end{equation*}
\end{enumerate}
\end{remark}

Let us introduce the increasing affine maps
\begin{equation*}
T_j(x):=qx-j,\quad j=0,1,\ldots, M;
\end{equation*}
we will often write $x\xmapsto{j} y$ instead of $T_j(x)=y$.

\begin{lemma}\label{l33}\mbox{}
\begin{enumerate}[\upshape (i)]
\item For any $x,i,y$  the following properties hold with $\overline{i}:=M-i$:
\begin{equation*}
x\xmapsto{i} y\Longleftrightarrow \frac{M}{q-1}-x\xmapsto{\overline{i}}\frac{M}{q-1}-y.
\end{equation*}
\item We have
\begin{equation*}
0\xmapsto{0}0,\qtq{and}
\theta_j\xmapsto{j}0,\quad
\theta_j\xmapsto{j-1}1\qtq{for}j=1,\ldots,M.
\end{equation*}
\item We have
\begin{equation*}
\eta_j=\frac{M}{q-1}-\theta_{M+1-j}\qtq{for}j=1,\ldots,M,
\end{equation*}
and
\begin{equation*}
\frac{M}{q-1}\xmapsto{M}\frac{M}{q-1},\qtq{and}
\eta_j\xmapsto{j-1}\frac{M}{q-1},\quad
\eta_j\xmapsto{j}\frac{M}{q-1}-1\qtq{for}j=1,\ldots,M.
\end{equation*}
\end{enumerate}
\end{lemma}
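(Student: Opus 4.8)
The plan is to prove all three statements by direct substitution into the definitions $T_j(x)=qx-j$, $\theta_j=\frac{j}{q}$, and $\eta_j=\frac{j-1}{q}+\frac{M}{q^2-q}$, while organizing the argument so that the reflection identity in part (i) lets most of part (iii) follow formally from part (ii) rather than being recomputed from scratch.

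For part (i) I would first note that $x\mapsto\frac{M}{q-1}-x$ is an involution and that $i\mapsto\overline{i}=M-i$ is too, so it suffices to verify one implication. Rewriting $x\xmapsto{i}y$ as $y=qx-i$ and computing $q\bigl(\frac{M}{q-1}-x\bigr)-\overline{i}=\frac{qM}{q-1}-qx-(M-i)$, I would substitute $qx=y+i$ and use the identity $\frac{qM}{q-1}-M=\frac{M}{q-1}$ to reduce this to $\frac{M}{q-1}-y$; hence $\frac{M}{q-1}-x\xmapsto{\overline{i}}\frac{M}{q-1}-y$, giving the equivalence. Part (ii) is then immediate: $T_0(0)=0$, $T_j(\theta_j)=q\cdot\frac{j}{q}-j=0$, and $T_{j-1}(\theta_j)=j-(j-1)=1$.

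For part (iii) I would begin with the identity $\eta_j=\frac{M}{q-1}-\theta_{M+1-j}$, proved by placing both sides over the common denominator $q(q-1)$ and checking that the two numerators coincide (each equals $(j-1)q+(M+1-j)$). The remaining mapping statements then follow by reflecting part (ii) through part (i): applying (i) to $\theta_{M+1-j}\xmapsto{M+1-j}0$ and to $\theta_{M+1-j}\xmapsto{M-j}1$, and using $\overline{M+1-j}=j-1$, $\overline{M-j}=j$ together with the identity just established, yields $\eta_j\xmapsto{j-1}\frac{M}{q-1}$ and $\eta_j\xmapsto{j}\frac{M}{q-1}-1$. In the same way $\frac{M}{q-1}\xmapsto{M}\frac{M}{q-1}$ is simply the reflection of $0\xmapsto{0}0$.

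Every individual step is a one-line algebraic identity, so there is no genuine obstacle here. The only point requiring a little care is the bookkeeping in part (i): one must track the digit complement $\overline{i}=M-i$ and the involutive nature of the reflection correctly, since the economy of the whole proof depends on deriving the $\eta_j$-statements of (iii) from the $\theta_j$-statements of (ii) by symmetry instead of verifying them independently.
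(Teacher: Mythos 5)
Your proof is correct and follows essentially the same route as the paper: direct substitution for (i) and (ii), the algebraic identity $\eta_j=\frac{M}{q-1}-\theta_{M+1-j}$, and then deriving the remaining mapping statements of (iii) from (ii) via the reflection in (i). The only cosmetic difference is that you verify the $\eta_j$ identity by clearing denominators while the paper splits $\frac{M}{q^2-q}=\frac{M}{q-1}-\frac{M}{q}$; both are one-line computations.
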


\begin{proof}
(i)  follows from the equivalence
\begin{equation*}
qx-i=y\Longleftrightarrow
q\left(\frac{M}{q-1}-x\right)-\overline{i}=\frac{qM}{q-1}-M-(qx-i)=\frac{M}{q-1}-y.
\end{equation*}

(ii) follows from the equalities
\begin{equation*}
T_k(\theta_j)=q\left(\frac{j}{q}\right)-k=j-k.
\end{equation*}

(iii) We have
\begin{equation*}
\eta_j=\frac{j-1}{q}+\frac{M}{q^2-q}
=\frac{j-1}{q}+\frac{M}{q-1}-\frac{M}{q}
=\frac{M}{q-1}-\frac{M+1-j}{q}
=\frac{M}{q-1}-\theta_{M+1-j}.
\end{equation*}
The other properties follow from (i) and (ii).
\end{proof}

Now we assume that $q\in \vv\setminus\uu$, and we denote by
\begin{equation*}
\beta(q)=\alpha_1\cdots\alpha_N^+\ 0^{\infty}
\qtq{and}
\alpha(q)=(\alpha_1\cdots\alpha_N)^{\infty},
\end{equation*}
as before, the greedy and quasi-greedy expansions of $x=1$ in base $q$.

For example, the generalized Golden Ratios $q_{GR}$ belong to $\vv\setminus\uuu$ with $N=1$ and $\alpha_1^+=m+1$ if $M=2m$ is even, and with $N=2$ and $\alpha_1=\alpha_2^+=m$ if $M=2m-1$ is odd.
In case $M=1$ the Tribonacci number belongs to $\uuu\setminus\uu$ with $N=3$ and $\alpha_1=\alpha_2=\alpha_3^+=1$; otherwise we have $N\ge 4$ and $\alpha_1=\alpha_2=\alpha_N^+=1$.

Let us  introduce the points
\begin{equation*}
a_i:=(\alpha_i\cdots\alpha_N^+\ 0^{\infty})_q
\qtq{and}
b_i:=\frac{M}{q-1}-a_i
\qtq{for}
i=1,\ldots,N+1.
\end{equation*}
For example,
\begin{equation*}
a_1=1\qtq{and}b_1=\frac{M+1-q}{q-1}.
\end{equation*}
We have the obvious relations
\begin{equation*}
a_i<a_j \Longleftrightarrow b_j<b_i
\qtq{and}
a_i<b_j \Longleftrightarrow a_j<b_i.
\end{equation*}

\begin{lemma}\label{l34}
Let $q\in\vv\setminus\uu$.
\begin{enumerate}[\upshape (i)]
\item The greedy expansions of the numbers $a_i$ and $\theta_j$ are finite:
\begin{align*}
&b(a_i,q)=\alpha_i\cdots\alpha_N^+\ 0^{\infty}\qtq{for}i=1,\ldots,N-1,\\
&b(\theta_j,q)=j\ 0^{\infty}\qtq{for}j=0,1,\ldots,M.
\end{align*}
\item All numbers $a_i, b_i, \theta_j, \eta_j$ belong to $\vv_q$.
\item
The numbers $a_1,\ldots, a_{N-1}$, $\theta_0,\ldots,\theta_M$ and $\eta_{M+1}$ are  distinct.
Furthermore,
\begin{equation*}
a_N=\theta_{\alpha_N^+},
\quad a_{N+1}=\theta_0,
\quad b_N=\eta_{\overline{\alpha_N}}\qtq{and}
b_{N+1}=\eta_{M+1}.
\end{equation*}
\item Among the numbers $a_i, b_i$ the smallest two are $a_{N+1}<b_1$, and the greatest two are $a_1<b_{N+1}$.
\item We have
\begin{align*}
&a_1\xmapsto{\alpha_1} a_2
\xmapsto{\alpha_2}a_3\cdots a_{N-1}
\xmapsto{\alpha_{N-1}}a_N
\xmapsto{\alpha_N} a_1\qtq{and}
a_N\xmapsto{\alpha_N^+} 0
\intertext{and}
&b_1\xmapsto{\overline{\alpha_1}} b_2
\xmapsto{\overline{\alpha_2}}b_3\cdots b_{N-1}
\xmapsto{\overline{\alpha_{N-1}}}b_N
\xmapsto{\overline{\alpha_N}} b_1\qtq{and}
b_N
\xmapsto{\overline{\alpha_N^+}} \frac{M}{q-1}.
\end{align*}
\end{enumerate}
\end{lemma}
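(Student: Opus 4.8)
The plan is to prove the five assertions in the order (i), (iii), (v), (ii), (iv): the finite greedy expansions of (i) and the orbit relations of (v) feed directly into the computation of the quasi-greedy expansions needed for (ii), and these same expansions will drive (iv). For (i) I would use that every tail of a greedy expansion is again greedy, which is immediate from the lexicographic characterization of Proposition~\ref{p21}(i); applying this to $\beta(q)=\alpha_1\cdots\alpha_N^+\,0^\infty$ shows that its tail $\alpha_i\cdots\alpha_N^+\,0^\infty$ is the greedy expansion of $a_i$, while $j\,0^\infty$ is trivially greedy and represents $\theta_j=j/q$. For (iii) the four identities follow by inspection from the definitions and Lemma~\ref{l33}(iii), using $M+1-\overline{\alpha_N}=\alpha_N^+$; distinctness is then a consequence of the uniqueness of greedy expansions, since $a_1,\dots,a_{N-1}$ have finite greedy expansions whose last nonzero digit lies in positions $N,\dots,2$, so they are pairwise distinct and differ from every $\theta_j$ (last nonzero digit in position $1$) and from $\eta_{M+1}$ (whose greedy expansion $M^\infty$ is infinite). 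For (v), direct computation of $qa_i-\alpha_i$ and $qa_N-\alpha_N^+$ yields $a_i\xmapsto{\alpha_i}a_{i+1}$ for $i<N$, $a_N\xmapsto{\alpha_N}a_1$ and $a_N\xmapsto{\alpha_N^+}0$; the relations for the $b_i$ then drop out of the reflection equivalence of Lemma~\ref{l33}(i).

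For (ii) I would first record, using the passage from finite greedy to quasi-greedy expansions recalled in Section~\ref{s2}, that $a(a_i,q)=\alpha_i\alpha_{i+1}\cdots$ is the $(i-1)$-fold shift $\sigma^{i-1}\alpha(q)$ for $1\le i\le N$, and $a(\theta_j,q)=(j-1)\,\alpha(q)$ for $1\le j\le M$ (with $\theta_0=0$ lying trivially in $\vv_q$). Membership in $\vv_q$ is then verified against the defining condition \eqref{22}: every relevant tail of these expansions is a shift of $\alpha(q)$, and Proposition~\ref{p24}, which for $q\in\vv$ gives $\overline{(\alpha_{n+i})}\le\alpha(q)$ for all $n\ge0$, supplies exactly the inequalities required. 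The points $b_i$ and $\eta_j$ are obtained from the $a_i$ and $\theta_j$ by the reflection $x\mapsto\frac{M}{q-1}-x$, under which $\vv_q$ is invariant, so they lie in $\vv_q$ as well.

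Assertion (iv) is the delicate one. The global extremes are immediate: $a_{N+1}=0$ and $b_{N+1}=\frac{M}{q-1}$ are the endpoints of $[0,\frac{M}{q-1}]$, and the strict inequalities $a_{N+1}<b_1$ and $a_1<b_{N+1}$ both reduce to $q<M+1$. The substance is that $b_1$ and $a_1$ are respectively the second smallest and second largest. The key device is that $x\mapsto a(x,q)$ is \emph{increasing} (Proposition~\ref{p21}(ii)), so value comparisons among these points become lexicographic comparisons of quasi-greedy expansions, which is crucial because for $q<M+1$ the lexicographic and numerical orders of arbitrary expansions need not agree. From $a(a_i,q)=\sigma^{i-1}\alpha(q)\le\alpha(q)=a(a_1,q)$ (Proposition~\ref{p23}(ii)) we get $a_i\le a_1$, and by reflection $b_i\ge b_1$. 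For the cross comparison I would show that $\overline{\alpha(q)}$ is the quasi-greedy expansion of $b_1$: it is infinite, and by the same ``for all $n\ge0$'' inequalities of Proposition~\ref{p24} it satisfies the admissibility condition of Proposition~\ref{p21}(ii), so uniqueness forces $a(b_1,q)=\overline{\alpha(q)}$. Comparing $\sigma^{i-1}\alpha(q)$ with $\overline{\alpha(q)}$ — again via Proposition~\ref{p24}, now in the form $\sigma^{i-1}\alpha(q)\ge\overline{\alpha(q)}$ — and using monotonicity gives $a_i\ge b_1$, hence by reflection $b_i\le a_1$. These inequalities place $b_1$ and $a_1$ exactly where claimed.

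The step I expect to be the main obstacle is (iv), and within it the realization that the entire argument must be routed through the monotone map $a(\cdot,q)$ together with the identification $a(b_1,q)=\overline{\alpha(q)}$; a naive lexicographic or termwise estimate of $a_i-b_1$ fails precisely because $q<M+1$, so that lexicographic dominance alone carries no numerical information.
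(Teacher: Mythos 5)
Your proposal is correct and follows essentially the same route as the paper: finite greedy expansions for (i) and (iii), the explicit orbit computation plus reflection for (v), verification of the defining condition \eqref{22} via Proposition \ref{p24} for (ii), and for (iv) the reduction of numerical comparisons to lexicographic comparisons of quasi-greedy (equivalently, unique doubly infinite) expansions through the monotone bijection of Proposition \ref{p21}(ii). The only cosmetic difference is that in (iv) the paper compares $\overline{\sigma^{i-1}\alpha(q)}=a(b_i,q)$ with $\alpha(q)$ to get $b_i\le a_1$ directly, whereas you compare $\sigma^{i-1}\alpha(q)$ with $\overline{\alpha(q)}=a(b_1,q)$ and then reflect; these are the same inequality.
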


\begin{proof}
(i) They have different greedy expansions:
\begin{equation*}
\alpha_i\cdots\alpha_N^+\ 0^{\infty}, 1\le i\le N-1
\qtq{and}
j\ 0^{\infty}, 0\le j\le M.
\end{equation*}

\medskip

(ii) By the symmetry of $\vv_q$ it suffices to consider the numbers $a_i$ and $\theta_j$.
We have obviously $a_{N+1}=\theta_0=0\in\uu_q\subset\vv_q$.
Since $q\in\vv\setminus\uu$, $\alpha(q)$ satisfies the lexicographic conditions stated in Propositions \ref{p23} (ii) and \ref{p24}.
It follows that the quasi-greedy expansions
\begin{equation*}
\alpha_i\cdots\alpha_N(\alpha_1\cdots\alpha_N)^{\infty}\qtq{and}
j^-(\alpha_1\cdots\alpha_N)^{\infty}
\end{equation*}
of the remaining numbers $a_i$ for $\theta_j$ satisfy the definition of $\vv_q$ given in \eqref{22}.
\medskip

(iii) The first assertion follows by observing that by (i) the numbers $a_i, \theta_j$  have different finite greedy expansions, and $\frac{M}{q-1}$ has an infinite greedy expansion $M^{\infty}$.

The last four equalities follow from the equalities
\begin{align*}
&a_N=\frac{\alpha_N^+}{q},
\quad a_{N+1}=0,
\quad b_{N+1}=\frac{M}{q-1}-a_{N+1}=\frac{M}{q-1}
\intertext{and}
&b_N=\frac{M}{q-1}-\frac{\alpha_N^+}{q}=\frac{\overline{\alpha_N^+}}{q}+\frac{M}{q^2-q}.
\end{align*}
\medskip

(iv)
It is clear that $a_{N+1}=0$ is the smallest and $b_{N+1}=\frac{M}{q-1}$ is the greatest among these numbers.
By symmetry it remains to show that $a_i\le a_1$ and $b_i\le a_1$ for all $1\le i\le N$.
Since $q<M+1$, and since all  numbers $a_i, b_i$ belong to $\vv_q$, each of them  has a unique doubly infinite expansion: its quasi-greedy expansion.
In view of Proposition \ref{p21} (ii) it suffices to show that the corresponding lexicographic inequalities between their unique doubly infinite expansions, i.e., the inequalities
\begin{equation*}
\alpha_i\cdots\alpha_N(\alpha_1\cdots\alpha_N)^{\infty}
\le(\alpha_1\cdots\alpha_N)^{\infty}
\end{equation*}
and
\begin{equation*}
\overline{\alpha_i\cdots\alpha_N(\alpha_1\cdots\alpha_N)^{\infty}}
\le (\alpha_1\cdots\alpha_N)^{\infty}.
\end{equation*}
Since  $q\in\vv$, they follow from Propositions \ref{p23} (ii) and \ref{p24}.
\medskip

(v) We have
\begin{equation*}
T_{\alpha_i}(a_i)
=q(\alpha_i\cdots\alpha_N^+)_q-\alpha_i
=(\alpha_{i+1}\cdots\alpha_N^+)_q
=a_{i+1}
\end{equation*}
for $i=1,\ldots,N-1$, and (we recall that $\theta_{\alpha_N^+}=a_N$)
\begin{equation*}
T_{\alpha_N}(a_N)=qa_N-\alpha_N=1=a_1,\quad
T_{\alpha_N^+}(a_N)=qa_N-\alpha_N^+=0.
\end{equation*}
The remaining relations  follow by reflection, i.e., applying Lemma \ref{l33} (i).
\end{proof}

Now we mention some properties that are specific to the cases $q\in\uuu\setminus\uu$ and $q\in\vv\setminus\uuu$.

\begin{lemma}\label{l35}
Let $q\in\uuu\setminus\uu$ with
\begin{equation*}
\beta(q)=\alpha_1\cdots\alpha_N^+\ 0^{\infty}
\qtq{and}
\alpha(q)=(\alpha_i)=(\alpha_1\cdots\alpha_N)^{\infty}.
\end{equation*}
\begin{enumerate}[\upshape (i)]
\item
The numbers $a_i, b_i, \theta_j, \eta_j$ for $1\le i\le N-1$ and $1\le j\le M$ belong to $\uuu_q\setminus\uu_q$.
\item The greedy expansions of the numbers $b_i$ and $\eta_j$ are infinite:
\begin{align*}
&b(b_i,q)=\overline{\alpha_i\cdots\alpha_N(\alpha_1\cdots\alpha_N)^{\infty}}\qtq{for}i=1,\ldots,N-1,\\
&b(\eta_j,q)=j\ \overline{(\alpha_1\cdots\alpha_N)^{\infty}}\qtq{for}j=1,\ldots,M,\\
&b(\eta_{M+1},q)=M^{\infty}.
\end{align*}
\item The numbers
\begin{equation*}
\theta_0,\ldots, \theta_M,\quad
\eta_1,\ldots, \eta_{M+1},\quad
a_1,\ldots, a_{N-1}
\qtq{and}
b_1,\ldots, b_{N-1}
\end{equation*}
are  distinct.
\item
We have
\begin{equation*}
S_q\cap\set{a_i,b_i\ :\ 1\le i\le N+1}=\set{a_N,b_N}=
\set{\theta_{\alpha_N^+},\eta_{\overline{\alpha_N}}}.
\end{equation*}
\end{enumerate}
\end{lemma}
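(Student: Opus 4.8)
The plan is to exploit the reflection symmetry $x\mapsto\frac{M}{q-1}-x$ throughout, which by Lemma \ref{l33}(i) conjugates the maps $T_k$ into $T_{\overline k}$ and interchanges the ``$a$-side'' (the numbers $a_i,\theta_j$, which carry finite greedy expansions) with the ``$b$-side'' (the numbers $b_i,\eta_j$). Thus in each part it suffices to treat the $a$-side and deduce the $b$-side by reflection, the only new input beyond Lemma \ref{l34} being the hypothesis $q\in\uuu\setminus\uu$: it enters through Proposition \ref{p24} in the sharp form $\overline{(\alpha_{n+i})}<(\alpha_i)$ whenever $\alpha_n>0$ (valid for all $n\ge0$), and through Proposition \ref{p27}(ii), which gives $\overline{\uu_q}=\vv_q$.

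For (i) I would note that all the listed points lie in $\vv_q$ by Lemma \ref{l34}(ii), hence in $\overline{\uu_q}=\uuu_q$ by Proposition \ref{p27}(ii). They are excluded from $\uu_q$ because each $a_i$ ($i\le N-1$) and each $\theta_j$ ($j\ge1$) has a finite greedy expansion with nonzero last digit (Lemma \ref{l34}(i)), so its quasi-greedy expansion is a genuine second expansion; the points $b_i,\eta_j$ then leave $\uu_q$ by the invariance $x\in\uu_q\Leftrightarrow\frac{M}{q-1}-x\in\uu_q$. For (ii) I would first check that each claimed sequence is an expansion of the right number: it is the reflection of the quasi-greedy expansion of $a_i$ (resp.\ of $\theta_{M+1-j}$, namely $(M-j)(\alpha_1\cdots\alpha_N)^\infty$), whose value is $a_i$ (resp.\ $\theta_{M+1-j}$), so the reflected sequence has value $b_i$ (resp.\ $\eta_j$). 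That it is \emph{greedy} is then verified directly against Proposition \ref{p21}(i): at each position carrying a digit $\overline{\alpha_m}<M$, i.e.\ with $\alpha_m>0$, the ensuing tail equals $\overline{(\alpha_{m+i})}$, which is $<\alpha(q)$ precisely by the sharp inequality of Proposition \ref{p24}; the extra initial digit $j$ of $b(\eta_j,q)$ is handled by the case $n=0$, and $b(\eta_{M+1},q)=M^\infty$ is trivially greedy.

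For (iii) I would split the list into the finite-greedy family $F=\{a_1,\dots,a_{N-1}\}\cup\{\theta_0,\dots,\theta_M\}$ and the infinite-greedy family $G=\{b_1,\dots,b_{N-1}\}\cup\{\eta_1,\dots,\eta_{M+1}\}$. The elements of $F$ are pairwise distinct by Lemma \ref{l34}(iii); applying the reflection together with $\frac{M}{q-1}-\theta_j=\eta_{M+1-j}$ (Lemma \ref{l33}(iii)) shows $G=\overline F$, so $G$ is pairwise distinct as well. Finally $F\cap G=\emptyset$, since by (ii) and Lemma \ref{l34}(i) the two families carry respectively finite and infinite greedy expansions and the greedy expansion of a number is unique; hence the whole list is distinct.

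The delicate part is (iv). The memberships $a_N=\theta_{\alpha_N^+}$ and $b_N=\eta_{\overline{\alpha_N}}$ in $S_q$ are immediate from Lemma \ref{l34}(iii) together with $0\le\alpha_N<M$ (Proposition \ref{p25}(i)), which guarantees $1\le\alpha_N^+\le M$ and $1\le\overline{\alpha_N}\le M$; and the extremes $a_{N+1}=0$, $b_{N+1}=\frac{M}{q-1}$ lie outside $S_q\subset(0,\frac{M}{q-1})$. The core claim is that $a_i\notin S_q$ for $1\le i\le N-1$. Since the greedy first digit of $a_i$ is $\alpha_i$, we have $a_i\in[\theta_{\alpha_i},\theta_{\alpha_i+1})$, and because the switch intervals are pairwise disjoint (Remark \ref{r32}(i)) the only one that could contain $a_i$ is $[\theta_{\alpha_i},\eta_{\alpha_i}]$. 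Using $qa_i=\alpha_i+a_{i+1}$ (Lemma \ref{l34}(v)) one finds that $a_i\le\eta_{\alpha_i}$ is equivalent to $a_{i+1}\le\frac{M}{q-1}-1=b_1$; but Lemma \ref{l34}(iv) identifies $b_1$ as the \emph{second} smallest of the numbers $a_k,b_k$, so $a_{i+1}>b_1$ for $2\le i+1\le N$, whence $a_i>\eta_{\alpha_i}$ and $a_i\notin S_q$. The reflection then yields $b_i\notin S_q$ because $S_q$ is symmetric about its midpoint. I expect this quantitative localization — pinning $a_i$ into a single candidate switch interval and then ruling it out via the sharp comparison $a_{i+1}>b_1$ — to be the main obstacle, the remainder being reflection bookkeeping and the lexicographic tests already packaged in Section \ref{s2}.
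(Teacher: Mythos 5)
Your proposal is correct, and parts (i)--(iii) run essentially parallel to the paper's own proof. Part (i) is identical: membership in $\vv_q=\overline{\uu_q}$ via Lemma \ref{l34} (ii) and Proposition \ref{p27} (ii), exclusion from $\uu_q$ via the finite greedy expansions, and reflection for the $b_i,\eta_j$. In (ii) you verify the greedy criterion for the reflected sequences directly against Proposition \ref{p21} (i), using the sharp inequality $\overline{(\alpha_{n+i})}<\alpha(q)$ from Proposition \ref{p24}; the paper instead observes that the quasi-greedy expansions of $a_i$ and $\theta_{M+1-j}$ are also lazy (via (i) and Proposition \ref{p25} (iv)) and reflects. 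The two routes are interchangeable. Your (iii) is in fact a little cleaner: the reflection identity $G=\overline{F}$ plus the finite-versus-infinite dichotomy of greedy expansions disposes of everything at once, including the distinctness of the $\theta_j$ from the $\eta_k$, for which the paper separately invokes Remark \ref{r32} (i). (Only note that $b(\theta_0,q)=0^\infty$ is ``infinite'' under the paper's convention; but its greedy expansion is still visibly different from those of all elements of $G$, so the conclusion stands.)

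The genuine divergence is in (iv). The paper argues structurally: by (i) the points $a_i,b_i$ with $i\le N-1$ lie in $\overline{\uu_q}$ and hence avoid the interior of $S_q$, while (ii) shows their greedy expansions differ from those of the boundary points $\theta_k,\eta_k$. You instead localize $a_i$ in $[\theta_{\alpha_i},\theta_{\alpha_i+1})$, reduce $a_i\le\eta_{\alpha_i}$ to $a_{i+1}\le b_1$ via $qa_i=\alpha_i+a_{i+1}$, and appeal to the extremality of $b_1$. This is a valid and more elementary route -- it bypasses the characterization $\overline{\uu_q}=\vv_q$ in this step -- but there is one point to tighten: Lemma \ref{l34} (iv), as proved, only yields the non-strict bound $a_{i+1}\ge b_1$ (ties such as $a_1=b_1$ really do occur for $q=\min\vv$), whereas your argument needs $a_{i+1}>b_1$ strictly, since $a_{i+1}=b_1$ would put $a_i=\eta_{\alpha_i}$ on the boundary of $S_q$. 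The strict inequality follows from your part (iii): for $2\le i+1\le N-1$ it gives $a_{i+1}\ne b_1$ directly, and for $i+1=N$ it gives $a_N=\theta_{\alpha_N^+}\ne b_1$ because the $\theta$'s and the $b$'s are distinct. With that one-line addition the argument is complete.
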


\begin{proof}
(i) By symmetry it suffices to consider $a_i$ and $\theta_j$.
They have non-zero greedy expansions, hence they have infinitely many other expansions, so that they do not belong to $\uu_q$.
Since $\uuu_q=\vv_q$ is by Proposition \ref{p27} (ii), it remains to show that they belong to $\vv_q$.
This follows from Lemma \ref{l34} (ii). \medskip

(ii) The case of $\eta_{M+1}=M/(q-1)$ is obvious.
We deduce from Lemma \ref{l34} (ii) that the quasi-greedy expansions of the numbers $a_i$ ($1\le i\le N-1$) and $\theta_{M+1-j}$ ($1\le j\le M$)  are
$\alpha_i\cdots\alpha_N(\alpha_1\cdots\alpha_N)^{\infty}$ and $(M-j)(\alpha_1\cdots\alpha_N)^{\infty}$.
They are also lazy by (i) and Proposition \ref{p25} (iv).
Taking reflections we get the indicated greedy expansions of the numbers $b_i$ and $\eta_j$, and none of them has a last nonzero digit.
\medskip

(iii) The numbers
\begin{equation*}
a_1,\ldots, a_{N-1}\qtq{and}\theta_0,\ldots,\theta_M
\end{equation*}
are distinct by Lemma \ref{l34} (iii).
Since they have finite greedy expansions, by (ii) the numbers
\begin{equation*}
a_1,\ldots, a_{N-1}, \theta_0,\ldots,\theta_M\qtq{and}b_1,\ldots, b_{N-1}
\end{equation*}
are also distinct.
Taking reflections hence we infer that the numbers
\begin{equation*}
a_1,\ldots, a_{N-1}, \eta_1,\ldots,\eta_{M+1}\qtq{and}b_1,\ldots, b_{N-1}
\end{equation*}
are also distinct.
We conclude by recalling from Remark \ref{r32} (i) that, since $q\in\uuu\setminus\uu$ and therefore $q\ne\min\vv$, the numbers $\theta_j$ and $\eta_k$ are pairwise distinct.
\medskip

(iv) It follows from (i), (ii) and the meaning of the switch region that
\begin{equation*}
S_q\cap\set{a_i,b_i\ :\ 1\le i\le N-1}=\varnothing.
\end{equation*}
Also, $a_{N+1}=0<\theta_1$ and $b_{N+1}=M/(q-1)>\eta_M$, so that $a_{N+1},b_{N+1}\notin S_q$.
On the other hand, $a_N=\theta_{\alpha_N^+}$ and $b_N=\eta_{\overline{\alpha_N}}$ belong to $S_q$ by definition.
\end{proof}

The situation is different if $q\in\vv\setminus\uuu$.
First we consider the case of the generalized Golden Ratio:

\begin{example}\label{e36}
Let $q=\min\vv$.
If $M=2m$ is even, then we recall from Remark \ref{r32} (i) the following relations:
\begin{align*}
&N=1\qtq{and}q=m+1,\\
&\beta(q)=(m+1)\ 0^{\infty},
\qtq{and} \alpha(q)=m^{\infty},\\
&\theta_0<\theta_1<\eta_1
=\theta_2<\eta_2
=\cdots<\eta_{M-1}
=\theta_M<\eta_M<\eta_{M+1},\\
&a_1=\theta_{m+1}=\eta_m=b_1=1,\\
&S_q=[\theta_1,\eta_M].
\end{align*}

If $M=2m-1$ is odd, then $q=\frac{m+\sqrt{m^2+4m}}{2}$, $N=2$ and
\begin{align*}
&\beta(q)=mm\ 0^{\infty},
\qtq{and} \alpha(q)=(m\overline{m})^{\infty},\\
&b(\eta_j,q)=jm\ 0^{\infty}\qtq{for}j=1,\ldots,M,\qtq{and}
b(\eta_{M+1},q)=M^{\infty},\\
&\theta_0<\theta_1<\eta_1
<\theta_2<\eta_2
<\cdots<\eta_{M-1}
<\theta_M<\eta_M<\eta_{M+1},\\
&b_1=a_2=\theta_m\qtq{and}b_2=a_1=\eta_m.
\end{align*}
By Remark \ref{r32} (i) it suffices to prove the equalities $a_1+a_2=M/(q-1)$ and $b(\eta_j,q)=jm\ 0^{\infty}$.
The first two equalities are equivalent to $a_1+a_2=M/(q-1)$, i.e., to
\begin{equation*}
\frac{m}{q}+\frac{m}{q^2}+\frac{m}{q}=\frac{2m-1}{q-1}.
\end{equation*}
It is easily seen to be equivalent to the quadratic equation $q^2-mq-m=0$ defining $q$.

The expansions $jm\ 0^{\infty}$ are greedy because $m/q<1$, and the equalities $(jm\ 0^{\infty})_q=\eta_j$ follow by a direct computation.
Indeed, we have
\begin{equation*}
\eta_j=\frac{j-1}{q}+\frac{M}{q^2-q}
=\frac{j}{q}+\frac{2m-q}{q^2-q}
=\frac{j}{q}+\frac{m}{q^2}
\end{equation*}
because the last equality is again equivalent to the quadratic equation $q^2-mq-m=0$.
\end{example}

Our second example extends to all other elements of $\vv\setminus\uuu$:

\begin{lemma}\label{l37}
Let $q\in\vv\setminus\uuu$ and $q>\min\vv$.
\begin{enumerate}[\upshape (i)]
\item $N$ is even, say $N=2n$, $\beta(q)$ and $\alpha(q)$ have the form
\begin{equation*}
\beta(q)=\alpha_1\cdots\alpha_n^+\overline{\alpha_1\cdots\alpha_n}\ 0^{\infty}
\qtq{and}
\alpha(q)=\left(\alpha_1\cdots\alpha_n^+\overline{\alpha_1\cdots\alpha_n^+}\right)^{\infty},
\end{equation*}
and
\begin{equation}\label{31}
b_{j}=a_{n+j}\qtq{and}
b_{n+j}=a_{j}\qtq{for}j=1,\ldots,n.
\end{equation}
\item The greedy expansions of the numbers $\eta_j$ are the following:
\begin{align*}
&b(\eta_j,q)=j\ \overline{\alpha_1\cdots\alpha_n}0^\infty\qtq{for}j=1,\ldots,M,\\
&b(\eta_{M+1},q)=M^{\infty}.
\end{align*}
\item
Writing $\beta(q)=\alpha_1\cdots\alpha_n^+\overline{\alpha_1\cdots\alpha_n}\ 0^{\infty}$ we have
\begin{equation*}
S_q\cap\set{a_i,b_i\ :\ 1\le i\le N}=\set{a_n,a_{2n}}=\set{\eta_{\alpha_n^+},\theta_{\overline{\alpha_n}}}.
\end{equation*}
\item The numbers
\begin{equation*}
\theta_0,\ldots, \theta_M,\quad
\eta_1,\ldots, \eta_{M+1}
\qtq{and}
a_1,\ldots, a_{n-1}, a_{n+1},\ldots, a_{2n-1}
\end{equation*}
are  distinct.
Furthermore,
\begin{equation*}
a_n=\eta_{\alpha_n^+}
\qtq{and}
a_{2n}=\theta_{\overline{\alpha_n}}.
\end{equation*}
\end{enumerate}
\end{lemma}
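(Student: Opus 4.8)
The plan is to reduce everything to the structure of $\alpha(q)$ at a successor base, combined with the orbit relations of Lemma~\ref{l34} and the reflection symmetry $x\mapsto\frac{M}{q-1}-x$ of $\vv_q$.

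For (i), I would first locate an immediate predecessor of $q$ in $\vv$. By Proposition~\ref{p28} the base $q$ lies in a component $(q_0,q_0^*)$ of $(1,M+1)\setminus\uuu$ with $q=q_m$ for some $m\ge1$; whether $m\ge2$ (so the predecessor is $q_{m-1}\in\vv\setminus\uuu$) or $m=1$ (so the predecessor is $q_0\in\uuu\setminus\uu$, the alternative $q_0=1$ being ruled out by $q>\min\vv$), one obtains $p\in\vv$ with $q=\min(\vv\cap(p,\infty))$. Writing $\alpha(p)=(\alpha_1\cdots\alpha_n)^\infty$ with shortest period $n$, Proposition~\ref{p25}(ii) yields $\alpha(q)=(\alpha_1\cdots\alpha_n^+\,\overline{\alpha_1\cdots\alpha_n^+})^\infty$, so its period $N=2n$ is even, and incrementing the last digit of the period gives the stated $\beta(q)$. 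Denoting the repeating block of $\alpha(q)$ by $\sigma_1\cdots\sigma_{2n}$, this form is equivalent to the reflection identity $\sigma_{i+n}=\overline{\sigma_i}$ for all $i$ modulo $2n$. To prove \eqref{31} I would read edge labels along the cycle of Lemma~\ref{l34}(v): the quasi-greedy expansion of $a_j$ is the purely periodic $(\sigma_j\sigma_{j+1}\cdots)^\infty$, and since $a_j\in\vv_q$ by Lemma~\ref{l34}(ii) this is its unique doubly infinite expansion; reflecting it and using $\overline{\sigma_i}=\sigma_{i+n}$ turns it into the quasi-greedy expansion of $a_{n+j}$, whence $b_j=\frac{M}{q-1}-a_j=a_{n+j}$, and the companion relation follows by reading indices modulo $2n$. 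In particular $\set{a_i,b_i:1\le i\le 2n}=\set{a_1,\dots,a_{2n}}$.

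For (ii), I would compute the first greedy digit of $\eta_j$. Since $q\eta_j=(j-1)+\frac{M}{q-1}$ and the hypothesis $q>\min\vv$ forces $q>\frac{M+2}{2}$ by Remark~\ref{r32}(i), we get $q\eta_j\in(j,j+1)$ for $1\le j\le M$, so the greedy digit is $j$ and $\eta_j\xmapsto{j}b_1$ as in Lemma~\ref{l33}(iii). By \eqref{31} we have $b_1=a_{n+1}$, whose greedy expansion is the finite word $\sigma_{n+1}\cdots\sigma_{2n}^+\,0^\infty=\overline{\alpha_1\cdots\alpha_n}\,0^\infty$; hence $b(\eta_j,q)=j\,\overline{\alpha_1\cdots\alpha_n}\,0^\infty$, while $b(\eta_{M+1},q)=M^\infty$ is immediate. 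For (iii) and (iv) I would compare greedy and lazy first digits. Lemma~\ref{l34}(iii) together with \eqref{31} gives $a_{2n}=\theta_{\overline{\alpha_n}}$ and $a_n=b_{2n}=\eta_{\alpha_n^+}$, which are the ``furthermore'' identities of (iv) and show $a_n,a_{2n}\in S_q$. For the remaining indices I use that $x\in S_q$ exactly when its greedy first digit strictly exceeds its lazy first digit, together with the fact that the lazy first digit of $a_i$ equals the reflection of the greedy first digit of $b_i=a_{\iota(i)}$, where $\iota(i)\equiv i+n$ modulo $2n$ is defined by $b_i=a_{\iota(i)}$. For $i\notin\set{n,2n}$ both indices $i$ and $\iota(i)$ lie in $\set{1,\dots,2n-1}$, the two first digits are $\sigma_i$ and $\overline{\sigma_{\iota(i)}}=\sigma_{\iota(i)+n}=\sigma_i$, and they coincide, so $a_i\notin S_q$; this proves (iii). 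Distinctness in (iv) is then a matter of comparing finite greedy expansions: the $\theta_j$ and the $\eta_k$ are pairwise distinct and mutually distinct by Remark~\ref{r32}(i) (as $q\ne\min\vv$), the numbers $a_i$ with $i\le 2n-1$ have greedy expansions of pairwise different lengths $2n-i+1$, and comparing these lengths against the greedy expansions $j\,0^\infty$ of $\theta_j$ and $j\,\overline{\alpha_1\cdots\alpha_n}\,0^\infty$ of $\eta_j$ (from (ii)) shows $a_i\ne\theta_j,\eta_k$ for every $i\ne n$.

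The main obstacle is the last step of (iii). In the companion case $q\in\uuu\setminus\uu$ (Lemma~\ref{l35}) the points $a_i$ with $i\le N-1$ have unique infinite expansions and therefore automatically avoid the switch region; for $q\in\vv\setminus\uuu$ this is no longer true, so I cannot argue by univoquity. Instead the argument must track both the greedy and the lazy first digit explicitly and rely on the reflection identity $\overline{\sigma_i}=\sigma_{i+n}$ to see that they agree for every $i\notin\set{n,2n}$ but differ precisely for $i\in\set{n,2n}$; the delicate point is the index bookkeeping modulo $2n$ at the two boundary indices $i=n$ and $i=2n$, where the greedy block shortens and the coincidence of first digits breaks down.
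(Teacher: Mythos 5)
Your proof is correct, and for parts (i), (ii) and (iv) it runs essentially parallel to the paper's: the pairing $b_j=a_{n+j}$ in \eqref{31} is obtained in both cases by matching the shifted quasi-greedy expansion of $a_{n+j}$ against the reflection of that of $a_j$ (the paper phrases this as the two expansions summing to $M^{\infty}$); part (ii) is in both cases the observation that $b(\eta_j,q)$ is the digit $j$ followed by $b(b_1,q)=b(a_{n+1},q)=\overline{\alpha_1\cdots\alpha_n}\,0^{\infty}$, the only point to check being $\eta_j<\theta_{j+1}$ (the paper gets this from $b_1<a_1=1$ rather than from $q>\tfrac{M+2}{2}$, but it is the same inequality); and the distinctness in (iv) is read off the finite greedy expansions in both proofs.

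The genuine divergence is in part (iii), which is where the paper spends most of its effort. The paper fixes $a_j\in[\theta_i,\eta_i]$, squeezes $b(a_j,q)$ between $i\,0^{\infty}$ and $i\,\overline{\alpha_1\cdots\alpha_n}\,0^{\infty}$ (its inequalities \eqref{32}--\eqref{33}), and eliminates every $j\notin\set{n,2n}$ by a lexicographic case analysis that contradicts Proposition \ref{p25} (v) for $j\le n$ and the minimality of the period, via Proposition \ref{p23} (ii), for $j>n$. You instead use the criterion that a point of $\left(0,\frac{M}{q-1}\right)$ lies outside $S_q$ as soon as its greedy and lazy first digits coincide, and combine the involution $b_i=a_{i+n\bmod 2n}$ from \eqref{31} with the half-period reflection $\sigma_{i+n}=\overline{\sigma_i}$ of your digits $\sigma_1\cdots\sigma_{2n}$ of the period of $\alpha(q)$: for $i\notin\set{n,2n}$ both $i$ and $i+n\bmod 2n$ lie in $\set{1,\ldots,2n-1}$, where Lemma \ref{l34} (i) supplies the first greedy digit, so the greedy and lazy first digits of $a_i$ are both $\sigma_i$. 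I checked the index bookkeeping at $i=n$ and $i=2n$ and it is exactly as you describe: there the relevant greedy expansion is $\sigma_{2n}^+\,0^{\infty}$ rather than $\sigma_{2n}\cdots$, which is precisely where the coincidence of first digits breaks and $a_n=\eta_{\alpha_n^+}$, $a_{2n}=\theta_{\overline{\alpha_n}}$ land on the boundary of $S_q$. Your route is shorter and more structural than the paper's, and it explains \emph{why} only the two boundary indices survive; the price is the (easy) preliminary remark that every point of $S_q\cap\left(0,\frac{M}{q-1}\right)$ admits two expansions with distinct first digits, so that equality of the greedy and lazy first digits forces $x\notin S_q$. Both arguments need \eqref{31} as input before attacking (iii), so the logical order of the lemma is unchanged.
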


\begin{proof}
(i) The first part is shown in \cite[Lemma 3.5]{DKL2016}.
For any fixed $1\le j\le n$, it follows that the quasi-greedy expansions of $a_{n+j}$ and $a_j$ are
\begin{equation*}
\overline{\alpha_j\cdots\alpha_n^+}\left(\alpha_1\cdots\alpha_n^+\overline{\alpha_1\cdots\alpha_n^+}\right)^{\infty}
\qtq{and}
\alpha_j\cdots\alpha_n^+
\left(\overline{\alpha_1\cdots\alpha_n^+}\alpha_1\cdots\alpha_n^+\right)^{\infty},
\end{equation*}
respectively.
Since
\begin{equation*}
\overline{\alpha_j\cdots\alpha_n^+}\left(\alpha_1\cdots\alpha_n^+\overline{\alpha_1\cdots\alpha_n^+}\right)^{\infty}
+\alpha_j\cdots\alpha_n^+
\left(\overline{\alpha_1\cdots\alpha_n^+}\alpha_1\cdots\alpha_n^+\right)^{\infty}
=M^{\infty},
\end{equation*}
we conclude that $a_{n+j}+a_j=\frac{M}{q-1}$.

This is equivalent to \eqref{31} by the definition of the numbers $b_i$.
\medskip

(ii) The relation  $b(\eta_{M+1},q)=M^{\infty}$ is clear.
For $j=1,\ldots,M$ it follows from (i) and Lemma \ref{l33} (iii) that $j\overline{\alpha_1\cdots\alpha_n}$ is an expansion of $\eta_j$.
It remains to show that no expansion of $\eta_j$ may begin with a larger digit.
This is true because $b(b_1,q)=\overline{\alpha_1\cdots\alpha_n}$, and therefore, using also Lemma \ref{l34} (iv),
\begin{equation*}
\eta_j=\frac{j+b_1}{q}<\frac{j+a_1}{q}=\frac{j+1}{q}.
\end{equation*}
\medskip

(iii) We have
\begin{equation*}
a_{2n}=\theta_{\overline{\alpha_n}}
\qtq{and}
a_n=b_{2n}=\frac{M}{q-1}-a_{2n}
=\frac{M}{q-1}-\theta_{\overline{\alpha_n}}
=\eta_{M+1-\overline{\alpha_n}}
=\eta_{\alpha_n^+},
\end{equation*}
so that both $a_{2n}$ and $a_n$ belong to the boundary of $S_q$.

It remains to show that if $1\le j\le 2n$ and $a_j\in S_q$, then $j\in\set{n,2n}$.
Fix $1\le i\le M$ such that $a_j\in[\theta_i,\eta_i]$, i.e.,
\begin{equation*}
\frac{i}{q}\le a_j\le \frac{i-1}{q}+\frac{M}{q^2-q},
\end{equation*}
then we infer from (ii) and Lemma \ref{l34} (i) the relations
\begin{equation}\label{32}
i0^\infty\le  \alpha_j\cdots\alpha_n^+\overline{\alpha_1\cdots\alpha_n}0^\infty\le i\overline{\alpha_1\cdots\alpha_n}0^\infty \qtq{if}1\le j\le n
\end{equation}
and
\begin{equation}\label{33}
i0^\infty\le  \overline{\alpha_\ell\cdots\alpha_n}0^\infty\le i\overline{\alpha_1\cdots\alpha_n}0^\infty \qtq{if}n+1\le j=n+\ell\le 2n.\end{equation}
The second inequality in \eqref{32} is an equality if $j=n$ and $i=\alpha_n^+$.
Otherwise \eqref{32} yields
\begin{equation*}
i=a_j \qtq{and} \alpha_{j+1}\cdots\alpha_n^+\le \overline{\alpha_1\cdots\alpha_{n-j}}.
\end{equation*}
Hence
\begin{equation*}
\alpha_{j+1}\cdots\alpha_n< \overline{\alpha_1\cdots\alpha_{n-j}}
\qtq{or}
\overline{\alpha_{j+1}\cdots\alpha_n}
>\alpha_1\cdots\alpha_{n-j},
\end{equation*}
contradicting  Proposition \ref{p25} (v).

Next, the first inequality in \eqref{33} is an equality if $j=2n$ and $i=\overline{\alpha_n}$.
Otherwise \eqref{33} yields $i=\overline{a_\ell}$ and
\begin{equation*}
\overline{\alpha_{\ell+1}\cdots\alpha_n}0^{\infty}\le \overline{\alpha_1\cdots\alpha_n}0^{\infty}.
\end{equation*}
This is equivalent to the following inequality between the corresponding quasi-greedy expansions:
\begin{equation*}
\overline{\alpha_{\ell+1}\cdots\alpha_n^+}\left(\alpha_1\cdots\alpha_n^+\overline{\alpha_1\cdots\alpha_n^+}\right)^{\infty}
\le\overline{\alpha_1\cdots\alpha_n^+}\left(\alpha_1\cdots\alpha_n^+\overline{\alpha_1\cdots\alpha_n^+}\right)^{\infty}
\end{equation*}
Taking reflections and writing $\alpha(q)=\alpha_1\alpha_2\cdots,$ this is equivalent to
\begin{equation*}
\alpha_1\alpha_2\cdots
\le \alpha_{\ell+1}\alpha_{\ell+2}\cdots.
\end{equation*}
The strict inequality here contradicts $q\in\vv$.
In case of equality, Proposition \ref{p23}  (ii) implies that  $\alpha(q)$ has  a period of length $\ell<2n$, and this is impossible by the minimal choice of $N=2n$.
\medskip

(iv) The first part follows from (iii) and Lemma \ref{33} (iii).
The equalities follow from (ii) and Lemma \ref{l34} (i).
\end{proof}

The set
\begin{equation*}
(0,\frac{M}{q-1})\setminus\left(
\set{a_i, b_i \ :\ i=1,\ldots,N-1}\cup\set{\theta_j, \eta_j\ :\ j=1,\ldots,M}\right)
\end{equation*}
is the union of a finite number of disjoint open intervals.
If $(c,d)$ is one of them, then we will also  write it in the form $(c,c^+)$ or $(d^-,d)$.
For example, since none of the points $a_i, b_i$ lie inside $S_q$, we have $\theta_j^+=\eta_j$ for all $j$.
Thus we may write the switch region as
\begin{equation*}
S_q=\cup_{j=1}^M[\theta_j,\theta_j^+].
\end{equation*}

Let us write $I\xrightarrow{k}
J$ if $I,J$ are some of these intervals, $k\in\set{0,1,\ldots, M}$ and $T_k(I)\supset J$.

\begin{lemma}\label{l38}
Let $(c_i)\in\set{0,\ldots,M}^{\infty}$ be an arbitrary sequence, and  $x_n:=(c_nc_{n+1}\cdots)_q$, $n=1,2,\ldots.$
\begin{enumerate}[\upshape (i)]
\item There exists a sequence of intervals $(I_i)$ such that
\begin{equation}\label{34}
I_1\xrightarrow{c_1}
I_2\xrightarrow{c_2}
I_3\xrightarrow{c_3}\cdots.
\end{equation}
\item If there exists a sequence of intervals $(I_i)$ satisfying \eqref{34}, then $x_n\in\overline{I_n}$ for all $n$.
\end{enumerate}
\end{lemma}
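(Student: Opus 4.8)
The plan is to prove both assertions by tracking how the dynamics of the maps $T_k$ interact with the interval structure. For part (i), I would start from the observation in Lemma \ref{l31} and Remark \ref{r32} (ii) that each tail value $x_n=(c_nc_{n+1}\cdots)_q$ lies in $[0,\frac{M}{q-1}]$ and satisfies $T_{c_n}(x_n)=qx_n-c_n=x_{n+1}$. The difficulty is that the $x_n$ may land exactly on one of the boundary points $a_i,b_i,\theta_j,\eta_j$ that were removed when forming the vertices of $\gg(q)$, so that $x_n$ need not lie in any open interval $I_n$. The remedy is to note that each $x_n$ lies in the \emph{closure} of at least one vertex: the removed points form a finite set, and each such point is a common endpoint of adjacent intervals, so I may always \emph{choose} a vertex $I_n$ with $x_n\in\overline{I_n}$. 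The substantive step is then to verify that the chosen intervals can be linked by edges, i.e.\ that $T_{c_n}(I_n)\supset I_{n+1}$.

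First I would make the selection of intervals compatible with the edge condition. Since $T_{c_n}$ is an increasing affine bijection and $T_{c_n}(x_n)=x_{n+1}$, the map sends a neighborhood of $x_n$ onto a neighborhood of $x_{n+1}$. The key point is that $T_{c_n}$ maps the endpoint set into itself in the relevant sense: by Lemma \ref{l33} the points $\theta_j,\eta_j$ are carried to $0,1,\frac{M}{q-1}-1,\frac{M}{q-1}$, and by Lemma \ref{l34} (v) the points $a_i,b_i$ are permuted (together with $0$ and $\frac{M}{q-1}$) under the appropriate digit maps. Consequently $T_{c_n}$ carries the partition of $[0,\frac{M}{q-1}]$ by these endpoints to a \emph{coarsening} of itself, so the image $T_{c_n}(\overline{I_n})$ of any one closed vertex is a union of closures of vertices. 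I would therefore choose $I_1,I_2,\ldots$ recursively: having fixed $I_n\ni_{\mathrm{cl}} x_n$, I pick as $I_{n+1}$ a vertex whose closure contains $x_{n+1}$ and which lies inside $T_{c_n}(\overline{I_n})$; such a vertex exists because $x_{n+1}\in T_{c_n}(\overline{I_n})$ and the latter is a union of closed vertices. This gives $T_{c_n}(I_n)\supset I_{n+1}$, establishing \eqref{34}.

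For part (ii), I would argue by a direct induction on $n$ using the contraction property of the inverse maps. Suppose intervals $(I_i)$ satisfy \eqref{34}. Each edge $I_n\xrightarrow{c_n}I_{n+1}$ means $T_{c_n}(I_n)\supset I_{n+1}$, equivalently $I_n\supset T_{c_n}^{-1}(I_{n+1})$, and $T_{c_n}^{-1}(y)=\frac{y+c_n}{q}$ is a contraction by the factor $1/q<1$. Composing, for each fixed $n$ and every $m>n$ the set $T_{c_n}^{-1}\circ\cdots\circ T_{c_{m-1}}^{-1}(\overline{I_m})$ is contained in $\overline{I_n}$ and has length at most $q^{-(m-n)}\cdot\frac{M}{q-1}\to 0$. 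But this composition is exactly the cylinder of points whose expansion begins $c_n\cdots c_{m-1}$ and then continues inside $I_m$, and it contains $x_n=(c_nc_{n+1}\cdots)_q$ as well as points of $\overline{I_n}$; letting $m\to\infty$ the nested closed sets shrink to the single point $x_n$, which therefore lies in $\overline{I_n}$.

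The main obstacle is the boundary-point subtlety in part (i): one must be careful that the \emph{choice} of $I_n$ can be made consistently so that the edge condition $T_{c_n}(I_n)\supset I_{n+1}$ holds simultaneously with $x_{n+1}\in\overline{I_{n+1}}$, even when several tail values fall on removed endpoints. Everything hinges on the fact, supplied by Lemmas \ref{l33} and \ref{l34} (v), that the digit maps $T_k$ respect the endpoint set, so that images of closed vertices decompose into closed vertices; once this invariance is in hand the recursive selection goes through without conflict. Part (ii) is then routine from the uniform contraction of the inverse branches.
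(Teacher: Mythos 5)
Your argument for part (i) follows the same route as the paper's: a recursive selection of $I_{n+1}$ with $x_{n+1}\in\overline{I_{n+1}}$ and $I_{n+1}\subset T_{c_n}(\overline{I_n})\cap[0,\frac{M}{q-1}]$, justified by the fact that this image decomposes into closures of intervals of the partition. Two cautions, though. First, the intervals in Lemma \ref{l38} are the components of $(0,\frac{M}{q-1})$ minus the \emph{finite} set $\set{a_i,b_i,\theta_j,\eta_j}$, so they include the open switch intervals $(\theta_j,\eta_j)$; they are not the vertices of $\gg(q)$, which is defined only afterwards by discarding the switch intervals. Your repeated reference to ``vertices of $\gg(q)$'' and your framing of the only difficulty as $x_n$ landing on a removed point suggest the smaller vertex set, and with that reading part (i) is false: for an arbitrary sequence $(c_i)$ some $x_n$ may lie in the interior of $S_q$, where it belongs to the closure of no vertex of $\gg(q)$. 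With the finer partition the argument is fine. Second, the endpoint-invariance you invoke via Lemmas \ref{l33} and \ref{l34} (v) holds only for the digit attached to the region containing $I_n$; by Remark \ref{r32} (ii) this is automatically $c_n$ except when $x_n$ is an endpoint $\theta_j$ or $\eta_j$ and $c_n$ is the digit belonging to the switch side, in which case $I_n$ must be chosen to be the switch interval itself (otherwise $T_{c_n}(\overline{I_n})\cap[0,\frac{M}{q-1}]$ degenerates to a single point and no edge exists). So the recursive choice of $I_n$ has to be made with $c_n$ in view; the paper's own proof is equally silent on this, so I only ask you to make the selection rule explicit.

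For part (ii) you take a genuinely different and perfectly valid route. The paper picks, for each $k\ge n$, a point $y_k\in I_n$ admitting an expansion beginning with $c_n\cdots c_k$ and extracts a convergent subsequence, whose limit lies in $\overline{I_n}$ and has $c_nc_{n+1}\cdots$ as an expansion, hence equals $x_n$. You instead read the edge relations backwards to get the nested closed sets $\overline{I_n}\supset T_{c_n}^{-1}(\overline{I_{n+1}})\supset T_{c_n}^{-1}T_{c_{n+1}}^{-1}(\overline{I_{n+2}})\supset\cdots$, whose diameters shrink like $q^{-(m-n)}$ and whose intersection is exactly $x_n$. Your version is more quantitative and avoids the compactness step; the paper's is shorter to state. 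Either is acceptable.
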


\begin{proof}
(i) In the following proof by an interval we mean one of the above defined special open intervals, and by closed intervals we mean their closures.

Choose an interval $I_1$ such that $x_1\in\overline{I_1}$.
Assume by induction that we have already defined $I_1,\ldots, I_n$ for some $n\ge 1$, so that
$x_i\in\overline{I_i}$ for $i=1,\ldots, n$ and
\begin{equation*}
I_1\xrightarrow{c_1}
I_2\xrightarrow{c_2}
\cdots
\xrightarrow{c_{n-1}}
I_n.
\end{equation*}
Then, since
\begin{equation*}
x_{n+1}=T_{c_n}(x_n)\cap\left[0,\frac{M}{q-1}\right]\in
T_{c_n}(\overline{I_n})\cap\left[0,\frac{M}{q-1}\right],
\end{equation*}
and since the right-hand set is either a closed interval or a finite union of (consecutive) closed intervals, we can chose an interval $I_{n+1}$ satisfying $x_{n+1}\in\overline{I_{n+1}}$ and $T_{c_n}(\overline{I_n})\supset I_{n+1}$.
The latter relation is equivalent to $I_n\xrightarrow{c_n}I_{n+1}$.
\medskip

(ii) For each fixed $n\ge 1$, the relations \eqref{34} show that $I_n$ contains a sequence $y_n, y_{n+1}, \ldots$ such that for each $k\ge n$, $y_k$ has expansion starting with $c_n\cdots c_k$.
By compactness there is a subsequence converging to some point in $\overline{I_n}$ of which  $c_nc_{n+1}\cdots$ is an expansion, i.e., to $x_n$.
Hence $x_n\in\overline{I_n}$.
\end{proof}

If $(c_i)$ is a unique expansion, then $x_i\notin S_q$ by Lemma \ref{l31} for all $i$, so that all  intervals $I_i$ in Lemma \ref{l38} (i) are different from the switch intervals $(\theta_j,\eta_j)$, $j=1,\ldots,M$.
This leads to the following graph construction \cite{TK2010, DJ2017}.
The set
\begin{equation*}
(0,\frac{M}{q-1})\setminus\left(S_q\cup\set{a_i, b_i \ :\ i=1,\ldots,N-1}\right)
\end{equation*}
is the union of a finite number of disjoint open intervals.
We take as the vertices of a \emph{labeled  directed graph} $\gg=\gg(q)$ these open intervals; by Lemmas \ref{l34} (iii) and \ref{l35} there are $2N+M-1$ vertices if $q\in\uuu\setminus\uu$ and $N+M-1=2n+M-1$ vertices if $q\in\vv\setminus\uuu$.
Furthermore, the edges are the triplets $(I,k,J)$ where  $I,J$ are vertices of $\gg(q)$,  $k\in\set{0,1, \ldots, M}$ and  $I\xrightarrow{k}J$.

The following lemma shows in particular that if $I\xrightarrow{k}J$ is an edge of $\gg(q)$, then $k$ is uniquely determined by the pair $(I,J)$.

\begin{lemma}\label{l39}
Let $I\xrightarrow{k}J$.
Then
\begin{align*}
&I\subset (\theta_0,\theta_1)\Longrightarrow k=0,\\
&I\subset (\eta_{j-1},\theta_j)\Longrightarrow k=j-1,\quad j=2,\ldots, M,\\
&I\subset (\eta_M,\eta_{M+1})\Longrightarrow k=M.
\end{align*}
\end{lemma}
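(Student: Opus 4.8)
The plan is to translate the combinatorial relation $I\xrightarrow{k}J$ into a statement about expansions and then invoke the fact, recorded in Remark \ref{r32} (ii), that a point lying outside the switch region has a \emph{forced} first digit. First I would recall that, by construction, every vertex $I$ of $\gg(q)$ is disjoint from $S_q$; hence $I$ is contained in exactly one of the gaps $(\theta_0,\theta_1)$, $(\eta_{j-1},\theta_j)$ for $2\le j\le M$, or $(\eta_M,\eta_{M+1})$, which are precisely the three cases in the statement.

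Next I would unwind the meaning of the edge. Since $J$ is itself a vertex, it is a nonempty open subinterval of $(0,\frac{M}{q-1})$, and the relation $I\xrightarrow{k}J$ means $T_k(I)\supset J$. Choosing any $y\in J$, we obtain a point $x\in I$ with $T_k(x)=qx-k=y\in(0,\frac{M}{q-1})\subset[0,\frac{M}{q-1}]$. Consequently $y$ admits some expansion $(d_i)$, and then $kd_1d_2\cdots$ is an expansion of $x$, because $(kd_1d_2\cdots)_q=\frac{k+T_k(x)}{q}=x$. Thus $x$ possesses an expansion whose first digit is $k$.

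Now I would apply Remark \ref{r32} (ii) to this point $x$. Since $x\in I$ lies outside $S_q$, every expansion of $x$ starts with the digit prescribed by the gap containing $I$: namely $0$ on $(\theta_0,\theta_1)$, $j-1$ on $(\eta_{j-1},\theta_j)$, and $M$ on $(\eta_M,\eta_{M+1})$. Comparing this forced digit with the first digit $k$ found above yields the claimed equality in each of the three cases.

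The argument is essentially routine, so there is no serious obstacle; the only point requiring a little care is the extraction step, i.e.\ observing that $T_k(I)\supset J$ with $J\subset(0,\frac{M}{q-1})$ genuinely produces a point of $I$ whose $T_k$-image is admissible, so that the forced-digit principle applies. If one prefers to avoid Remark \ref{r32} (ii), the same conclusion follows by a direct computation from $y\in(0,\frac{M}{q-1})$: in the case $I\subset(\eta_{j-1},\theta_j)$, the inequality $qx-k>0$ together with $x<\theta_j=\frac{j}{q}$ gives $k\le j-1$, while $qx-k<\frac{M}{q-1}$ together with $x>\eta_{j-1}=\frac{j-2}{q}+\frac{M}{q^2-q}$ gives $k\ge j-1$, forcing $k=j-1$; the two extreme cases $I\subset(\theta_0,\theta_1)$ and $I\subset(\eta_M,\eta_{M+1})$ are handled identically, each using only the single relevant inequality.
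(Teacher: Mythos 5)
Your proof is correct and follows the same route as the paper, whose entire proof of this lemma is the single sentence ``This follows from Remark \ref{r32} (ii)''; you have simply made explicit the (routine) step of extracting from $T_k(I)\supset J$ a point $x\in I$ with an expansion beginning with $k$, to which the forced-digit principle of that remark then applies.
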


\begin{proof}
This follows from Remark \ref{r32} (ii).
\end{proof}

\begin{example}\label{e310}
Let us consider the case of the generalized Golden Ratio $q=\min\vv$, already studied in Remark \ref{r32} (i) and Example \ref{e36}.

If $M$ is even, then $S_q=[\theta_1,\eta_M]$, so that $\gg(q)$ has two vertices: $(\theta_0,\theta_1)$ and $(\eta_M,\eta_{M+1})$, and the only edges are
\begin{equation*}
(\theta_0,\theta_1)\xrightarrow{0}(\theta_0,\theta_1)
\qtq{and}
(\eta_M,\eta_{M+1})\xrightarrow{M}(\eta_M,\eta_{M+1}).
\end{equation*}

If $M$ is odd, then $\gg(q)$ has $M+1$ vertices:
\begin{equation*}
(\theta_0,\theta_1),
(\eta_1,\theta_2),
\ldots,
(\eta_{M-1},\theta_M),
(\eta_M,\eta_{M+1}).
\end{equation*}
A direct computations shows that
\begin{align*}
&T_0((\theta_0,\theta_1))=(\theta_0,\eta_m),\\
&T_j((\eta_j,\theta_{j+1}))=(\theta_m,\eta_m),\quad j=1,\ldots,M-1,\\
&T_M((\eta_M,\eta_{M+1}))=(\theta_m,\eta_{M+1}).
\end{align*}
It follows that $\gg(q)$ has the following edges:
\begin{align*}
&(\theta_0,\theta_1)\xrightarrow{0}(\theta_0,\theta_1),
\qtq{and}
(\theta_0,\theta_1)\xrightarrow{0}(\eta_j,\theta_{j+1}),\quad j=1,\ldots,m-1,\\
&(\eta_M,\eta_{M+1})\xrightarrow{M}(\eta_M,\eta_{M+1}),
\qtq{and}
(\eta_M,\eta_{M+1})\xrightarrow{M}
(\eta_j,\theta_{j+1}),\quad j=m,\ldots,M-1.
\end{align*}
\end{example}

As before, an interval-vertex $(c,d)$ will also be written in the form $(c,c^+)$ or $(d^-,d)$.
Since we have removed the connected components of the interior of $S_q$, there may be five (not mutually excluding) types of vertices, of the form
\begin{equation*}
(a_i^-, a_i),\quad
(b_j, b_j^+),\quad
(a_i, b_j),\quad
(\theta_i^-,\theta_i)
\qtq{and}
(\eta_j,\eta_j^+).
\end{equation*}
Here we may have $a_i^-=a_j$, $b_j$ or $\eta_j$; $\theta_i^-=a_j$, $b_j$ or $\eta_j$, and similarly $b_j^+=a_i$, $b_i$ or $\theta_i$; $\eta_j^+=a_i$, $b_i$ or $\theta_i$.

We say that a sequence $(c_i)$ is \emph{generated by an infinite path} $(I_i)$ in $\gg(q)$ if $(I_i)$ is a sequence of vertices in $\gg(q)$ such that
\begin{equation*}
I_1\xrightarrow{c_1}
I_2\xrightarrow{c_2}
\cdots.
\end{equation*}
We denote by $\gg_q'$ the set of sequences generated by $\gg(q)$, and we set
\begin{equation*}
\gg_q:=\set{(c_i)_q\ :\ (c_i)\in\gg_q'}.
\end{equation*}

It follows from the reflection properties $b_i=\frac{M}{q-1}-a_i$ and $\eta_j=\frac{M}{q-1}-\theta_{M+1-j}$ that $\gg_q'$ and $\gg_q$ are invariant for reflections:
\begin{equation}\label{35}
(c_i)\in \gg_q'\Longrightarrow\overline{(c_i)}\in \gg_q'
\qtq{and}
x\in\gg_q\Longrightarrow\frac{M}{q-1}-x\in\gg_q.
\end{equation}

The following lemma will be needed in the next section.

\begin{lemma}\label{l311}
Let $q\in \vv\setminus\uu$ and $q>\min\vv$.
Then $T_{\alpha_{N-1}}(a_{N-1}^+)>\eta_{\alpha_N^+}$.
\end{lemma}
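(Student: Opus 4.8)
The plan is to invert the increasing affine map $T_{\alpha_{N-1}}$ and reduce the claim to a statement about the location of division points. Since $T_{\alpha_{N-1}}$ is increasing, the inequality $T_{\alpha_{N-1}}(a_{N-1}^+)>\eta_{\alpha_N^+}$ is equivalent to $a_{N-1}^+>z$, where $z:=T_{\alpha_{N-1}}^{-1}(\eta_{\alpha_N^+})$. Using $\eta_j=\left((j-1)M^{\infty}\right)_q$ one computes $\eta_{\alpha_N^+}=(\alpha_N M^{\infty})_q$, hence $z=(\alpha_{N-1}\alpha_N M^{\infty})_q$; moreover $T_{\alpha_{N-1}}(a_{N-1})=a_N=\theta_{\alpha_N^+}$ by Lemma~\ref{l34}. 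As $a_{N-1}^+$ is by definition the smallest division point strictly larger than $a_{N-1}$, it suffices to prove that \emph{no} division point lies in the half-open interval $(a_{N-1},z]$.

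First I would rule out the switch points $\theta_j,\eta_j$. Since $a_{N-1}\notin S_q$ (Lemmas~\ref{l35} and~\ref{l37}) and every expansion of $a_{N-1}$ begins with $\alpha_{N-1}$, the point $a_{N-1}$ lies in the forced-digit gap whose right endpoint is $\theta_{\alpha_{N-1}+1}$ (or $\eta_{M+1}$ when $\alpha_{N-1}=M$). The inequality $z<\theta_{\alpha_{N-1}+1}$ unwinds to $\eta_{\alpha_N^+}<1$, and this holds because $a_N=\theta_{\alpha_N^+}<1=a_1$ by Lemma~\ref{l34}(iv), while $1\notin S_q$, so $1$ must lie to the right of the switch interval $[\theta_{\alpha_N^+},\eta_{\alpha_N^+}]$. (When $\alpha_{N-1}=M$ one uses instead $z<\eta_{M+1}$, which follows from $\alpha_N<M$.) Thus $(a_{N-1},z]$ is contained in a single forced-digit gap and contains no $\theta_j$ or $\eta_j$.

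It remains to exclude the points $a_j,b_j$. Suppose for contradiction that some division point $p$ lies in $(a_{N-1},z]$. Then $p\notin S_q$, so all expansions of $p$ start with $\alpha_{N-1}$, and Lemma~\ref{l34}(v) shows $T_{\alpha_{N-1}}(p)\in\set{a_{j+1},b_{j+1}}$ for the appropriate index $j$. Applying the increasing map $T_{\alpha_{N-1}}$ to $a_{N-1}<p\le z$ places $T_{\alpha_{N-1}}(p)$ in $(\theta_{\alpha_N^+},\eta_{\alpha_N^+}]\subseteq S_q$. However, the only elements of $\set{a_i,b_i:1\le i\le N}$ lying in $S_q$ are $a_N=\theta_{\alpha_N^+}$ and $b_N=\eta_{\overline{\alpha_N}}$ (Lemmas~\ref{l35}(iv) and~\ref{l37}(iii)). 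Since $T_{\alpha_{N-1}}(p)>\theta_{\alpha_N^+}$, we must have $T_{\alpha_{N-1}}(p)=b_N=\eta_{\overline{\alpha_N}}\le\eta_{\alpha_N^+}$, which forces $\overline{\alpha_N}=\alpha_N^+$ and $T_{\alpha_{N-1}}(p)=\eta_{\alpha_N^+}$, and therefore $p=z$.

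The final step is to turn $p=z$ into a contradiction on the parity of $M$. If $p=b_j$, then $b_{j+1}=b_N$ gives $j=N-1$, so $p=b_{N-1}$ has first digit $\overline{\alpha_{N-1}}=\alpha_{N-1}$, forcing $M$ even; but $\overline{\alpha_N}=\alpha_N^+$ forces $M$ odd. If $p=a_j$, then $a_{j+1}=b_N$; this is impossible when $q\in\uuu\setminus\uu$, since there $b_N$ is not among the $a_i$, while when $q\in\vv\setminus\uuu$ the identification $b_N=a_n$ forces $j=n-1$, and then matching the first digits together with the reflection structure of $\alpha(q)$ in Lemma~\ref{l37}(i) yields $\alpha_{n-1}=\alpha_{N-1}=\overline{\alpha_{n-1}}$, once more forcing $M$ even. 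In every case we contradict $\overline{\alpha_N}=\alpha_N^+$, so no division point lies in $(a_{N-1},z]$ and the lemma follows. The crux, and the only genuinely delicate point, is this last step: in the regime $q\in\vv\setminus\uuu$ the coincidences $b_j=a_{n+j}$ defeat the naive ``an index below $N$ is outside $S_q$'' reasoning, so one is forced to exploit the palindromic structure of $\alpha(q)$ (Lemma~\ref{l37}(i), and if necessary Proposition~\ref{p25}(v)) to show that $z$ cannot itself be a division point.
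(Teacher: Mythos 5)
Your route is genuinely different from the paper's. The paper assumes equality in $T_{\alpha_{N-1}}(a_{N-1}^+)\ge\eta_{\alpha_N^+}$, computes the resulting greedy expansion $\alpha_{N-1}\alpha_N^+\overline{(\alpha_1\cdots\alpha_N)^{\infty}}$ of $a_{N-1}^+$, and rules it out by comparing with the greedy expansions of every division point. You instead pull everything back through $T_{\alpha_{N-1}}^{-1}$ and exclude division points from $(a_{N-1},z]$ dynamically, using $a_j\mapsto a_{j+1}$, $b_j\mapsto b_{j+1}$ and the fact that only $a_N$ and $b_N$ meet $S_q$; the parity contradiction $\overline{\alpha_N}=\alpha_N^+$ versus $\overline{\alpha_{N-1}}=\alpha_{N-1}$ (resp.\ $\alpha_{n-1}=\overline{\alpha_{n-1}}$) replaces the paper's word-combinatorial case analysis. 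For $q\in\uuu\setminus\uu$ and for $q\in\vv\setminus\uuu$ with $N\ge4$ I checked the steps and they go through; in those regimes your argument is sound and arguably more transparent than the paper's.

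There is, however, a step that fails in a case the lemma covers, namely $q\in\vv\setminus\uuu$ with $N=2$ (i.e.\ $n=1$ in the notation of Lemma \ref{l37}). This case is nonempty for every $M\ge 2$: e.g.\ $M=2$, $\beta(q)=210^{\infty}$, $q=1+\sqrt2>\min\vv=2$. There $a_{N-1}=a_1=1=\eta_{\alpha_1^+}$ by Lemma \ref{l37}(iv), so three of your assertions are false as stated: $a_{N-1}\in S_q$ (it is a boundary point of a switch interval), not every expansion of $a_{N-1}$ begins with $\alpha_{N-1}$ (the point $1=\eta_{\alpha_1^+}$ has expansions beginning with two different digits), and $1\notin S_q$ fails, which is exactly what you used to get $\eta_{\alpha_N^+}<1$. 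The conclusion of the lemma still holds there and the damage is repairable — in this case $a_{N-1}$ is precisely the \emph{left endpoint} $\eta_{\alpha_{N-1}}$ of the relevant forced-digit gap, so $(a_{N-1},z]$ still sits inside that gap once one proves $\eta_{\alpha_N^+}<1$ by a different argument (e.g.\ $\alpha_N^+=\overline{\alpha_1}<\alpha_1^+$ via Proposition \ref{p25}(iii), whence $\eta_{\alpha_N^+}<\eta_{\alpha_1^+}=1$) — but as written your second paragraph does not establish that $(a_{N-1},z]$ avoids the switch points in this case, and the rest of the proof depends on it. You should either treat $n=1$ separately or restate the gap-location step so that it only uses properties of the open interval to the right of $a_{N-1}$ rather than of the point $a_{N-1}$ itself.
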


\begin{remark}\label{r312}
The lemma fails for the generalized Golden Ratios.
Indeed, let $q=\min\vv$.
If $M=2m$, then $N=1$, so that the left hand side is undefined.
If $M=2m-1$, then $N=2$ and $\beta(q)=mm\ 0^{\infty}$, and a short computation shows that we have an equality:
\begin{equation*}
T_{\alpha_{N-1}}(a_{N-1}^+)=T_{m}(\theta_{m+1})=1=a_1=\eta_{\alpha_N^+}.
\end{equation*}
\end{remark}

\begin{proof}
Since $T_{\alpha_{N-1}}$ is an increasing map, by Lemma \ref{l34} (v) we have
\begin{equation*}
T_{\alpha_{N-1}}(a_{N-1}^+)>T_{\alpha_{N-1}}(a_{N-1})=a_N=\frac{\alpha_N^+}{q}.
\end{equation*}
Using Lemmas \ref{l35} (iv) and \ref{l37} (iii) hence we deduce that $T_{\alpha_{N-1}}(a_{N-1}^+)\ge \frac{\alpha_N}{q}+\frac{M}{q^2-q}$, and it remains to exclude the equality.

Assume on the contrary that $T_{\alpha_{N-1}}(a_{N-1}^+)=\frac{\alpha_N}{q}+\frac{M}{q^2-q}$, or equivalently that
\begin{equation*}
a_{N-1}^+=\frac{\alpha_{N-1}}{q}+\frac{\frac{\alpha_N}{q}+\frac{M}{q^2-q}}{q}
=(\alpha_{N-1}\alpha_N\ M^{\infty})_q
=\left(\alpha_{N-1}\alpha_N^+\overline{(\alpha_1\cdots\alpha_N)^{\infty}}\right)_q.
\end{equation*}
Since $(\alpha_N^+\overline{(\alpha_1\cdots\alpha_N)^{\infty}})_q=(\alpha_N M^{\infty})_q$, $\alpha_N^+\overline{(\alpha_1\cdots\alpha_N)^{\infty}}$ is a greedy expansion.
Hence $\alpha_{N-1}\alpha_N^+\overline{(\alpha_1\cdots\alpha_N)^{\infty}}$ is clearly also a greedy expansion if $\alpha_{N-1}=M$.

This is also true if $\alpha_{N-1}<M$ because we have $a_{N-1}^+<\frac{\alpha_{N-1}^+}{q}$. It is true, in fact
\begin{equation*}
\begin{split}
a_{N-1}^+<\frac{\alpha_{N-1}^+}{q}&\Longleftrightarrow a_{N-1}^+=\frac{\alpha_{N-1}}{q}+\frac{\frac{\alpha_N}{q}+\frac{M}{q^2-q}}{q}<\frac{\alpha_{N-1}}{q}+\frac{1}{q}\\
&\Longleftrightarrow \frac{\alpha_N}{q}+\frac{M}{q^2-q}<1\\
&\Longleftrightarrow\alpha_N^+\overline{(\alpha_1\cdots\alpha_N)^{\infty}}<(\alpha_1\cdots\alpha_N)^\infty\\
&\Longleftrightarrow\alpha_N^+<a_1 \qtq{or} \alpha_N^+=a_1,\
\overline{(\alpha_1\cdots\alpha_N)^\infty}<\alpha_2\cdots\alpha_N(\alpha_1\cdots\alpha_N)^\infty\\
\end{split}
\end{equation*}
We prove the final inequality as follows:
Since $q\in \vv$, we have $\overline{\alpha_1\cdots\alpha_N}\le\alpha_2\cdots\alpha_N\alpha_1\ $. We claim the inequality is strict, otherwise
$\overline{\alpha_1\cdots\alpha_N}=\alpha_2\cdots\alpha_N\alpha_1\ $ yields $\alpha_1=\overline{\alpha_1}$ if $N$ is odd, it is impossible since we have $\alpha_1>\overline{\alpha_1}$.  $\alpha_1\cdots\alpha_N=(\alpha_1\overline{\alpha_1})^n$ if $N=2n$ is even, it is impossible, since $\alpha_1\cdots\alpha_N$ as is the shortest period of $(\alpha_i)=(\alpha_1\cdots\alpha_N)^{\infty}$.

Hence the greedy expansion of $a_{N-1}^+$ has to start with the digit $\alpha_{N-1}$, and then to follow with the greedy expansion $\alpha_N^+\overline{(\alpha_1\cdots\alpha_N)^{\infty}}$,

so that it has the infinite greedy expansion
\begin{equation*}
b(a_{N-1}^+,q)=\alpha_N^+\overline{(\alpha_1\cdots\alpha_N)^{\infty}}.
\end{equation*}
We will arrive at a contradiction by showing that this expansion is different from the greedy expansions of the numbers $a_i$, $b_i$, $\theta_j$ and $\eta_j$.

This is clear for the number $\eta_{M+1}$ because
\begin{equation*}
\alpha_{N-1}\alpha_N^+\overline{(\alpha_1\cdots\alpha_N)^{\infty}}\ne M^{\infty},
\end{equation*}
and it is also clear for the numbers $a_i$ and  $\theta_j$ because the latter have finite greedy expansions by Lemma \ref{l34} (i).

If $q\in\vv\setminus\uuu$, then it is also true for the numbers $b_i$ and $\eta_1,\ldots, \eta_M$ because they also have finite greedy expansions by Lemma \ref{l37} (i) and (ii).

Henceforth we assume that $q\in\uuu\setminus\uu$.
It remains to exclude the equalities
\begin{equation}\label{36}
b(\eta_j,q)=\alpha_{N-1}\alpha_N^+\overline{(\alpha_1\cdots\alpha_N)^{\infty}}
\end{equation}
for $j=1,\ldots,M,$ and the equalities
\begin{equation}\label{37}
b(b_i,q)=\alpha_{N-1}\alpha_N^+\overline{(\alpha_1\cdots\alpha_N)^{\infty}}
\end{equation}
for $i=1,\ldots,N-1$.

Assume on the contrary that \eqref{36} holds for some $j\in\set{1,\ldots,M}$.
Then by Lemma \ref{l35} (ii) we obtain
\begin{equation*}
j\ \overline{(\alpha_1\cdots\alpha_N)^{\infty}}=\alpha_{N-1}\alpha_N^+\overline{(\alpha_1\cdots\alpha_N)^{\infty}};
\end{equation*}
hence $N=1$ and $\alpha_1^+=\overline{\alpha_1}$, contradicting Proposition \ref{p25} (iii).

Finally, assume on the contrary that \eqref{37} holds for some $i\in\set{1,\ldots,N-1}$.
Then by Lemma \ref{l35} (ii) we obtain that
\begin{equation*}
\overline{\alpha_i\cdots\alpha_N(\alpha_1\cdots\alpha_N)^{\infty}}=\alpha_{N-1}\alpha_N^+\overline{(\alpha_1\cdots\alpha_N)^{\infty}}.
\end{equation*}
For $i<N-1$ this implies
\begin{equation*}
(\alpha_1\cdots\alpha_N)^{\infty}
=(\alpha_{i+2}\cdots\alpha_N\alpha_1\cdots\alpha_{i+1})^{\infty},
\end{equation*}
contradicting the choice of $\alpha_1\cdots\alpha_N$ as the shortest period of $(\alpha_i)=(\alpha_1\cdots\alpha_N)^{\infty}$.

Finally, if $i=N-1$, then the supposed equality implies that $\overline{\alpha_{N-1}\alpha_N}=\alpha_{N-1}\alpha_N^+$.
But this is also impossible because
$\alpha_{N-1}\ne\overline{\alpha_{N-1}}$ if $M$ is odd, and  $\alpha_N^+\ne\overline{\alpha_N}$ if $M$ is even.
\end{proof}

We end this section by establishing some general properties of the graphs $\gg(q)$.

\begin{proposition}\label{p313}
Let $q\in\vv\setminus\uu$.
\begin{enumerate}[\upshape (i)]
\item If $b_1<\theta_1$, then $(b_1^-,b_1)=(0,\frac{M}{q-1}-1)$ is a vertex of $\gg(q)$, and the only edge arriving at this vertex is
\begin{equation*}
(0, b_1)\xrightarrow{0}(0,b_1).
\end{equation*}
If $\eta_{j-1}<b_1<\theta_j$ for some $j\in\set{2,\ldots,M}$, then $b_1^-=\eta_{j-1}$,
\begin{equation*}
(0,\theta_1),
(\eta_1,\theta_2),\ldots,
(\eta_{j-2},\theta_{j-1}),
(\eta_{j-1},b_1)
\end{equation*}
are the vertices of $\gg(q)$ lying in $(0,b_1)$, and the only edge arriving at any $J$ of these vertices is
\begin{equation*}
(0,\theta_1)\xrightarrow{0}J.
\end{equation*}

If $a_1>\eta_M$, then $(a_1, a_1^+)=(1,\eta_{M+1})=(1,M/(q-1))$ is a vertex of $\gg(q)$, and the only edge arriving at this vertex is
\begin{equation*}
(1,\eta_{M+1})\xrightarrow{M}(1,\eta_{M+1}).
\end{equation*}
If $\eta_{j-1}<a_1<\theta_j$ for some $j\le M$, then $a_1^+=\theta_j$,
\begin{equation*}
(a_1,\theta_j),
(\eta_j,\theta_{j+1}),\ldots,
(\eta_{M-1},\theta_M),
(\eta_M,\eta_{M+1})
\end{equation*}
are the vertices of $\gg(q)$ lying in $(a_1,\eta_{M+1})$, and the only edge arriving at any $J$ of these vertices is
\begin{equation*}
(\eta_M,\eta_{M+1})\xrightarrow{M}J.
\end{equation*}

\item  If there exists a path
\begin{equation*}
I_1\xrightarrow{\alpha_1}
I_2\xrightarrow{\alpha_2}
\cdots
\xrightarrow{\alpha_{N-1}}
I_N\xrightarrow{\alpha_N^+}
I_{N+1}
\end{equation*}
in $\gg(q)$, then $I_1\subset (a_1,\eta_{M+1})$.
\end{enumerate}
\end{proposition}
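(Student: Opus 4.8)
I would prove the two parts by different devices: part (i) I reduce, via the reflection symmetry, to the statements about $b_1$, which I settle using the fact that all switch intervals have the \emph{same} length; part (ii) I obtain by composing the transition maps along the prescribed path. For part (i), first invoke reflection: by Lemma \ref{l33} (i) and the invariance \eqref{35}, we have $I\xrightarrow{k}J$ in $\gg(q)$ if and only if $\rho(I)\xrightarrow{\overline{k}}\rho(J)$, where $\rho(x)=\frac{M}{q-1}-x$ interchanges $a_i\leftrightarrow b_i$ and sends $\theta_j\mapsto\eta_{M+1-j}$ (note $T_{\overline{k}}\circ\rho=\rho\circ T_k$, and $\rho$ preserves set inclusion). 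Hence the four $a_1$-assertions are the $\rho$-images of the four $b_1$-assertions (for instance $a_1>\eta_M\iff b_1<\theta_1$, and $\eta_{j-1}<a_1<\theta_j\iff\eta_{M+1-j}<b_1<\theta_{M+2-j}$), so it suffices to prove the $b_1$-statements. The computational heart is the identity $\eta_k-\theta_k=\frac{M+1-q}{q(q-1)}=\frac{b_1}{q}$ for $k=1,\dots,M$, saying each switch interval $[\theta_k,\eta_k]$ has the common length $b_1/q$; here I use $b_1=\frac{M+1-q}{q-1}$ and $a_1=1$.

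\textbf{Vertices and the label-$0$ edges.} Both hypotheses $b_1<\theta_1$ and $\eta_{j-1}<b_1<\theta_j$ force $b_1\notin S_q$, and Lemma \ref{l34} (iv) gives $0<b_1<a_1=1$ with $b_1$ the least of the removed points $\set{a_i,b_i:1\le i\le N-1}$; thus $(0,b_1)$ contains none of them. If $b_1<\theta_1$, then $(0,b_1)$ also misses $S_q$ and is a single vertex $(b_1^-,b_1)=(0,\frac{M}{q-1}-1)$; if $\eta_{j-1}<b_1<\theta_j$, then the components of $(0,b_1)\setminus S_q$ are exactly $(0,\theta_1),(\eta_1,\theta_2),\dots,(\eta_{j-2},\theta_{j-1}),(\eta_{j-1},b_1)$, giving $b_1^-=\eta_{j-1}$ and the listed vertices. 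For incoming edges I use that, $T_k$ being an increasing bijection, $I\xrightarrow{k}J\iff I\supseteq T_k^{-1}(J)$. Writing $J=(\ell,r)\subseteq(0,b_1]$, one has $T_k^{-1}(J)=(\theta_k+\tfrac{\ell}{q},\theta_k+\tfrac{r}{q})$. For $k=0$ this lies in $(0,b_1/q)\subseteq(0,\theta_1)$ because $b_1<1$, hence in the leftmost vertex $L$ (which is $(0,b_1)$ in the first case and $(0,\theta_1)$ in the second), producing exactly the asserted edge $L\xrightarrow{0}J$.

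\textbf{Excluding positive labels (the crux).} The step I expect to be the main obstacle is ruling out every positive label; it is precisely where the equal-length identity is used. For $1\le k\le M$ the identity gives $T_k^{-1}(J)\subseteq(\theta_k,\theta_k+\tfrac{b_1}{q}]=(\theta_k,\eta_k]\subseteq S_q$, so no vertex (being disjoint from $S_q$) can contain $T_k^{-1}(J)$, and there is no edge into $J$ with label $k\ge 1$. The point is that the preimage under any positive-label map of anything lying below $b_1$ is trapped inside a switch interval, which is exactly what the identity encodes. Therefore $L\xrightarrow{0}J$ is the only incoming edge, which proves the $b_1$-statements; applying $\rho$ yields the $a_1$-statements.

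\textbf{Part (ii).} Here I compose the maps along the path. Put $S:=T_{\alpha_N^+}\circ T_{\alpha_{N-1}}\circ\cdots\circ T_{\alpha_1}$, an increasing affine bijection. By Lemma \ref{l34} (v), which realises $a_1\xrightarrow{\alpha_1}a_2\to\cdots\to a_N\xrightarrow{\alpha_N^+}0$, we get $S(a_1)=0$. The hypothesised path gives, by monotonicity of each $T_k$, the inclusion $S(I_1)\supseteq I_{N+1}$; since $I_{N+1}$ is a nonempty subinterval of $(0,\frac{M}{q-1})$, the set $S(I_1)$ meets $(0,\infty)$, and as $S$ is increasing with $S(a_1)=0$ the interval $I_1$ meets $(a_1,\infty)$. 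Finally $a_1\notin I_1$: if $N\ge 2$ then $a_1$ is a removed point, and if $N=1$ then $a_1=\theta_{\alpha_1^+}\in S_q$ by Lemma \ref{l34} (iii); either way no vertex contains $a_1$. Being a connected interval that avoids $a_1$ but meets $(a_1,\infty)$, $I_1$ must lie in $(a_1,\infty)\cap(0,\frac{M}{q-1})=(a_1,\eta_{M+1})$, as claimed.
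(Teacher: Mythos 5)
Your proof is correct, and it is worth recording where it departs from the paper's. Part (ii) coincides with the paper's argument: the paper likewise forms the composition $T:=T_{\alpha_N^+}\circ T_{\alpha_{N-1}}\circ\cdots\circ T_{\alpha_1}$, uses $T(a_1)=0$ from Lemma \ref{l34} (v), and concludes that $T(I_1)\subset(-\infty,0)$ cannot contain $I_{N+1}$ when $I_1\subset(-\infty,a_1)$; your added observation that $a_1$ lies in no vertex (a removed point if $N\ge 2$, and $a_1=\theta_{\alpha_1^+}\in S_q$ if $N=1$) merely makes explicit the dichotomy the paper leaves tacit. In part (i) you perform the same reduction by reflection to the $b_1$-statements, but the exclusion of incoming edges with positive label is done by a genuinely different mechanism: the paper invokes Lemma \ref{l39} (the label of an edge is forced by the location of its source vertex) together with the forward-image relations $T_0((\theta_1,\infty))\subset(b_1,\infty)$ and $T_{i-1}((\eta_{i-1},\infty))=(b_1,\infty)$ for $i=2,\ldots,M$, whereas you argue backwards with $T_k^{-1}$ and the equal-length identity $\eta_k-\theta_k=b_1/q$, which traps the preimage of any $J\subseteq(0,b_1]$ under a positive-label map inside the switch interval $(\theta_k,\eta_k]$. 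These are the same computation — the paper's $T_{i-1}(\eta_{i-1})=b_1$ is exactly your identity read forwards — but your packaging buys two things: Lemma \ref{l39} is bypassed entirely, and uniqueness of the incoming edge is immediate, since vertices are pairwise disjoint open intervals disjoint from $S_q$, so at most one vertex can contain $T_k^{-1}(J)$. One cosmetic slip, not a gap: in the first case ($b_1<\theta_1$) your chain $T_0^{-1}(J)\subseteq(0,b_1/q)\subseteq(0,\theta_1)$ does not place the preimage in $L$, because there $L=(0,b_1)\subsetneq(0,\theta_1)$; the inclusion you actually need, and which holds since $q>1$, is $(0,b_1/q)\subseteq(0,b_1)=L$.
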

\begin{proof}
(i) By symmetry it suffices to consider $b_1$.
All assertions follow from the preceding lemmas and from the relation
\begin{equation*}
T_0((0,\theta_1))=(0,1)=(0,a_1)\supset(0,b_1),
\end{equation*}
except the non-existence of other edges than those indicated.
This last property follows from Lemma \ref{l39} and from the relations
\begin{align*}
&T_0((\theta_1,\infty))=(1,\infty)=(a_1,\infty)\subset(b_1,\infty)
\intertext{and}
&T_{i-1}((\eta_{i-1},\infty))
=(b_1,\infty)\qtq{for}i=2,\ldots, M.
\end{align*}
\medskip

(ii) The composition map
\begin{equation*}
T:=T_{\alpha_N^+}\circ T_{\alpha_{N-1}}\circ\cdots\circ T_{\alpha_1}
\end{equation*}
is increasing, and $T(a_1)=0$ by Lemma \ref{l34} (v), so that $T(x)<0$ for all $x<a_1$.
If the vertex $I_1$ of the graph is not a subset of $(a_1,\eta_{M+1})$, then
$I_1\subset(-\infty, a_1)$.
In this case we have $T(I_1)\subset(-\infty,0)$, and hence $T(I_1)$ cannot contain any interval $I_{N+1}$ of the graph.
%
\end{proof}

\section{Proof of Theorem \ref{t11} for $q\in\uuu\setminus\uu$}\label{s4}

In this section we assume that $q\in\uuu\setminus\uu$, so that the graph $\gg(q)$ has $2N+M-1$ vertices.

\begin{lemma}\label{l41}\mbox{}
Let $q\in\uuu\setminus\uu$, and write
\begin{equation*}
\beta(q)=\alpha_1\cdots\alpha_N^+\ 0^{\infty}
\qtq{and}
\alpha(q)=(\alpha_1\cdots\alpha_N)^{\infty}
\end{equation*}
as before.
Then the graph $\gg(q)$ contains the following two cycles:
\begin{align*}
&
(a_1^-, a_1)
\xrightarrow{\alpha_1}(a_2^-, a_2)\xrightarrow{\alpha_2}\cdots
\xrightarrow{\alpha_{N-1}}(a_N^-, a_N)
\xrightarrow{\alpha_N}(a_1^-, a_1),\\
&
(b_1, b_1^+)
\xrightarrow{\overline{\alpha_1}}(b_2, b_2^+)
\xrightarrow{\overline{\alpha_2}}\cdots
\xrightarrow{\overline{\alpha_{N-1}}}(b_N, b_N^+)
\xrightarrow{\overline{\alpha_N}}(b_1, b_1^+),
\intertext{and the following two paths:}
&
(a_1, a_1^+)
\xrightarrow{\alpha_1}(a_2, a_2^+)\xrightarrow{\alpha_2}\cdots
\xrightarrow{\alpha_{N-2}}(a_{N-1}, a_{N-1}^+)
\xrightarrow{\alpha_{N-1}}
(\eta_{\alpha_N^+},\eta_{\alpha_N^+}^+)
\xrightarrow{\alpha_N^+}
(b_1,b_1^+),\\
&
(b_1^-, b_1)
\xrightarrow{\overline{\alpha_1}}(b_2^-, b_2)
\xrightarrow{\overline{\alpha_2}}\cdots
\xrightarrow{\overline{\alpha_{N-2}}}(b_{N-1}^-, b_{N-1})
\xrightarrow{\overline{\alpha_{N-1}}}(\theta_{\overline{\alpha_N}}^-,\theta_{\overline{\alpha_N}})
\xrightarrow{\overline{\alpha_N^+}}
(a_1^-,a_1).
\end{align*}
\end{lemma}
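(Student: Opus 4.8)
The plan is to reduce the four assertions to two using the reflection symmetry of $\gg(q)$, and then to verify each edge by checking the two endpoint inequalities encoded in the relation $T_k(I)\supset J$. Recall from Lemma~\ref{l33}~(i) that $x\xmapsto{i}y$ holds iff $\frac{M}{q-1}-x\xmapsto{\overline{i}}\frac{M}{q-1}-y$, and that by \eqref{35} the graph $\gg(q)$ is invariant under the reflection $R\colon x\mapsto\frac{M}{q-1}-x$. Since $R$ sends $a_i$ to $b_i$, it maps the vertex $(a_i^-,a_i)$ onto $(b_i,b_i^+)$ and the vertex $(a_i,a_i^+)$ onto $(b_i^-,b_i)$, while turning a label $\alpha_i$ into $\overline{\alpha_i}$; it also sends $\eta_{\alpha_N^+}$ to $\theta_{\overline{\alpha_N}}$ (Lemma~\ref{l33}~(iii)) and $(b_1,b_1^+)$ to $(a_1^-,a_1)$. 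Hence the second cycle is the reflection of the first and the fourth path is the reflection of the third, so it suffices to establish the first cycle and the third path.

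For the first cycle I would exploit that an edge $I\xrightarrow{k}J$ means $T_k(I)\supset J$, which, since $T_k$ is an increasing affine bijection, amounts to the two endpoint inequalities $T_k(\inf I)\le\inf J$ and $T_k(\sup I)\ge\sup J$. For the edge $(a_i^-,a_i)\xrightarrow{\alpha_i}(a_{i+1}^-,a_{i+1})$ (with $a_{N+1}:=a_1$ and $\alpha_N$ closing the cycle), Lemma~\ref{l34}~(v) gives $T_{\alpha_i}(a_i)=a_{i+1}$, so the upper endpoints match \emph{exactly}. The remaining inequality $T_{\alpha_i}(a_i^-)\le a_{i+1}^-$ I would obtain by identifying the special point $a_i^-$ lying immediately below $a_i$ and computing its image: using the explicit action of the maps $T_k$ on the points $a_j,b_j,\theta_j,\eta_j$ (Lemmas~\ref{l33}, \ref{l34}~(v) and~\ref{l35}), one checks that $T_{\alpha_i}(a_i^-)$ again lands on one of these special points, at or below $a_{i+1}^-$, whence monotonicity of $T_{\alpha_i}$ yields the containment.

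For the third path the lower endpoints of the vertices $(a_i,a_i^+)$ are matched by the same relations $T_{\alpha_i}(a_i)=a_{i+1}$, and the upper containments are handled as above. The delicate steps are the two edges adjacent to the switch region. For $(a_{N-1},a_{N-1}^+)\xrightarrow{\alpha_{N-1}}(\eta_{\alpha_N^+},\eta_{\alpha_N^+}^+)$ the lower side is controlled by $T_{\alpha_{N-1}}(a_{N-1})=a_N=\theta_{\alpha_N^+}<\eta_{\alpha_N^+}$, while the upper side requires the image to reach past the boundary point $\eta_{\alpha_N^+}$; this is precisely the content of Lemma~\ref{l311}, namely $T_{\alpha_{N-1}}(a_{N-1}^+)>\eta_{\alpha_N^+}$. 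For the exit edge $(\eta_{\alpha_N^+},\eta_{\alpha_N^+}^+)\xrightarrow{\alpha_N^+}(b_1,b_1^+)$, Lemma~\ref{l33}~(iii) gives $T_{\alpha_N^+}(\eta_{\alpha_N^+})=\frac{M}{q-1}-1=b_1$, matching the lower endpoints, and the upper containment follows once more from monotonicity together with the location of $b_1^+$.

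I expect the main obstacle to be exactly these non-matching containments near the switch region. The subtlety is that Lemma~\ref{l311} only guarantees that the image \emph{overshoots} the single boundary point $\eta_{\alpha_N^+}$, whereas the edge demands that $T_{\alpha_{N-1}}\bigl((a_{N-1},a_{N-1}^+)\bigr)$ cover the \emph{entire} adjacent vertex, i.e.\ that $T_{\alpha_{N-1}}(a_{N-1}^+)$ reach $\eta_{\alpha_N^+}^+$ rather than stop strictly inside $(\eta_{\alpha_N^+},\eta_{\alpha_N^+}^+)$. Ruling this out, and likewise pinning down the neighbouring special points $a_i^-$, $\eta_{\alpha_N^+}^+$ and $b_1^+$, will require the explicit infinite greedy expansions of the $b_i$ and $\eta_j$ from Lemma~\ref{l35}~(ii) together with the order relations among the $a_i,b_i,\theta_j,\eta_j$ established in Lemmas~\ref{l34} and~\ref{l35}. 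This bookkeeping, rather than any single sharp estimate beyond Lemma~\ref{l311}, is where the real work lies; the edges internal to the $a_i$-chain are routine once the ordering of the special points is in hand.
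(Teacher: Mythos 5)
Your plan coincides with the paper's proof: reflection halves the work, Lemma \ref{l34} (v) supplies the endpoint identities for every edge except the two adjacent to the switch interval, and those two are handled exactly by Lemma \ref{l311} together with $T_{\alpha_N^+}(\eta_{\alpha_N^+})=\frac{M}{q-1}-1=b_1$ from Lemma \ref{l33} (iii). The containment subtlety you flag is genuine but resolves just as you indicate --- under the forced map $T_k$ each special endpoint $a_j,b_j,\theta_j,\eta_j$ is carried to another special point, so an image point strictly beyond $\eta_{\alpha_N^+}$ (resp.\ strictly below $a_{i+1}$) is automatically $\ge\eta_{\alpha_N^+}^+$ (resp.\ $\le a_{i+1}^-$) --- and the paper's own write-up is in fact even terser than your sketch on this point.
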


\begin{proof}
The existence of the indicated edges follows from Lemma \ref{l34} (v), except for the last edges
\begin{equation*}
(a_{N-1}, a_{N-1}^+)
\xrightarrow{\alpha_{N-1}}
(\eta_{\alpha_N^+},\eta_{\alpha_N^+}^+)
\xrightarrow{\alpha_N^+}
(b_1,b_1^+)
\end{equation*}
and
\begin{equation*}
(b_{N-1}^-, b_{N-1})
\xrightarrow{\overline{\alpha_{N-1}}}(\theta_{\overline{\alpha_N}}^-,\theta_{\overline{\alpha_N}})
\xrightarrow{\overline{\alpha_N^+}}
(a_1^-,a_1)
\end{equation*}
of the two paths.
The existence of the first edge follows from the relations $T_{\alpha_{N-1}}(a_{N-1})=a_N=\theta_{\alpha_N^+}$ and $T_{\alpha_{N-1}}(a_{N-1}^+)>
\eta_{\alpha_N^+}$ (see Lemma \ref{l311}), because $(\theta_{\alpha_N^+},\eta_{\alpha_N^+})$ is a switch interval.
The existence of the fourth edge follows from the equality
\begin{equation*}
T_{\overline{\alpha_N^+}}(\theta_{\overline{\alpha_N}})
=\overline{\alpha_N}-\overline{\alpha_N^+}
=1=a_1.
\end{equation*}
The existence of the remaining two edges follows by reflection.
\end{proof}

Now we are ready to determine the numbers and sequences generated by the graph $\gg(q)$:

\begin{proof}[Proof of Theorem \ref{t11} for $q\in\uuu\setminus\uu$]
In view of Proposition  \ref{p210} we prove the equivalent relations
\begin{equation*}
\gg_q=\overline{\uu_q}
\qtq{and}
\gg_q'=\set{a(x,q)\ :\ x\in \overline{\uu_q}}.
\end{equation*}

First we show that if $(c_i)\in\gg_q'$,
then $(c_i)_q\in\overline{\uu_q}$ and
$(c_i)=a(x_1,q)$.
Setting $x_i:=(c_ic_{i+1}\cdots)_q$ for $i=1,2,\ldots$ as in Lemma \ref{l38}, none of the points $x_i$ belongs to the interior of $S_q$ by the definition of the graph.
If $x_i\notin S_q$ for all $i\ge 1$, i.e., if we always avoid the switch region, then $x_1\in\uu_q$ by Lemma \ref{l31}.
Furthermore,  $(c_i)$ being the only expansion of $x_1$, $(c_i)=a(x_1,q)$.

Otherwise there exists a first $k$ such that $x_k\in S_q$, i.e., $x_k\in\set{\theta_j, \eta_j\ :\ j=1,\ldots,N}$; we will prove that $x_1\in \overline{\uu_q}$ and $(c_i)=a(x_1,q)$.

Assume first that $x_k=\theta_j$ for some $j$.
Then $\theta_j\in \overline{\uu_q}$ by Lemma \ref{l35} (i).

Since $\theta_j\in\overline{\uu_q}$ and $\theta_j$ has a finite greedy expansion by definition, it has a unique infinite expansion by Proposition \ref{p210} (vi).
Since all expansions of $x_1$ start with $c_1\cdots c_{k-1}$ by the minimality of $k$, all expansions of $x_1$ are obtained by adding to the expansions of $\theta_j$ the same prefix  $c_1\cdots c_{k-1}$.
Hence $x_1$ also has a unique infinite expansion, and therefore it belongs to $\overline{\uu_q}$  by Proposition \ref{p210} (vi).
Since both $a(x_1,q)$ and $(c_i)$ are infinite expansions of $x_1$ (they are even doubly infinite by Proposition \ref{p210} (i) and Lemma \ref{l41}), we conclude that $a(x_1,q)=(c_i)$.

Now assume that $x_k=\eta_j$ for some $j$.
Since $\overline{(c_i)}\in\gg_q'$ by \eqref{35}, repeating the preceding arguments we obtain that $\overline{x_1}\in\overline{\uu_q}$ and $a(\overline{x_1},q)=\overline{(c_i)}$.
Since $\overline{\uu_q}$ is invariant for reflections, hence we infer that $x_1\in \overline{\uu_q}$, and that $(c_i)$ is a doubly infinite expansion of $x_1$.
Since $\overline{\uu_q}=\vv_q$, this is the only doubly infinite expansion of $x_1$.
Since $a(x_1,q)$ is also doubly infinite, we conclude again that $(c_i)=a(x_1,q)$.
\medskip

It remains to prove that $a(x_1,q)\in\gg_q'$ for every $x_1\in\overline{\uu_q}$.
If $x_1\in\uu_q$, then this follows from Lemmas \ref{l31} and \ref{l38} (i) because $x_i\notin S_q$ and therefore $I_i\cap S_q=\varnothing$ for every $i$.

Finally, let $x_1\in\overline{\uu_q}\setminus\uu_q$.
If $x_1$ has a finite greedy expansion $c_1\cdots c_k^+0^{\infty}$, then  all other expansions of $x_1$ start with $c_1\cdots c_k$ by \cite[Proposition 4.2]{K2012}.
Setting $x_n:=(c_n\cdots c_k^+0^{\infty})_q$ for $n=2,3\ldots, k$, we have
\begin{equation*}
x_1\xmapsto{c_1}
x_2\xmapsto{c_2}
\cdots
\xmapsto{c_{k-2}}
x_{k-1}\xmapsto{c_{k-1}}
x_k=\frac{c_k^+}{q}=\theta_{c_k^+}.
\end{equation*}
We infer from $x_1\in\overline{\uu_q}\setminus\uu_q$ by using the lexicographic characterizations that $x_2,\ldots, x_k\in\overline{\uu_q}\setminus\uu_q$.
Approximating them with elements of $\uu_q$ hence we obtain that $x_1,\ldots, x_k$ are not interior points of $S_q$.
The points $x_1,\ldots, x_{k-1}$ do not belong to the boundary of $S_q$ either because their greedy expansions are $c_n\cdots c_k^+0^{\infty}$ for
$n=1,\ldots, k-1$,  and they are different from the greedy expansions of $\theta_j$ and $\eta_j$ for $j=1,\ldots, M$.
We have thus $\set{x_1,\ldots, x_{k-1}}\cap S_q=\varnothing$.
This implies that there exist interval-vertices $I_1,\ldots, I_{k-1}$ of $\gg(q)$ such that $x_n\in\overline{I_n}$ for $n=1,\ldots, k-1$, and $\gg(q)$ contains the path
\begin{equation*}
I_1
\xrightarrow{c_1}
I_2
\xrightarrow{c_2}
\cdots
\xrightarrow{c_{k-2}}
I_{k-1}
\xrightarrow{c_{k-1}}
(\theta_{c_k^+}^-,\theta_{c_k^+})
\xrightarrow{c_k}
(a_1^-, a_1).
\end{equation*}
Since $\gg(q)$ also contains the cycle
\begin{equation*}
(a_1^-, a_1)
\xrightarrow{\alpha_1}(a_2^-, a_2)\xrightarrow{\alpha_2}\cdots
\xrightarrow{\alpha_{N-1}}(a_N^-, a_N)
\xrightarrow{\alpha_N}(a_1^-, a_1)
\end{equation*}
by Lemma \ref{l41}, hence we infer that $\gg_q'$ contains the sequence
\begin{equation*}
c_1\cdots c_{k-1}c_k(\alpha_1\cdots\alpha_N)^{\infty}.
\end{equation*}
Since
\begin{equation*}
\left(c_1\cdots c_{k-1}c_k(\alpha_1\cdots\alpha_N)^{\infty}\right)_q
=\left(c_1\cdots c_{k-1}c_k^+\ 0^{\infty}\right)_q
=x_1,
\end{equation*}
we conclude that $x_1\in\gg_q$ and
\begin{equation*}
a(x_1,q)=c_1\cdots c_{k-1}\alpha_N(\alpha_1\cdots\alpha_N)^{\infty}\in\gg_q'.
\end{equation*}

If $x_1$ does not have a finite greedy expansion, then $y_1:=\frac{1}{q-1}-x_1\in\overline{\uu_q}\setminus\uu_q$ has a finite greedy expansion, so that $a(y_1,q)\in\gg_q'$ by the preceding paragraph.
Then we also have $\overline{a(y_1,q)}\in\gg_q'$ by the symmetry of the graph $\gg(q)$.
Since $x_1\in\overline{\uu_q}\setminus\uu_q$ has only one doubly infinite expansion, we conclude that $a(x_1,q)=\overline{a(y_1,q)}\in\gg_q'$.
\end{proof}

Let us consider the subgraph $\tilde\gg(q)$ obtained from $\gg(q)$ by keeping only the vertices that are subintervals of $(b_1,a_1)$.
We  denote by $\tilde\gg_q'$ the set of sequences generated by $\tilde\gg(q)$, and we set
\begin{equation*}
\tilde\gg_q:=\set{(c_i)_q\ :\ (c_i)\in \tilde\gg_q'}.
\end{equation*}
Let us also introduce the following sets:
\begin{equation}\label{41}
\begin{split}
&\tilde\vv_q:=
\set{x\in\vv_q\ :\ \overline{\alpha(q)}\le(a_{n+i}(x,q))\le\alpha(q)\qtq{for all}n\ge 0};\\
&\tilde\vv_q':=
\set{a(x,q)\ :\ x\in \tilde\vv_q}.
\end{split}
\end{equation}

\begin{lemma}\label{l42}
We have $\vv_{q}\cap [b_1, a_1]=\tilde \vv_{q}$.
\end{lemma}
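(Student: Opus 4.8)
The plan is to exploit, via Proposition \ref{p21} (ii), that $x\mapsto a(x,q)$ is an \emph{increasing} bijection, anchored by the two identities $a(a_1,q)=\alpha(q)$ and $a(b_1,q)=\overline{\alpha(q)}$. The first is immediate since $a_1=1$. For the second I would not rely on Lemma \ref{l35} (ii), which is stated only for $i\le N-1$ and is therefore vacuous when $N=1$ (a genuine possibility in $\uuu\setminus\uu$, e.g. $M=4$, $q=4$, where $\alpha(q)=3^{\infty}$). Instead I would observe that both $a(b_1,q)$ and $\overline{\alpha(q)}$ are doubly infinite expansions of $b_1$: the former by Proposition \ref{p210} (i), the latter because $\overline{\alpha(q)}$ is the reflection of the doubly infinite sequence $\alpha(q)$ and $(\overline{\alpha(q)})_q=\frac{M}{q-1}-1=b_1$; since $b_1\in\vv_q$ by Lemma \ref{l34} (ii), it has exactly one doubly infinite expansion, so the two coincide.

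The inclusion $\tilde\vv_q\subseteq\vv_q\cap[b_1,a_1]$ is then immediate: for $x\in\tilde\vv_q$ the case $n=0$ of its defining inequalities reads $\overline{\alpha(q)}\le a(x,q)\le\alpha(q)$, and running the increasing bijection backwards gives $b_1\le x\le a_1$, while $x\in\vv_q$ holds by definition of $\tilde\vv_q$. For the reverse inclusion I take $x\in\vv_q\cap[b_1,a_1]$ and write $a(x,q)=(a_i)$; monotonicity again yields the case $n=0$, namely $\overline{\alpha(q)}\le(a_i)\le\alpha(q)$, and the real work is to propagate \emph{both} inequalities to every shift.

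The engine is a single reflection-symmetric combinatorial claim: if a sequence $(d_i)$ satisfies $(d_i)\le\alpha(q)$ and $(d_{m+i})\le\alpha(q)$ whenever $d_m<M$, then $(d_{n+i})\le\alpha(q)$ for \emph{all} $n\ge 0$. I would prove this by taking the least $n$ with $(d_{n+i})>\alpha(q)$ (so $n\ge 1$); the second hypothesis forces $d_n=M$, and then comparing $(d_{(n-1)+i})\le\alpha(q)$ (minimality) against $(d_{n+i})>\alpha(q)$ digit by digit forces $\alpha_1=\alpha_2=\cdots=M$, i.e. $\alpha(q)=M^{\infty}$, which is impossible since $q<M+1$. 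I then apply the claim twice. For $(d_i)=(a_i)$ the hypothesis ``$(d_{m+i})\le\alpha(q)$ when $d_m<M$'' is exactly the quasi-greedy property of Proposition \ref{p21} (ii), giving $(a_{n+i})\le\alpha(q)$ for all $n$. For $(d_i)=\overline{(a_i)}$ one has $d_m<M\iff a_m>0$ and $(d_{m+i})\le\alpha(q)\iff\overline{(a_{m+i})}\le\alpha(q)$, so the hypothesis becomes precisely the membership $x\in\vv_q$ in the form \eqref{22}, while the anchor $\overline{(a_i)}\le\alpha(q)$ is $x\ge b_1$; the claim then yields $\overline{(a_{n+i})}\le\alpha(q)$, i.e. $(a_{n+i})\ge\overline{\alpha(q)}$, for all $n$. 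Together these are exactly the defining inequalities of $\tilde\vv_q$.

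The main obstacle is exactly the two boundary cases $a_n=M$ (for the upper bound) and $a_n=0$ (for the lower bound), where neither the quasi-greedy characterization of Proposition \ref{p21} (ii) nor the definition of $\vv_q$ supplies a lexicographic bound directly. It is the minimal-counterexample induction, which collapses $\alpha(q)$ to the forbidden sequence $M^{\infty}$, that closes these gaps; packaging the propagation as one combinatorial lemma and feeding it the reflected sequence $\overline{(a_i)}$ is what lets a single statement dispatch both the $a_n=M$ and the $a_n=0$ obstructions.
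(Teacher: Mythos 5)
Your proposal is correct and follows essentially the same route as the paper: both proofs use the increasing bijection $x\mapsto a(x,q)$ of Proposition \ref{p21} (ii) to obtain the $n=0$ inequalities from $b_1\le x\le a_1$, and then propagate them to all shifts, with the reflection symmetry handling the lower bound. The only (immaterial) difference is in the propagation step past a digit $a_n=M$: the paper compares the shift at $n$ directly with the shift at the start of the maximal block of $M$'s containing position $n$, while you run a minimal-counterexample induction that collapses $\alpha(q)$ to the forbidden sequence $M^{\infty}$ — both arguments are valid and of the same depth.
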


\begin{proof}
If $q=M+1$, then $\alpha(q)=M^{\infty}$ and hence $\vv_{q}=[b_1, a_1]=\tilde \vv_{q}=[0,1]$.
Henceforth we assume that $1<q<M+1$.
By definition we have $x\in\vv_q$ if and only if
\begin{align*}
&(a_{n+i}(x,q))\le\alpha(q)\qtq{whenever}a_n(x,q)<M,
\intertext{and}
&\overline{\alpha(q)}\le(a_{n+i}(x,q))\qtq{whenever}a_n(x,q)>0.
\end{align*}
Since  $q<M+1$, we have $0<b_1<a_1<M/(q-1)$, and we have both $a_n(x,q)<M$ and $a_n(x,q)>0$ infinitely many times for each if $0<x<\frac{M}{q-1}$.

Since by Proposition \ref{p21} (ii) we have $x\in [b_1, a_1]$  if and only if
\begin{equation*}
\overline{\alpha(q)}\le a(x,q)\le \alpha(q),
\end{equation*}
we have $\tilde\vv_q\subseteq \vv_q\cap [b_1, a_1]$.
For the converse relation we show that if $x\in\vv_q\cap [b_1, a_1]$, then in fact \begin{equation*}
\overline{\alpha(q)}\le (a_{n+i}(x,q))\le \alpha(q)\qtq{for all}n\ge 0.
\end{equation*}

Indeed, the second inequality holds for $n=0$ and for infinitely many indices $n$ satisfying $a_n(x,q)<M$.
Therefore, if $a_n(x,q)=M$ for some $n$, then, writing $c_i$ instead of $a_i(x,q)$ for brevity, there exist two integers $k$ and $\ell$ such that $0\le k<n\le\ell$,
\begin{equation*}
(k=0\text{ or }c_k<M),\quad
c_{k+1}=c_{k+2}=\cdots=c_{\ell}=M\qtq{and}
c_{\ell+1}<M.
\end{equation*}
Then
\begin{equation*}
c_{n+1}\cdots c_{\ell+1}<M^{\ell-n+1}=c_{k+1}\cdots c_{\ell+k-n+1},
\end{equation*}
and therefore
\begin{equation*}
c_{n+1}c_{n+2}\cdots<c_{k+1}c_{k+2}\cdots
\le\alpha(q).
\end{equation*}
The proof of the second inequality, is similar, by setting $c_i:=\overline{a_i(x,q)}$ instead of $a_i(x,q)$.
\end{proof}

\begin{corollary}\label{c43}
Let $q\in \uuu\setminus\uu$. Then $\tilde \gg_{q}=\tilde \vv_{q}$ and $\tilde \gg'_{q}=\tilde \vv'_{q}$.
\end{corollary}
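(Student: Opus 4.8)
The plan is to read off the corollary from the three facts already available for $q\in\uuu\setminus\uu$: Theorem \ref{t11} (so that $\gg_q=\vv_q$ and $\gg_q'=\vv_q'$), Lemma \ref{l42} (so that $\tilde\vv_q=\vv_q\cap[b_1,a_1]$), and the fact that, since $a(a_1,q)=\alpha(q)$ and $a(b_1,q)=\overline{\alpha(q)}$, Proposition \ref{p21}(ii) gives the equivalence $y\in[b_1,a_1]\iff\overline{\alpha(q)}\le a(y,q)\le\alpha(q)$. The geometric bridge between the restricted graph and the restricted set is the observation that, because $b_1$ and $a_1$ are among the removed endpoints, every vertex of $\gg(q)$ is contained in exactly one of $(0,b_1)$, $(b_1,a_1)$, $(a_1,\frac{M}{q-1})$; the vertices of $\tilde\gg(q)$ are precisely the \emph{inner} ones, those lying in $(b_1,a_1)$, and no vertex straddles either endpoint.

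For the inclusion $\tilde\gg_q'\subseteq\tilde\vv_q'$ I would argue directly. If $(c_i)\in\tilde\gg_q'$ it is generated by a path $(I_i)$ with every $I_i\subseteq(b_1,a_1)$, so Lemma \ref{l38}(ii) forces $x_n\in\overline{I_n}\subseteq[b_1,a_1]$ for all $n$, where $x_n:=(c_nc_{n+1}\cdots)_q$. Since $(c_i)\in\gg_q'=\vv_q'$ we have $(c_i)=a(x_1,q)$ with $x_1\in\vv_q$, and as $x_1\in[b_1,a_1]$ as well, Lemma \ref{l42} gives $x_1\in\tilde\vv_q$ and hence $(c_i)\in\tilde\vv_q'$. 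Passing to the values $(c_i)_q$ yields $\tilde\gg_q\subseteq\tilde\vv_q$.

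The substantive direction is $\tilde\vv_q'\subseteq\tilde\gg_q'$. Given $x_1\in\tilde\vv_q=\vv_q\cap[b_1,a_1]$, the sequence $(c_i):=a(x_1,q)$ lies in $\gg_q'$, so by Lemma \ref{l38}(i) it is generated by a path $(I_i)$ in $\gg(q)$ that we may start with any vertex $I_1$ whose closure contains $x_1$; since $x_1\in[b_1,a_1]$ we may choose $I_1$ to be inner, taking the inner side $(a_1^-,a_1)$ or $(b_1,b_1^+)$ at the two endpoints. The task is then to show the whole path stays inner, and here the key tool is Proposition \ref{p313}(i): it shows that every edge of $\gg(q)$ arriving at an \emph{outer} vertex (one in $(0,b_1)$ or $(a_1,\frac{M}{q-1})$) issues only from an outer vertex. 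Consequently no edge passes from an inner vertex to an outer one, so once $I_1$ is inner the inductively constructed $I_{n+1}$—any valid choice with $x_{n+1}\in\overline{I_{n+1}}$—is automatically inner. Thus $(I_i)$ is a path in $\tilde\gg(q)$, $(c_i)\in\tilde\gg_q'$, and taking values gives $\tilde\vv_q\subseteq\tilde\gg_q$.

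I expect the backward inclusion to be the main obstacle, precisely because the hypothesis $x_1\in\tilde\vv_q$ only constrains the tail \emph{values} to lie in $[b_1,a_1]$, whereas one must produce a generating path confined to $\tilde\gg(q)$. The clean resolution is the one-sidedness of the transition structure recorded in Proposition \ref{p313}(i), which converts the value constraint into the statement that inner vertices are forward-invariant; the only mild care needed is the choice of the inner vertex at tails hitting $b_1$ or $a_1$, which is legitimate exactly because those points are endpoints of inner vertices. Once both inclusions are established at the level of sequences, the equality $\tilde\gg_q=\tilde\vv_q$ follows by applying $(c_i)\mapsto(c_i)_q$.
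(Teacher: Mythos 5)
Your proof follows the same skeleton as the paper's: Theorem \ref{t11} (i) to identify $\gg_q'$ with $\vv_q'$, Lemma \ref{l42} to identify $\vv_q\cap[b_1,a_1]$ with $\tilde\vv_q$, and the bridge $\tilde\gg_q=\gg_q\cap[b_1,a_1]$ between the restricted graph and the restricted set. The difference is that the paper dismisses this bridge as following ``from the definition of $\tilde\gg_q$,'' whereas you supply an actual argument for it: Lemma \ref{l38} (ii) gives the easy inclusion, and Proposition \ref{p313} (i) --- read as saying that every edge arriving at a vertex outside $(b_1,a_1)$ issues from a vertex outside $(b_1,a_1)$, so that the inner vertices are forward-invariant --- gives the hard one. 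That forward-invariance observation is exactly what the paper's terse first equality conceals, so this expansion is worthwhile.

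The one soft spot is the starting vertex in the backward direction. For $x_1$ in the open interval $(b_1,a_1)$ there is nothing to choose: since no vertex straddles $b_1$ or $a_1$, any vertex whose closure contains $x_1$ is automatically inner, so any generating path (whose existence Theorem \ref{t11} guarantees) begins inner and, by your invariance argument, stays inner. But for $x_1\in\set{b_1,a_1}$ you assert that one ``may choose'' the inner vertex $(b_1,b_1^+)$ or $(a_1^-,a_1)$ as $I_1$. Lemma \ref{l38} (i) does not license this: it only produces a path in the larger interval system that still contains the switch intervals, and in any case a sequence generated by some path in $\gg(q)$ need not be generated by a path with an arbitrarily prescribed admissible starting vertex. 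The gap is harmless here because $a(b_1,q)=\overline{\alpha(q)}$ and $a(a_1,q)=\alpha(q)$, and Lemma \ref{l41} exhibits the two cycles on the inner vertices $(b_i,b_i^+)$ and $(a_i^-,a_i)$ generating exactly these two sequences; you should dispose of the two endpoint cases by citing that lemma rather than by appealing to a free choice of $I_1$.
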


\begin{proof}
The equality $\tilde \gg_{q}=\tilde \vv_{q}$ follows from the relations
\begin{equation*}
\tilde \gg_{q}=\gg_{q}\cap [b_1, a_1]=\vv_{q}\cap [b_1, a_1]=\tilde \vv_{q}.
\end{equation*}
Here the first equality follows from the definition of $\tilde \gg_q$, the second one  from Theorem \ref{t11} (i) stating that $\gg_q=\vv_q$, and the last one  from Lemma  \ref{l42}.
The equality $\tilde \gg'_{q}=\tilde \vv'_{q}$ hence follows by recalling from
Proposition \ref{p210} (vii) that the only doubly infinite expansion of a number $x\in\vv_q$ is its quasi-greedy expansion.
\end{proof}

\section{Proof of Theorem \ref{t13}}\label{s5}

This section is devoted to the proof of the isomorphism of the graphs $\gg(q_0)$ and $\gg(q_1)$.
We write
\begin{equation*}
\beta(q_0)
=\alpha_1\cdots\alpha_n^+\ 0^{\infty}
\qtq{and}
\beta(q_1)
=\alpha_1\cdots\alpha_n^+\ \overline{\alpha_1\cdots\alpha_n}\ 0^{\infty}.
\end{equation*}

By Lemmas \ref{l34}, \ref{l35} and \ref{l37}  the numbers $a_i, b_i, \theta_j, \eta_j$ for $q_0$ and the similar numbers $\tilde a_i, \tilde \theta_j, \tilde\eta_j$ for $q_1$ have the following greedy expansions in bases $q_0$ and $q_1$, respectively:
\begin{equation*}
b(\theta_0,q_0)=b(\tilde\theta_0,q_1)=0^{\infty},\quad
b(\eta_{M+1},q_0)=b(\tilde\eta_{M+1},q_1)=M^{\infty},
\end{equation*}
and, for $1\le i\le n$ and $1\le j\le M$:
\begin{align*}
b(a_i,q_0)
&:=\alpha_i\cdots\alpha_n^+\ 0^{\infty},\\
b(b_i,q_0)
&:=\overline{\alpha_i\cdots\alpha_n\ (\alpha_1\cdots\alpha_n)^{\infty}},\\
b(\theta_j,q_0)
&:=j0^\infty,\\
b(\eta_j,q_0)
&:=j\overline{(\alpha_1\cdots\alpha_n)^{\infty}},
\intertext{and}
b(\tilde a_i,q_1)
&:=\alpha_i\cdots\alpha_n^+\
\overline{\alpha_1\cdots\alpha_n}\ 0^{\infty},\\
b(\tilde a_{n+i},q_1)
&:=\overline{\alpha_i\cdots\alpha_n}\ 0^{\infty},\\
b(\tilde \theta_j,q_1)
&:=j0^\infty,\\
b(\tilde\eta_j,q_1)
&:=j\overline{\alpha_1\cdots\alpha_n}0^\infty.
\end{align*}
Observe the equalities
\begin{equation*}
a_n=\theta_{\alpha_n^+},\quad
b_n=\eta_{\overline{\alpha_n}},\quad
\tilde a_{2n}=\tilde\theta_{\overline{\alpha_n}}\qtq{and}
\tilde a_n=\tilde\eta_{\alpha_n^+}.
\end{equation*}

We admit temporarily the following technical result:

\begin{lemma}\label{l51}
Let us introduce the sets
\begin{equation*}
A:=\set{a_i, b_i\ :\ i=1,\ldots,n-1}
\cup\set{\theta_0,\ldots, \theta_M},
\cup\set{\eta_1,\ldots, \eta_{M+1}}
\end{equation*}
and
\begin{equation*}
\tilde A:=\set{\tilde a_i, \tilde a_{n+i}\ :\ i=1,\ldots, n-1}
\cup\set{\tilde \theta_0,\ldots, \tilde\theta_M}
\cup\set{\tilde \eta_1,\ldots, \tilde\eta_{M+1}}
\end{equation*}
and a bijection $f:A\to \tilde A$ by the following formula:
\begin{align*}
&f(a_i):=\tilde a_i\qtq{and}
f(b_i):=\tilde a_{n+i}\qtq{for}i=1,\ldots,n-1,\\
&f(\theta_j):=\tilde \theta_j\qtq{for}j=0,\ldots,M,\\
&f(\eta_j):=\tilde \eta_j\qtq{for}j=1,\ldots,M+1.
\end{align*}
Then the map $f$ is increasing.
\end{lemma}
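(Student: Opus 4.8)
The plan is to reduce the statement to a comparison of quasi-greedy expansions, which has the great advantage of being independent of the base. All the numbers listed in $A$ and $\tilde A$ belong to $\vv_{q_0}$ and $\vv_{q_1}$ respectively by Lemma \ref{l34} (ii), so each has a unique doubly infinite expansion, namely its quasi-greedy expansion, and by Proposition \ref{p21} (ii) the map $x\mapsto a(x,q)$ is strictly increasing. Hence the order of a finite family of such numbers coincides with the \emph{lexicographic} order of their quasi-greedy expansions, and this lexicographic order does not depend on the base. Thus $f$ is increasing if and only if the induced map $g$, sending the quasi-greedy expansion in base $q_0$ of each $x\in A$ to the quasi-greedy expansion in base $q_1$ of $f(x)$, preserves the lexicographic order.

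First I would rewrite all the quasi-greedy expansions uniformly. Writing $\sigma$ for the shift and using the greedy expansions listed above together with Lemma \ref{l35} (ii) for $q_0$ and Lemma \ref{l37} (i)--(ii) for $q_1$ (and the greedy-to-quasi-greedy rule for finite expansions), one checks that
\begin{align*}
a(a_i,q_0)&=\sigma^{i-1}\alpha(q_0), & a(b_i,q_0)&=\overline{\sigma^{i-1}\alpha(q_0)},\\
a(\theta_j,q_0)&=(j-1)\alpha(q_0), & a(\eta_j,q_0)&=j\,\overline{\alpha(q_0)},
\end{align*}
while $a(\theta_0,q_0)=0^{\infty}$ and $a(\eta_{M+1},q_0)=M^{\infty}$. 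Using the identity $\overline{\alpha(q_1)}=(\overline{\alpha_1\cdots\alpha_n^+}\,\alpha_1\cdots\alpha_n^+)^{\infty}$, the corresponding expansions in base $q_1$ are given by exactly the same formulas with $\alpha(q_0)$ replaced by $\alpha(q_1)$. Consequently $g$ simply substitutes $\alpha(q_1)$ for $\alpha(q_0)$, leaving the shift, the reflection bar, and the prefixed digit untouched. I would also record that $f$ commutes with the order-reversing reflections $x\mapsto\frac{M}{q_0-1}-x$ on $A$ and $x\mapsto\frac{M}{q_1-1}-x$ on $\tilde A$, which halves the casework.

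It then remains to show that this substitution is lexicographically monotone on the finite family at hand. The leading digit of each expansion is preserved by $g$: the digits $\alpha_i$ with $i\le n-1$, together with $j-1$, $j$, $0$ and $M$, all reappear unchanged. Hence any comparison between two expansions with \emph{distinct} leading digits has the same sign before and after applying $g$, and only comparisons between expansions sharing a leading digit require attention. Stripping the common digit, these reduce to comparisons among the sequences $\sigma^{k}\alpha(q)$, $\overline{\sigma^{k}\alpha(q)}$, $\alpha(q)$ and $\overline{\alpha(q)}$ for $q=q_0$ versus $q=q_1$. Each such comparison is governed by the admissibility inequalities $\sigma^{k}\alpha(q)\le\alpha(q)$ and $\overline{\sigma^{k}\alpha(q)}\le\alpha(q)$ of Propositions \ref{p23} (ii) and \ref{p24} for $q_0$, and their analogues from Proposition \ref{p25} and Lemma \ref{l37} (iii) for $q_1$, the point being that the relevant inequalities are \emph{strict} whenever the two numbers are distinct, which follows from the minimality of the periods $\alpha_1\cdots\alpha_n$ of $\alpha(q_0)$ and $\alpha_1\cdots\alpha_n^+\overline{\alpha_1\cdots\alpha_n^+}$ of $\alpha(q_1)$.

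The main obstacle is precisely this last point. I must verify that passing from the period $w=\alpha_1\cdots\alpha_n$ of $\alpha(q_0)$ to the period $w^+\overline{w^+}$ of $\alpha(q_1)$ — that is, incrementing the last digit and appending the reflected block — preserves the direction of every strict lexicographic comparison among the relevant shifts and their reflections, and in particular never collapses a strict inequality into an equality nor reverses it. The delicate pairs are those whose expansions agree on a long prefix and first differ exactly at the position where $\alpha_n$ is replaced by $\alpha_n^+$, or where the reflected block $\overline{w^+}$ begins; ruling out a spurious tie there is where the minimal-period property and the strict inequalities of Proposition \ref{p25} (and of Proposition \ref{p24}, since $q_0\in\uuu\setminus\uu$) are indispensable. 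The reflection symmetry of $f$ then lets me carry out these checks only for the shifts of $\alpha(q_0)$ without the reflection bar, completing the proof that $g$, and hence $f$, is increasing.
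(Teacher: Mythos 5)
Your reduction is sound and, modulo the choice of expansion, follows the same first half as the paper's argument: you pass from the order of the points to the lexicographic order of their expansions via the monotone bijection of Proposition \ref{p21} (the paper uses greedy expansions through part (i), you use quasi-greedy ones through part (ii); both are legitimate order isomorphisms), you observe that leading digits are preserved so that only same-leading-digit pairs matter, and you use the reflection symmetry to halve the casework. The uniform description of $g$ as the substitution of $\alpha(q_1)=(w^+\overline{w^+})^{\infty}$ for $\alpha(q_0)=w^{\infty}$ in the formulas $\sigma^{i-1}\alpha(q)$, $\overline{\sigma^{i-1}\alpha(q)}$, $(j-1)\alpha(q)$, $j\,\overline{\alpha(q)}$ is correct and is a clean way to organize what the paper writes as the implications \eqref{51}--\eqref{56}.

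However, your proof stops exactly where the lemma's actual content begins. The sentence ``I must verify that passing from the period $w$ to the period $w^+\overline{w^+}$ preserves the direction of every strict lexicographic comparison among the relevant shifts and their reflections'' is not a proof of that fact; after your (correct) reductions it \emph{is} the lemma. The admissibility inequalities $\sigma^{k}\alpha(q)\le\alpha(q)$ and $\overline{\sigma^{k}\alpha(q)}\le\alpha(q)$ that you cite only compare each sequence with $\alpha(q)$ itself; they say nothing about the comparison of $\sigma^{i-1}\alpha(q)$ with $\sigma^{j-1}\alpha(q)$, or of a shift with a reflected shift, which are precisely the cases $a_i$ vs.\ $a_j$, $a_i$ vs.\ $b_j$ and $b_i$ vs.\ $b_j$. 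Each of these needs a genuine argument: one must locate the first position of disagreement, treat $i>j$ and $i<j$ separately, and, in the delicate subcases where the two words agree up to a block boundary, rule out a tie or a reversal by invoking Proposition \ref{p25} (v) together with the minimality of the period. This is what the paper does at length for \eqref{51}--\eqref{54}; see in particular the equality subcase of \eqref{53}, where a chain of implications is needed before a contradiction with Proposition \ref{p25} (v) is reached, and the equality subcase of \eqref{54}, which is excluded via Proposition \ref{p23} (ii). Until those comparisons (or their quasi-greedy analogues) are actually carried out, the proof is incomplete.
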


\begin{proof}[Proof of Theorem \ref{t13}]
Using the function $f$ of Lemma \ref{l51}, the map
\begin{equation*}
F((x_i,y_j)):=(f(x_i),f(y_j))
\end{equation*}
defines a bijection $F$ between the vertices $I_i$ of $\gg(q_0)$ and the vertices $J_i$ of $\gg(q_1)$ such that
\begin{equation*}
I_i\xrightarrow{k}I_j
\Longleftrightarrow
F(I_i)\xrightarrow{k}F(I_j).
\end{equation*}
Hence $F$ is an isomorphism between the graphs $\gg(q_0)$ and $\gg(q_1)$.
\end{proof}

\begin{proof}[Proof of Lemma \ref{l51}]
It follows Lemma \ref{l35} (iii)  that  all numbers in $\tilde A$  are distinct, and all numbers in $A$ are distinct, so that both sets have $2n+2M$ elements.
The restriction of $f$ onto
\begin{equation*}
\set{\theta_0,\ldots, \theta_M}
\cup\set{\eta_1,\ldots, \eta_{M+1}}
\end{equation*}
is increasing by the relations
\begin{align*}
&\theta_0<\theta_1<\eta_1
<\theta_2<\eta_2
<\cdots<\eta_{M-1}
<\theta_M<\eta_M<\eta_{M+1}
\intertext{and}
&\tilde \theta_0<\tilde \theta_1<\tilde \eta_1
<\tilde \theta_2<\tilde \eta_2
<\cdots<\tilde \eta_{M-1}
<\tilde \theta_M<\tilde \eta_M<\tilde \eta_{M+1}.
\end{align*}
Therefore the lemma will follow by proving the following equivalences for all $i,j\in\set{1,\ldots,n-1}$ and $k\in\set{1,\ldots,M}$:
\begin{align*}
&\tilde a_i<\tilde a_j\Longleftrightarrow a_i<a_j,\\
&\tilde a_{n+i}<\tilde a_j\Longleftrightarrow b_i<a_j,\\
&\tilde a_i<\tilde a_{n+j}\Longleftrightarrow a_i<b_j,\\
&\tilde a_{n+i}<\tilde a_{n+j}\Longleftrightarrow b_i<b_j,\\
&\tilde a_i<\tilde\theta_k\Longleftrightarrow a_i<\theta_k
\qtq{and}
\tilde a_i<\tilde\eta_k\Longleftrightarrow a_i<\eta_k,\\
&\tilde a_{n+i}<\tilde\theta_k\Longleftrightarrow b_i<\theta_k
\qtq{and}
\tilde a_{n+i}<\tilde\eta_k\Longleftrightarrow b_i<\eta_k,\\
&\tilde a_i>\tilde\theta_k\Longleftrightarrow a_i>\theta_k
\qtq{and}
\tilde a_i>\tilde\eta_k\Longleftrightarrow a_i>\eta_k,\\
&\tilde a_{n+i}>\tilde\theta_k\Longleftrightarrow b_i>\theta_k
\qtq{and}
\tilde a_{n+i}>\tilde\eta_k\Longleftrightarrow b_i>\eta_k.
\end{align*}

Since $f$ is a bijection between the sets $\set{a_i\ :\ i<n}$ and $\set{\tilde a_i\ :\ i<n}$, it suffices to prove the one-sided implications $a_i<a_j\Longrightarrow\tilde a_i<\tilde a_j$ instead of the equivalences of the first line.
A similar remark applies to the following seven lines.

Next, since $\tilde\theta_k^+=\tilde\eta_k$ and $\theta_k^+=\eta_k$, the last two relations in each of the last four lines are equivalent.
Furthermore, the fifth and seventh lines are equivalent because  $\tilde a_i\ne \tilde\theta_k$ and $a_i\ne\theta_k$ for all $i\in\set{1,\ldots,n-1}$ and $k\in\set{1,\ldots,M}$.
Similarly, the sixth and eighth lines are also equivalent.

Therefore it remains to establish the following six implications:

\begin{align}
& a_i<a_j \Longrightarrow \tilde a_i<\tilde a_j,\label{51}\\
&b_i<a_j\Longrightarrow \tilde a_{n+i}<\tilde a_j,\label{52}\\
& a_i<b_j \Longrightarrow \tilde a_i<\tilde a_{n+j},\label{53}\\
&  \tilde a_{n+i}<\tilde a_{n+j} \Longrightarrow b_i<b_j\label{54}\\
&a_i<\theta_k\Longrightarrow \tilde a_i<\tilde\theta_k,\label{55}\\
&\tilde a_{n+i}<\tilde\theta_k\Longrightarrow b_i<\theta_k.\label{56}
\end{align}

Here \eqref{51} is equivalent to the implication
\begin{equation}\label{57}
\alpha_i\cdots\alpha_n^+\ 0^{\infty}
<
\alpha_j\cdots\alpha_n^+\ 0^{\infty}
\Longrightarrow
\alpha_i\cdots\alpha_n^+\
\overline{\alpha_1\cdots\alpha_n}\ 0^{\infty}
<
\alpha_j\cdots\alpha_n^+\
\overline{\alpha_1\cdots\alpha_n}\ 0^{\infty}.
\end{equation}
If $i>j$, then the left inequality in \eqref{57} means that $$\alpha_i\cdots\alpha_n^+\le \alpha_j\cdots\alpha_{n+j-i}.$$
If the inequality is strict, we obtain the right inequality in \eqref{57}.
If we have an equality, then using Proposition \ref{p25} (v) we get
\begin{equation*}
\overline{\alpha_{n+j-i}\cdots\alpha_n^+}<\alpha_1\cdots\alpha_{i-j+1}\qtq{or}
\overline{\alpha_1\cdots\alpha_{i-j+1}}<\alpha_{n+j-i}\cdots\alpha_n^+;
\end{equation*}
together with the equality this yields the desired result.

If $i<j$,  then the left inequality in \eqref{57} means that
\begin{equation*}
\alpha_i\cdots\alpha_{n+i-j}< \alpha_j\cdots\alpha_{n}^+,
\end{equation*}
and this implies the right inequality in \eqref{57}.
\medskip

The relation  \eqref{52} is equivalent to
\begin{equation*}
\overline{\alpha_i\cdots\alpha_n\ (\alpha_1\cdots\alpha_n)^{\infty}}
<\alpha_j\cdots\alpha_n^+\ 0^{\infty}
\Longrightarrow
\overline{\alpha_i\cdots\alpha_n}\ 0^{\infty}
<
\alpha_j\cdots\alpha_n^+\
\overline{\alpha_1\cdots\alpha_n}\ 0^{\infty}.
\end{equation*}
If $i\le j$, then the first inequality implies
\begin{equation*}
\overline{\alpha_i\cdots\alpha_{i+n-j}}<\alpha_j\cdots\alpha_n^+,
\end{equation*}
and this implies the second inequality.
If $i>j$, then the first inequality implies
\begin{equation*}
\overline{\alpha_i\cdots\alpha_n\alpha_1\cdots\alpha_{i-j}}<\alpha_j\cdots\alpha_n^+,
\end{equation*}
and therefore
\begin{equation*}
\overline{\alpha_i\cdots\alpha_n}\ 0^{\infty}
\le
\overline{\alpha_i\cdots\alpha_n\alpha_1\cdots\alpha_{i-j}}\ 0^{\infty}
<\alpha_j\cdots\alpha_n^+\ 0^{\infty}
\le\alpha_j\cdots\alpha_n^+\
\overline{\alpha_1\cdots\alpha_n}\ 0^{\infty}.
\end{equation*}
\medskip

The relation \eqref{53} is equivalent to
\begin{equation}\label{58}
\alpha_i\cdots\alpha_n^+\ 0^{\infty}
<
\overline{\alpha_j\cdots\alpha_n\ (\alpha_1\cdots\alpha_n)^{\infty}}
\Longrightarrow
\alpha_i\cdots\alpha_n^+\
\overline{\alpha_1\cdots\alpha_n}\ 0^{\infty}
<
\overline{\alpha_j\cdots\alpha_n}\ 0^{\infty}.
\end{equation}
If $i\ge j$, then the first inequality of \eqref{58} implies that
\begin{equation*}
\alpha_i\cdots\alpha_n^+\le\overline{\alpha_j\cdots\alpha_{n+j-i}},
\end{equation*}
If this  inequality is strict, then the  right inequality in \eqref{58} follows. If we have an equality, then, since $\beta(q_0)=\alpha_1\cdots\alpha_n^+$, we have
\begin{equation*}
\alpha_{n+j-i+1}\cdots\alpha_{n}<\alpha_{n+j-i+1}\cdots\alpha_{n}^+\le \alpha_{1}\cdots\alpha_{i-j},
\end{equation*}
i.e.,
\begin{equation*}
\overline{\alpha_{1}\cdots\alpha_{i-j}}<\overline{\alpha_{n+j-i+1}\cdots\alpha_{n}}.
\end{equation*}
Together with the equality this yields the desired result.

If $i<j$, then the first inequality of \eqref{58} implies that
\begin{equation*}
\alpha_i\cdots\alpha_{i+n-j}\le\overline{\alpha_j\cdots\alpha_n}.
\end{equation*}
If this inequality is strict, then the second inequality of \eqref{58} follows again.
We conclude the proof in this case by showing that we cannot have an equality.
Indeed, in case of equality we would have the following implications, where the first one follows from the first inequality of \eqref{58}:
\begin{align*}
\alpha_{i+n-j+1}\cdots\alpha_n^+\ 0^{\infty}
<\overline{(\alpha_1\cdots\alpha_n)^{\infty}}
&\Longrightarrow
\alpha_{i+n-j+1}\cdots\alpha_n^+
\le\overline{\alpha_1\cdots\alpha_{j-i}}\\
&\Longrightarrow
\alpha_{i+n-j+1}\cdots\alpha_n
<\overline{\alpha_1\cdots\alpha_{j-i}}\\
&\Longrightarrow
\overline{\alpha_{i+n-j+1}\cdots\alpha_n}
>\alpha_1\cdots\alpha_{j-i}.
\end{align*}
However, the last inequality would contradict Proposition \ref{p25} (v).
\medskip

The relation \eqref{54} is equivalent to
\begin{equation}\label{59}
\overline{\alpha_i\cdots\alpha_n(\alpha_1\cdots\alpha_n)^{\infty}}<\overline{\alpha_j\cdots\alpha_n(\alpha_1\cdots\alpha_n)^{\infty}}
\Longrightarrow
\overline{\alpha_i\cdots\alpha_n}\ 0^{\infty}<\overline{\alpha_j\cdots\alpha_n}\ 0^{\infty}
\end{equation}

If $i>j$, then the left inequality in \eqref{59} implies that
\begin{equation*}
\overline{\alpha_i\cdots\alpha_n}\leq \overline{\alpha_j\cdots\alpha_{j-i+n}},
\end{equation*}
and hence the right inequality of \eqref{59} follows.

If $i<j$, then the first inequality of \eqref{59} implies that
\begin{equation*}
 \overline{\alpha_i\cdots\alpha_{i-j+n}}\leq \overline{\alpha_j\cdots\alpha_{n}},
\end{equation*}
and it remains to exclude the equality.
In case of equality the first inequality of \eqref{59} would also imply the inequality
\begin{equation*}
\overline{\alpha_{i-j+n+1}\cdots\alpha_n(\alpha_1\cdots\alpha_n)^{\infty}}<\overline{(\alpha_1\cdots\alpha_n)^{\infty}},
\end{equation*}
contradicting Proposition \ref{p23} (ii).
\medskip

The implication \eqref{55} is equivalent to
\begin{equation*}
\alpha_i\cdots\alpha_n^+0^{\infty}<k0^{\infty} \Longrightarrow \alpha_i\cdots\alpha_n^+
\overline{\alpha_1\cdots\alpha_n}0^{\infty}<k0^{\infty},
\end{equation*}
and this is obvious because both inequalities are equivalent to $\alpha_i<k$.
\medskip

Finally, the implication \eqref{56} is equivalent to
\begin{equation*}
\overline{\alpha_i\cdots\alpha_n(\alpha_1\cdots\alpha_n)^{\infty}}<k0^{\infty}\Longrightarrow
\overline{\alpha_i\cdots\alpha_n}0^{\infty}<k0^{\infty},
\end{equation*}
and this is obvious because both inequalities are equivalent to $\overline{\alpha_i}<k$.
\end{proof}

\section{Proof of Theorem \ref{t14}}\label{s6}

We have proved in the preceding section that the graphs $\gg(q_0)$ and $\gg(q_1)$ are isomorphic.
We will see that the graphs $\gg(q_m)$ and $\gg(q_{m+1})$ are not isomorphic if $m\ge 1$.
For this we need some particular properties of the graphs $\gg(q_{m+1})$ for $m\ge 1$.

Fix an integer $m\ge 1$, write
\begin{align*}
\beta(q_m)
=\alpha_1\cdots\alpha_n^+\ \overline{\alpha_1\cdots\alpha_n}\ 0^{\infty}
\end{align*}
and
\begin{align*}
\beta(q_{m+1})
=\alpha_1\cdots\alpha_n^+\ \overline{\alpha_1\cdots\alpha_n\
\alpha_1\cdots\alpha_n^+}\
\alpha_1\cdots\alpha_n^+\ 0^{\infty},
\end{align*}
and introduce the usual numbers $a_i,\theta_j,\eta_j$ for $q_m$ and $\tilde a_i, \tilde\theta_j, \tilde\eta_j$ for $q_{m+1}$.
It follows from the results of Section \ref{s3} that $\gg(q_m)$ and $\gg(q_{m+1})$ have respectively $2n+M-1$ and $4n+M-1$ vertices, respectively, and the greedy expansions of the endpoints of the interval-vertices are given by
\begin{equation*}
b(\theta_0,q_m)=b(\tilde\theta_0,q_{m+1})=0^{\infty},\quad
b(\eta_{M+1},q_m)=b(\tilde\eta_{M+1},q_{m+1})=M^{\infty},
\end{equation*}
and the following formulas where $i\in\set{1,\ldots,n}$ and $j\in\set{1,\ldots,M}$:
\begin{equation}\label{61}
\begin{split}
&b(a_i,q_m)=\alpha_i\cdots\alpha_n^+\ \overline{\alpha_1\cdots\alpha_n}\ 0^{\infty},\\
&b(a_{n+i},q_m)=\overline{\alpha_i\cdots\alpha_n}\ 0^{\infty},\\
&b(\theta_j,q_m)=j\ 0^{\infty},\\
&b(\eta_j,q_m)=j\ \overline{\alpha_1\cdots\alpha_n}\ 0^{\infty}
\end{split}
\end{equation}
for $q_m$, and
\begin{equation}\label{62}
\begin{split}
&b(\tilde a_i,q_{m+1})=
\alpha_i\cdots\alpha_n^+\ \overline{\alpha_1\cdots\alpha_n\
\alpha_1\cdots\alpha_n^+}\
\alpha_1\cdots\alpha_n^+\ 0^{\infty},\\
&b(\tilde a_{n+i},q_{m+1})=\overline{\alpha_i\cdots\alpha_n\
\alpha_1\cdots\alpha_n^+}\
\alpha_1\cdots\alpha_n^+\ 0^{\infty},\\
&b(\tilde a_{2n+i},q_{m+1})=
\overline{\alpha_i\cdots\alpha_n^+}\
\alpha_1\cdots\alpha_n^+\ 0^{\infty},\\
&b(\tilde a_{3n+i},q_{m+1})=
\alpha_i\cdots\alpha_n^+\ 0^{\infty},\\
&b(\tilde \theta_j,q_{m+1})=j\ 0^{\infty},\\
&b(\tilde \eta_j,q_{m+1})=j\ \overline{\alpha_1\cdots\alpha_n^+}
\ \alpha_1\cdots\alpha_n^+
\ 0^{\infty}
\end{split}
\end{equation}
for $q_{m+1}$.

It follows that
\begin{equation*}
a_{2n}=\theta_{\overline{\alpha_n}},\quad
a_n=\eta_{\alpha_n^+},\quad
\tilde a_{4n}=\tilde\theta_{\alpha_n^+}
\qtq{and}
\tilde a_{2n}=\tilde\eta_{\overline{\alpha_n}}.
\end{equation*}
We will constantly use these formulas without reference in this section.

\begin{lemma}\label{l61}
We consider the graph $\gg(q_{m+1})$ and the corresponding points $\tilde a_i, \tilde\theta_j, \tilde\eta_j$ for an integer $m\ge 1$.
\begin{enumerate}[\upshape (i)]
\item $(\tilde \eta_{\alpha_n^+}, \tilde a_n)$ and $(\tilde a_{3n}, \tilde \theta_{\overline{a_n}})$ are vertices of the graph $\gg(q_{m+1})$.
\item $\gg(q_{m+1})$ contains the following cycle of $2n$ vertices, denoted by $\cc_{m+1}$:
\begin{equation*}
\begin{split}
(\tilde a_{3n+1},\tilde a_1)
&\xrightarrow{\alpha_1}
\cdots
\xrightarrow{\alpha_{n-2}}
(\tilde a_{4n-1},\tilde a_{n-1})
\xrightarrow{\alpha_{n-1}}
(\tilde \eta_{\alpha_n^+},\tilde a_n)
\xrightarrow{\alpha_n^+}\\
(\tilde a_{2n+1},\tilde a_{n+1})
&\xrightarrow{\overline{\alpha_1}}
\cdots
\xrightarrow{\overline{\alpha_{n-2}}}
(\tilde a_{3n-1},\tilde a_{2n-1})
\xrightarrow{\overline{\alpha_{n-1}}}
(\tilde a_{3n},\tilde \theta_{\overline{\alpha_n}})
\xrightarrow{\overline{\alpha_n^+}}
(\tilde a_{3n+1},\tilde a_1).
\end{split}
\end{equation*}
\item There is no any other outgoing edge in $\gg(q_{m+1})$ from the vertices of $\cc_{m+1}$.
\item The cycle contains no consecutive intervals.
\item $\gg(q_{m+1})$ contains the  two paths
\begin{align*}
&(\tilde a_{2n-1},\tilde a_{2n-1}^+)
\xrightarrow{\overline{\alpha_{n-1}}}
(\tilde a_{2n},\tilde a_{2n}^+)
=(\tilde \eta_{\overline{\alpha_n}},\tilde \eta_{\overline{\alpha_n}}^+)
\xrightarrow{\overline{\alpha_{n}}}(\tilde a_{2n+1}, \tilde a_{2n+1}^+)
\intertext{and}
&(\tilde a_{4n-1}^-,\tilde a_{4n-1})
\xrightarrow{\alpha_{n-1}}
(\tilde a_{4n}^-,\tilde a_{4n})
=(\tilde \theta_{\alpha_n^+}^-,\tilde \theta_{\alpha_n^+})
\xrightarrow{\alpha_n}(\tilde a_1^-, \tilde a_1).
\end{align*}
\end{enumerate}
\end{lemma}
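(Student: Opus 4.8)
The whole lemma is a computation in the base $q_{m+1}$, whose period has length $4n$, so I would first record its digits explicitly: $\tilde\alpha_i=\alpha_i$ for $i<n$, $\tilde\alpha_n=\alpha_n^+$, $\tilde\alpha_{n+i}=\tilde\alpha_{2n+i}=\overline{\alpha_i}$ for $i<n$, $\tilde\alpha_{2n}=\overline{\alpha_n}$, $\tilde\alpha_{3n}=\overline{\alpha_n^+}$, $\tilde\alpha_{3n+i}=\alpha_i$ for $i<n$, and $\tilde\alpha_{4n}=\alpha_n$. With this, Lemma \ref{l34}(v) applied to $q_{m+1}$ reads $\tilde a_i\xmapsto{\tilde\alpha_i}\tilde a_{i+1}$ together with $\tilde a_{4n}\xmapsto{\alpha_n}\tilde a_1$, and I would use the identifications $\tilde a_{2n}=\tilde\eta_{\overline{\alpha_n}}$, $\tilde a_{4n}=\tilde\theta_{\alpha_n^+}$ throughout. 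Two reductions organise the work. By the reflection symmetry \eqref{35}, the vertex $(\tilde a_{3n},\tilde\theta_{\overline{\alpha_n}})$ is the mirror image of $(\tilde\eta_{\alpha_n^+},\tilde a_n)$ and the second path of (v) is the mirror of the first, so in (i) and (v) it suffices to treat one representative of each mirror pair.

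The core of the argument is to show that every interval listed in (i) and (ii) is a genuine vertex, i.e. that its two endpoints are consecutive among the marked points $\set{\tilde a_i}\cup\set{\tilde\theta_j,\tilde\eta_j}$ and lie outside the interior of $S_{q_{m+1}}$. The defining inequalities, such as $\tilde\eta_{\alpha_n^+}<\tilde a_n$ and $\tilde a_{3n+i}<\tilde a_i$, follow by comparing the greedy expansions of \eqref{62} lexicographically (for instance $b(\tilde\eta_{\alpha_n^+})$ and $b(\tilde a_n)$ first differ in the digit $\overline{\alpha_n^+}$ versus $\overline{\alpha_n}$). Excluding an intervening marked point is the main obstacle: here I would argue exactly as in the proof of Lemma \ref{l51}, translating a hypothetical intermediate point into an equality between shifted blocks of $\alpha_1\cdots\alpha_n$ and deriving a contradiction either from Proposition \ref{p25}(v) or from the minimality of the period $4n$.

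Granting the vertices, part (ii) is a direct verification of each edge $I\xrightarrow{k}J$ by applying the increasing map $T_k$ to the two endpoints of $I$. The generic edges come straight from the chain $\tilde a_i\xmapsto{\tilde\alpha_i}\tilde a_{i+1}$, while the two edges that straddle a switch interval, namely $(\tilde a_{4n-1},\tilde a_{n-1})\xrightarrow{\alpha_{n-1}}(\tilde\eta_{\alpha_n^+},\tilde a_n)$ and its mirror, need nothing beyond $T_{\alpha_{n-1}}(\tilde a_{4n-1})=\tilde a_{4n}=\tilde\theta_{\alpha_n^+}<\tilde\eta_{\alpha_n^+}$ and $T_{\alpha_{n-1}}(\tilde a_{n-1})=\tilde a_n$, so that the switch interval $(\tilde\theta_{\alpha_n^+},\tilde\eta_{\alpha_n^+})$ is absorbed into the image. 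Part (iii) then follows from Lemma \ref{l39}: each cycle vertex admits a single label, forced by its location, and the image $T_k(I)$ computed above meets exactly one vertex. Part (iv) is purely set-theoretic: the right endpoints of the cycle vertices are $\set{\tilde a_1,\dots,\tilde a_{2n-1},\tilde\theta_{\overline{\alpha_n}}}$ and the left endpoints are $\set{\tilde a_{2n+1},\dots,\tilde a_{4n-1},\tilde\eta_{\alpha_n^+}}$, and these sets are disjoint by the distinctness of the marked points (Lemma \ref{l37}(iv)), so no two cycle intervals share an endpoint.

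Finally, for (v) I would verify the first path $(\tilde a_{2n-1},\tilde a_{2n-1}^+)\to(\tilde\eta_{\overline{\alpha_n}},\tilde\eta_{\overline{\alpha_n}}^+)\to(\tilde a_{2n+1},\tilde a_{2n+1}^+)$ by the same endpoint method, using $\tilde a_{2n-1}\xmapsto{\overline{\alpha_{n-1}}}\tilde a_{2n}=\tilde\eta_{\overline{\alpha_n}}$ and the switch-point image $\tilde\eta_{\overline{\alpha_n}}\xmapsto{\overline{\alpha_n}}\frac{M}{q-1}-1=\tilde a_{2n+1}$ from Lemma \ref{l33}(iii); the second path is then obtained by reflection. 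In contrast with the cycle, the relevant vertices here sit \emph{adjacent} to a switch endpoint, so the matching endpoint is exact but the opposite endpoint must reach past a full vertex; this requires a boundary estimate of the type of Lemma \ref{l311} (applicable since $q_{m+1}>\min\vv$), guaranteeing that the image of such a vertex covers its successor. I expect this boundary estimate, together with the exclusion of stray marked points in the previous step, to be where essentially all the difficulty resides, the remaining edge verifications being mechanical applications of Lemmas \ref{l33} and \ref{l34}.
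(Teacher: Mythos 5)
Your proposal is correct and follows essentially the same route as the paper's proof: the reflection symmetry \eqref{35} to halve the work in (i) and (v), lexicographic comparison of the greedy expansions \eqref{62} for the ordering of the marked points, exclusion of intervening marked points as the crux of (i)--(ii), Lemma \ref{l39} for (iii), endpoint-disjointness for (iv), and the chain $\tilde a_i\xmapsto{\tilde\alpha_i}\tilde a_{i+1}$ from Lemma \ref{l34} (v) for the edges. The one substantive tactical difference lies in the exclusion step for the cycle vertices $(\tilde a_{3n+i},\tilde a_i)$ in (ii): you propose to run a direct Lemma \ref{l51}--style case analysis for each such interval, whereas the paper performs the hard casework only once, for $(\tilde\eta_{\alpha_n^+},\tilde a_n)$ — exactly by the mechanism you describe, contradicting the inequalities obtained from Proposition \ref{p25} (v) and the minimality of the period — and then disposes of all remaining cycle vertices by pushing a hypothetical intermediate point of $(\tilde a_{3n+i},\tilde a_i)$ forward under $T_{\alpha_{n-1}}\circ\cdots\circ T_{\alpha_i}$ into $(\tilde a_{4n},\tilde a_n)$, forcing it to equal $\tilde\eta_{\alpha_n^+}$ by part (i), and reading off a forbidden greedy expansion (ruled out by the parity of $M$). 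Your direct version should also succeed but multiplies the casework; the reduction to (i) is the non-obvious economy in the paper's argument. Incidentally, your extra caution in (v) about the image of a vertex having to cover its successor is warranted — the paper dispatches (v) with a bare citation of Lemma \ref{l34} (v) — and your identifications $\tilde a_{2n}=\tilde\eta_{\overline{\alpha_n}}$, $\tilde a_{4n}=\tilde\theta_{\alpha_n^+}$, $\tilde a_{2n+1}=\frac{M}{q-1}-1$ used there are all correct.
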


\begin{proof}
(i) We only consider $(\tilde\eta_{\alpha_n^+},\tilde a_n)$; the other case hence will follow by reflection because
\begin{equation*}
\tilde a_{3n}=\frac{M}{q-1}-\tilde a_n
\qtq{and}
\tilde \theta_{\overline{a_n}}=\frac{M}{q-1}-\tilde \eta_{\alpha_n^+}.
\end{equation*}

The inequality $\tilde \eta_{\alpha_n^+}<\tilde a_n$ follows from the corresponding lexicographic inequality between their greedy expansions
\begin{equation*}
b(\eta_{\alpha_n^+},q_{m+1})
=\alpha_n^+\
\overline{\alpha_1\cdots\alpha_n^+}\ \alpha_1\cdots\alpha_n^+\ 0^\infty
\end{equation*}
and
\begin{equation*}
b(\tilde a_n,q_{m+1})
=\alpha_n^+\
\overline{\alpha_1\cdots\alpha_n\
\alpha_1\cdots\alpha_n^+}\ \alpha_1\cdots\alpha_n^+\ 0^\infty.
\end{equation*}
It remains to prove that there is no further point $\tilde a_j$ between $\eta_{\alpha_n^+}$ and $\tilde a_n$.

Applying Proposition \ref{p25} (v) for $\alpha(q_m)=(\alpha_1\cdots\alpha_{n}^+\ \overline{\alpha_1\cdots\alpha_{n}^+})^\infty$ with $N=n$ and $i=n-j$ we obtain that
\begin{equation}\label{63}\overline{\alpha_{n-j+1}\cdots\alpha_n^+}<\alpha_1\cdots\alpha_{j},\quad j=1,\ldots, n.
\end{equation}

Similarly, applying Proposition \ref{p25}  (v) for
\begin{equation*}
\alpha(q_{m+1})=(\alpha_1\cdots\alpha_n^+\ \overline{\alpha_1\cdots\alpha_n\
\alpha_1\cdots\alpha_n^+}\
\alpha_1\cdots\alpha_n)^\infty
\end{equation*}
with $N=2n$ and with $i=j$ and $i=n+j$, respectively, we obtain that
\begin{equation}\label{64}
\alpha_{j+1}\cdots\alpha_n^+\overline{\alpha_1\cdots \alpha_n}>\overline{\alpha_{1}\cdots\alpha_{n}^+}\alpha_1\cdots\alpha_{n-j},\quad j=0, 1,\ldots, n,
\end{equation}
and
\begin{equation}\label{65}
\alpha_{j+1}\cdots\alpha_n<\alpha_{1}\cdots\alpha_{n-j},\quad j=1,\ldots, n-1.
\end{equation}

Assume on the contrary that there exists a point $\tilde a_j\in (\eta_{\alpha_n^+},\tilde a_n)$.
Then $j\ne n$.
Writing $b(\tilde a_j,q_{m+1})=c_1\cdots c_K^+0^\infty$ we have the lexicographic relations
\begin{equation}\label{66}
\alpha_{n}^+\ \overline{\alpha_1\cdots\alpha_{n}^+}\ \alpha_1\cdots\alpha_{n}^+0^\infty
<c_1\cdots c_K^+0^\infty
<\alpha_{n}^+\
\overline{\alpha_1\cdots\alpha_{n}\ \alpha_1\cdots\alpha_{n}^+}\ \alpha_1\cdots\alpha_{n}^+0^\infty.
\end{equation}
Hence $K\ge n+1$,  $c_1\cdots c_n=\alpha_{n}^+\overline{\alpha_1\cdots\alpha_{n-1}}$, and $c_{n+1}=\overline{\alpha_{n}^+}$ or $c_{n+1}=\overline{\alpha_{n}}$.
Furthermore, using the greedy expansion of $\tilde a_j$, in case $c_{n+1}=\overline{\alpha_{n}^+}$ we have $K\ge 2n+2$, hence $j<2n$, and
\begin{multline}\label{67}
\alpha_{n}^+\ \overline{\alpha_1\cdots\alpha_{n}^+}\ \alpha_1\cdots\alpha_{n}^+c_{2n+2}\cdots c_K^+\\
=
\begin{cases}
\alpha_j\cdots\alpha_n^+\ \overline{\alpha_1\cdots\alpha_n\ \alpha_1\cdots\alpha_n^+}\
\alpha_1\cdots\alpha_n^+&\text{if }0<j<n,\\
\overline{\alpha_{j-n}\cdots\alpha_n\ \alpha_1\cdots\alpha_n^+}\
\alpha_1\cdots\alpha_n^+&\text{if }n<j<2n,
\end{cases}
\end{multline}
while in case $c_{n+1}=\overline{\alpha_{n}}$  we have $K\ge n+1$, hence $j\le 3n$, and
\begin{multline*}
\alpha_{n}^+\ \overline{\alpha_1\cdots\alpha_{n}}\ c_{n+2}\cdots c_K^+\\
=
\begin{cases}
\alpha_j\cdots\alpha_n^+\ \overline{\alpha_1\cdots\alpha_n\
\alpha_1\cdots\alpha_n^+}\
\alpha_1\cdots\alpha_n^+
&\text{if }0<j<n,\\
\overline{\alpha_{j-n}\cdots\alpha_n\
\alpha_1\cdots\alpha_n^+}\
\alpha_1\cdots\alpha_n^+
&\text{if }n<j\le 2n,\\
\overline{\alpha_{j-2n}\cdots\alpha_n^+}\
\alpha_1\cdots\alpha_n^+
&\text{if }2n<j\le 3n.
\end{cases}
\end{multline*}

If $c_{n+1}=\overline{\alpha_{n}^+}$, then we infer from \eqref{66} the equalities
\begin{equation*}
\begin{cases}
\overline{\alpha_{1}\cdots\alpha_{n}^+}\alpha_1\cdots\alpha_{n-j}=\alpha_{j+1}\cdots\alpha_n^+\overline{\alpha_1\cdots \alpha_n}&\qtq{if}0<j<n,\\
\overline{\alpha_{1}\cdots\alpha_{n-\ell}}=\overline{\alpha_{\ell+1}\cdots\alpha_n}&\qtq{if}n<j=n+\ell<2n,
\end{cases}
\end{equation*}
contradicting \eqref{64} and \eqref{65}, respectively.

If $c_{n+1}=\overline{\alpha_{n}}$, then we infer from \eqref{67} the relations
\begin{equation*}
\begin{cases}
\overline{\alpha_{1}\cdots\alpha_{n-j}}=\alpha_{j+1}\cdots\alpha_n^+
&\qtq{if}0<j<n,\\
\overline{\alpha_{1}\cdots\alpha_{n-\ell}}=\overline{\alpha_{\ell+1}\cdots\alpha_n}
&\qtq{if}n<j=n+\ell<2n,\\
\overline{\alpha_{1}\cdots\alpha_{n}}=\overline{\alpha_{1}\cdots\alpha_n^+}&\qtq{if}j=2n,\\
c_{n+2}\cdots c_{2n-\ell+1}=\alpha_{\ell+1}\cdots\alpha_{n}^+
&\qtq{if}2n<j=2n+\ell\le 3n.
\end{cases}
\end{equation*}
They first two equalities contradict \eqref{63} and \eqref{65}, respectively.
The third equality is impossible because $\alpha_n\ne\alpha_n^+$.
Combining the last equality with the second inequality of \eqref{66} we obtain
\begin{equation*}
\alpha_{\ell+1}\cdots\alpha_{n}^+\leq \overline{\alpha_1\cdots\alpha_{n-\ell}},
\end{equation*}
contradicting  \eqref{63} again.

\medskip

(ii) The explicit formulas of the greedy expansions of the numbers $\tilde a_i$ show that none of the indicated intervals is degenerate and that they are disjoint from the switch region $S_{q_{m+1}}$.
Furthermore using Lemma \ref{l34} (v), the relations
\begin{equation*}
\tilde \eta_{\alpha_n^+} \xmapsto{\alpha_n^+} \tilde a_{2n+1}, \qtq{and}
\tilde \theta_{\overline{\alpha_n}} \xmapsto{\overline{\alpha_n^+}}\tilde a_1
\end{equation*}
and that
\begin{itemize}
\item $b(\tilde a_{3n+i},q_{m+1})$ and $b(\tilde a_{i},q_{m+1})$ start with the same digit $\alpha_i$ for $0<i<n$,
\item $b(\tilde \eta_{\alpha_n^+},q_{m+1})$ and $b(\tilde a_n,q_{m+1})$ start with the same digit $\alpha_n^+$,
\item $b(\tilde a_{2n+i},q_{m+1})$ and $b(\tilde a_{n+i},q_{m+1})$ start with the same digit $\overline{\alpha_i}$ for $0<i<n$,
\item $b(\tilde a_{3n},q_{m+1})$ and $a(\tilde \theta_{\overline{\alpha_n}},q_{m+1})$ start with the same digit
$\overline{\alpha_n^+}$,
\end{itemize}
we obtain that each interval is the image of the preceding one by the map $T_{\alpha_i}$ indicated on the arrow between them, except two:
\begin{equation}\label{68}
T_{\alpha_{n-1}}\left((\tilde a_{4n-1},\tilde a_{n-1})\right)=(\tilde \theta_{\alpha_n^+},\tilde a_n)
\qtq{and}
T_{\overline{\alpha_{n-1}}}\left((\tilde a_{3n-1},\tilde a_{2n-1})\right)=(\tilde a_{3n},\tilde \eta_{\overline{\alpha_n}}).
\end{equation}
We claim that these intervals are vertices of $\gg(q_{m+1})$.
For this we need to show that none of them contains another point $\tilde a_j$ in its interior.

This is true for the intervals $(\tilde \eta_{\alpha_n^+},\tilde a_n)$ and $(\tilde a_{3n},\tilde \theta_{\overline{\alpha_n}})$ by (i).
Next we show that none of the intervals $(\tilde a_{3n+i},\tilde a_i)$, $0<i<n$ contains another point $\tilde a_j$ in its interior.

Assume on the contrary that $\tilde a_{3n+i}<\tilde a_j<\tilde a_i$ for some $0<i<n$ and $0<j\le 4n$.
Then
\begin{equation*}
(T_{\alpha_{n-1}}\circ\cdots\circ T_{\alpha_i})(\tilde a_j)
\in (\tilde a_{4n},\tilde a_n)
\end{equation*}
and hence
\begin{equation*}
(T_{\alpha_{n-1}}\circ\cdots\circ T_{\alpha_i})(\tilde a_j)=\tilde \eta_{\alpha_n^+}.
\end{equation*}
Using the relations (we use here the notation $\alpha(q_{m+1})=\alpha_1(q_{m+1})\alpha_2(q_{m+1})\cdots$)
\begin{equation*}
\tilde a_1\xmapsto{\alpha_1(q_{m+1})}
\cdots\xmapsto{\alpha_{4n-1}(q_{m+1})}
\tilde a_{4n}\xmapsto{\alpha_{4n}(q_{m+1})}\tilde a_1
\end{equation*}
and the equalities
\begin{equation*}
\alpha_{3n+\ell}(q_{m+1})
=\alpha_{\ell}(q_{m+1})
\qtq{for}\ell=1,\ldots, n-1,
\end{equation*}
hence we infer that there exists an index $k$ such that
\begin{equation*}
\tilde a_{4n-1}<\tilde a_k<\tilde a_{n-1}
\qtq{and}
\tilde a_{4n}<T_{\alpha_{4n-1}}(\tilde a_k)<\tilde a_n.
\end{equation*}
By (i) this implies that $T_{\alpha_{4n-1}}(\tilde a_k)=\tilde \eta_{\alpha_n^+}$.
Since $\alpha_{4n-1}=\alpha_{n-1}$, hence we infer  that
\begin{equation*}
\alpha_{n-1}\alpha_n^+
\ \overline{\alpha_1\cdots\alpha_n^+}
\ \alpha_1\cdots\alpha_n^+\ 0^{\infty}
\end{equation*}
is an expansion of $\tilde a_k$.
This expansion is greedy because
\begin{equation*}
\tilde a_k<\tilde a_{n-1}<\frac{\alpha_{n-1}^+}{q_{m+1}}
\end{equation*}
(recall that the first digit of $b(\tilde a_{n-1},q_{m+1})$ is $\alpha_{n-1}$), and because
\begin{equation*}
\alpha_{n}^+\
\overline{\alpha_1\cdots\alpha_{n}^+}\ \alpha_1\cdots\alpha_{n}^+\ 0^\infty=b(\tilde \eta_{\alpha_n^+},q_{m+1})
\end{equation*}
is a greedy expansion.
Thus we have
\begin{equation*}
b(\tilde a_k,q_{m+1})=
\alpha_{n-1}\alpha_n^+
\ \overline{\alpha_1\cdots\alpha_n^+}
\ \alpha_1\cdots\alpha_n^+\ 0^{\infty}.
\end{equation*}
Comparing with the formulas of $b(\tilde a_i,q_{m+1})$ at the beginning of the section, hence we infer that $k=2n-1$ and
$\overline{\alpha_{n-1}\alpha_n}=\alpha_{n-1}\alpha_n^+$.
But this is impossible because $\overline{\alpha_{n-1}}\ne \alpha_{n-1}$ if $M$ is odd, and $\overline{\alpha_n}\ne \alpha_n^+$ if $M$ is even.
This contradiction proves our initial claim: the intervals $(\tilde a_{3n+i},\tilde a_i)$ for $0<i<n$ are vertices of $\gg(q_{m+1})$.

By reflection and symmetry we obtain that the intervals
\begin{align*}
(\tilde a_{2n+i},\tilde a_{n+i})
&=\left(\frac{M}{q_{m+1}-1}-\tilde b_{2n+i},\frac{M}{q_{m+1}-1}-\tilde b_{n+i}\right)\\
&=\left(\frac{M}{q_{m+1}-1}-\tilde a_{i},\frac{M}{q_{m+1}-1}-\tilde a_{3n+i}\right)
\end{align*}
for $0<i<n$ are vertices of $\gg(q_{m+1})$.
\medskip

(iii) This  follows from the proof of (ii) showing that for any relation $I\xmapsto{k}J$ in the cycle, $T_k(I)$ contains no other interval-vertex of the graph than $J$.
More precisely, we have $T_k(I)=J$ for all but two vertices: those indicated in \eqref{68}.
Furthermore,
\begin{equation*}
[\tilde \theta_{\alpha_n^+},\tilde a_n]
=[\tilde \theta_{\alpha_n^+},\tilde \eta_{\alpha_n^+}]
\cup[\tilde \eta_{\alpha_n^+},\tilde a_n]
\qtq{and}
[\tilde a_{3n},\tilde \eta_{\overline{\alpha_n}}]
=[\tilde a_{3n},\tilde \theta_{\overline{\alpha_n}}]
\cup [\theta_{\overline{\alpha_n}},\tilde \eta_{\overline{\alpha_n}}],
\end{equation*}
and the switch intervals $[\tilde \theta_{\alpha_n^+},\tilde \eta_{\alpha_n^+}]$ and $[\theta_{\overline{\alpha_n}},\tilde \eta_{\overline{\alpha_n}}]$ are not vertices of $\gg(q_{m+1})$.
\medskip

(iv) This follows by observing that no two intervals of the cycle have any common endpoint.
\medskip

(v) The existence of both paths follows from Lemma \ref{l34} (v).
\end{proof}

Now we clarify the relationship between the graphs $\gg(q_m)$ and $\gg(q_{m+1})$ for some $m\ge 1$.
Let us introduce the corresponding points
\begin{equation*}
a_1,\ldots, a_{2n},
\quad \theta_0,\ldots,\theta_M,
\quad \eta_1,\ldots,\eta_{M+1}
\end{equation*}
associated with $q_m$, and
\begin{equation*}
\tilde a_1,\ldots, \tilde a_{4n},
\quad \tilde \theta_0,\ldots,\tilde \theta_M,
\quad \tilde \eta_1,\ldots,\tilde \eta_{M+1}
\end{equation*}
associated with$q_{m+1}$.
We recall that
\begin{equation*}
\theta_0=\tilde\theta_0=0,
\quad \eta_{M+1}=\frac{M}{q_m-1}.
\quad \tilde \eta_{M+1}=\frac{M}{q_{m+1}-1}
\end{equation*}
and
\begin{equation*}
a_n=\eta_{\alpha_n^+},
\quad a_{2n}=\theta_{\overline{\alpha_n}},
\quad \tilde a_{2n}=\tilde \eta_{\overline{\alpha_n}},
\quad \tilde a_{4n}=\tilde \theta_{\alpha_n^+}.
\end{equation*}
We will use the notation
\begin{equation*}
\tilde z:=
\begin{cases}
\tilde a_i&\text{if $z=a_i$ for some $i$},\\
\tilde \theta_j&\text{if $z=\theta_j$ for some $j$},\\
\tilde \eta_j&\text{if $z=\eta_j$ for some $j$}.
\end{cases}
\end{equation*}
Observe that if $(w,z)$ runs over the vertices of $\gg(q_m)$, then $w$ runs over the points
\begin{equation}\label{69}
a_1,\ldots, a_{n-1},\quad
a_{n+1},\ldots, a_{2n-1},\quad
\theta_0,\quad
\eta_1,\ldots, \eta_M,
\end{equation}
and $z$ runs over the points
\begin{equation}\label{610}
a_1,\ldots, a_{n-1},\quad
a_{n+1},\ldots, a_{2n-1},\quad
\theta_1,\ldots, \theta_M\qtq{and}\eta_{M+1}.
\end{equation}

Let us introduce the following function $f=f_m$:
\begin{equation}\label{611}
\begin{cases}
&f(a_i):=\tilde a_i
\qtq{for}i=1,\ldots, n-1,\\
&f(a_i):=\tilde a_i
\qtq{for}i=n+1,\ldots, 2n-1,\\
&f(\theta_j):=\tilde\theta_j\qtq{for}j=0,1,\ldots,M, \\
&f(\eta_j):=\tilde \eta_j\qtq{for}j\in\set{1,\ldots,M+1}\setminus\set{\alpha_n^+},\\
&f(\eta_{\alpha_n^+}):=\tilde a_n.
\end{cases}
\end{equation}
Note that $a_n=\eta_{\alpha_n^+}$ and $a_{2n}=\theta_{\overline{\alpha_n}}$, so that
\begin{equation*}
f(a_n)=\tilde a_n\qtq{and}
f(a_{2n})=\tilde \theta_{\overline{\alpha_n}}.
\end{equation*}

\begin{lemma}\label{l62}
Let $I\xrightarrow{k}J$ in $\gg(q_m)$, and let $z$ be one of the endpoints of $I$.
\begin{enumerate}[\upshape (i)]
\item If $z=a_{2n-1}$, then $(f(T_k(z)),T_k(f(z)))=(\tilde\theta_{\overline{\alpha_n}}
,\tilde\eta_{\overline{\alpha_n}})$ is a switch interval.
\smallskip
\item If $z\in \set{\eta_1,\ldots \eta_M}\setminus\set{\eta_{\alpha_n^+}}$, then
$(T_k(f(z)), f(T_k(z)))=(\tilde a_{2n+1},\tilde a_{n+1})$ is a vertex of $\gg(q_{m+1})$.
\smallskip
\item Otherwise we have
$f(T_k(z))=T_k(f(z))$.
\end{enumerate}
\end{lemma}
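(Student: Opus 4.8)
The plan is to prove all three parts by a single mechanical computation built on the explicit greedy and quasi-greedy expansions \eqref{61} and \eqref{62}, using that each $T_k$ acts as a one-step shift on the expansion whose leading digit is $k$: if $(kc_2c_3\cdots)_q=x$ then $T_k(x)=qx-k=(c_2c_3\cdots)_q$, so deleting the leading digit realizes $T_k$ whenever that leading digit equals $k$.

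First I would isolate the shift rule for endpoints. If $I\xrightarrow{k}J$ in $\gg(q_m)$ and $z$ is an endpoint of $I$, then by Remark \ref{r32}(ii) and Lemma \ref{l39} the digit $k$ is the common leading digit of every expansion of every point of $\overline I\setminus S_{q_m}$; hence $k$ is the leading digit of $b(z,q_m)$ unless $z\in\set{\theta_1,\ldots,\theta_M}$, in which case it is the leading digit of the quasi-greedy expansion $a(z,q_m)$ (the greedy and quasi-greedy leading digits differ by one exactly at the left endpoints of the switch intervals). With the appropriate expansion chosen, $T_k(z)$ is obtained by deleting the leading digit, and the truncated word again names one of the distinguished points $a_i,\theta_j,\eta_j$ by Lemmas \ref{l33} and \ref{l34}(v). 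The identical rule holds in base $q_{m+1}$ for $f(z)$, because \eqref{611} together with \eqref{61}--\eqref{62} shows that $z$ and $f(z)$ have the same leading digit for every relevant $z$.

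Then I would run the case analysis over the endpoints in \eqref{69}--\eqref{610}, sorted by where the deletion lands. For $z=a_i$ with $1\le i\le 2n-2$ (which includes $z=a_n=\eta_{\alpha_n^+}$) one reads off \eqref{61}--\eqref{62} that deleting the leading digit sends $b(a_i,q_m)\mapsto b(a_{i+1},q_m)$ and $b(\tilde a_i,q_{m+1})\mapsto b(\tilde a_{i+1},q_{m+1})$ simultaneously; since $f(a_{i+1})=\tilde a_{i+1}$, this gives $f(T_k(z))=T_k(f(z))$, i.e. (iii). The boundary points $z=\theta_0$ and $z=\eta_{M+1}$ are fixed by their $T_k$ and give (iii) at once, and for $z=\theta_j$ with $1\le j\le M$ the quasi-greedy shift yields $T_{j-1}(\theta_j)=a_1$ and $T_{j-1}(\tilde\theta_j)=\tilde a_1$, so again (iii) since $f(a_1)=\tilde a_1$. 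The two exceptions are precisely the endpoints at which the $q_m$-expansion has already reached its tail while the $q_{m+1}$-expansion still carries the inserted block $\overline{\alpha_1\cdots\alpha_n^+}\,\alpha_1\cdots\alpha_n^+$: for $z=a_{2n-1}$ deletion gives $T_k(a_{2n-1})=\theta_{\overline{\alpha_n}}$, hence $f(T_k(z))=\tilde\theta_{\overline{\alpha_n}}$, whereas $T_k(\tilde a_{2n-1})=\tilde\eta_{\overline{\alpha_n}}$, which is (i); and for $z=\eta_j$ with $j\ne\alpha_n^+$ deletion gives $T_j(\eta_j)=b_1=a_{n+1}$, hence $f(T_k(z))=\tilde a_{n+1}$, whereas $T_j(\tilde\eta_j)=\tilde a_{2n+1}$, which is (ii). Finally I would note that $(\tilde\theta_{\overline{\alpha_n}},\tilde\eta_{\overline{\alpha_n}})$ is a switch interval since $\tilde\theta_{\overline{\alpha_n}}^+=\tilde\eta_{\overline{\alpha_n}}$, and that $(\tilde a_{2n+1},\tilde a_{n+1})$ is a vertex of $\gg(q_{m+1})$ by Lemma \ref{l61}(ii).

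The only real obstacle is the bookkeeping in the two exceptional cases, namely verifying the block identifications $(\overline{\alpha_n\,\alpha_1\cdots\alpha_n^+}\,\alpha_1\cdots\alpha_n^+\,0^{\infty})_{q_{m+1}}=\tilde\eta_{\overline{\alpha_n}}$ and $(\overline{\alpha_1\cdots\alpha_n^+}\,\alpha_1\cdots\alpha_n^+\,0^{\infty})_{q_{m+1}}=\tilde a_{2n+1}$ by direct comparison with \eqref{62}; these are exactly what force $f$ to fail to commute with $T_k$ and what pin down the two intervals that appear. The secondary point demanding care is the greedy-versus-quasi-greedy choice at the switch boundary, which must be made correctly so that the digit deleted really is the edge label $k$.
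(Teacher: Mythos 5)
Your proposal is correct and follows essentially the same route as the paper: a case-by-case computation of $f(T_k(z))$ and $T_k(f(z))$ over the endpoint types, using the explicit expansions \eqref{61}--\eqref{62} together with Lemmas \ref{l33}, \ref{l34} (v) and \ref{l39}, with the two exceptional cases $z=a_{2n-1}$ and $z=\eta_j$ ($j\ne\alpha_n^+$) identified exactly as in the paper and the final vertex/switch-interval claims settled by Lemma \ref{l61} (ii) and the relation $\tilde\theta_j^+=\tilde\eta_j$. Your "delete the leading digit of the appropriate (greedy vs.\ quasi-greedy) expansion" phrasing is just a repackaging of the paper's direct application of those lemmas, and the greedy/quasi-greedy distinction at the points $\theta_j$ is handled correctly.
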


\begin{proof}
We apply Lemmas \ref{l33}, \ref{l34}, \ref{l39} and we use the relations \eqref{61}, \eqref{62}.
We write $T:=T_k$ for brevity.
For $z=a_{2n-1}$ we have
\begin{align*}
&f(T(a_{2n-1}))=f(a_{2n})
=f(\theta_{\overline{\alpha_n}})
=\tilde \theta_{\overline{\alpha_n}}
\intertext{and}
&T(f(a_{2n-1}))=T(\tilde a_{2n-1})
=\tilde a_{2n}=\tilde\eta_{\overline{\alpha_n}}.
\end{align*}
If $z=\eta_j$ for some $j\in\set{1,\ldots, M}\setminus\set{\alpha_n^+}$, then
\begin{equation*}
f(T(\eta_j))=f(a_{n+1})=\tilde a_{n+1}\qtq{and}
T(f(\eta_j))=T(\tilde\eta_j)=\tilde a_{2n+1}.
\end{equation*}
If $z=a_i$ for some $i\in\set{1,\ldots,2n-2}$ (this include the case $z=\eta_{\alpha_n^+}$), then
\begin{equation*}
f(T(a_i))=f(a_{i+1})=\tilde a_{i+1}
\qtq{and}
T(f(a_i))=T(\tilde a_i)=\tilde a_{i+1}.
\end{equation*}
If $z=\theta_j$ for some  $j\in\set{1,\ldots,M}$ (this include the case $z=a_{2n}$), then
\begin{equation*}
f(T(\theta_j))=f(a_1)=\tilde a_1
\qtq{and}
T(f(\theta_j))=T(\tilde\theta_j)=\tilde a_1.
\end{equation*}
Finally, we have
\begin{equation*}
f(T(\theta_0))=f(\theta_0)=\tilde \theta_0
\qtq{and}
T(f(\theta_0))=T(\tilde\theta_0)=\tilde\theta_0,
\end{equation*}
and
\begin{equation*}
f(T(\eta_{M+1}))=f(\eta_{M+1})=\tilde\eta_{M+1}
\qtq{and}
T(f(\eta_{M+1}))=T(\tilde\eta_{M+1})=\tilde\eta_{M+1}.
\end{equation*}
The lemma follows from these relations and from Lemma \ref{l61} (ii).
\end{proof}

We prove a variant of Lemma \ref{l51}; we recall that $a_n=\eta_{\alpha_n^+}$ and $a_{2n}=\theta_{\overline{\alpha_n}}$.

\begin{lemma}\label{l63}
We consider the graphs $\gg(q_m)$, $\gg(q_{m+1})$, and the function $f=f_m$.
\begin{enumerate}[\upshape (i)]
\item The function $f$ is increasing.
\item Let $(w,z)$ be a vertex  of $\gg(q_m)$.
\begin{enumerate}[\upshape (a)]
\item If $z=a_i$ for some $i\in\set{1,\ldots,n-1,n+1,\ldots,2n}$,
then $(f(w),f(z))$ contains two vertices of $\gg(q_{m+1})$:
\begin{align*}
&(f(w),\tilde a_{3n+i})\qtq{and}
(\tilde a_{3n+i},\tilde a_i)\qtq{if}i=1,\ldots,n-1,\\
&(f(w),\tilde a_{n+i})\qtq{and}
(\tilde a_{n+i},\tilde a_i)\qtq{if}i=n+1,\ldots,2n.
\end{align*}
\item If $z\in\set{\theta_1,\ldots,\theta_M,\eta_{M+1}}\setminus\set{a_{2n}}$,
then $(f(w),f(z))$ is a vertex of $\gg(q_{m+1})$.
\item $(\tilde\eta_{\alpha_n^+},\tilde a_n)$ is a vertex of $\gg(q_{m+1})$.
\end{enumerate}
Moreover, this is the complete list of the vertices of $\gg(q_{m+1})$.
\item The map
\begin{equation*}
F((x,x^+)):=(f(x),f(x)^+)
\end{equation*}
defines an isomorphism between $\gg(q_m)$ and a subgraph $\hat\gg(q_m)$ of $\gg(q_{m+1})$.
\item The vertices of $\hat\gg(q_m)$ and $\cc_{m+1}$ form a partition of the vertices of  $\gg(q_{m+1})$.
\item If $m\ge 2$, then there exists a path in $\gg(q_{m+1})$ from the image $\hat\cc_m$ of $\cc_m$ to  $\cc_{m+1}$.
\end{enumerate}
\end{lemma}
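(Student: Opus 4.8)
The plan is to treat the five assertions in order, since each feeds into the next, and to reserve the real work for (v).

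For (i) and (ii) I would mimic the proof of Lemma \ref{l51}. Since $f$ already respects the natural order of the points $\theta_j,\eta_j$, monotonicity reduces to a finite list of one-sided implications comparing the $a_i$ (and the mixed cases), and each such implication is equivalent, via the explicit greedy expansions \eqref{61} and \eqref{62}, to a lexicographic inequality between the corresponding blocks; the boundary equalities are excluded exactly as there by Proposition \ref{p25}(v). The only genuinely new point is the special value $f(\eta_{\alpha_n^+})=\tilde a_n$, whose placement relative to $\tilde\eta_{\alpha_n^+}$ is governed by Lemma \ref{l61}(i). Granting (i), assertion (ii) is a counting statement: because $f$ is increasing, each vertex $(w,z)$ of $\gg(q_m)$ maps to the interval $(f(w),f(z))$, and I would read off from \eqref{62} together with Lemma \ref{l61}(i),(ii) precisely which points $\tilde a_j$ fall strictly inside. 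When $z=a_i$ exactly one point is inserted (namely $\tilde a_{3n+i}$ or $\tilde a_{n+i}$), splitting the image into the vertex $(f(w),f(w)^+)$ and the cycle vertex $(\tilde a_{3n+i},\tilde a_i)$, resp. $(\tilde a_{n+i},\tilde a_i)$, of $\cc_{m+1}$; otherwise nothing is inserted. A count then yields the $2n+M-1$ vertices of the form $(f(w),f(w)^+)$ and the $2n$ vertices of $\cc_{m+1}$ (the $2n-1$ split-off cycle vertices together with the lone vertex $(\tilde\eta_{\alpha_n^+},\tilde a_n)$ of Lemma \ref{l61}(i)), which exhaust the $4n+M-1$ vertices of $\gg(q_{m+1})$.

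For (iii) the vertex map $F$ is, by (ii), a bijection onto the non-inserted vertices, and I would verify $I\xrightarrow{k}J\Longleftrightarrow F(I)\xrightarrow{k}F(J)$ using Lemma \ref{l62}: in its case (iii) one has $f\circ T_k=T_k\circ f$ on the relevant endpoint, so the edge transports verbatim, while in cases (i) and (ii) the discrepancy is only a switch interval, resp.\ a vertex of $\cc_{m+1}$, so that the containment $T_k(F(I))\supset F(J)$ still holds and no spurious edge between $\hat\gg(q_m)$-vertices is created (the label being forced by Lemma \ref{l39}). Thus $F$ is an isomorphism onto the induced subgraph $\hat\gg(q_m)$, and (iv) is then immediate from the vertex partition already established in (ii).

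The heart of the matter is (v), and the difficulty is precisely that $\cc_m$ is a sink inside $\hat\gg(q_m)$: by Lemma \ref{l61}(iii) it has no outgoing edge in $\gg(q_m)$, hence $\hat\cc_m$ has none within the isomorphic copy, so the required path must use an edge of $\gg(q_{m+1})$ that is genuinely new. The analysis behind (iii) shows that the only way an $\hat\gg(q_m)$-vertex acquires a new edge pointing into $\cc_{m+1}$ is through case (ii) of Lemma \ref{l62}: a vertex $F(I)$ with $I$ carrying a switch-boundary endpoint $\eta_j$, $j\ne\alpha_n^+$, sends an edge to the cycle vertex $(\tilde a_{2n+1},\tilde a_{n+1})$ of $\cc_{m+1}$. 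So the plan is to exhibit such a vertex inside $\cc_m$. For $m\ge 2$ the base $q_{m-1}$ lies in $\vv\setminus\uuu$, and writing $\beta(q_{m-1})=\delta_1\cdots\delta_{n'}^+\,\overline{\delta_1\cdots\delta_{n'}}\,0^\infty$ with $n'=n/2$, an application of Lemma \ref{l61}(ii) to the pair $(q_{m-1},q_m)$ produces in $\cc_m$ the vertex $(\eta_{\delta_{n'}^+},a_{n'})$, whose left endpoint is the switch boundary $\eta_{\delta_{n'}^+}$. I would then check $\delta_{n'}^+\ne\alpha_n^+$, invoke Lemma \ref{l62}(ii), and verify the right-endpoint containment so that $F$ of this vertex really carries an edge into $\cc_{m+1}$; since $\cc_{m+1}$ is a single cycle, reaching one vertex yields a path to all of them.

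The obstacle I foresee is the bookkeeping needed to pin down this switch-boundary vertex of $\cc_m$ and to confirm $\delta_{n'}^+\ne\alpha_n^+$ (equivalently $\delta_{n'}^+\ne\overline{\delta_{n'}}$, using $\beta(q_m)$'s $n$-th digit $\alpha_n^+=\overline{\delta_{n'}}$). Here the reflection symmetry \eqref{35} lets me swap the $\eta$-side for the $\theta$-side at will, and in the borderline case where the two switch boundaries would coincide—forcing $M$ odd and $\delta_{n'}=(M-1)/2$—the special value $f(\eta_{\alpha_n^+})=\tilde a_n$ places $F(I)$ immediately to the right of the cycle vertex $(\tilde\eta_{\alpha_n^+},\tilde a_n)$, a configuration that must be handled by a separate short-path argument. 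This is also exactly where the hypothesis $m\ge 2$ enters: for $m=1$ the block $V_1$ spans the entire copy of $\gg(q_0)$ rather than a pure cycle, so its non-cyclic vertices already supply edges into $\cc_2$ and no analogous obstruction arises.
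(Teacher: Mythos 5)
Your outline for (i)--(iv) follows the paper's own route: monotonicity of $f$ via lexicographic comparison of the greedy expansions \eqref{61}--\eqref{62} (with the special value $f(\eta_{\alpha_n^+})=\tilde a_n$ controlled by Lemma \ref{l61}(i)), the vertex count for (ii), and the edge transport through Lemma \ref{l62} for (iii)--(iv); I have no objection there beyond the fact that it is a sketch. The problem is (v).

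For (v) you take a genuinely different route, and it has a gap that you acknowledge but do not close. You propose to locate the unique vertex of $\cc_m$ with a switch-boundary left endpoint, namely $(\eta_{\delta_{n'}^+},a_{n'})$, and to feed it into Lemma \ref{l62}(ii), which requires $\delta_{n'}^+\ne\alpha_n^+$. Since $\alpha_n^+=\overline{\delta_{n'}}$, this condition reads $\delta_{n'}^+\ne\overline{\delta_{n'}}$ and fails exactly when $M=2\delta_{n'}+1$; for $M=1$ it fails \emph{always} ($\delta_{n'}=0$, $\delta_{n'}^+=1=\overline{\delta_{n'}}$). So your main argument never applies in the two-digit alphabet, which is the paper's principal setting (the Tribonacci number and all the $M=1$ data of Section \ref{s12}), and the ``separate short-path argument'' you defer to is precisely the missing content. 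Your premise that Lemma \ref{l62}(ii) is the \emph{only} source of new edges into $\cc_{m+1}$ is also false in this borderline case: when $\overline{\alpha_n}=\alpha_n^+$ the vertex $(\tilde a_{2n},\tilde a_{2n}^+)=(\tilde\eta_{\overline{\alpha_n}},\tilde\eta_{\overline{\alpha_n}}^+)$ is itself the lone vertex $(\tilde\eta_{\alpha_n^+},\tilde a_n)$ of $\cc_{m+1}$, so the edge arriving at it from $(\tilde a_{2n-1},\tilde a_{2n-1}^+)\in\hat\cc_m$ is already the required connection. The paper sidesteps the whole dichotomy by quoting the explicit two-edge path of Lemma \ref{l61}(v),
\begin{equation*}
(\tilde a_{2n-1},\tilde a_{2n-1}^+)
\xrightarrow{\overline{\alpha_{n-1}}}
(\tilde a_{2n},\tilde a_{2n}^+)
\xrightarrow{\overline{\alpha_{n}}}
(\tilde a_{2n+1},\tilde a_{2n+1}^+),
\end{equation*}
whose first vertex always lies in $\hat\cc_m$ (because $a_{2n-1}$ is a left endpoint of a $\cc_m$-vertex and $f(a_{2n-1})=\tilde a_{2n-1}$) and whose last vertex $(\tilde a_{2n+1},\tilde a_{n+1})$ always lies in $\cc_{m+1}$; the path therefore connects the two blocks no matter which block the middle vertex falls into. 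Patching your argument in the borderline case amounts to rediscovering this path, so you should use it directly.
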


\begin{proof}
(i) By Lemma \ref{l61} (i) the interval $(\tilde\eta_{\alpha_n^+},\tilde a_n)$ is a vertex of $\gg(q_{m+1})$, and therefore it does not contain any other point $\tilde a_i$ for $0<i<n$ or $n<i<2n$, $\tilde\theta_j$ for $0\le j\le M$, or $\tilde\eta_j$ for $1\le j\le M+1$.Therefore it suffices to show that the modified function $g$, defined the following formulas, is increasing:

\begin{equation*}
\begin{cases}
&g(a_i):=\tilde a_i
\qtq{for}i=1,\ldots, n-1,\\
&g(a_{n+i}):=\tilde a_{n+i}
\qtq{for}i=1,\ldots, n-1,\\
&g(\theta_j):=\tilde\theta_j\qtq{for}j=0,1,\ldots,M, \\
&g(\eta_j):=\tilde \eta_j\qtq{for}j=1,\ldots,M+1.
\end{cases}
\end{equation*}
Repeating the proof of Lemma \ref{l51}, and recalling that $b_i=a_{n+i}$ for $i=1,\ldots,n-1,$ it suffices to check the following implications (cf. \eqref{51}--\eqref{56}) for $i=1,\ldots, n-1$ and $j=1,\ldots, M$:

\begin{align}
& a_i<a_j \Longrightarrow \tilde a_i<\tilde a_j,\label{612}\\
&a_{n+i}<a_j\Longrightarrow \tilde a_{n+i}<\tilde a_j,\label{613}\\
& a_i<a_{n+j} \Longrightarrow \tilde a_i<\tilde a_{n+j},\label{614}\\
& a_{n+i}<a_{n+j}\Longrightarrow\tilde a_{n+i}<\tilde a_{n+j}, \label{615}\\
&a_i<\theta_k\Longrightarrow \tilde a_i<\tilde\theta_k,\label{616}\\
&\tilde a_{n+i}<\tilde\theta_k\Longrightarrow a_{n+i}<\theta_k.\label{617}
\end{align}
The last two implications hold because both inequalities in \eqref{616} are equivalent to $\alpha_i<k$, and both inequalities in \eqref{617} are equivalent to $\overline{\alpha_i}<k$.
We prove them by adapting the proofs of \eqref{51}--\eqref{54}.

The implication \eqref{612} is equivalent to the implication
\begin{equation}\label{618}
\begin{split}
&\alpha_i\cdots\alpha_n^+\overline{\alpha_1\cdots\alpha_n}\ 0^{\infty}
<
\alpha_j\cdots\alpha_n^+\overline{\alpha_1\cdots\alpha_n}\ 0^{\infty}\\
&\Longrightarrow
\alpha_i\cdots\alpha_n^+\
 \overline{\alpha_1\cdots\alpha_n\
\alpha_1\cdots\alpha_n^+}\
\alpha_1\cdots\alpha_n^+\ 0^{\infty}\\
&\qquad\qquad <
\alpha_j\cdots\alpha_n^+\
 \overline{\alpha_1\cdots\alpha_n\
\alpha_1\cdots\alpha_n^+}\
\alpha_1\cdots\alpha_n^+\ 0^{\infty}.
\end{split}
\end{equation}
If $i>j$, then the left inequality in \eqref{618} implies that $$\alpha_i\cdots\alpha_n^+\le \alpha_j\cdots\alpha_{n+j-i}.$$
If the inequality is strict, we obtain the right inequality in \eqref{618}.
If we have an equality, then using Proposition \ref{p25} (v) we get
\begin{equation}\label{619}
\overline{\alpha_{n+j-i+1}\cdots\alpha_n^+}<\alpha_1\cdots\alpha_{i-j}\qtq{or}
\overline{\alpha_1\cdots\alpha_{i-j}}<\alpha_{n+j-i+1}\cdots\alpha_n^+;
\end{equation}
together with the equality this yields the desired result.

If $i<j$,  then the left inequality in \eqref{618} means that
\begin{equation*}
\alpha_i\cdots\alpha_{n}^+\overline{\alpha_1\cdots\alpha_{n+i-j}}< \alpha_j\cdots\alpha_{n}^+\overline{\alpha_1\cdots\alpha_{n}},
\end{equation*}
and this implies the right inequality in \eqref{618}.
\medskip

The relation  \eqref{613} is equivalent to
\begin{equation}\label{620}
\begin{split}
&\overline{\alpha_i\cdots\alpha_n}0^\infty
<\alpha_j\cdots\alpha_n^+\overline{\alpha_1\cdots\alpha_n}\ 0^{\infty}
\Longrightarrow\\
&\overline{\alpha_i\cdots\alpha_n\
\alpha_1\cdots\alpha_n^+}\
\alpha_1\cdots\alpha_n^+\ 0^{\infty}
<
\alpha_j\cdots\alpha_n^+\
 \overline{\alpha_1\cdots\alpha_n\
\alpha_1\cdots\alpha_n^+}\
\alpha_1\cdots\alpha_n^+\ 0^{\infty}.
\end{split}
\end{equation}
If $i\ge j$, then the first inequality of \eqref{620} implies that
\begin{equation*}
\overline{\alpha_i\cdots\alpha_n}\le\alpha_j\cdots\alpha_{n+j-i},
\end{equation*}
If this  inequality is strict, then the  right inequality in \eqref{620} follows. If we have an equality, then, \eqref{619} together with the equality  yields the desired result.

If $i<j$, then the first inequality of \eqref{620} implies that
\begin{equation*}
\overline{\alpha_i\cdots\alpha_{i+n-j}}\le\alpha_j\cdots\alpha_n^+.
\end{equation*}
If this inequality is strict, then the second inequality of \eqref{620} follows again.
We conclude the proof in this case by showing that we cannot have an equality. Indeed, in case of equality we would have the following implications
\begin{equation*}
\overline{\alpha_{1}\cdots\alpha_{j-i}}\geq\overline{\alpha_{n+i-j+1}\cdots\alpha_{n}}.
\end{equation*}
This is impossible because by $\beta(q_0)=\alpha_1\cdots\alpha_n^+\ 0^{\infty}$ we have
\begin{equation*}
\alpha_{n+i-j+1}\cdots\alpha_{n}<\alpha_{n+i-j+1}\cdots\alpha_{n}^+\le \alpha_{1}\cdots\alpha_{j-i},
\end{equation*}
and hence
\begin{equation*}
\overline{\alpha_{1}\cdots\alpha_{j-i}}<\overline{\alpha_{n+i-j+1}\cdots\alpha_{n}}.
\end{equation*}

The relation \eqref{614} is equivalent to
\begin{equation}\label{621}
\begin{split}
&\alpha_i\cdots\alpha_n^+\overline{\alpha_1\cdots\alpha_n}\ 0^{\infty}
<
\overline{\alpha_j\cdots\alpha_n}\ 0^{\infty}
\Longrightarrow\\
&\alpha_i\cdots\alpha_n^+\
 \overline{\alpha_1\cdots\alpha_n\
\alpha_1\cdots\alpha_n^+}\
\alpha_1\cdots\alpha_n^+\ 0^{\infty}
<
 \overline{\alpha_j\cdots\alpha_n\
\alpha_1\cdots\alpha_n^+}\
\alpha_1\cdots\alpha_n^+\ 0^{\infty}.
\end{split}
\end{equation}
The first inequality of \eqref{621} implies that
\begin{equation*}
\alpha_i\cdots\alpha_n^+\overline{\alpha_1\cdots\alpha_{i-1}}<\overline{\alpha_j\cdots\alpha_{n}}\ 0^{j-1}.
\end{equation*}
 This yields the desired result.
\medskip

The relation \eqref{615} is equivalent to
\begin{equation}\label{622}
\begin{split}
&\overline{\alpha_i\cdots\alpha_n}\ 0^{\infty}
<
\overline{\alpha_j\cdots\alpha_n}\ 0^{\infty}
\Longrightarrow\\
&\overline{\alpha_i\cdots\alpha_n\
\alpha_1\cdots\alpha_n^+}\
\alpha_1\cdots\alpha_n^+\ 0^{\infty}
<
 \overline{\alpha_j\cdots\alpha_n\
\alpha_1\cdots\alpha_n^+}\
\alpha_1\cdots\alpha_n^+\ 0^{\infty}.
\end{split}
\end{equation}

If $i>j$, then the left inequality in \eqref{622} implies that
\begin{equation*}
\overline{\alpha_i\cdots\alpha_n}\leq \overline{\alpha_j\cdots\alpha_{j-i+n}},
\end{equation*}
this together with \eqref{64} yields the right inequality of \eqref{622}.

If $i<j$, then the first inequality of \eqref{622} implies that
\begin{equation*}
 \overline{\alpha_i\cdots\alpha_{i-j+n}}<\overline{\alpha_j\cdots\alpha_{n}}.
\end{equation*}
We obtain  the right inequality of \eqref{622}.
\medskip

(ii) First we consider the case (a).
If $w=\eta_{\alpha_n^+}$, then $f(w)=\tilde a_n$, and it follows from Lemma \ref{l61} (ii) that
\begin{align*}
&\tilde w<\tilde a_n<\tilde a_{3n+i}<\tilde a_i=\tilde z
\qtq{if}0<i<n,\\
&\tilde w<\tilde a_n<\tilde a_{n+i}<\tilde a_i=\tilde z
\qtq{if}n<i\le 2n.
\end{align*}
Hence $(f(w),f(z))$ contains at least two vertices of $\gg(q_{m+1})$ by the construction of $\gg(q_m)$ and $\gg(q_{m+1})$.

If $w\ne\eta_{\alpha_n^+}$, then $f(w)=\tilde w$, and Lemma \ref{l61} (ii) implies that
\begin{align*}
&\tilde w<\tilde a_{3n+i}<\tilde a_i=\tilde z
\qtq{if}0<i<n,\\
&\tilde w<\tilde a_{n+i}<\tilde a_i=\tilde z
\qtq{if}n<i\le 2n.
\end{align*}
Hence $(f(w),f(z))$ contains at least two vertices of $\gg(q_{m+1})$ again.

In cases (b) and (c) we have $f(w)<f(z)$, and $(f(w),f(z))$ is not a switch region.
Therefore $(f(w),f(z))$  contains at least one vertex of $\gg(q_{m+1})$.

We have indicated altogether at least $2(2n-2)+M+1=4n+M-1$ vertices of $\gg(q_{m+1})$: their right endpoints run over
\begin{equation*}
\tilde a_1,\ldots, \tilde a_{2n-1},
\tilde a_{2n+1},\ldots, \tilde a_{4n-1},
\theta_1,\ldots,\theta_M,\eta_{M+1}
\end{equation*}
(we recall that $\tilde a_{2n}=\tilde \theta_{\overline{\alpha_n}}$).

Since $\gg(q_{m+1})$ has exactly $4n+M-1$ vertices, we conclude that $(f(w),f(z))$ contains exactly two vertices of $\gg(q_{m+1})$ in case (a), and $(f(w),f(z))$ is a vertex of $\gg(q_{m+1})$ in cases (b) and (c).
\medskip

(iii) We have to show the equivalence
\begin{equation*}
(x,x^+)\xrightarrow{k}(y,y^+)
\Longleftrightarrow
(f(x),f(x)^+)\xrightarrow{k}(f(y),f(y)^+)\end{equation*}
where $x, y$ run over the points \eqref{69}.

If
\begin{equation*}
(x,x^+)\xrightarrow{k}(y,y^+)
\Longleftrightarrow
(f(x),f(x)^+)\xrightarrow{j}(f(y),f(y)^+)\end{equation*}
for some labels $k,j$, then $k=j$ because \begin{equation*}
k=b_1(x,q_m)=b_1(f(x),q_{m+1})=j.
\end{equation*}
Henceforth we  write $T:=T_k$ for brevity.
We have to show that
\begin{equation*}
T(x)\le y\qtq{and}y^+\le T(x^+)
\Longleftrightarrow
T(f(x))\le f(y)\qtq{and}f(y)^+\le T(f(x)^+).
\end{equation*}
This is equivalent to
\begin{equation*}
T(x)\le y<T(x^+)
\Longleftrightarrow
T(f(x))\le f(y)<T(f(x)^+),
\end{equation*}
and then, since $f$ is increasing by (i), to
\begin{equation*}
f(T(x))\le f(y)<f(T(x^+))
\Longleftrightarrow
T(f(x))\le f(y)<T(f(x)^+).
\end{equation*}

First we prove that
\begin{equation}\label{623}
f(T(x))\le f(y)
\Longleftrightarrow
T(f(x))\le f(y).
\end{equation}
If $x\in\set{\eta_1, \ldots, \eta_M}\setminus\set{\eta_{\alpha_n^+}}$, then  \eqref{623} takes the form
\begin{equation*}
\tilde a_{n+1}\le f(y)
\Longleftrightarrow
\tilde a_{2n+1}\le f(y).
\end{equation*}
Since $(\tilde a_{2n+1},\tilde a_{n+1})$ is a vertex of  $\gg(q_{m+1})$ by  Lemma \ref{l62}, we have
\begin{equation*}
\tilde a_{n+1}\le f(y)
\Longleftrightarrow
\tilde a_{2n+1}<f(y).
\end{equation*}
It remains to observe that, since $(y, y^+)$ is a vertex of $\gg(q_m)$, $f(y)$ differs from $\tilde a_{2n+1}=T(f(x))$.

If $x=a_{2n-1}$, then
\eqref{623} takes the form
\begin{equation*}
\tilde\theta_{\overline{\alpha_n}}\le f(y)
\Longleftrightarrow
\tilde\eta_{\overline{\alpha_n}}\le f(y).
\end{equation*}
Since $(\tilde\theta_{\overline{\alpha_n}},\tilde\eta_{\overline{\alpha_n}})$ is a switch interval by  Lemma \ref{l62}, we have
\begin{equation*}
\tilde\theta_{\overline{\alpha_n}}<f(y)
\Longleftrightarrow
\tilde\eta_{\overline{\alpha_n}}\le f(y).
\end{equation*}
It remains to show that $f(y)$ differs from $\tilde\theta_{\overline{\alpha_n}}$.
This follows from \eqref{69} and \eqref{611} because
\begin{equation*}
\alpha_n^+\le M\Longrightarrow
\alpha_n<M\Longrightarrow
\overline{\alpha_n}>0.
\end{equation*}
In the remaining cases \eqref{623} is obvious  because $f(T(x))=T(f(x))$ by  Lemma \ref{l62}.

We finish the proof of (iii) by showing the equivalence
\begin{equation}\label{624}
f(y)<f(T(x^+))
\Longleftrightarrow
f(y)<T(f(x)^+)
\end{equation}
for the  points $x,y$ listed in \eqref{69}; then $x^+$ is one of the points listed in \eqref{610}.

If $x^+\in\set{\theta_1,\ldots,\theta_M,\eta_{M+1}}\setminus\set{\theta_{\overline{\alpha_n}}}$, then \eqref{624} holds because $T(f(x)^+)=f(T(x^+))$.
Indeed, $(f(x),f(x^+))$ is a vertex of $\gg(q_{m+1})$ by (ii), and hence $f(x)^+=f(x^+)$.
Since $T(f(x^+))=f(T(x^+))$ by Lemma \ref{l62}, we conclude that
\begin{equation*}
T(f(x)^+)=T(f(x^+))=f(T(x^+)).
\end{equation*}

It remains to consider the case where $x^+=a_i$ for some
\begin{equation*}
i\in\set{1,\ldots,n-1,n+1,\ldots,2n};
\end{equation*}
we recall that $a_{2n}=\theta_{\overline{\alpha_n}}$.
Applying (ii) we obtain that
\begin{equation*}
(T(f(x)^+),f(T(x^+)))=
\begin{cases}
(\tilde a_{3n+i+1},\tilde a_{i+1})
&\text{if }i=1,\ldots, n-1,\\
(\tilde a_{n+i+1},\tilde a_{i+1})
&\text{if }i=n+1,\ldots, 2n-1,\\
(\tilde a_{3n+1},\tilde a_1)
&\text{if }i=2n.
\end{cases}
\end{equation*}
It remains to show that
$f(y)$ cannot belong to $[T(f(x)^+),f(T(x^+)))$.
Since $(y,y^+)$ is a vertex of $\gg(q_m)$, and therefore
\begin{equation*}
f(y)\notin\set{\tilde a_{2n+1},\ldots,\tilde a_{4n}},
\end{equation*}
$f(y)\ne T(f(x)^+)$.
If $0<i<n-1$ or if $n<i<2n-1$, then we infer from the above formula and from Lemma \ref{l61} (ii) that $(T(f(x)^+),f(T(x^+)))$ is a vertex of $\gg(q_{m+1})$, and therefore $f(y)$ does not belong to this interval.
This proves \eqref{624} in these cases.

If $i=n-1$, then
\begin{equation*}
(T(f(x)^+),f(T(x^+)))=(\tilde a_{4n},\tilde a_n)
=(\tilde\theta_{\alpha_n^+},\tilde a_n),
\end{equation*}
and we infer from (ii) that $(\tilde\eta_{\alpha_n^+},\tilde a_n)$ is a vertex of $\gg(q_{m+1})$.
Since $(\tilde\theta_{\alpha_n^+},\tilde\eta_{\alpha_n^+})$ is a switch interval, we conclude by observing that $f(y)\ne \tilde\eta_{\alpha_n^+}$ by the definition of $f$.

Finally, if $i=2n-1$, then
\begin{equation*}
(T(f(x)^+),f(T(x^+)))=(\tilde a_{3n},\tilde a_{2n})
=(\tilde a_{3n},\tilde \eta_{\overline{\alpha_n}}),
\end{equation*}
and we infer from (ii) that $(\tilde a_{3n},\tilde \theta_{\overline{\alpha_n}})$ is a vertex of $\gg(q_{m+1})$.
Since $(\tilde \theta_{\overline{\alpha_n}},\tilde \eta_{\overline{\alpha_n}})$ is a switch interval, \eqref{624} will follow if we show that $f(y)$ is different from both $\tilde a_{3n}$ and $\tilde\theta_{\overline{\alpha_n}}$.
This follows from \eqref{611} if we  recall that $y$ is one of the points listed in \eqref{69}.

(iv) Applying Lemma \ref{l37} (iv) (with $N=4n$, so that $n$ is replaced by $2n$) and Lemma \ref{l61} (ii) we obtain that
$\hat\gg(q_m)$ and $\cc_{m+1}$ have the following vertices, respectively:
\begin{align*}
&(\tilde a_i,\tilde a_i^+)\qtq{for}0<i<2n,\\
&(\tilde\eta_j,\tilde\eta_j^+)\qtq{for}j\in\set{1,\ldots,M}\setminus\set{\alpha_n^+},\\
&(\tilde\theta_0,\tilde\theta_0^+),
\end{align*}
and
\begin{equation*}
(\tilde a_i,\tilde a_i^+)\qtq{for}2n<i<4n,\quad (\tilde\eta_{\alpha_n^+},\tilde\eta_{\alpha_n^+}^+).
\end{equation*}
Comparing the greedy expansions of the numbers $\tilde a_i$ and $\tilde\eta_{\alpha_n^+}$, we see that $\tilde\eta_{\alpha_n^+}=\tilde a_{2n}$ if $\alpha_n^+=\overline{\alpha_n}$, and $\tilde\eta_{\alpha_n^+}$ is different from all numbers $\tilde a_i$ if $\alpha_n^+\ne\overline{\alpha_n}$.
Therefore the two vertex sets are different.
Furthermore, since their total number $4n+M-1$ is equal to the number of vertices of $\gg(q_{m+1})$, they form a partition of the vertices of  $\gg(q_{m+1})$.
\medskip

(v) Since $n=2k$ is even and
\begin{equation*}
\alpha_1\cdots\alpha_n^+
=\alpha_1\cdots\alpha_k^+\ \overline{\alpha_1\cdots\alpha_k},
\end{equation*}
whence $\alpha_k^+=\overline{\alpha_n}$,  using the definition of the function $f$ we obtain that $\hat\cc_m$ and $\cc_{m+1}$ have the following vertices, respectively:
\begin{align*}
&(\tilde a_i,\tilde a_i^+)\qtq{for}n<i<2n,\quad (\tilde\eta_{\alpha_k^+},\tilde\eta_{\alpha_k^+}^+),
\intertext{and}
&(\tilde a_i,\tilde a_i^+)\qtq{for}2n<i<4n,\quad (\tilde\eta_{\alpha_n^+},\tilde\eta_{\alpha_n^+}^+).
\end{align*}
It follows that the path
\begin{equation*}
(\tilde a_{2n-1},\tilde a_{2n-1}^+)
\xrightarrow{\overline{\alpha_{n-1}}}
(\tilde a_{2n},\tilde a_{2n}^+)
\xrightarrow{\overline{\alpha_{n}}}(\tilde a_{2n+1}, \tilde a_{2n+1}^+)
\end{equation*}
of Lemma \ref{l61} (v) leads from $\hat\cc_m$ to $\cc_{m+1}$.
\end{proof}

\begin{proof}[Proof of Theorem \ref{t14}]
Applying Lemma \ref{l63} (iii) and identifying the graphs $\gg(q_m)$ and $\hat\gg(q_m)$ we may assume that $\gg(q_m)$ is a subgraph of $\gg(q_{m+1})$ for $m=1,2,\ldots.$
Let us denote by $V_1$ the set of vertices of $\gg(q_1)$, and by $V_m$ for $m\ge 2$ the set of vertices of $\gg(q_m)$ that are not vertices of the subgraph $\gg(q_{m-1})$.

Now we define the graph $\hat\gg(q_0^*)$ as follows.
Its vertices form the union of the sets $V_m$ for $m=1,2,\ldots .$
Furthermore, we draw an edge $I\xrightarrow{k}J$ in $\hat\gg(q_0^*)$ if there exists an $m$ such that $I\xrightarrow{k}J$ in $\gg(q_m)$; then by construction we also have $I\xrightarrow{k}J$ in $\gg(q_j)$ for all $j>m$ as well.

By Lemma \ref{l63} (iv) the subgraph spanned by $V_m$ is the cycle $\cc_m$ for every $m\ge 2$.
Furthermore, if $\beta(q_0)=\alpha_1\cdots\alpha_n^+\ 0^{\infty}$, then the last nonzero digit of $\beta(q_m)$ is the $2^mn$th digit by Proposition \ref{p25} (ii) for every $m\ge 1$, and the length of the cycle $\cc_m$ is equal to $2^{m-1}n$ by Lemma \ref{l61} (ii).
The rest of the theorem follows from Lemmas \ref{l61} (iii) and \ref{l63} (v).
\end{proof}

\section{Proof of Theorem \ref{t11} for $q\in\vv\setminus\uuu$}\label{s7}

The following lemma strengthens Proposition \ref{p313} (ii) for $q\in\vv\setminus\uuu$ because then $\alpha_n^+$ is not the last nonzero digit of $\beta(q)$.

\begin{lemma}\label{l71}
Let $q\in\vv\setminus\uuu$ and write $\alpha(q)=\left(\alpha_1\cdots\alpha_n^+\overline{\alpha_1\cdots\alpha_n^+}\right)^{\infty}$ with the smallest possible $n$.
If there exists a path
\begin{equation}\label{71}
I_1\xrightarrow{\alpha_1}
I_2\xrightarrow{\alpha_2}
\cdots
I_n\xrightarrow{\alpha_n^+}
I_{n+1}
\end{equation}
in $\gg(q)$, then $I_1\subset (a_1,\eta_{M+1})=(1,M/(q-1))$.
\end{lemma}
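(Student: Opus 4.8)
The plan is to argue by contradiction, adapting the increasing-composition argument of Proposition \ref{p313} (ii) but tracking the right endpoints of the intervals rather than comparing everything against $0$. Suppose $I_1\not\subset(a_1,\eta_{M+1})$. Since the vertices of $\gg(q)$ are the connected components of a set from which the point $a_1=1$ has been removed (recall $a_1\in\set{a_i:i=1,\dots,N-1}$ with $N=2n$), each vertex lies entirely in $(\theta_0,a_1)$ or entirely in $(a_1,\eta_{M+1})$. Hence the negation forces $I_1\subset(\theta_0,a_1)=(0,a_1)$, so in particular $\sup I_1\le a_1$.

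Next I would propagate this bound along the path. Each $T_{\alpha_i}(x)=qx-\alpha_i$ is increasing, and by Lemma \ref{l34} (v) (with $N=2n$, so that the first $n-1$ labels $\alpha_1,\dots,\alpha_{n-1}$ coincide with the first $n-1$ global digits) we have $T_{\alpha_i}(a_i)=a_{i+1}$ for $i=1,\dots,n-1$. A short induction then yields $\sup I_i\le a_i$ for $i=1,\dots,n$: from $I_{i+1}\subset T_{\alpha_i}(I_i)$ and monotonicity,
\begin{equation*}
\sup I_{i+1}\le T_{\alpha_i}(\sup I_i)\le T_{\alpha_i}(a_i)=a_{i+1}.
\end{equation*}
In particular $\sup I_n\le a_n$, and by Lemma \ref{l37} (iv) (valid for $q>\min\vv$) we have $a_n=\eta_{\alpha_n^+}$, so $\sup I_n\le\eta_{\alpha_n^+}$.

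The contradiction now comes from the final edge $I_n\xrightarrow{\alpha_n^+}I_{n+1}$. Because $\alpha_n^+\ge 1$, Remark \ref{r32} (ii) (equivalently Lemma \ref{l39}) shows that any vertex carrying an outgoing edge labeled $\alpha_n^+$ must sit in the first-digit region of $\alpha_n^+$: namely $I_n\subset(\eta_{\alpha_n^+},\theta_{\alpha_n^++1})$ if $\alpha_n^+\le M-1$, and $I_n\subset(\eta_M,\eta_{M+1})$ if $\alpha_n^+=M$. In either case $\inf I_n\ge\eta_{\alpha_n^+}$, which is incompatible with $\sup I_n\le\eta_{\alpha_n^+}$ for a nonempty open interval $I_n$. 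This contradiction proves $I_1\subset(a_1,\eta_{M+1})$.

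The main subtlety to be aware of is that, unlike in Proposition \ref{p313} (ii), the prescribed path stops at the label $\alpha_n^+$, which is \emph{not} the last nonzero digit of $\beta(q)$; consequently the composition sends $a_1$ to $a_{n+1}=b_1\ne 0$, and the crude sign argument ``$T(I_1)\subset(-\infty,0)$'' of Proposition \ref{p313} (ii) is no longer available. The decisive replacement is exactly the identity $a_n=\eta_{\alpha_n^+}$, which pins the obstruction at the right endpoint of the relevant switch interval. Finally, the boundary base $q=\min\vv$, where Lemma \ref{l37} does not apply, is handled directly from the explicit graph in Example \ref{e310}: there the only labels occurring are $0$ and $M$, so either no path with the prescribed labels exists (the statement being vacuous) or, in the case $M=1$, the unique edge labeled $\alpha_1^+=1$ already starts in $(a_1,\eta_{M+1})$.
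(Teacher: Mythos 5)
Your proof is correct, but it takes a genuinely different route from the paper's. The paper treats $q=\min\vv$ exactly as you do (via Example \ref{e310}), but for $q>\min\vv$ it writes $q=q_m$ in the sense of Proposition \ref{p28} and inducts on $m$: the base case $m=1$ is reduced to Proposition \ref{p313} (ii) for $q_0\in\uuu\setminus\uu$ through the isomorphism $\gg(q_0)\cong\gg(q_1)$ of Theorem \ref{t13}, and the inductive step uses the decomposition of $\gg(q_{m+1})$ into the subgraph $\hat\gg(q_m)$ and the cycle $\cc_{m+1}$, together with the absence of edges from $\cc_{m+1}$ back to $\hat\gg(q_m)$ (Lemma \ref{l63}). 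You instead run the monotone-composition idea of Proposition \ref{p313} (ii) directly on $\gg(q)$ but stop it at the $n$th vertex: since $a_1$ is one of the removed points, the negation forces $\sup I_1\le a_1$, the edges then give $\sup I_n\le a_n=\eta_{\alpha_n^+}$ (Lemma \ref{l37} (iv)), and this collides with the requirement $\inf I_n\ge\eta_{\alpha_n^+}$ imposed on any vertex carrying an outgoing label $\alpha_n^+\ge 1$ (Lemma \ref{l39}); you also correctly identify why the crude sign argument $T(a_1)=0$ of Proposition \ref{p313} (ii) is unavailable here and why the identity $a_n=\eta_{\alpha_n^+}$ is the right substitute. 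The trade-off: your proof is shorter and self-contained, and --- since Lemma \ref{l71} feeds into the proof of Theorem \ref{t11} for $q\in\vv\setminus\uuu$ --- it would decouple that part of Theorem \ref{t11} from the structure theorems of Sections \ref{s5}--\ref{s6}; the paper's induction is logically heavier but reuses machinery it has already built and makes the ``core with attached cycles'' picture of the graphs $\gg(q_m)$ do the work explicitly.
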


\begin{proof}
If $q=\min\vv$ is the generalized Golden Ratio and $M=2m-1$ or $M=2m$, then $n=1$ and $\alpha_n^+=m>0$.
By Example \ref{e310} there exists such a path $I_1\xrightarrow{m}I_2$ only if $m=M$, i.e., if $m=M=1$,  and in this case $I_1=(\eta_M,\eta_{M+1})=(a_1,\eta_{M+1})$.

Turning to the case $q>\min\vv$, with the notation of Proposition \ref{p28} we have $q=q_m$ for some $m\ge 1$, where $q_0\in\uuu\setminus\uu$.

If $m=1$, then $\beta(q_0)=\alpha_1\cdots\alpha_n^+\ 0^{\infty}$, and the lemma follows by applying  Proposition \ref{p313} (ii) for $q_0$, and using the isomorphism of $\gg(q_0)$ and $\gg(q_1)$ (Theorem \ref{t13}).

Proceeding by induction, we assume that the lemma is true for some $q_m$ with $m\ge 1$, and we prove its validity for $q_{m+1}$.
Writing
\begin{equation*}
\alpha(q_m)=\left(\alpha_1\cdots\alpha_n^+\overline{\alpha_1\cdots\alpha_n^+}\right)^{\infty}
\end{equation*}
we have
\begin{equation*}
\alpha(q_{m+1})=\left(\alpha_1\cdots\alpha_n^+\
\overline{\alpha_1\cdots\alpha_n\  \alpha_1\cdots\alpha_n^+}\ \alpha_1\cdots\alpha_n\right)^{\infty}.
\end{equation*}
By Lemma \ref{l63} the graph $\gg(q_{m+1})$ consists of a subgraph $\hat\gg(q_m)$ generating  $\gg_{q_m}'$,  a \emph{cycle} $\cc_{m+1}$ generating $\alpha(q_m)$,
and  there is no edge from $\cc_{m+1}$ to $\hat\gg(q_m)$.
Therefore a path of the form \eqref{71} in $\gg(q_{m+1})$ that generates a sequence starting with $\alpha_1\cdots\alpha_n^+\
\overline{\alpha_1\cdots\alpha_n}$  cannot start in the cycle $\cc_{m+1}$. Hence it must start in $\hat\gg(q_m)$, and the relation $I_1\subset (a_1,\eta_{M+1})$ follows by applying the induction hypothesis.
\end{proof}

Now we are ready to complete the proof of Theorem \ref{t11}.

\begin{proof}[Proof of Theorem \ref{t11} for $q\in\vv\setminus\uuu$]
Since now $\overline{\uu_q}=\uu_q$, the proof is simpler than in the earlier case $q\in\uuu\setminus\uu$.
The only new difficulty is to show that $(c_i)\in\gg_q'$ implies that $(c_i)_q\in\uu_q$.
Setting $x_i:=(c_ic_{i+1}\cdots)_q$ for all $i$, we have to show that they are  different from the points $\theta_i$ and $\eta_i$, $i=1,\cdots, M$.
By reflection it suffices to show that  $x_i\ne \theta_j$ for all $i$.
We recall from Lemma \ref{l34} (ii) and Proposition \ref{p210} (vii) that $\theta_j\in\vv_q$, and therefore $\theta_j$ has a unique doubly infinite expansion.

Assume on the contrary that $x_k=\theta_j$ for some $j$.
Then, since $j^-\alpha(q)$ and $(c_i)$ are doubly infinite, and $\theta_j$ has a unique doubly infinite expansion,
\begin{equation*}
c_{k}c_{k+1}\cdots=j^-\alpha(q),
\end{equation*}
and hence
\begin{equation*}\label{72}
\alpha(q)=c_{k+1}c_{k+2}\cdots\in\gg_q'.
\end{equation*}
Writing $\alpha(q)=\left(\alpha_1\cdots\alpha_n^+\overline{\alpha_1\cdots\alpha_n^+}\right)^{\infty}$ with the smallest possible $n$, hence we infer the
existence of a path of the form
\begin{equation*}
J_1\xrightarrow{\overline{\alpha_1}}
J_2\xrightarrow{\overline{\alpha_2}}
\cdots
J_n\xrightarrow{\overline{\alpha_n^+}}
J_{n+1}\xrightarrow{\alpha_1}
J_{n+2}\xrightarrow{\alpha_2}
\cdots
J_{2n}\xrightarrow{\alpha_n^+}
J_{2n+1}
\end{equation*}
in $\gg(q)$.
Applying Lemma \ref{l71} it follows that $J_{n+1}\subset (a_1,\eta_{M+1})$.
Applying Proposition \ref{p313} (i) hence we infer that
\begin{equation*}
J_1=\cdots=J_n=(\eta_M,\eta_{M+1}) \qtq{or} (1, \eta_{M+1}),
\end{equation*}
and $\overline{\alpha_1\cdots\alpha_n^+}=M^n$.
But this is impossible because $\overline{\alpha_n^+}<M$.
\end{proof}

\begin{corollary}\label{c72}
If $q\in\vv\setminus\uu$, then $\uu_q$, $\overline{\uu_q}$, $\vv_q$, $\gg_q$ and $\tilde\gg_q$
have the same Hausdorff dimension.
\end{corollary}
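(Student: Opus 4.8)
The plan is to reduce the five-way equality to the single claim $\dim\tilde\gg_q=\dim\gg_q$, and then to prove the latter by a covering argument. First I would record that $\dim\uu_q=\dim\overline{\uu_q}=\dim\vv_q$ by Proposition \ref{p29}(ii), and that $\gg_q$ coincides with $\vv_q$ (when $q\in\uuu\setminus\uu$) or with $\uu_q$ (when $q\in\vv\setminus\uuu$) by Theorem \ref{t11}; in either case $\dim\gg_q$ equals the common value above. Thus the only thing left is to compare $\tilde\gg_q$ with $\gg_q$.

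Since $\tilde\gg(q)$ is a subgraph of $\gg(q)$, we have $\tilde\gg_q\subseteq\gg_q$ and hence $\dim\tilde\gg_q\le\dim\gg_q$. For the reverse inequality the key structural fact, to be extracted from Proposition \ref{p313}(i) and its reflection, is that the vertices of $\gg(q)$ lying outside $(b_1,a_1)$ are transient while the central part $\tilde\gg(q)$ is absorbing. Concretely, every vertex contained in $(0,b_1)$ receives an incoming edge only from the leftmost vertex, which in turn has its self-loop as its unique incoming edge; by the reflection symmetry \eqref{35} the same holds near $\eta_{M+1}$. Consequently no central vertex sends an edge to a peripheral one, so once a path reaches a vertex that is a subinterval of $(b_1,a_1)$ it remains central forever, and the edges joining such vertices are precisely the edges of $\tilde\gg(q)$; moreover a path that never becomes central must stabilize at an extreme vertex, i.e.\ generate $0^\infty$ or $M^\infty$.

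Therefore every infinite path in $\gg(q)$, apart from the two generating the points $0$ and $\tfrac{M}{q-1}$, has a finite (possibly empty) peripheral prefix $c_1\cdots c_\ell$ after which it runs entirely inside $\tilde\gg(q)$. Writing $y:=(c_{\ell+1}c_{\ell+2}\cdots)_q\in\tilde\gg_q$ and $\psi_w(t):=\sum_{i=1}^\ell c_iq^{-i}+q^{-\ell}t$ for the corresponding affine contraction associated with the word $w=c_1\cdots c_\ell$, we obtain the inclusion $\gg_q\subseteq\{0,\tfrac{M}{q-1}\}\cup\bigcup_w\psi_w(\tilde\gg_q)$, where $w$ ranges over the countably many finite words over $\set{0,\ldots,M}$. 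Since the Hausdorff dimension is countably stable and invariant under the bi-Lipschitz maps $\psi_w$, this yields $\dim\gg_q\le\dim\tilde\gg_q$, completing the equality.

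I expect the main obstacle to be the verification that the peripheral vertices are genuinely transient and the centre absorbing, that is, turning the incoming-edge description of Proposition \ref{p313}(i) into the clean dichotomy that a path is eventually central or else eventually constant at an extreme vertex. Once that combinatorial fact is in hand, the passage to Hausdorff dimensions via countable stability and affine invariance is routine, and in particular no entropy computation is needed.
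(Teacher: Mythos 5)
Your argument is correct and is essentially the paper's own proof: the first three equalities come from Proposition \ref{p29} together with the identification of $\gg_q$ with $\overline{\uu_q}$ (respectively $\vv_q$ or $\uu_q$) provided by Theorem \ref{t11}, and the comparison of $\gg_q$ with $\tilde\gg_q$ is exactly the paper's observation that, by Proposition \ref{p313}(i), $\gg_q$ admits a countable cover by sets similar to $\tilde\gg_q$. Your elaboration of the transient/absorbing dichotomy for peripheral versus central vertices is precisely the justification of that covering claim, which the paper leaves implicit.
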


\begin{proof}
By Proposition \ref{p27} (i) and Theorem \ref{t11}, these sets $\uu_q$, $\overline{\uu_q}=\gg_q$, $\vv_q$ differ only by countable sets, hence they have the same Hausdorff dimension by Proposition \ref{p29}.
Since $\tilde\gg_q\subset\gg_q$ by the definition of a subgraph, the proof is completed by observing that $\gg_q$ has a countable cover by sets similar to $\tilde\gg_q$ by Proposition \ref{p313} (i).
\end{proof}

\section{Proof of Theorem \ref{t16}}\label{s8}

Since $j>1$, we have always $\uu_q^j\subset \left(0,\frac{M}{q-1}\right)$.
We will show that if $\uu_q^j\ne\varnothing$, then $0\in\overline{\uu_q^j}$, and hence  $\uu_q^j$ is not closed.

We use the generalized Golden Ratio $q_{GR}$, and we distinguish several cases:
\begin{enumerate}[\upshape (a)]
\item $q<q_{GR}$;
\item $\uu_q^j\cap (0,1)\ne\varnothing$;
\item $\uu_q^j\cap \left(\frac{M}{q-1}-1,\frac{M}{q-1}\right)\ne\varnothing$;
\item $M=2m$ is even, $q=m+1$ and $\uu_{m+1}^j=\set{1}$.
\end{enumerate}

(a) If $q<q_{GR}$, then every $x\in \left(0,\frac{M}{q-1}\right)$ has a continuum of expansions by \cite{EJK1990, B2014}.
Since $j>1$, the assumption $\uu_q^j\ne\varnothing$ implies that $j=2^{\aleph_0}$.
Then $\uu_q^j=\left(0,\frac{M}{q-1}\right)$ and therefore $0\in\overline{\uu_q^j}$ as required.

(b) If there exists a point $x\in\uu_q^j\cap (0,1)$, then $0\in \overline{\uu_q^j}$ again because $q^{-k}x\to 0$ as $k\to\infty$, and $q^{-k}x\in\uu_q^j$ for each positive integer $k$.
Indeed, if $(c_i)$ is an expansion  of $x$, then  $0^k(c_i)$ is an expansion of $q^{-k}x$, so that $q^{-k}x$ has at least $j$ expansions.
Conversely, since $q^{-k}x<q^{-k}$, each expansion $(d_i)$ of $q^{-k}x$ must start with $0^k$, and therefore $(d_{k+i})$ is an expansion of $x$.
This shows that $q^{-k}x$ has at most $j$ expansions.

(c) If there exists a point $y\in\uu_q^j\cap \left(\frac{M}{q-1}-1,\frac{M}{q-1}\right)$, then there exists also a point $x\in\uu_q^j\cap (0,1)$ because $y\in\uu_q^j\Longleftrightarrow \frac{M}{q-1}-y\in\uu_q^j$.

If $\uu_q^j\ne\varnothing$ and none of the conditions (a), (b) and (c) is satisfied, then the condition (d) is satisfied.
Indeed, we must have
\begin{equation*}
q\ge q_{GR}\qtq{and}\varnothing\ne\uu_q^j\subset\left[1,\frac{M}{q-1}-1\right].
\end{equation*}
This is impossible if $M=2m-1$ because \begin{equation*}
q\ge q_{GR}\Longrightarrow
\frac{M}{q-1}
\le\frac{M}{q_{GR}-1}
=\frac{4m-2}{m+\sqrt{m^2+4m}-2}
<\frac{4m-2}{m+(m+1)-2}=2,
\end{equation*}
so that the interval $\left[1,\frac{M}{q-1}-1\right]$ is empty.

This cannot happen either if $M=2m$ and $q>q_{GR}$ because
\begin{equation*}
q>q_{GR}\Longrightarrow
\frac{M}{q-1}
<\frac{M}{q_{GR}-1}
=\frac{2m}{(m+1)-1}=2,
\end{equation*}
and that the interval $\left[1,\frac{M}{q-1}-1\right]$ is empty again.

The only remaining possibility is $M=2m$ and $q=q_{GR}=m+1$.
Then the interval $\left[1,\frac{M}{q-1}-1\right]$ reduces to the one-point set $\set{1}$, i.e., to condition (d).

We complete the proof of the theorem by showing that the case (d) is impossible.
More precisely, we show that if
\begin{equation}\label{81}
M=2m,\quad
q=m+1\qtq{and}1\in\uu_q^j,
\end{equation}
then $\frac{1}{m+1}\in\uu_q^j$ and therefore $\uu_q^j\ne\set{1}$.

First we show that if $M=2m$ and $q=m+1$, then $1\in\uu_q^{\aleph_0}$, and therefore \eqref{81} implies that $j=\aleph_0$.
First of all, $m^{\infty}$ is an expansion of $1$ because
\begin{equation*}
\sum_{i=1}^{\infty}\frac{m}{(m+1)^i}=1.
\end{equation*}
Furthermore, for each $k\ge 0$ there exist exactly two  expansions $(c_i)$ of $1$ starting with exactly $k$ consecutive $m$ digits.
Indeed, if $c_{k+1}=m+1$, then
\begin{equation*}
\left(\sum_{i=1}^k\frac{m}{(m+1)^i}\right)+\frac{m+1}{(m+1)^{k+1}}=1,
\end{equation*}
so that we must have $c_i=0$ for all $i>k+1$.
This also shows that there are no expansions of $1$ with $c_{k+1}>m+1$.
On the other hand, if $c_{k+1}=m-1$, then
\begin{equation*}
\left(\sum_{i=1}^k\frac{m}{(m+1)^i}\right)+\frac{m-1}{(m+1)^{k+1}}
+\left(\sum_{i=k+2}^k\frac{2m}{(m+1)^i}\right)=1.
\end{equation*}
Since $2m$ is the maximal digit in the alphabet, hence we conclude that we must have $c_i=2m$ for all $i>k+1$, and that there are no expansions of $1$ with $c_{k+1}<m-1$.

It remains to show that $\frac{1}{m+1}\in\uu_q^{\aleph_0}$.
If $c_1c_2\cdots$ is an expansion of $1$, then $0c_1c_2\cdots$ is an expansion of $\frac{1}{m+1}$, so that $\frac{1}{m+1}$ has at least $\aleph_0$ expansions.
Conversely, if $d_1d_2\cdots$ is an expansion of $\frac{1}{m+1}$, then either $d_1=1$ and $d_i=0$ for all $i>1$, or $d_1=0$ and $d_2d_3\cdots$ is an expansion of $1$.
Hence $\frac{1}{m+1}$ has at most $1+\aleph_0=\aleph_0$ expansions.

\section{Preparation of the proofs of Theorems \ref{t18} and \ref{t110}}\label{s9}

Fix a positive integer $M$.
We recall that if $q\in\vv$, then the quasi-greedy expansion $\alpha(q)=\alpha_1\alpha_2\cdots$ of $x=1$ in base $q$ satisfies the lexicographic inequalities
\begin{equation}\label{91}
\alpha_{k+1}\alpha_{k+2}\cdots
\le \alpha(q)
\qtq{and}
\overline{\alpha_{k+1}\alpha_{k+2}\cdots}\le\alpha(q)
\end{equation}
hold for all $k\ge 0$.
If, moreover, $q\in \vv\setminus\uu$, then the sequence $\alpha(q)$ is periodic, and the last digit of the period is $<M$.

Henceforth we fix $q\in \vv\setminus\uu$, and we denote by $\alpha_1\cdots\alpha_N$ the shortest period of $\alpha(q)$.
Then $\alpha_1\cdots\alpha_{N-1}\alpha_N^+0^{\infty}$ is the greedy expansion of $x=1$ in base $q$.

Let $\ff'$ be a family of sequences (not necessarily  a subshift), and set
\begin{equation*}
\ff_q:=\set{(c_i)_q\ :\ (c_i)\in\ff'}.
\end{equation*}

\begin{lemma}\label{l91}
Assume that each $(c_i)\in\ff'$ satisfies the following lexicographic conditions:
\begin{align}
&c_{n+1}c_{n+2}\cdots<\alpha(q)\qtq{for}n= 0,\qtq{and whenever}c_n<M;\label{92}\\
&\overline{c_{n+1}c_{n+2}\cdots}<\alpha(q)\qtq{for}n= 0,\qtq{and whenever}c_n>0;\label{93}\\
&\alpha_{k+1}\cdots\alpha_N^+c_1c_2\cdots<\alpha(q)\qtq{whenever}1\le k<N\qtq{and}\alpha_k<M.\label{94}
\end{align}
Then
\begin{equation*}
\dim\uu_q^m\ge\dim\ff_q
\end{equation*}
for all $m\ge 1$.
\end{lemma}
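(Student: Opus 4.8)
The plan is to realize, for each $m\ge 1$, an affine copy of $\ff_q$ inside $\uu_q^m$. Since Hausdorff dimension is invariant under similarities, this yields $\dim\uu_q^m\ge\dim\ff_q$ immediately. The copy I propose is the image of $\ff_q$ under the similarity $x\mapsto 1+q^{-mN}x$; on the level of sequences this amounts to prepending to each $(c_i)\in\ff'$ the word $\alpha_1\cdots\alpha_N^+\,0^{(m-1)N}$, because $(\alpha_1\cdots\alpha_N^+\,0^{(m-1)N}c_1c_2\cdots)_q=1+q^{-mN}x$ with $x=(c_i)_q$. Writing $W_{jN}:=1+q^{-jN}x$, the whole lemma reduces to the claim that $W_{mN}$ has exactly $m$ expansions for every $x\in\ff_q$.

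First I would use the hypotheses to locate $x$ and to pin down the greedy expansion. Taking $n=0$ in \eqref{92} and \eqref{93} gives $\overline{\alpha(q)}<(c_i)<\alpha(q)$, i.e. $b_1<x<1=a_1$. Conditions \eqref{92} and \eqref{94} then show that $\alpha_1\cdots\alpha_N^+\,0^{(j-1)N}c_1c_2\cdots$ satisfies the greedy admissibility inequalities of Proposition \ref{p21}(i): inside $c_1c_2\cdots$ this is \eqref{92}; at a position $k<N$ with $\alpha_k<M$ the relevant tail is $\alpha_{k+1}\cdots\alpha_N^+\,0^{(j-1)N}c_1\cdots\le\alpha_{k+1}\cdots\alpha_N^+c_1\cdots<\alpha(q)$ by \eqref{94} (inserting zeros only lowers the sequence); and at the zeros it is trivial. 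Hence this word is the greedy expansion $b(W_{jN},q)$.

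Next I would count the expansions of $W_{jN}$ by induction on $j$, showing the expansion tree is a ``caterpillar''. All expansions share the prefix $\alpha_1\cdots\alpha_{N-1}$; the $N$-th digit is the only branch, where the greedy choice $\alpha_N^+$ leads to the remainder $q^{-(j-1)N}x$ (a genuinely univoque number by Corollary \ref{c22}, since the leading zeros trigger no reflection condition and $c$ obeys \eqref{92}--\eqref{93}), while the lazy choice $\alpha_N$ leads to the remainder $W_{(j-1)N}$. The heart of the argument is to prove that there is no other branch. For $0\le k\le N-2$ the remainder $r_k=(\alpha_{k+1}\cdots\alpha_N^+0^{(j-1)N}c\cdots)_q$ sits just to the right of $a_{k+1}$, which by Lemmas \ref{l35}(iv) and \ref{l37}(iii) lies outside $S_q$ (even when $a_{k+1}$ is a boundary point of $S_q$, as for $k+1=n$ with $q\in\vv\setminus\uuu$, the positive perturbation pushes $r_k$ past the right endpoint $\eta_{\alpha_{k+1}}$); condition \eqref{94} then gives $r_k<\theta_{\alpha_{k+1}+1}$, so $r_k\notin S_q$ and the $(k{+}1)$-st digit is forced. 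The single surviving branch is at $r_{N-1}=\theta_{\alpha_N^+}+q^{-(j-1)N-1}x$ (recall $a_N=\theta_{\alpha_N^+}$), which lies in the interior of $[\theta_{\alpha_N^+},\eta_{\alpha_N^+}]$ exactly when $q^{-(j-1)N}x<b_1$. For $j\ge 2$ this holds because $q^{-(j-1)N}x\le q^{-N}x<b_1$ (one checks $q^Nb_1>1$), so the branch is present and, combined with the inductive count for $W_{(j-1)N}$, yields exactly $1+(j-1)=j$ expansions; for $j=1$ it fails precisely because $x>b_1$, whence $r_{N-1}>\eta_{\alpha_N^+}$, no branch occurs and $W_N$ is univoque --- this is the base case.

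Applying the count with $j=m$ gives $1+q^{-mN}x\in\uu_q^m$ for every $x\in\ff_q$, so $x\mapsto 1+q^{-mN}x$ maps $\ff_q$ into $\uu_q^m$; being a similarity of ratio $q^{-mN}$ it preserves Hausdorff dimension, and therefore $\dim\uu_q^m\ge\dim\ff_q$. The step I expect to be the main obstacle is the ``no spurious branch'' analysis at the positions $k<N$: it must be uniform in $x$ and must absorb the extra switch points among the $a_i$ that appear when $q\in\vv\setminus\uuu$ (namely $a_n$), which is exactly the place where the full strength of the lexicographic hypothesis \eqref{94} is required.
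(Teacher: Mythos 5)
Your proof is correct, and it shares the paper's overall strategy: prefix a fixed word to every $(c_i)\in\ff'$ so as to place a similar copy of $\ff_q$ inside $\uu_q^m$, then count expansions by induction. (The two copies are essentially the same: the paper prefixes $10^{(m-1)N}$, giving $x_m=(10^{(m-1)N}c_1c_2\cdots)_q$, and your $W_{mN}$ is exactly $qx_{m+1}$, so your count ``$W_{mN}$ has exactly $m$ expansions'' is equivalent to the paper's ``$x_{m+1}$ has exactly $m+1$ expansions''.) The genuine difference lies in how the branching is controlled. The paper never looks at the switch region: it verifies via Proposition \ref{p21}(i) that the greedy and the lazy expansions of $qx_{m+1}$ are $\alpha_1\cdots\alpha_N^+0^{(m-1)N}c_1c_2\cdots$ and $(\alpha_1\cdots\alpha_N)^{m-1}\alpha_1\cdots\alpha_N^+c_1c_2\cdots$, so every expansion is lexicographically sandwiched between them and must begin with $\alpha_1\cdots\alpha_N$ or $\alpha_1\cdots\alpha_N^+$; this single sandwich replaces your whole digit-by-digit orbit analysis, and it is exactly where the hypothesis $q\in\vv$ enters (in the lazy verification). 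You instead push the remainders $r_k$ through the maps $T_j$ and invoke the switch-region dichotomy (Lemma \ref{l31}, Remark \ref{r32}) together with Lemmas \ref{l35}(iv) and \ref{l37}(iii); this makes the Sidorov-type bifurcation geometrically explicit and localizes where \eqref{92}--\eqref{94} act, but it requires the auxiliary numerical facts $b_1<x<1$ and $q^Nb_1>1$. Both do hold: the first follows from the increasing bijection of Proposition \ref{p21}(ii) applied to $(c_i)$, $\alpha(q)$ and $\overline{\alpha(q)}$ (the last being admissible precisely because $q\in\vv$), and the second from $q^Nb_1=\sum_{i=1}^N\overline{\alpha_i}\,q^{N-i}+b_1\ge\overline{\alpha_N}+b_1>1$, using $\alpha_N<M$ from Proposition \ref{p25}(i). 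So your geometric route is a valid alternative that exposes the bifurcation mechanism; the paper's lexicographic sandwich is shorter because it disposes of all positions at once.
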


\begin{proof}[Proof of Lemma \ref{l91}]
It suffices to show  for each $(c_i)\in\ff'$ the relation
\begin{equation*}
x_m:=(10^{(m-1)N}c_1c_2\cdots)_q\in\uu_q^m.
\end{equation*}
Indeed, then
\begin{equation*}
\frac{1}{q}+\frac{1}{q^{1+(m-1)N}}\ff_q\subset\uu_q^m,
\end{equation*}
i.e., $\uu_q^m$ contains a set similar to $\ff_q$.

For $m=1$ the claim follows by observing that the sequence $1c_1c_2\cdots$ satisfies the conditions of Corollary \ref{c22} by our assumptions \eqref{92} and \eqref{93}.

Since
\begin{equation*}
x_{m+1}=(10^{mN}c_1c_2\cdots)_q
=(0\alpha_1\cdots\alpha_N^+0^{(m-1)N}c_1c_2\cdots)_q
\end{equation*}
for all $m\ge 1$, we obtain by induction on $m$ that if $m\ge 2$, then $x_m$ has at least $m$ expansions: the given one starting with $1$, and the expansions
\begin{equation*}
0(\alpha_1\cdots\alpha_N)^j\alpha_1\cdots\alpha_N^+0^{(m-2-j)N}c_1c_2\cdots,\quad j=0,\ldots,m-2.
\end{equation*}

We prove by induction on $m$ that $x_m$ has no other expansions.
This is true for $m=1$.
Assuming that it is true for some $m\ge 1$, since
\begin{equation*}
x_{m+1}=(10^{mN}c_1c_2\cdots)_q
=(0\alpha_1\cdots\alpha_N^+0^{(m-1)N}c_1c_2\cdots)_q
\end{equation*}
and
\begin{equation*}
(10^{(m-1)N}c_1c_2\cdots)_q=x_m\in\uu_q^m,
\end{equation*}
it is sufficient to show that
\begin{equation*}
(0^{mN}c_1c_2\cdots)_q<1,
\end{equation*}
and that every expansion of
\begin{equation*}
(\alpha_1\cdots\alpha_N^+0^{(m-1)N}c_1c_2\cdots)_q=qx_{m+1}
\end{equation*}
starts necessarily with $\alpha_1\cdots\alpha_N^+$ or $\alpha_1\cdots\alpha_N$.
Indeed, the first property ensures that $x_{m+1}$ has no expansion starting with a digit $>1$, while the second condition ensures that if an expansion of $x_{m+1}$ starts with zero, then it has to start with $0\alpha_1\cdots\alpha_N^+$ or $0\alpha_1\cdots\alpha_N$.

The first condition follows by observing that $0^{mN}c_1c_2\cdots$ is a quasi-greedy expansion in base $q$ by \eqref{92} and Proposition \ref{p21} (ii).
Since
\begin{equation*}
0^{mN}c_1c_2\cdots<c_1c_2\cdots\le\alpha(q)
\end{equation*}
(the strict inequality holds because $c_1c_2\cdots\ne 0^{\infty}$ by \eqref{93}), we conclude that
\begin{equation*}
(0^{mN}c_1c_2\cdots)_q
<(\alpha(q))_q=1.
\end{equation*}

For the second condition it suffices to show that
\begin{equation*}
(b_i):=\alpha_1\cdots\alpha_N^+0^{(m-1)N}c_1c_2\cdots
\end{equation*}
and
\begin{equation*}
(d_i):=(\alpha_1\cdots\alpha_N)^{m-1}\alpha_1\cdots\alpha_N^+c_1c_2\cdots
\end{equation*}
are the greedy and lazy, i.e, the lexicographically largest and smallest expansions of $qx_{m+1}$, respectively.

Applying Proposition \ref{p21} (i) it suffices to show that
\begin{equation*}
b_{n+1}b_{n+2}\cdots<\alpha(q)\qtq{whenever}b_n<M
\end{equation*}
and
\begin{equation*}
\overline{d_{n+1}d_{n+2}\cdots}<\alpha(q)\qtq{whenever}\overline{d_n}<M.
\end{equation*}

The conditions on $(b_i)$ are satisfied by the assumptions \eqref{92} and \eqref{94} of the lemma.
The conditions on $(d_i)$ are satisfied for $n\ge mN$ by \eqref{93}.
For $n<mN$ they follow from the equality $\alpha(q)=(\alpha_1\cdots\alpha_N)^{\infty}$  and from \eqref{91} because
\begin{equation*}
(d_i)=(\alpha_1\cdots\alpha_N)^{m-1}\alpha_1\cdots\alpha_N^+c_1c_2\cdots
\end{equation*}
with $\alpha_1\cdots\alpha_N<\alpha_1\cdots\alpha_N^+$, and therefore
\begin{equation*}
\overline{d_{n+1}d_{n+2}\cdots}
<\overline{\alpha_{n+1}\alpha_{n+2}\cdots}
\le\alpha(q).
\end{equation*}
In the last step we used the assumption $q\in\vv$.
\end{proof}

The conclusion of Lemma \ref{l91} remains valid if we only assume the weak inequalities in of \eqref{92}, \eqref{93} and \eqref{94}:

\begin{corollary}\label{c92}
Assume that each $(c_i)\in\ff'$ satisfies the following lexicographic conditions:
\begin{align}
&c_{n+1}c_{n+2}\cdots\le\alpha(q)\qtq{for}n=0,\qtq{and whenever}c_n<M;\label{95}\\
&\overline{c_{n+1}c_{n+2}\cdots}\le\alpha(q)\qtq{for}n= 0,\qtq{and whenever}c_n>0;\label{96}\\
&\alpha_{k+1}\cdots\alpha_N^+c_1c_2\cdots\le\alpha(q)\qtq{whenever}1\le k<N\qtq{and}\alpha_k<M.\label{97}
\end{align}
Then
\begin{equation*}
\dim\uu_q^m\ge\dim\ff_q
\end{equation*}
for all $m\ge 1$.
\end{corollary}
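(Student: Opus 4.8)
The plan is to deduce Corollary \ref{c92} from Lemma \ref{l91} by showing that the sequences of $\ff'$ for which one of the weak inequalities \eqref{95}--\eqref{97} is actually an equality form a negligible set. Concretely, I would introduce the subfamily
\[
\ff_0':=\set{(c_i)\in\ff'\ :\ (c_i)\text{ satisfies the strict inequalities \eqref{92}, \eqref{93} and \eqref{94}}}
\]
and write $(\ff_0)_q:=\set{(c_i)_q:(c_i)\in\ff_0'}$. By construction the hypotheses of Lemma \ref{l91} hold for $\ff_0'$, so that lemma gives $\dim\uu_q^m\ge\dim(\ff_0)_q$ for every $m\ge 1$. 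It therefore suffices to prove that $\dim(\ff_0)_q=\dim\ff_q$, and for this I would show that the image of $\ff'\setminus\ff_0'$ under $(c_i)\mapsto(c_i)_q$ is countable.

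The main step is to describe $\ff'\setminus\ff_0'$. A sequence there violates at least one of the strict inequalities while satisfying the corresponding weak one, hence attains equality somewhere. If \eqref{92} fails at some index $n$ (with $n=0$ or $c_n<M$), then $c_{n+1}c_{n+2}\cdots=\alpha(q)$; taking the least such $n$, the whole sequence equals $c_1\cdots c_n\,\alpha(q)$, and since there are only finitely many words $c_1\cdots c_n$ of a given length, for each $n$ there are finitely many such sequences, hence countably many in total. The failure of \eqref{93} is handled identically, now with $c_{n+1}c_{n+2}\cdots=\overline{\alpha(q)}$, again yielding a countable family. Finally, equality in \eqref{94} for some $k$ forces $\alpha_{k+1}\cdots\alpha_N^+c_1c_2\cdots=\alpha(q)$, which already determines $c_1c_2\cdots$ uniquely; as $k$ ranges over $\set{1,\ldots,N-1}$ this gives only finitely many sequences. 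Consequently $\ff'\setminus\ff_0'$ is countable.

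Since $(c_i)\mapsto(c_i)_q$ sends this countable set to a countable set $C$, we may write $\ff_q=(\ff_0)_q\cup C$. As Hausdorff dimension is countably stable and vanishes on countable sets, this gives $\dim\ff_q=\max\set{\dim(\ff_0)_q,0}=\dim(\ff_0)_q$, which together with the bound $\dim\uu_q^m\ge\dim(\ff_0)_q$ furnished by Lemma \ref{l91} completes the proof. The only point needing care is the countability claim, and I would emphasize that it rests entirely on the periodicity of $\alpha(q)$: each equality ``freezes'' a tail (or, for \eqref{94}, the head) of the sequence into a single fixed word, so the offending sequences are parametrised by a finite prefix and thus form a countable set. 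No estimate beyond Lemma \ref{l91} is required.
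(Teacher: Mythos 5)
Your proof is correct and follows essentially the same route as the paper: the paper's own argument is simply the observation that weakening the strict inequalities of Lemma \ref{l91} to the weak ones of \eqref{95}--\eqref{97} enlarges $\ff'$ by only countably many sequences (each forced to have a tail equal to $\alpha(q)$ or $\overline{\alpha(q)}$, or to be entirely determined), so $\dim\ff_q$ is unchanged. You have merely spelled out the countability argument that the paper leaves implicit.
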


\begin{proof}
By weakening the conditions \eqref{92}, \eqref{93}, \eqref{94} to \eqref{95}, \eqref{96}, \eqref{97} the family $\ff'$ may be increased only by countably many new sequences, so that $\dim\ff_q$ remains the same.
\end{proof}

In some cases the condition \eqref{97} of Corollary \ref{c92} may be replaced by a simpler one:

\begin{lemma}\label{l93}
Assume that
\begin{equation}\label{98}
c_1\cdots c_k\le\overline{\alpha_1\cdots\alpha_k}
\end{equation}
and
\begin{equation}\label{99}
c_{k+1}c_{k+2}\cdots\le\alpha(q)
\end{equation}
whenever $1\le k<N$ and $\alpha_k<M$.
Then the assumption \eqref{97} of Corollary \ref{c92} is satisfied.
\end{lemma}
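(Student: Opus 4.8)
The goal is to verify the single lexicographic inequality \eqref{97}, i.e.\ that for each fixed $k$ with $1\le k<N$ and $\alpha_k<M$ one has
\[
\alpha_{k+1}\cdots\alpha_N^+c_1c_2\cdots\le\alpha(q)=(\alpha_1\cdots\alpha_N)^{\infty}.
\]
The plan is to first locate the length-$(N-k)$ block $\alpha_{k+1}\cdots\alpha_N^+$ inside $\alpha(q)$, and only in the degenerate case where this block coincides with $\alpha_1\cdots\alpha_{N-k}$ to bring in the two hypotheses \eqref{98} and \eqref{99}. First I would feed the greedy characterization of Proposition \ref{p23} (i) into $\beta(q)=\alpha_1\cdots\alpha_N^+0^{\infty}$ at the index $n=k$: since $\beta_k=\alpha_k<M$, it yields
\[
\alpha_{k+1}\cdots\alpha_N^+0^{\infty}<\alpha_1\cdots\alpha_N^+0^{\infty},
\]
whose first $N-k$ digits force $\alpha_{k+1}\cdots\alpha_N^+\le\alpha_1\cdots\alpha_{N-k}$ as words of length $N-k$ (a strict ``$>$'' here would contradict the displayed inequality). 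If this word inequality is strict, the first discrepancy sits at some position $j\le N-k<N$, where the $j$-th digit of $\alpha(q)$ is exactly $\alpha_j$; hence $\alpha_{k+1}\cdots\alpha_N^+c_1c_2\cdots$ is already strictly below $\alpha(q)$ at position $j$, independently of $(c_i)$, and \eqref{97} follows at once.

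The main difficulty is the remaining equality case $\alpha_{k+1}\cdots\alpha_N^+=\alpha_1\cdots\alpha_{N-k}$, where the greedy condition no longer separates the sequences and the digits $c_i$ genuinely enter. In this situation $\alpha(q)$ reads $\alpha_1\cdots\alpha_{N-k}\,\alpha_{N-k+1}\cdots\alpha_N\,\alpha_1\alpha_2\cdots$, so after the common prefix $\alpha_1\cdots\alpha_{N-k}$ I must compare $c_1\cdots c_k$ with $\alpha_{N-k+1}\cdots\alpha_N$, and then the tail $c_{k+1}c_{k+2}\cdots$ with $\alpha(q)$. The key auxiliary estimate will be
\[
\overline{\alpha_1\cdots\alpha_k}\le\alpha_{N-k+1}\cdots\alpha_N .
\]
I would obtain it from the $\vv$-characterization of Proposition \ref{p24}, applied at the complementary index $n=N-k$ (legitimate since here $\alpha_{N-k}=\alpha_N+1>0$), which gives $\overline{\alpha_{N-k+1}\cdots\alpha_N}\le\alpha_1\cdots\alpha_k$; reflecting both sides and using that reflection reverses the lexicographic order produces the displayed bound.

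Combining this estimate with hypothesis \eqref{98} yields $c_1\cdots c_k\le\overline{\alpha_1\cdots\alpha_k}\le\alpha_{N-k+1}\cdots\alpha_N$. If the inequality is strict the comparison finishes immediately; if $c_1\cdots c_k=\alpha_{N-k+1}\cdots\alpha_N$, then $\alpha_{k+1}\cdots\alpha_N^+c_1c_2\cdots$ agrees with $\alpha(q)$ through position $N$, and the comparison reduces to the tails $c_{k+1}c_{k+2}\cdots$ against $\alpha_1\alpha_2\cdots=\alpha(q)$, where hypothesis \eqref{99} delivers exactly $c_{k+1}c_{k+2}\cdots\le\alpha(q)$. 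Either way \eqref{97} holds, completing the verification. I expect the equality (``border'') case to be the sole real obstacle: the crux is recognizing that the $\vv$-condition, read at the complementary index $N-k$ and then reflected, supplies precisely the bound that dovetails with \eqref{98} and \eqref{99}.
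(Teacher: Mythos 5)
Your argument is correct and, in its overall architecture, matches the paper's proof: both reduce \eqref{97} to the word inequality $\alpha_{k+1}\cdots\alpha_N^+c_1\cdots c_k\le\alpha_1\cdots\alpha_N$ and then append the tail via \eqref{99}; both dispose of the case $\alpha_{k+1}\cdots\alpha_N^+<\alpha_1\cdots\alpha_{N-k}$ by the greedy condition on $\beta(q)$; and both settle the borderline case by comparing $\overline{\alpha_1\cdots\alpha_k}$ (which dominates $c_1\cdots c_k$ by \eqref{98}) with the block $\alpha_{N-k+1}\cdots\alpha_N$. The one genuine difference is how that last comparison is sourced: the paper proves the strict inequality $\overline{\alpha_{N-k+1}\cdots\alpha_N^+}<\alpha_1\cdots\alpha_k$ by invoking Proposition \ref{p25} (v) for the successor base $q^+$ (after identifying $\alpha(q^+)$ via Proposition \ref{p25} (ii)), whereas you obtain the bound $\overline{\alpha_{N-k+1}\cdots\alpha_N}\le\alpha_1\cdots\alpha_k$ directly from the $\vv$-condition for $q$ itself (Proposition \ref{p24}, equivalently \eqref{91}), and absorb the residual equality sub-case with \eqref{99}. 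The two bounds are in fact equivalent (they differ by incrementing or decrementing the final digit by one), so nothing is lost in strength; your route is marginally more economical in that it avoids the detour through $q^+$, at the cost of one extra layer of case analysis at position $N$.
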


\begin{proof}
Since $\alpha(q)=(\alpha_1\cdots\alpha_N)^{\infty}$, it suffices to show that if $1\le k<N$ and $\alpha_k<M$, then
\begin{equation*}
\alpha_{k+1}\cdots\alpha_N^+c_1\cdots c_k
\le\alpha_1\cdots\alpha_N.
\end{equation*}
Indeed,  using \eqref{99}  we will then obtain that
\begin{equation*}
\alpha_{k+1}\cdots\alpha_N^+c_1c_2\cdots
\le(\alpha_1\cdots\alpha_N)\alpha(q)=\alpha(q).
\end{equation*}
In view of \eqref{98} it suffices to prove that
\begin{equation*}
\alpha_{k+1}\cdots\alpha_N^+\overline{\alpha_1\cdots\alpha_k}
\le \alpha_1\cdots\alpha_N,
\end{equation*}
or equivalently that
\begin{equation}\label{910}
\alpha_{k+1}\cdots\alpha_N^+\overline{\alpha_1\cdots\alpha_k}
<\alpha_1\cdots\alpha_N^+.
\end{equation}

Since  $\alpha_1\cdots\alpha_N^+$ is the greedy expansion of a number $\le 1$, we have
\begin{equation*}
\alpha_{k+1}\cdots\alpha_N^+\le \alpha_1\cdots\alpha_{N-k}.
\end{equation*}
If this inequality is strict, then \eqref{910} follows.
Otherwise we have to show that $\overline{\alpha_1}<\alpha_N^+$ if $k=1$, and
\begin{equation*}
\overline{\alpha_1\cdots\alpha_k}<\alpha_{N-k+1}\cdots\alpha_N^+\qtq{if}1<k<N.
\end{equation*}
or equivalently (by taking reflections) that $\overline{\alpha_N^+}<\alpha_1$ if $k=1$, and
\begin{equation*}
\overline{\alpha_{N-k+1}\cdots\alpha_N^+}<\alpha_1\cdots\alpha_k\qtq{if}1<k<N.
\end{equation*}
This follows by applying Proposition \ref{p25} (v) for $q^+:=\min\set{p\in\vv\ :\ p>q}$ because

\begin{equation*}
\alpha(q^+)=\left(\alpha_1\cdots\alpha_N^+\overline{\alpha_1\cdots\alpha_N^+}\right)^{\infty}
\end{equation*}
by Proposition \ref{p25} (ii).
\end{proof}

Now fix an arbitrary base $p_L\in\vv\setminus\uu$, and we denote by $a_1\cdots a_m$ the shortest period of $\alpha(p_L)$. Then
\begin{equation*}
\beta(p_L)=a_1\cdots a_m^+\ 0^{\infty}.
\end{equation*}
Define the bases $r_0, r_1,\ldots$ and $p_R$ by the formulas
\begin{align*}
&\left(a_1\cdots a_m^+\left(\overline{a_1\cdots a_m}\right)^j0^{\infty}\right)_{r_j}=1,\quad j=0,1,\ldots
\intertext{and}
&\left(a_1\cdots a_m^+\left(\overline{a_1\cdots a_m}\right)^{\infty}\right)_{p_R}=1.
\end{align*}

\begin{lemma}\label{l94}
We have $\alpha(q_R)=a_1\cdots a_m^+(\overline{a_1 \cdots a_m})^\infty$
and
\begin{equation*}
p_L=r_0<r_1<\cdots<p_R.
\end{equation*}
Furthermore, $p_R\in\uu$, $r_j\in\uuu\setminus\uu$ for all $j\ge 2$, and
$r_1\in\vv\setminus\uuu.$
\end{lemma}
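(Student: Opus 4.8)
The plan is to identify explicitly the greedy and quasi-greedy expansions of $x=1$ in each of the bases $r_j$ and $p_R$, and then to read off all the assertions from the lexicographic characterizations of Propositions \ref{p23}, \ref{p24} and \ref{p25}. Throughout I would write $w:=a_1\cdots a_m$, so that $w^+=a_1\cdots a_m^+$ and $\beta(p_L)=w^+0^\infty$, $\alpha(p_L)=w^\infty$, and I would denote by $\sigma$ the shift map. Since $p_L\in\vv$ and $m$ is the shortest period, the sequence $w^\infty$ satisfies
\begin{align*}
&\sigma^k(w^\infty)\le w^\infty\qquad(k\ge0),\\
&\overline{\sigma^k(w^\infty)}\le w^\infty\qquad(k\ge0),
\end{align*}
by Propositions \ref{p23}(ii) and \ref{p24}, and minimality of $m$ upgrades the first inequality to a strict one for $0<k<m$ (otherwise $w^\infty$ would have a shorter period, as $\sigma^k(w^\infty)=w^\infty$ only when $m\mid k$). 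These are the basic inequalities I will use.

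First I would settle the two extreme indices. For $j=0$ the defining relation reads $(w^+0^\infty)_{r_0}=1$, so $\beta(r_0)=\beta(p_L)$ and hence $r_0=p_L$ by injectivity of $q\mapsto\beta(q)$ (Proposition \ref{p23}(i)). For $j=1$ I would instead invoke Proposition \ref{p25}(ii): writing $p_L^+:=\min(\vv\cap(p_L,\infty))$ we get $\alpha(p_L^+)=(w^+\overline{w^+})^\infty$, and applying the greedy--quasi-greedy relation of Section \ref{s2} (increment the last digit of one period and append zeros) yields $\beta(p_L^+)=w^+\overline{w}\,0^\infty$. This is exactly the sequence defining $r_1$, so $r_1=p_L^+\in\vv\setminus\uuu$. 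The point of treating $r_1$ this way is that Proposition \ref{p25}(v), applied to $r_1$, now furnishes the strict inequalities
\begin{equation*}
\overline{a_{i+1}\cdots a_m^+}<a_1\cdots a_{m-i},\qquad 1\le i\le m-1,
\end{equation*}
which are precisely what is needed to break the ties in the remaining lexicographic comparisons.

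Next I would show that $w^+(\overline w)^\infty$ is quasi-greedy and that, for every $j$, the finite sequence $w^+(\overline w)^j0^\infty$ is greedy, by checking the conditions of Proposition \ref{p23}. In each case a shift $\sigma^n$ either begins inside a reflected block $\overline w$, where the tail equals $\overline{\sigma^r(w^\infty)}$ up to a suffix and the inequality reduces to $\overline{\sigma^r(w^\infty)}\le w^\infty<w^+(\overline w)^{\cdots}$; or it begins inside the initial block $w^+$, where after cancelling the common prefix the comparison is governed by the strict shift inequality for $w^\infty$, the only delicate situation being the boundary case $a_{n+1}\cdots a_m^+=a_1\cdots a_{m-n}$, which is settled by the inequality from Proposition \ref{p25}(v) above. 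This identifies $\alpha(p_R)=\beta(p_R)=w^+(\overline w)^\infty$ (the first assertion) and $\beta(r_j)=w^+(\overline w)^j0^\infty$; the standard rule then gives $\alpha(r_j)=(w^+(\overline w)^{j-1}\overline{w^+})^\infty$ for $j\ge1$. The chain $r_0<r_1<\cdots<p_R$ follows at once from the monotonicity of $q\mapsto\beta(q)$, since $w^+(\overline w)^j0^\infty$ is lexicographically increasing in $j$ and bounded above by $w^+(\overline w)^\infty$.

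Finally I would read off the membership statements from Proposition \ref{p24}. Since $\beta(r_j)$ is finite for every $j\ge0$, $x=1$ is not unique and so $r_j\notin\uu$; that $r_j\in\uuu$ for $j\ge2$ amounts to the strict reflected inequalities $\overline{\sigma^n(\alpha(r_j))}<\alpha(r_j)$, and $p_R\in\uu$ amounts to $\overline{\sigma^n(\beta(p_R))}<\beta(p_R)$. Both are verified by the same two-case scheme, using the $\vv$-inequality $\overline{\sigma^r(w^\infty)}\le w^\infty$ (together with the harmless strict gap created by $\overline{a_m^+}=\overline{a_m}-1$) for shifts of the first type, and Proposition \ref{p25}(v) for the boundary ties of the second type. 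I expect the reflected verification for $r_j\in\uuu$ with $j\ge2$ to be the main obstacle, because there the period $w^+(\overline w)^{j-1}\overline{w^+}$ is long and the strict inequality must be checked for every shift; the whole argument hinges on having the strict inequalities of Proposition \ref{p25}(v) available, which is exactly why identifying $r_1=p_L^+$ at the outset is the key enabling step.
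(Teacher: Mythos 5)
Your proposal is correct and follows essentially the same route as the paper's proof: establish the key lexicographic inequalities among the blocks $a_{i+1}\cdots a_m^+$ and $\overline{a_1\cdots a_{m-i}}$, identify the greedy/quasi-greedy expansions of $1$ in the bases $r_j$ and $p_R$ explicitly, and then verify the characterizations of Propositions \ref{p23} and \ref{p24} by a case analysis on where each shift of the (long) period begins. The only variation is in how the enabling strict inequalities are sourced — you obtain $\overline{a_{i+1}\cdots a_m^+}<a_1\cdots a_{m-i}$ from Proposition \ref{p25} (v) applied to $r_1=\min(\vv\cap(p_L,\infty))$, whereas the paper derives the equivalent chain \eqref{911} directly from the greediness of $\beta(p_L)$ and from $p_L\in\vv\setminus\uu$ — and your identification of $r_1$ with the $\vv$-successor of $p_L$ has the side benefit of proving $r_1\in\vv\setminus\uuu$, a point the paper only declares to be clear.
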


\begin{proof} The following inequalities \eqref{911} and  $p_R\in\uu$  are from  \cite[Lemmas 4.1, 4.10]{ABBK2019}, respectively. For the reader's convenience, we give the proof here. First we prove
\begin{equation}\label{911}
\overline{a_1\cdots a_{m-i}}\leq a_{i+1}\cdots a_m<a_{i+1}\cdots a_m^+\leq a_1\cdots a_{m-i} \qtq{for all}  1\le i<m.
\end{equation}
The second and the third inequalities follows from
$\beta(p_L)=a_1\cdots a_m^+$ implying that
$$a_{i+1}\cdots a_m<a_{i+1}\cdots a_m^+\leq a_1\cdots a_{m-i} \qtq{for all}  1\le i<m.$$
The first inequality is obtained by $q_L\in\vv\setminus\uu$ .
So, by  \eqref{911} and Proposition \ref{p24}, we obtain  $\alpha(q_R)=a_1\cdots a_m^+(\overline{a_1 \cdots a_m})^\infty$ and $q_R\in\uu$.
The relation \begin{equation*}
p_L=r_0<r_1<\cdots<p_R.
\end{equation*}  and $r_1\in \vv\setminus \uuu$ are clear.

 It remains to prove $r_j\in\uuu\setminus\uu$ for all $j\ge 2$.  Let $$(c_i)=(a_1\cdots a_m^+\left(\overline{a_1\cdots a_m}\right)^{j-1}\overline{a_1\cdots a_m^+})^\infty.$$ We need to prove $$(\overline{a_1\cdots a_m^+}\left(a_1\cdots a_m\right)^{j-1}a_1\cdots a_m^+
  )^\infty<c_{i+1}c_{i+2}\cdots \leq (a_1\cdots a_m^+\left(\overline{a_1\cdots a_m}\right)^{j-1}\overline{a_1\cdots a_m^+})^\infty$$ for all $i\geq 0$.

  We only  prove the left inequality. The right inequality can be proved similarly. Since $(c_i)$ is periodic, it suffices to prove  the left inequality holds for $0\leq i<(j+1)m$.
  For $i=0$, it follows from $\overline{a_1}<a_1$, see Proposition \ref{p25} (iii). For $1\le i<m$, it follows from $\overline{a_1\cdots a_{m-i}}<a_{i+1}\cdots a_m^+$  of \eqref{911}. For $i=km, k\in\{1, \cdots, j-1\}$, it is clear. For $km<i<(k+1)m$, it follows from $a_{i+1}\cdots a_{m}< a_1\cdots a_{m-i}$ of \eqref{911}. For $i=jm$, it is clear. For $jm<i<(j+1)m$, it follows from the third inequalities and $\overline{a_1\cdots a_{m-i}}< a_{i+1}\cdots a_m^+$.
\end{proof}

In the remainder of this section we write, as usual,
\begin{equation}\label{912}
w:=a_1\cdots a_m,\quad
w^+:=a_1\cdots a_m^+,\quad
\overline{w}=\overline{a_1\cdots a_m},\quad
\overline{w^+}=\overline{a_1\cdots a_m^+}.
\end{equation}

\begin{lemma}\label{l95}
Let us denote by $\ff'$ the set of sequences $(c_i)$ starting with $\overline{a_1\cdots a_m}$ and satisfying the following conditions:
\begin{align}
&c_{n+1}c_{n+2}\cdots\le\alpha(p_L)\qtq{for} n=0 \qtq{and  whenever}c_n<M;\label{913}\\
&\overline{c_{n+1}c_{n+2}\cdots}\le\alpha(p_L)
\qtq{for} n=0 \qtq{and  whenever}c_n>0.\label{914}
\end{align}
Fix $j\ge 0$ arbitrarily, and write $\beta(r_j)=\alpha_1\cdots\alpha_N^+\ 0^{\infty}$.
Then we have
\begin{equation}\label{915}
\alpha_{k+1}\cdots\alpha_N^+c_1c_2\cdots\le\alpha(p_L)\qtq{whenever}1\le k<N\qtq{and}\alpha_k<M.
\end{equation}
\end{lemma}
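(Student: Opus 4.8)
The plan is to exploit the explicit form of $\beta(r_j)$ coming from the construction and reduce \eqref{915} to a family of lexicographic comparisons that are settled by the inequalities \eqref{911}. Since $\beta(r_j)=w^+(\overline w)^j\,0^{\infty}$ by definition, writing $\beta(r_j)=\alpha_1\cdots\alpha_N^+0^{\infty}$ gives $N=(j+1)m$, and the word $\alpha_1\cdots\alpha_N^+$ is exactly $w^+(\overline w)^j$; hence for $1\le k<N$ the block $\alpha_{k+1}\cdots\alpha_N^+$ is precisely the length-$(N-k)$ suffix of $w^+(\overline w)^j$. Moreover every $(c_i)\in\ff'$ begins with $\overline w$ and, applying \eqref{913}--\eqref{914} at $n=0$, satisfies $\overline{\alpha(p_L)}=(\overline w)^{\infty}\le c_1c_2\cdots\le(a_1\cdots a_m)^{\infty}=\alpha(p_L)$. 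Thus \eqref{915} is the comparison of a suffix of $w^+(\overline w)^j$, continued by $c$, against $\alpha(p_L)$.

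First I would distinguish whether the position $k+1$ falls in the initial factor $w^+$ or in one of the factors $\overline w$. If $1\le k<m$, then $\alpha_k=a_k$ and the suffix begins with $a_{k+1}\cdots a_m^+$, which by the last inequality of \eqref{911} (with $i=k$) is $\le a_1\cdots a_{m-k}$, the matching prefix of $\alpha(p_L)$. If $k\ge m$, the suffix begins with a block $\overline{a_{s+1}\cdots a_m}$, and the reflected form of the first inequality of \eqref{911} yields $\overline{a_{s+1}\cdots a_m}\le a_1\cdots a_{m-s}$; the boundary digit $\alpha_m=a_m^+$ and the block ends are absorbed into these two cases. In each case, when the leading-block inequality is strict, the prefix comparison already gives \eqref{915}, so the whole difficulty is concentrated in the subcases where the leading block equals the corresponding prefix of $\alpha(p_L)$.

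In such an equality subcase one matches the leading block and must then compare the continuation --- which begins with $\overline w$, arising either from a remaining factor $\overline w$ or, when none is left, from $c$ itself --- against the shift $\sigma^{m-k}(\alpha(p_L))$ (respectively $\sigma^{m-s}(\alpha(p_L))$). Since $p_L\in\vv$, we have $\sigma^{\ell}(\alpha(p_L))\ge\overline{\alpha(p_L)}=(\overline w)^{\infty}$ for every $\ell$. If this shift differs from $(\overline w)^{\infty}$, then, because $(a_1\cdots a_m)^{\infty}$ has shortest period $m$, it must already exceed $(\overline w)^{\infty}$ within its first $m$ digits; hence its leading block is strictly larger than $\overline w$ and the continuation comparison is strict. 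This disposes of all cases when $p_L\in\uuu\setminus\uu$, where Proposition \ref{p24} makes every such shift strictly below $\alpha(p_L)$. The only residual possibility is $\sigma^{\ell}(\alpha(p_L))=(\overline w)^{\infty}$, which by minimality of the period forces $p_L\in\vv\setminus\uuu$ and $\ell=m/2$ (so $w=v\overline v$ and $\overline w=\overline v\,v$); there the required inequality becomes $c\le(\overline w)^{\infty}$, and combined with the lower bound $c\ge(\overline w)^{\infty}$ this pins $c=\overline{\alpha(p_L)}$, so that \eqref{915} holds with equality.

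The main obstacle is precisely this degenerate subcase: one must check that the lexicographic conditions defining $\ff'$ are strong enough to forbid $c$ from exceeding $\sigma^{\ell}(\alpha(p_L))$ when that shift collapses to $(\overline w)^{\infty}$. This is where the full conditions \eqref{913}--\eqref{914}, and not merely the two-sided bound $\overline{\alpha(p_L)}\le c\le\alpha(p_L)$, are indispensable: applying \eqref{913} at an intermediate index together with the lower bound on $c$ squeezes $c$ from both sides and forces $c=(\overline w)^{\infty}$. Proposition \ref{p25}(iii),(v) and the minimality of the period $\alpha_1\cdots\alpha_N$ are then used to rule out the spurious chains of equalities. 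The remaining work --- bookkeeping of the block boundaries $k\equiv 0\pmod m$, the incremented digit $a_m^+$, and the termination of nested equalities --- is routine but must be carried out carefully.
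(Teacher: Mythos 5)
Your proof is correct, but it takes a genuinely different route from the paper's. The paper first upgrades \eqref{913}--\eqref{914} to the two-sided bound $\overline{\alpha(p_L)}\le c_{n+1}c_{n+2}\cdots\le\alpha(p_L)$ for all $n\ge 0$ (its \eqref{917}), and then proves a single monotonicity inequality (its \eqref{918}): prepending any number of copies of $\overline w$ to any tail of $c$ can only decrease it lexicographically. With these two facts the whole lemma collapses into two short telescoping computations: for $1\le k\le m$ the block $\overline{w^j}$ is deleted via \eqref{918} and the claim reduces to the case $q=p_L$, which is settled by Lemma \ref{l93}; for $k>m$ the identity $c_1\cdots c_m=\overline w$ lets one rewrite the left-hand side of \eqref{915} as $\overline{a_{\ell+1}\cdots a_m}\,(\overline{w})^{j-i+1}c_{m+1}c_{m+2}\cdots\le c_{\ell+1}c_{\ell+2}\cdots\le\alpha(p_L)$. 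In particular the paper never distinguishes $p_L\in\uuu\setminus\uu$ from $p_L\in\vv\setminus\uuu$ and never isolates equality cases. You instead compare block by block using \eqref{911}; each equality subcase reduces to comparing a continuation beginning with $\overline w$ against a shifted sequence $\sigma^{\ell}(\alpha(p_L))$, and your two key observations --- that two sequences of period $m$ agreeing on their first $m$ digits coincide, and that in the degenerate case $\sigma^{\ell}(\alpha(p_L))=(\overline w)^{\infty}$ the conditions \eqref{913}--\eqref{914} squeeze $c$ down to a single sequence --- are both sound: writing $w=u\overline u$, condition \eqref{913} applied just after the prefix $\overline u$ of $c$ gives $c_{m+1}c_{m+2}\cdots\le(\overline w)^{\infty}$, condition \eqref{914} applied at $n=m$ gives the reverse inequality, so $c=(\overline w)^{\infty}=\overline{\alpha(p_L)}$ and \eqref{915} holds with equality. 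Your route is longer and needs the dichotomy between $\uuu\setminus\uu$ and $\vv\setminus\uuu$, but it is more informative: it exhibits exactly when equality occurs in \eqref{915}, namely only for $p_L\in\vv\setminus\uuu$, where $\ff'$ degenerates to the singleton $\set{\overline{\alpha(p_L)}}$ --- consistent with Remark \ref{r1002} --- whereas the paper's inequality \eqref{918} absorbs this degeneration invisibly. Three small blemishes, none fatal: the parenthetical ``$\ell=m/2$'' misses the edge case $p_L=q_{GR}$ with $M$ even, where $m=1$ and $w=\overline w$; the phrase ``every such shift strictly below $\alpha(p_L)$'' should say that the \emph{reflection} of every shift is strictly below $\alpha(p_L)$, i.e.\ every shift strictly exceeds $\overline{\alpha(p_L)}$; and your worry about ``termination of nested equalities'' is unfounded, since periodicity settles each equality subcase in a single step of length $m$.
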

\noindent In \eqref{915} and in the sequel $\alpha_{k+1}\cdots\alpha_N^+$ means $\alpha_N^+$ when $k=N-1$.
\begin{proof}
Using the notation in \eqref{912}, we write
\begin{equation}\label{916}
\alpha(p_L)=w^{\infty}
\qtq{and}
\alpha(p_R)=w^+\ \overline{w}^{\infty}.
\end{equation}
First we deduce from \eqref{913} and \eqref{914} the seemingly stronger relations
\begin{equation}\label{917}
\overline{\alpha(p_L)}\le
c_{n+1}c_{n+2}\cdots\le\alpha(p_L)\qtq{for all}n\ge 0.
\end{equation}
By symmetry we only prove the second inequality.
Since $\alpha(p_L)<M^{\infty}$, by \eqref{913} there exist infinitely many integers $k\ge 1$ with $c_k<M$.
Therefore, if $c_n=M$ for some $n\ge 1$, then there exist two integers $k$ and $\ell$ such that $0\le k<n\le\ell$,
\begin{equation*}
(k=0\text{ or }c_k<M),\quad
c_{k+1}=c_{k+2}=\cdots=c_{\ell}=M\qtq{and}
c_{\ell+1}<M.
\end{equation*}
Then
\begin{equation*}
c_{n+1}\cdots c_{\ell+1}<M^{\ell-n+1}=c_{k+1}\cdots c_{\ell+k-n+1},
\end{equation*}
and therefore
\begin{equation*}
c_{n+1}c_{n+2}\cdots<c_{k+1}c_{k+2}\cdots
\le\alpha(p_L)
\end{equation*}
by \eqref{913}.

It follows from \eqref{916} and \eqref{917} that
\begin{equation}\label{918}
\overline{w^j}\ c_{k+1}c_{k+2}\cdots\\
\le c_{k+1}c_{k+2}\cdots
\end{equation}
for all integers $j,k\ge 0$.
Indeed, if we do not have an equality here, then $j\ge 1$, and $c_{k+1}c_{k+2}\cdots>\overline{\alpha(p_L)}=\overline{w^{\infty}}$ by \eqref{916} and \eqref{917}.
Therefore there exists an integer $n\ge 0$ and a word $u>\overline{w}$ of length $m$ such that $c_{k+1}c_{k+2}\cdots$ starts with $\overline{w^n}\ u$.
Our claim follows by observing that $\overline{w^j}\ c_{k+1}c_{k+2}\cdots$ starts with $\overline{w^{n+1}}<\overline{w^n}\ u$.

If $q=r_0=p_L$, then \eqref{915} follows by applying Lemma \ref{l93} because its assumption \eqref{99} is satisfied by \eqref{917}.

If $q=r_j$ with some $j\ge 1$, then $N=(j+1)m$.
Let $1\le k<N$ be such that $\alpha_k<M$.
If $1\le k\le m$, then using \eqref{918} we get
\begin{equation*}
\alpha_{k+1}\cdots\alpha_N^+c_1c_2\cdots
=a_{k+1}\cdots a_m^+ \overline{w^j}c_1c_2\cdots
\le a_{k+1}\cdots a_m^+ c_1c_2\cdots
\le \alpha(p_L);
\end{equation*}
the last inequality follows from the previous case $q=p_L$.

If $im+1\le k\le (i+1)m$ for some $i=1,\ldots, j$, then writing $\ell:=k-im$ and using the equality $c_1\cdots c_m=\overline{w}$ and \eqref{918} we get
\begin{align*}
\alpha_{k+1}\cdots\alpha_N^+c_1c_2\cdots
&=\overline{a_{\ell+1}\cdots a_m} \ (\overline{w})^{j-i}c_1c_2\cdots\\
&=\overline{a_{\ell+1}\cdots a_m} \ (\overline{w})^{j-i+1}c_{m+1}c_{m+2}\cdots\\
&\le \overline{a_{\ell+1}\cdots a_m} \ c_{m+1}c_{m+2}\cdots\\
&=c_{\ell+1}c_{\ell+2}\cdots
\le \alpha(p_L);
\end{align*}
the last inequality holds by \eqref{917}.
\end{proof}

Next we  show that  Lemma \ref{l95} does not hold for any base $q\in (\vv\setminus\uu)\cap [p_L,p_R)$ different from the numbers $r_k$.
In the following two results we describe the quasi-greedy and the greedy expansions  $\alpha(q)$ and $\beta(q)$ of the numbers  $q\in (\vv\setminus\uu)\cap (p_L,p_R)$.

\begin{proposition}\label{p96}
If $q\in (\vv\setminus\uu)\cap (p_L,p_R)$, then there exists a periodic sequence $k_1, k_2,\ldots$ of nonnegative integers such that
\begin{equation*}
\alpha(q)=
w^+\left( \overline{{w^{k_1}w^+}}\right)
\left( w^{k_2}w^+\right)
\left(\overline{{w^{k_3}w^+}}\right) \left(w^{k_4}w^+\right)
\cdots,
\end{equation*}
and $k_j\le k_1$ for all $j\ge 2$.
\end{proposition}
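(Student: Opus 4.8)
The plan is to read $\alpha(q)$ in consecutive blocks of length $m$, where $w=a_1\cdots a_m$ is the shortest period of $\alpha(p_L)$, and to show that every such block is one of the four words $w,\ w^+,\ \overline{w},\ \overline{w^+}$; once this block alphabet and the correct alternation are established, both the stated shape of $\alpha(q)$ and the bound $k_j\le k_1$ drop out of the extremality of $\alpha(q)$.

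First I would pin down the leading block. Since $p_L<q<p_R$, the monotonicity of $q\mapsto\beta(q)$ together with Lemma \ref{l94} gives $\beta(p_L)=w^+\,0^\infty<\beta(q)<\beta(p_R)=\alpha(p_R)=w^+\,\overline{w}^\infty$ (recall $p_R\in\uu$, so its greedy and quasi-greedy expansions coincide). The lower and upper bounds share the prefix $w^+$ and first differ at position $m+1$, so the greedy expansion $\beta(q)=\alpha_1\cdots\alpha_N^+\,0^\infty$ must itself begin with $w^+$; hence $N>m$ and $\alpha_1\cdots\alpha_m=w^+$. Thus $\alpha(q)$ begins with the block $w^+$, and by Proposition \ref{p24} it satisfies $\overline{\alpha(q)}\le\alpha_{n+1}\alpha_{n+2}\cdots\le\alpha(q)$ for \emph{every} $n\ge 0$; in particular $\overline{w^+}$ is the leading block of $\overline{\alpha(q)}$.

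Next I would establish the block alphabet and the alternation by induction on the block index. Using Proposition \ref{p25}(iii) we may assume $\overline{a_1}<a_1$ (the degenerate even/$\min\vv$ case, where $w=\overline{w}$, being handled separately and more simply), so that the four words are totally ordered as $\overline{w^+}<\overline{w}<w<w^+$, they are pairwise distinct as length-$m$ words, and digitwise reflection exchanges $w\leftrightarrow\overline{w}$ and $w^+\leftrightarrow\overline{w^+}$. Assuming the prefix $\alpha_1\cdots\alpha_{tm}$ has already been written as a concatenation of these four blocks, I would squeeze the next window $\alpha_{tm+1}\cdots\alpha_{(t+1)m}$ between the corresponding windows of $\overline{\alpha(q)}$ and $\alpha(q)$ prescribed by $\overline{\alpha(q)}\le\alpha_{tm+1}\alpha_{tm+2}\cdots\le\alpha(q)$; the internal inequalities \eqref{911} for $w$ are exactly what forces each admissible window back into $\{w,w^+,\overline{w},\overline{w^+}\}$, while the upper bound $\alpha(q)<\alpha(p_R)=w^+\,\overline{w}^\infty$ forces the switches: a $w^+$ block is followed by a barred block and a $\overline{w^+}$ block by an unbarred block, whereas inside a run only $\overline{w}$ (resp.\ $w$) can occur until the next $\overline{w^+}$ (resp.\ $w^+$). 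This produces precisely the pattern $w^+\,(\overline{w}^{k_1}\overline{w^+})(w^{k_2}w^+)(\overline{w}^{k_3}\overline{w^+})\cdots$, and the periodicity of $(k_j)$ is inherited from the periodicity of $\alpha(q)$, which holds because $q\in\vv\setminus\uu$ (Proposition \ref{p25}(i)).

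Finally I would deduce $k_j\le k_1$ for $j\ge 2$ from the maximality of $\alpha(q)$ among all its shifts. Each unbarred super-block $w^{k_{2i}}w^+$ ends in a marker $w^+$ immediately followed by the barred run $\overline{w}^{\,k_{2i+1}}$, so the tail of $\alpha(q)$ read from that marker equals $w^+\,\overline{w}^{\,k_{2i+1}}\overline{w^+}\cdots$ and must be $\le\alpha(q)=w^+\,\overline{w}^{\,k_1}\overline{w^+}\cdots$; as the two agree on the leading $w^+$, if $k_{2i+1}>k_1$ the first discrepancy would occur where $\alpha(q)$ already shows $\overline{w^+}$ while the tail still shows $\overline{w}>\overline{w^+}$, contradicting the inequality, whence $k_{2i+1}\le k_1$. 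Symmetrically, each barred super-block ends in $\overline{w^+}$ followed by an unbarred run $w^{k_{2i}}$, and applying the reflected inequality $\overline{\alpha_{n+1}\alpha_{n+2}\cdots}\le\alpha(q)$ at that marker yields $w^+\,\overline{w}^{\,k_{2i}}\overline{w^+}\cdots\le\alpha(q)$ and hence $k_{2i}\le k_1$. I expect the main obstacle to be the inductive bootstrapping of the third paragraph: converting the two-sided lexicographic inequalities of Proposition \ref{p24} into a rigid window-by-window description requires a careful case analysis resting on \eqref{911}, and it is the alternation of barred and unbarred runs---not merely membership in the four-letter block alphabet---that makes the combinatorics delicate.
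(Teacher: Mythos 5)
Your proposal follows the same route as the paper's own proof: both pin down the leading block $w^+$ via monotonicity and Lemma \ref{l94}, then determine $\alpha(q)$ block by block from the two-sided inequalities \eqref{91} (valid at every position because $q\in\vv$) together with the strict bound $\alpha(q)<\alpha(p_R)=w^+\left(\overline{w}\right)^{\infty}$, obtain $k_j\le k_1$ by comparing shifted tails of $\alpha(q)$ with $\alpha(q)$ and $\overline{\alpha(q)}$, and inherit the periodicity of $(k_j)$ from that of $\alpha(q)$. The only organizational difference is that the paper works with maximal runs, so each induction step produces a whole super-block $\overline{w^{k}w^+}$ or $w^{k}w^+$ at once and the bounds $k_j\le k_1$ fall out inline, whereas you go window by window and prove $k_j\le k_1$ afterwards; that difference is immaterial.

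There is, however, a concrete flaw in your second paragraph: the inequalities \eqref{911} are \emph{not} what forces a window into $\set{w,w^+,\overline{w},\overline{w^+}}$, and that step as you describe it would fail. Test it on $M=1$ with $p_L$ the Tribonacci number, so $w=110$, $w^+=111$, $\overline{w}=001$, $\overline{w^+}=000$: the prefix $111\,010$ passes the aligned squeeze ($000\le 010\le 111$), every inequality of Proposition \ref{p24} that can be checked on these six digits, and everything \eqref{911} can feed into such comparisons, yet $010$ is not an admissible window. The only constraint that excludes it is the strict upper bound, since $111\,010\cdots>111\,001\cdots=\alpha(p_R)$. So membership in the block alphabet cannot be separated from the upper bound and delegated to \eqref{911}; membership and the ``switches'' must be proved together. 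At each aligned occurrence of $w^+$ the inequality $\text{tail}\le\alpha(q)<w^+\left(\overline{w}\right)^{\infty}$ forces the continuation after that $w^+$ to be $<\left(\overline{w}\right)^{\infty}$, hence the following windows equal $\overline{w}$ up to a first deviating window, which is therefore $<\overline{w}$; the reflected inequality makes that window $\ge\overline{w^+}$, and since $\overline{w^+}$ and $\overline{w}$ differ only in their last digit, it equals $\overline{w^+}$ (symmetrically after each $\overline{w^+}$ marker, which also gives finiteness of each run). This is precisely the paper's run-by-run argument, and once the proof is organized this way, \eqref{911} plays no direct role in Proposition \ref{p96} at all --- it enters only through Lemma \ref{l94}, to identify $\alpha(p_R)$ and to guarantee $p_R\in\uu$.
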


\begin{proof}
Since $\beta(p_L)=w^+0^{\infty}$ and $p_L<q\le p_R$, $\alpha(q)$ starts with $w^+$.
Furthermore, since $p_R\in\uu$ by Lemma \ref{l94} and hence $q\ne p_R$, we have $\alpha(q)<\alpha(p_R)=w^+(\overline{w})^{\infty}$.
Therefore there exists a maximal integer $k_1\ge 0$ and a word $u<\overline{w}$ of length $m$ such that $\alpha(q)$ starts with $w^+\overline{w^{k_1}}u$.

Applying \eqref{91} it follows that $u\ge \overline{w^+}$.
Therefore we have $u=\overline{w^+}$, and  $\alpha(q)$ starts with $w^+\left(\overline{w^{k_1}w^+}\right)$.

Now there exists an integer $k_2'\ge 0$ and a word $u$ of length $m$ such that $\alpha(q)$ starts with $w^+\left(\overline{w^{k_1}w^+}\right)w^{k_2'}u$.
Applying \eqref{91} we see that $u\le w^+$ and
$\overline{w^+}w^{k_2'}u\ge \overline{w^+}w^{k_1}w^+$.
Hence $k_2'\le k_1$, and
\begin{equation*}
u=
\begin{cases}
w^+&\text{if $k_1=k_2'$,}\\
w^+\text{ or }w&\text{if $k_1>k_2'$.}
\end{cases}
\end{equation*}
There exists therefore an integer $k_2$ satisfying $0\le k_2\le k_1$, and such that $\alpha(q)$ starts with $w^+\left(\overline{w^{k_1}w^+}\right) \left(w^{k_2}w^+\right)$.

Continuing by induction, assume for some positive integer $j$ that $\alpha(q)$ starts with
\begin{equation*}
w^+\left(\overline{w^{k_1}w^+}\right) \left(w^{k_2}w^+\right)
\cdots\left(\overline{w^{k_{2j-1}}w^+}\right) \left(w^{k_{2j}}w^+\right).
\end{equation*}
Then there exists an integer $k_{2j+1}'\ge 0$ and a word $u$ of length $m$ such that $\alpha(q)$ starts with
\begin{equation*}
w^+\left(\overline{w^{k_1}w^+}\right) \left(w^{k_2}w^+\right)
\cdots\left(\overline{w^{k_{2j-1}}w^+}\right) \left(w^{k_{2j}}w^+\right)\left(\overline{w^{k_{2j+1}'}}\right)u.
\end{equation*}
Applying \eqref{91} we obtain the lexicographic inequalities
\begin{equation*}
u\ge \overline{w^+}\qtq{and}w^+\left(\overline{w^{k_{2j+1}'}}\right)u\le w^+\left(\overline{w^{k_1}}\right)\overline{w^+}.
\end{equation*}
Hence $k_{2j+1}'\le k_1$, and
\begin{equation*}
u=
\begin{cases}
\overline{w^+}&\text{if $k_{2j+1}'=k_1$,}\\
\overline{w^+}\text{ or }\overline{w}&\text{if $k_{2j+1}'<k_2$.}
\end{cases}
\end{equation*}
There exists therefore an integer $k_{2j+1}$ satisfying $0\le k_{2j+1}\le k_1$ and such that $\alpha(q)$ starts with
\begin{equation*}
w^+\left(\overline{w^{k_1}w^+}\right) \left(w^{k_2}w^+\right)
\cdots\left(\overline{w^{k_{2j-1}}w^+}\right) \left(w^{k_{2j}}w^+\right)\left(\overline{w^{k_{2j+1}}w^+}\right).
\end{equation*}

Next, there exists an integer $k_{2j+2}'\ge 0$ and a word $u$ of length $m$ such that $\alpha(q)$ starts with
\begin{equation*}
w^+\left(\overline{w^{k_1}w^+}\right) \left(w^{k_2}w^+\right)
\cdots\left(\overline{w^{k_{2j-1}}w^+}\right) \left(w^{k_{2j}}w^+\right)\left(\overline{w^{k_{2j+1}}w^+}\right)
\left(w^{k_{2j+2}'}\right)u.
\end{equation*}
Applying \eqref{91} we obtain the lexicographic inequalities
\begin{equation*}
u\le w^+\qtq{and}\overline{w^+}\left(w^{k_{2j+2}'}u\right)
\ge \overline{w^+}\left(w^{k_1}w^+\right).
\end{equation*}
Hence $k_{2j+2}'\le k_1$, and
\begin{equation*}
u=
\begin{cases}
w^+&\text{if $k_{2j+2}'=k_1$,}\\
w^+\text{ or }w&\text{if $k_{2j+2}'<k_2$.}
\end{cases}
\end{equation*}
There exists therefore an integer $k_{2j+2}$ satisfying $0\le k_{2j+2}\le k_1$ and such that $\alpha(q)$ starts with
\begin{equation*}
w^+\left(\overline{w^{k_1}w^+}\right) \left(w^{k_2}w^+\right)
\cdots\left(\overline{w^{k_{2j-1}}w^+}\right) \left(w^{k_{2j}}w^+\right)\left(\overline{w^{k_{2j+1}}w^+}\right)
\left(w^{k_{2j+2}}w^+\right).
\end{equation*}
We complete the proof of the proposition by recalling that $\alpha(q)$ is periodic for every $q\in\vv\setminus\uu$; therefore the sequence $(k_j)$ is also periodic.
\end{proof}

\begin{corollary}\label{c97}
If $q\in (\vv\setminus\uu)\cap [p_L,p_R)$ and $\beta(q)=b_1\cdots b_N^+0^{\infty}$, then
$b_1\cdots b_N^+$ ends with $w^+\overline{w^{\ell}}$ for some $\ell\ge 0$.
More precisely, $b_1\cdots b_N^+=w^+$; otherwise there exist a positive integer $j$ and nonnegative integers $k_1,\ldots, k_{2j}$ such that
\begin{equation*}
b_1\cdots b_N^+=
\begin{cases}
w^+\left( \overline{{w^{k_1}w^+}}\right)
\left( w^{k_2}w^+\right)\cdots
w^+\left(\overline{{w^{k_{2j-1}}w^+}}\right) w^{k_{2j}-1}w^+
&\text{if $k_{2j}\ge 1$,}\\
w^+\left( \overline{{w^{k_1}w^+}}\right)
\left( w^{k_2}w^+\right)\cdots
w^+\left(\overline{{w^{k_{2j-1}+1}}}\right)
&\text{if $k_{2j}=0$.}
\end{cases}
\end{equation*}
\end{corollary}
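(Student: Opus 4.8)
The plan is to convert the statement into a relationship between the greedy expansion $\beta(q)$ and the quasi-greedy expansion $\alpha(q)$, and then to read off the shape of $\beta(q)$ from the description of $\alpha(q)$ furnished by Proposition~\ref{p96}. Recall from Proposition~\ref{p25}~(i) that for $q\in\vv\setminus\uu$ the sequence $\alpha(q)$ is purely periodic, $\alpha(q)=(\alpha_1\cdots\alpha_N)^\infty$ with shortest period $N$ and $\alpha_N<M$, and that then $\beta(q)=\alpha_1\cdots\alpha_{N-1}\alpha_N^+0^\infty$. Hence, in the notation of the corollary,
\[
b_1\cdots b_N^+=\alpha_1\cdots\alpha_{N-1}\alpha_N^+
\]
is exactly the shortest period of $\alpha(q)$ with its last digit raised by one. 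The whole task therefore reduces to (a) identifying the shortest period of $\alpha(q)$ and (b) carrying out this single increment. The base case $q=r_0=p_L$ is immediate: there $\alpha(q)=\alpha(p_L)=w^{\infty}$ with shortest period $w=a_1\cdots a_m$, so $b_1\cdots b_N^+=w^+$.

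For $q\in(\vv\setminus\uu)\cap(p_L,p_R)$ I would invoke Proposition~\ref{p96}, which produces a periodic sequence $(k_i)$ with $k_i\le k_1$ and
\[
\alpha(q)=w^+\,\overline{w^{k_1}w^+}\,w^{k_2}w^+\,\overline{w^{k_3}w^+}\,w^{k_4}w^+\cdots .
\]
Setting $B_i:=w^{k_i}w^+$, the block with index $i$ is barred precisely when $i$ is odd; since this bar-decoration has period $2$, the minimal period of the block sequence is even, say $2j$. Because $B_{2j}=w^{k_{2j}}w^+$ terminates in $w^+$, its trailing $w^+$ can play the role of the leading $w^+$ of the next period, whence
\[
\alpha(q)=P^{\infty},\qquad
P:=w^+\,\overline{w^{k_1}w^+}\,w^{k_2}w^+\cdots\overline{w^{k_{2j-1}}w^+}\,w^{k_{2j}} .
\]
I would then argue that $P$ is primitive, so that $N=\abs{P}$ is the shortest period: this rests on the fact that $\alpha(q)$ admits a unique decomposition into the length-$m$ words $w,w^+,\overline w,\overline{w^+}$ (the block-terminating words $w^+,\overline{w^+}$, and hence the blocks, being recoverable), together with the minimality of $2j$ and of $m$ as the period of $w^{\infty}=\alpha(p_L)$.

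It then remains to increment the last digit of $P$, which splits into the two advertised cases. If $k_{2j}\ge 1$, then $P$ ends in $w=a_1\cdots a_m$; raising $a_m$ to $a_m^+$ turns this final $w$ into $w^+$, yielding
\[
b_1\cdots b_N^+=w^+\,\overline{w^{k_1}w^+}\,w^{k_2}w^+\cdots w^+\,\overline{w^{k_{2j-1}}w^+}\,w^{k_{2j}-1}w^+ .
\]
If $k_{2j}=0$, then $B_{2j}=w^+$ contributes no $w$, so $P$ ends in $\overline{w^{k_{2j-1}}w^+}$, whose last digit is $\overline{a_m^+}$; since raising $\overline{a_m^+}$ by one gives $M-a_m=\overline{a_m}$, this turns the final $\overline{w^+}$ into $\overline w$, hence $\overline{w^{k_{2j-1}}w^+}$ into $\overline{w^{k_{2j-1}+1}}$, yielding
\[
b_1\cdots b_N^+=w^+\,\overline{w^{k_1}w^+}\,w^{k_2}w^+\cdots w^+\,\overline{w^{k_{2j-1}+1}} .
\]
In both cases (and in the base case, where $w^+=w^+\overline{w^0}$) the word ends in $w^+\overline{w^{\ell}}$, which gives the first assertion.

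I expect the main obstacle to be step (a): rigorously pinning down that $P$ is the shortest period. The delicate points are the parity argument forcing an \emph{even} number $2j$ of blocks and the primitivity of $P$; both hinge on the unique length-$m$ block parsing of $\alpha(q)$, which relies on the block-terminating words $w^+$ and $\overline{w^+}$ being recognizable. One must also dispose of the degenerate situation $p_L=\min\vv$ with $M$ even, where $\overline{a_1}=a_1$ and $w=\overline w$; there the runs $w^{k_i}$ and $\overline{w}^{k_i}$ coincide as strings, but the blocks remain separated by the distinct delimiters $w^+$ and $\overline{w^+}$, so the parsing—and with it the whole argument—survives after a short check.
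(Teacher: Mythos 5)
Your proposal is correct and follows essentially the same route as the paper: read off the periodic block structure of $\alpha(q)$ from Proposition \ref{p96}, observe that the shortest period must end just before a block $w^+$, and increment its last digit, splitting into the cases $k_{2j}\ge 1$ and $k_{2j}=0$. You are in fact more careful than the paper's three-line proof, which silently passes over the point you flag as delicate --- that the shortest period of $\alpha(q)$ aligns with the block decomposition and is primitive as a block word.
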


\begin{proof}
The case $q=p_L$ is obvious.
Otherwise it follows from the structure of $\alpha(q)$ in Proposition \ref{p96} that its period has to end just before a word $w^+$, so it is  of the form
\begin{equation*}
w^+\left( \overline{{w^{k_1}w^+}}\right)
\left( w^{k_2}w^+\right)\cdots
w^+\left(\overline{{w^{k_{2j-1}}w^+}}\right) w^{k_{2j}}
\end{equation*}
for some $j\ge 1$.
This implies our claim.
\end{proof}

Now we can show that  Lemma \ref{l95} does not hold for any base $q\in (\vv\setminus\uu)\cap [p_L,p_R)$ different from the numbers $r_k$:

\begin{corollary}\label{c98}
Let $q\in (\vv\setminus\uu)\cap [p_L,p_R)$
with $\beta(q)=b_1\cdots b_N^+0^{\infty}$.
Let us denote by $\ff'$ the set of sequences $(c_i)$ starting with $\overline{w}=\overline{a_1\cdots a_m}$ and satisfying the following conditions:
\begin{align*}
&c_{n+1}c_{n+2}\cdots\le\alpha(p_L)\qtq{for} n=0 \qtq{and  whenever}c_n<M;\\
&\overline{c_{n+1}c_{n+2}\cdots}\le\alpha(p_L)
\qtq{for} n=0 \qtq{and  whenever}c_n>0.\end{align*}
If $\beta(q)$ is not of the form $a_1\cdots a_m^+\left(\overline{a_1\cdots a_m}\right)^{\ell}0^{\infty}$ for any $\ell\ge 0$, then the condition \eqref{915} of Lemma \ref{l95} is not satisfied.
\end{corollary}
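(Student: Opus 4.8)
The plan is to show that \eqref{915} fails by producing a single index $k$ with $1\le k<N$ and $b_k<M$, together with one sequence $(c_i)\in\ff'$, for which
\[
b_{k+1}\cdots b_N^+\,c_1c_2\cdots>\alpha(p_L)=w^{\infty}
\]
(here the $b_i$ play the role of the $\alpha_i$ of \eqref{915}). The mechanism is elementary: if the suffix $b_{k+1}\cdots b_N^+$ \emph{begins with the block} $w^+=a_1\cdots a_{m-1}a_m^+$, then, since $w^+$ and $w$ agree in their first $m-1$ digits and $a_m^+>a_m$, the displayed inequality holds for \emph{every} continuation. So the entire task reduces to locating an occurrence of the block $w^+$ inside $b_1\cdots b_N^+$ that does not start at the first position, and checking that the digit preceding it is $<M$.

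To locate such a block I would invoke Corollary \ref{c97}. It asserts that $b_1\cdots b_N^+$ ends with $w^+\overline{w}^{\ell}$ for some $\ell\ge0$, and, inspecting its two explicit forms, that $b_1\cdots b_N^+$ \emph{equals} $w^+\overline{w}^{\ell}$ precisely when $q=p_L$ (here $\ell=0$) or when $q$ is one of the $r_\ell$ with $\ell\ge1$ (the second case with $j=1$) — that is, precisely when $\beta(q)=w^+\overline{w}^{\ell}0^{\infty}$. Since by hypothesis $\beta(q)$ is \emph{not} of this form, the word $b_1\cdots b_N^+$ is strictly longer than its own ending $w^+\overline{w}^{\ell}$. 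Setting $k:=N-m(\ell+1)$ we therefore obtain $1\le k<N$ and
\[
b_{k+1}\cdots b_N^+=w^+\overline{w}^{\ell},
\]
so the suffix indeed begins with the block $w^+$.

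It then remains to check $b_k<M$ and to supply the sequence. The digit $b_k$ is the last digit of the block immediately preceding this $w^+$; reading off the forms of Corollary \ref{c97} — equivalently, the block decomposition of $\alpha(q)$ in Proposition \ref{p96}, in which each $w^+$ is preceded either by a copy of $w$ or by a copy of $\overline{w^+}$ — we find $b_k\in\set{a_m,\ \overline{a_m^+}}=\set{a_m,\ M-a_m-1}$, and both are $<M$ because $a_m<M$ by Proposition \ref{p25} (i). For the sequence I would take $(c_i):=\overline{w}^{\infty}$: it starts with $\overline{w}$, and all its shifts and the reflections of all its shifts are $\le w^{\infty}$ by the inequalities defining $\vv$ at $p_L$ (Propositions \ref{p23} (ii) and \ref{p24}), so $(c_i)\in\ff'$ and \eqref{913}, \eqref{914} hold. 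Since $b_{k+1}\cdots b_N^+$ begins with $w^+$, the comparison with $w^{\infty}$ is decided at the $m$-th digit, giving $b_{k+1}\cdots b_N^+\,c_1c_2\cdots>w^{\infty}=\alpha(p_L)$; this contradicts \eqref{915}.

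The main obstacle is the bookkeeping in the second paragraph: one must read the two explicit forms of Corollary \ref{c97} carefully enough to be certain that ``$\beta(q)$ not of the excluded form'' is \emph{exactly} the statement that the trailing $w^+\overline{w}^{\ell}$ is a \emph{proper} suffix, so that the distinguished $w^+$ block really sits at a position $k\ge1$ rather than at the start. Once this alignment is pinned down, the verification $b_k<M$ and the final lexicographic comparison are both immediate.
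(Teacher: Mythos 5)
Your proposal is correct and follows essentially the same route as the paper: both use Proposition \ref{p96} and Corollary \ref{c97} to locate an index $k$ with $1\le k<N$, $b_k<M$ and $b_{k+1}\cdots b_N^+=w^+\overline{w^{\ell}}$, and then conclude by a lexicographic comparison with $\alpha(p_L)=w^{\infty}$. The only (harmless) difference is in the last step: you observe that the comparison is already decided at the $m$-th digit since $w^+$ beats $w$, so any continuation works, whereas the paper bounds $c_1c_2\cdots\ge\overline{\alpha(p_L)}$ and identifies the resulting sequence as $\alpha(p_R)>\alpha(q)\ge\alpha(p_L)$.
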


\begin{proof}
We infer from Proposition \ref{p96} and Corollary \ref{c97} that there exists a $k$ satisfying $1\le k<N$ and $b_k<M$ such that $b_{k+1}\cdots b_N^+=w^+\overline{w^{\ell}}$ for some $\ell\ge 0$.
Then we have
\begin{equation*}
b_{k+1}\cdots b_N^+c_1c_2\cdots
=w^+\overline{w^{\ell}}c_1c_2\cdots
\ge w^+\overline{w^{\ell}\alpha(p_L)}
=\alpha(p_R)
>\alpha(q).\qedhere
\end{equation*}
\end{proof}

We end this section by deducing from Proposition \ref{p96} a property that will be used in the next section for the proof of Theorem \ref{t110} .
We need a lemma.

\begin{lemma}\label{l99}
Consider the interval  $[p_L,p_R]$ with an  arbitrary $p_L\in\vv\setminus\uu$, and pick an arbitrary $q_L\in [p_L,p_R]\cap(\vv\setminus\uu)$.
Write
\begin{equation*}
\beta(q_L)=b_1\cdots b_n^+\ 0^{\infty}
\end{equation*}
and define a base $q_R$ by the usual formula
\begin{equation*}
\beta(q_R)=\alpha(q_R)=b_1\cdots b_n^+\left(\overline{b_1\cdots b_n}\right)^{\infty}.
\end{equation*}
Then $[q_L,q_R]\subseteq[p_L,p_R]$.
\end{lemma}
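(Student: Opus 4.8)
The plan is to reduce the claimed inclusion to a single lexicographic comparison and then to exploit the explicit block structure of $\alpha(q_L)$ furnished by Proposition \ref{p96}. Since $q_L\in[p_L,p_R]$, the left endpoint inequality $p_L\le q_L$ is part of the hypothesis, so the whole content of the lemma is the inequality $q_R\le p_R$. By the monotonicity of $q\mapsto\alpha(q)$ (Proposition \ref{p23}) this is equivalent to $\alpha(q_R)\le\alpha(p_R)$, where $\alpha(p_R)=w^+\overline{w}^{\infty}$ by Lemma \ref{l94} and $\alpha(q_R)=b_1\cdots b_n^+(\overline{b_1\cdots b_n})^{\infty}$ by construction. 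Because $p_R\in\uu$ while $q_L\in\vv\setminus\uu$, we have $q_L\ne p_R$, hence $q_L<p_R$; thus either $q_L=p_L$, in which case $b_1\cdots b_n^+=w^+$ and $\alpha(q_R)=\alpha(p_R)$ (so $q_R=p_R$ and there is nothing to prove), or $q_L\in(p_L,p_R)$.

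In the remaining case I would invoke Proposition \ref{p96} to write
\[
\alpha(q_L)=(b_1\cdots b_n)^{\infty}=w^+\,\overline{w^{k_1}w^+}\,w^{k_2}w^+\,\overline{w^{k_3}w^+}\cdots
\]
with a periodic sequence $(k_j)$ satisfying $k_j\le k_1$ for $j\ge2$. Reading off the leading blocks of this periodic word (and recalling from Corollary \ref{c97} that a full period ends just before a factor $w^+$) shows that the shortest period $b_1\cdots b_n$ begins with $w^+\overline{w}^{\,k_1}\overline{w^+}$, so in particular $b_1\cdots b_m=w^+$, and that its length satisfies $n\ge(k_1+2)m$, with equality exactly when the period reduces to the two blocks $w^+\overline{w^{k_1}w^+}$.

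The core step is then a comparison of $\alpha(q_R)$ with $\alpha(p_R)=w^+\overline{w}^{\infty}$ in windows of length $m$. The two sequences agree on their first $(k_1+1)m$ symbols $w^+\overline{w}^{\,k_1}$; here $(k_1+1)m<n$, so the single increment hidden in $b_1\cdots b_n^+$ does not reach this far. The decisive inequality is $\overline{w^+}<\overline{w}$, which holds because these words agree up to position $m-1$ while $\overline{a_m^+}=\overline{a_m}-1<\overline{a_m}$ at position $m$ (note $a_m<M$ by Proposition \ref{p25}(i)). If $n>(k_1+2)m$, the block of $\alpha(q_R)$ in positions $(k_1+1)m+1,\dots,(k_1+2)m$ is still $\overline{w^+}$, strictly smaller than the block $\overline{w}$ of $\alpha(p_R)$, and we are done. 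If $n=(k_1+2)m$, the increment turns that terminal block $\overline{w^+}$ of $b_1\cdots b_n^+$ into $\overline{w^+}{}^+=\overline{w}$, so the two sequences still agree through position $n$; but then the reflected tail $(\overline{b_1\cdots b_n})^{\infty}$ of $\alpha(q_R)$ opens with $\overline{b_1\cdots b_m}=\overline{w^+}<\overline{w}$, which again forces $\alpha(q_R)<\alpha(p_R)$.

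The main obstacle is precisely this boundary case $n=(k_1+2)m$: there the first discrepancy between the underlying periodic words $\alpha(q_L)$ and $\alpha(p_R)$ is absorbed by the terminal increment of $b_1\cdots b_n^+$, so the strict inequality only becomes visible one period later, inside the reflected part $(\overline{b_1\cdots b_n})^{\infty}$. Keeping careful track of the single incremented digit and of the alignment of the length-$m$ windows is the delicate bookkeeping the proof must get right; once this is done, both cases yield $\alpha(q_R)<\alpha(p_R)$, hence $q_R<p_R$, and together with $p_L\le q_L$ this gives $[q_L,q_R]\subseteq[p_L,p_R]$.
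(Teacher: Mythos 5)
Your proposal is correct and follows essentially the same route as the paper: reduce to a lexicographic comparison with $\alpha(p_R)=w^+\overline{w}^{\infty}$ via the monotonicity in Proposition \ref{p23}, use Proposition \ref{p96}/Corollary \ref{c97} to pin down the leading blocks of the period, and conclude from $\overline{w^+}<\overline{w}$. Your two cases $n>(k_1+2)m$ and $n=(k_1+2)m$ are exactly the paper's two cases ($\beta(q_L)$ starting with $w^+\overline{w^{k_1}w^+}$ versus $\beta(q_L)=w^+\overline{w^{k_1+1}}\,0^{\infty}$), including the correct handling of the boundary case where the terminal increment pushes the first discrepancy into the reflected tail.
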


\begin{proof}
We  have $q_L<p_R$ because $p_R\in\uu$ by Lemma \ref{l94}.
If $q_L=p_L$, then $q_R=p_R$, and we have an equality.
Henceforth we assume that $p_L<q_L<p_R$.
Then, writing $w:=a_1\cdots a_m$ for brevity, by Corollary \ref{c97} we have either $\beta(q_L)=w^+\overline{w^{k_1+1}}\ 0^{\infty}$, or $\beta(q_L)$ starts with the word $w^+ \overline{{w^{k_1}w^+}}$.
In the first case $\beta(q_R)$ starts with
\begin{equation*}
w^+\overline{w^{k_1+1}w^+}<w^+\overline{w^{k_1+2}},
\end{equation*}
while in the second case $\beta(q_R)$ starts with
\begin{equation*}
w^+\overline{w^{k_1}w^+}<w^+\overline{w^{k_1+1}}.
\end{equation*}
Since $\beta(p_R)=w^+\overline{w^{\infty}}$, we have $\beta(q_R)<\beta(p_R)$ in both cases, and therefore $q_R<p_R$ by Proposition \ref{p23} (i).
\end{proof}
In the proof of the following proposition we use an important theorem of Alcaraz Barrera et al. \cite{ABBK2019}.

\begin{proposition}\label{p910}
The topological entropy $h(\vv_q')$ is constant in $[q_L,q_R]$ for every $q_L\in\vv\setminus\uu$.
\end{proposition}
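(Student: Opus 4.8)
The plan is to transfer the statement to the entropy function studied by Alcaraz Barrera, Baker and Kong \cite{ABBK2019} and then to exploit its monotonicity together with their plateau theorem.

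First I would observe that by Proposition \ref{p29} (i) we have $h(\vv_q')=h(\uu_q')$ for every $q$, so it suffices to prove that $q\mapsto h(\uu_q')$ is constant on $[q_L,q_R]$; this is precisely the entropy function appearing in \cite{ABBK2019}. Next I would record its \emph{monotonicity}: if $q<q'$, then $\alpha(q)<\alpha(q')$ by Proposition \ref{p23} (ii), and the two lexicographic inequalities defining $\vv_q'$ (namely $(a_{n+i})\le\alpha(q)$ whenever $a_n<M$ and $\overline{(a_{n+i})}\le\alpha(q)$ whenever $a_n>0$) only loosen, so that $\vv_q'\subseteq\vv_{q'}'$ as subshifts. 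Since topological entropy is monotone under inclusion of subshifts, $q\mapsto h(\vv_q')$ is non-decreasing. Because $q_L<q_R$ by Lemma \ref{l94}, constancy on $[q_L,q_R]$ is therefore equivalent to the single endpoint equality $h(\vv_{q_L}')=h(\vv_{q_R}')$.

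The inequality $h(\vv_{q_L}')\le h(\vv_{q_R}')$ is immediate from monotonicity, so everything reduces to the reverse bound $h(\vv_{q_R}')\le h(\vv_{q_L}')$, i.e.\ to showing that adjoining the periodic tail $\overline{w}^\infty$ (with $w=a_1\cdots a_m$) does not raise the exponential growth rate of the admissible words. This is where I would invoke the plateau theorem of \cite{ABBK2019}, which governs exactly the local behaviour of the entropy on the de Bruijn interval determined by $q_L$ and by $\alpha(q_R)=w^+\overline{w}^\infty$. To feed that theorem the correct data for every $q_L\in\vv\setminus\uu$, I would use Lemma \ref{l99}, which nests $[q_L,q_R]$ inside the maximal interval $[p_L,p_R]$ on which the relevant word structure is stable, and Proposition \ref{p96}, which describes $\alpha(q)$ for all intermediate $q\in(\vv\setminus\uu)\cap(q_L,q_R)$ and so certifies that no base strictly between the endpoints carries larger entropy.

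The main obstacle is precisely this reverse entropy inequality: the subshift $\vv_{q_R}'$ is strictly larger than $\vv_{q_L}'$, and one must show that the additional admissible blocks created by the tail $\overline{w}^\infty$ form a set of zero growth rate. This is the combinatorial core borrowed from \cite{ABBK2019}; the work on my side is to match their hypotheses to the de Bruijn data $w$, $w^+$, $\overline{w}$, which the structural results of Proposition \ref{p96} and Corollary \ref{c98}, together with the nesting furnished by Lemma \ref{l99}, are designed to supply.
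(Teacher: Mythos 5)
Your overall strategy coincides with the paper's: reduce the statement to the entropy-plateau structure of \cite{ABBK2019} and use Lemma \ref{l99} to nest $[q_L,q_R]$ inside a plateau. Your monotonicity observation ($\alpha(q)<\alpha(q')$ forces $\vv_q'\subseteq\vv_{q'}'$, hence $q\mapsto h(\vv_q')$ is non-decreasing) is correct, and the resulting reduction to the single inequality $h(\vv_{q_R}')\le h(\vv_{q_L}')$ is legitimate, though the paper does not need it.

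The genuine gap is at the decisive step: you never establish that $q_L$ lies in a maximal entropy plateau of the required form. Lemma \ref{l99} does not produce the enclosing interval; it \emph{presupposes} a base $p_L\in\vv\setminus\uu$ with $q_L\in[p_L,p_R]$, and only then yields $[q_L,q_R]\subseteq[p_L,p_R]$. Likewise, the plateau theorem of \cite{ABBK2019} only describes the maximal plateaus (they are generated by irreducible words); it does not say that a given $q_L\in\vv\setminus\uu$ belongs to one, and a priori $q_L$ could be a point where the entropy function strictly increases, in which case your ``maximal interval $[p_L,p_R]$ on which the relevant word structure is stable'' does not exist. Note also that $[q_L,q_R]$ itself need not be a plateau, because $a_1\cdots a_m^+$ need not be irreducible in the sense of \cite{ABBK2019}, so the plateau theorem cannot be applied directly to ``the de Bruijn interval determined by $q_L$'' as your second paragraph suggests. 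The paper closes exactly this gap in three steps that are absent from your proposal: it first disposes of $q\le q_{KL}$ using $h(\vv_q')=0$ there (\cite[Lemma 2.2]{KKL2017}); it then places $q_L$ in a component interval $[q_0,q_0^*]$ of $(1,M+1)\setminus\uuu$ (Proposition \ref{p28}) and invokes \cite[Lemma 2.11]{KKL2017} to see that $[q_0,q_0^*]$ is an entropy stability interval, hence contained in a maximal one; and only then does \cite[Theorem 2]{ABBK2019} identify that maximal plateau as an interval $[p_L,p_R]$ with $p_L\in\uuu\setminus\uu$, so that Lemma \ref{l99} applies. Without these inputs the reverse inequality $h(\vv_{q_R}')\le h(\vv_{q_L}')$ remains unproven. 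Finally, your citations of Proposition \ref{p96} and Corollary \ref{c98} do not fill this role: Proposition \ref{p96} enters only through Corollary \ref{c97} inside the proof of Lemma \ref{l99}, intermediate bases are already handled by your monotonicity remark, and Corollary \ref{c98} is a negative statement about condition \eqref{915} that plays no part here.
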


\begin{proof}
Since  $h(\vv_q')=0$ for all
$1<q\le q_{KL}$ by \cite[Lemma 2.2]{KKL2017}, we may assume that $q_L\ge q_{KL}$.
Then there exists a $q_0\in\uuu\setminus\uu$ such that $q_L\in [q_0,q_0^*]$, and $q_L<q_0^*$ because $q_0^*\in\uu$. Here $(q_0,q_0^*)$  denotes the  arbitrary connected component of $(1,M+1)\setminus\uuu$, see Proposition \ref{p28}.

We know from \cite[Lemma 2.11]{KKL2017} that  $[q_0,q_0^*]$ is a stability interval for the topological entropy, and hence it is contained in a maximal stability interval.
Since the maximal stability intervals in $[q_{KL},M+1)$ are of the form $[p_L,p_R]$ with a suitable $p_L\in\uuu\setminus\uu$ by  \cite[Theorem 2]{ABBK2019}, we have $q_L\in[q_0,q_0^*]\subset[p_L,p_R]$ for some  $p_L\in\uuu\setminus\uu$.
Applying Lemma \ref{l99} hence we infer that $[q_L,q_R]\subseteq[p_L,p_R]$.
Since the topological entropy $h(\vv_q')$ is constant in $[p_L,p_R]$, it  is also constant in the subinterval $[q_L,q_R]$.
\end{proof}

\section{Proofs of Theorems \ref{t18} and \ref{t110}}\label{s10}
First we prove Theorems \ref{t18} and \ref{t110} for $q=q_0$.

\begin{proof}[Proof of Theorem \ref{t18}   for $q=q_0$]
It follows from Lemma \ref{l41} that  $\tilde\gg(q_0)$ generates the words $\alpha_1\cdots\alpha_N$ and $\overline{\alpha_1\cdots\alpha_N}$.
Let $\ff'$ be the set of sequences $(c_i)$ in $\tilde\gg_{q_0}'$ that start with $\overline{\alpha_1\cdots\alpha_N}$, and consider an arbitrary $(c_i)\in\ff'$. Then $$(c_i)\in\ff'\subseteq \tilde\gg_{q_0}'=\tilde \vv_{q_0}'.$$
The last equality is from Corollary \ref{c43}.
Since $c_1\cdots c_N=\overline{\alpha_1\cdots\alpha_N}$,
 the condition \eqref{98} of  Lemma \ref{l93} is automatically satisfied.
By Lemma \ref{l42} the condition \eqref{99} of Lemma \ref{l93} and the conditions \eqref{95} and \eqref{96} are also satisfied (see \eqref{41}).
Therefore, in view of Lemma \ref{l93} we may apply Corollary \ref{c92} to conclude that  $\dim\uu_{q_0}^m\ge\dim\ff_{q_0}$ for all $m\ge 1$.
Since $\dim\uu_{q_0}^m\le\dim\uu_{q_0}$, it remains to show that $\dim\uu_{q_0}\le \dim\ff_{q_0}$.

Since $\tilde \gg({q_0})$ is strongly connected by assumption,  $\tilde\gg_{q_0}$ may be covered by a finite number of sets, each similar to $\ff_{q_0}$, so that $\dim\tilde\gg_{q_0}\le \dim\ff_{q_0}$.
The theorem follows because $\dim\uu_{q_0}=\dim\tilde\gg_{q_0}$ by Corollary \ref{c72}.
\end{proof}

For the proof of Theorem \ref{t110} we need another preliminary result:

\begin{lemma}\label{l101}
Assume  $q_0\in\uuu\setminus\uu$ and $\beta(q_0)=\alpha_1\cdots\alpha_N^+$. Then the subgraph $\tilde\gg_1(q_0)$ formed by the vertices of the form $(a_i^-, a_i)$ and $(b_i, b_i^+)$ for $i=1,\ldots, N$
 is strongly connected. Furthermore $\tilde\gg_1(q_0)$ contains the words $\alpha_1\cdots\alpha_N$ and $\overline{\alpha_1\cdots\alpha_N}$.
\end{lemma}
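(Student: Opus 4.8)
The plan is to fix the vertex set of $\tilde\gg_1(q_0)$, read the two words off the two cycles of Lemma~\ref{l41}, and then reduce strong connectedness to the construction of a single bridge between the cycles, with the reverse bridge supplied by reflection. First I would record that, by Lemma~\ref{l34}(iv) and reflection, all of $a_1,\dots,a_N,b_1,\dots,b_N$ lie in $[b_1,a_1]$; hence each interval $(a_i^-,a_i)$ and $(b_i,b_i^+)$ is a subinterval of $(b_1,a_1)$ and so a vertex of $\tilde\gg(q_0)$, and therefore of $\tilde\gg_1(q_0)$. Lemma~\ref{l41} equips these vertices with the $a$-cycle $(a_1^-,a_1)\xrightarrow{\alpha_1}\cdots\xrightarrow{\alpha_N}(a_1^-,a_1)$ and the $b$-cycle $(b_1,b_1^+)\xrightarrow{\overline{\alpha_1}}\cdots\xrightarrow{\overline{\alpha_N}}(b_1,b_1^+)$. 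Reading the labels along these cycles proves the \emph{Furthermore}: the $a$-cycle generates $\alpha_1\cdots\alpha_N$ and the $b$-cycle generates $\overline{\alpha_1\cdots\alpha_N}$; moreover each cycle is internally strongly connected, so only the linkage of the two cycles remains.

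Next I would invoke the reflection $R(x)=\tfrac{M}{q-1}-x$. By Lemma~\ref{l33}(i) and \eqref{35} it is a graph automorphism sending $(a_i^-,a_i)\mapsto(b_i,b_i^+)$ and every edge $I\xrightarrow{k}J$ to $R(I)\xrightarrow{\overline{k}}R(J)$, so it interchanges the two cycles while preserving the direction of paths. Hence it suffices to exhibit one path from the $a$-cycle into the $b$-cycle; applying $R$ then returns a path from the $b$-cycle into the $a$-cycle, and together with the internal connectivity of the cycles this yields strong connectedness.

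The crux is building that one bridge, and the right choice is the smallest $a$-point $a_{i_0}:=\min\{a_1,\dots,a_N\}$ rather than the boundary vertex $(a_N^-,a_N)$. I would examine the cut point $a_{i_0}^-$ immediately below $a_{i_0}$. By minimality it is not an $a$-point, and it cannot be a left switch endpoint $\theta_j$ (otherwise $(a_{i_0}^-,a_{i_0})$ would meet the interior of $[\theta_j,\eta_j]$, using that no $a$-point equals an $\eta_j$); so $a_{i_0}^-$ is either a $b$-point or a right switch endpoint $\eta_j$. If it is a $b$-point $b_l$, then $(a_{i_0}^-,a_{i_0})=(b_l,b_l^+)$ is simultaneously an $a$-vertex and a $b$-vertex, so the two cycles already share a vertex. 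If instead $a_{i_0}^-=\eta_j$, then $a_{i_0}$ lies in the region $(\eta_j,\theta_{j+1})$ of label $j$, forcing $j=\alpha_{i_0}$; the identity $T_{\alpha_{i_0}}(\eta_{\alpha_{i_0}})=b_1$ (the special case $T_k(\eta_k)=b_1$) together with $T_{\alpha_{i_0}}(a_{i_0})$ being again an $a$-point $>b_1$ (Lemma~\ref{l34}(v)) shows that the image of $(a_{i_0}^-,a_{i_0})$ under $T_{\alpha_{i_0}}$ is an interval $\bigl(b_1,\,T_{\alpha_{i_0}}(a_{i_0})\bigr)\supset(b_1,b_1^+)$, i.e. there is an edge $(a_{i_0}^-,a_{i_0})\xrightarrow{\alpha_{i_0}}(b_1,b_1^+)$ into the $b$-cycle.

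The step I expect to require the most care is precisely this localization. The bridge does \emph{not} in general come from $(a_N^-,a_N)$, whose single outgoing edge may land only inside the $a$-cycle and whose image can miss every $b$-vertex, as already happens for $\alpha(q_0)=(11010)^\infty$. Anchoring the argument at the smallest $a$-point is what pins $a_{i_0}^-$ down to exactly the two favorable possibilities, and the computation $T_k(\eta_k)=b_1$ is what turns the second possibility into the explicit edge to $(b_1,b_1^+)$.
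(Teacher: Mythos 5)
Your proof is correct and follows essentially the same route as the paper's: both read the two words off the two cycles of Lemma \ref{l41}, reduce to a single bridge by reflection, and settle the same dichotomy (left endpoint of an $a$-cycle vertex a $b$-point gives a shared vertex of the two cycles; left endpoint $\eta_j$ gives the edge to $(b_1,b_1^+)$ via $T_j(\eta_j)=b_1$ from Lemma \ref{l33} (iii)). The only difference is how the bridging vertex is located --- you anchor at the minimal $a$-point, whereas the paper counts that at most $N-2$ of the $N$ left endpoints along the $a$-cycle can be $a$-points because neither $(a_1,a_1^+)$ nor $(a_N,a_N^+)$ is a vertex of $\tilde\gg_1(q_0)$ --- an immaterial variation.
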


\begin{proof}
It follows from Lemma \ref{l41} that the vertices of $\tilde\gg_1(q_0)$ are formed of the vertices of two cycles of $N$ vertices that generate the words $\alpha_1\cdots\alpha_N$ and $\overline{\alpha_1\cdots\alpha_N}$, respectively.
It remains to show that there is a path from some vertex of the first cycle to some vertex of the second cycle, and there is a path from some vertex of the second cycle to some vertex of the first cycle.
By reflection it suffices to consider the first case.

The vertices of the first cycle are of the form $(a_k, a_i)$, $(b_n, a_i)$, $(\eta_j, a_i)$ with $i=1,\ldots,N$ and $k,n\in\set{1,\ldots,N}$ and $j\in\set{1,\ldots,M}$.
Observe that $(a_N,a_N^+)=(\theta_{\alpha_N^+},\eta_{\alpha_N^+})$ is a switch interval and hence is not a vertex of $\gg(q_0)$.
Furthermore, $(a_1,a_1^+)$ is not a vertex of $\tilde\gg(q_0)$ by definition, and hence it is not a vertex of the subgraph $\tilde\gg_1(q_0)$ either.

Thus we have at most $N-2$ vertices of the form $(a_k, a_i)$ in $\tilde\gg_1(q_0)$, and therefore the first cycle of $\tilde\gg_1(q_0)$ contains at least one vertex of the form $(b_n, a_i)$ or $(\eta_j, a_i)$.
In the first case $(b_n, a_i)$ is a common vertex of the two cycles.
In the second case we have
\begin{equation*}
(\eta_j, a_i)\xrightarrow{j}(b_1, b_1^+)
\end{equation*}
by Lemma \ref{l33} (iii); since $(b_1, b_1^+)$ is a vertex of the second cycle, this is an edge from the first cycle to the second one.
\end{proof}

\begin{proof}[Proof of Theorem \ref{t110} for $q=q_0$]
Applying Lemma \ref{l101}, let $\ff'$ be the set of sequences in $\tilde\gg_{1,q_0}'$ starting with $\overline{\alpha_1\cdots\alpha_N}$.
Since $\tilde\gg_{1,q_0}'\subseteq\tilde\gg_{q_0}'$, we may repeat the preceding proof to obtain $\dim\uu_{q_0}^m\ge\dim\ff_{q_0}$ for all $m\ge 1$, and it remains to show that $\dim\uu_{q_0}\le \dim\ff_{q_0}$.
Since $\dim\tilde\gg_{1, q_0}=\dim\uu_{q_0}$ by our assumption, it is sufficient to show that $\dim\tilde \gg_{1,q_0}\le \dim\ff_{q_0}$.
This follows similarly to the proof of Theorem \ref{t18} for $q=q_0$ above because   Lemma \ref{l101} implies that $\tilde\gg_{1,q_0}$ may be covered by a finite number of sets, each similar to $\ff_{q_0}$.
\end{proof}

\begin{proof}[Proof of Theorem \ref{t18} for $q=r_k$ with $k\ge 1$]
Let us consider $\ff'$ as defined in Lemma \ref{l95} with $p_L:=p<q$.
Then $\ff'\subseteq \tilde \gg_{p}'\subseteq \tilde \gg_{q}'$.
It follows from Lemma \ref{l95} and  Corollary \ref{c92} that
\begin{equation*}
\dim\uu_q^m\ge\dim\ff_q\qtq{for all}m\ge 1.
\end{equation*}

Since $\dim\uu_q^m\le\dim\uu^1_q\le \dim\vv_q$, we end the proof by showing that $\dim\vv_q=\dim\ff_q$.
This follows from the equalities
\begin{equation}\label{101}
\dim\vv_q
=\frac{h(\vv_q')}{\log q}
=\frac{h(\vv_p')}{\log q}
=\frac{h(\uuu_p')}{\log q}
=\frac{h(\tilde\gg_p')}{\log q}=\frac{h(\ff')}{\log q}
=\dim\ff_q.
\end{equation}
The first and last equalities are from Proposition \ref{p29}.
The second equality follows from Lemma \ref{l94} and Proposition \ref{p910}.
The third equality is  from   Proposition \ref{p29} (i).

The  fourth and fifth equalities are equivalent to
\begin{equation*}
\frac{h(\uuu_p')}{\log p}
=\frac{h(\tilde\gg_p')}{\log p}=\frac{h(\ff')}{\log p}
\qtq{or equivalently to}
\dim \uuu_p=\dim \tilde\gg_p=\dim \ff_p.
\end{equation*}
Here the first one follows from Corollary \ref{c72}.
The second one follows from the fact that $\ff_p\subset\gg_p$, and that $\tilde\gg_p$ may be covered by finitely many sets similar to $\ff_p$ by the strong connectedness of $\tilde \gg(p)$.
\end{proof}

\begin{proof}[Proof of Theorem \ref{t110} for $q=r_k$ with $k\ge 1$]
As in the proof for $q=q_0=r_0$, let $\ff'$ be the set of sequences in $\tilde\gg_{1,q_0}'$ starting with $\overline{\alpha_1\cdots\alpha_N}$.
Since $\tilde\gg_{1,q_0}'\subseteq\tilde\gg_{q_0}'$, we may repeat the preceding proof to obtain $\dim\uu_{q_0}^m\ge\dim\ff_{q_0}$ for all $m\ge 1$, and it remains to show that $\dim\uu_{q_0}\le \dim\ff_{q_0}$.
This follows by observing that the equalities \eqref{101} hold again with $p:=q_0$ and $q:=r_k$ if we replace $\tilde\gg_p'$ by $\tilde\gg_{1,p}'$.
There are only two changes in the justification of these equalities: we have
$h(\vv_q')=h(\vv_p')$, i.e., $h(\vv_{r_k}')=h(\vv_{q_0}')$ by Proposition \ref{p910}, and we have $h(\uuu_p')=h(\tilde\gg_{1,p}')$ by the assumption of the theorem.
\end{proof}

\begin{remark}\label{r1002}
Our proofs of Theorems \ref{t18} and \ref{t110} do not remain valid for $q\in\vv\setminus\uuu$ because if $q\in\vv\setminus\uuu$ and $\beta(q)=w\overline{w^-}0^{\infty}$, then $\tilde\gg_q'$, and even the larger set $\gg_q'$, does not contain any sequence starting with $\overline{w}w$.
Indeed, assume on the contrary that there exists a sequence $(c_i)\in\gg_q'$ starting with $\overline{w}w$.
Then, since $\gg_q'\subset\vv_q'$ and $\alpha(q)=(w\overline{w})^{\infty}$, applying \cite[Lemma 3.4 (a)]{DKL2016} we obtain that $(c_i)=(\overline{w}w)^{\infty}$, and therefore $(c_i)\notin\uu_q'$ by Corollary \ref{c22}.
This is a contradiction because $\uu_q'=\gg_q'$ by Theorem \ref{t11} (ii).
\end{remark}

\section{An example related to the structure of univoque graphs}\label{s11}

In this section we will discuss an example
related to Theorems \ref{t11}, \ref{t13} and \ref{t14}.  This allows us to explain Proposition \ref{p28} (iii)-(v) in a
transparent way, instead of the former combinatorial and less intuitive arguments.
\begin{example}\label{e1101}
Let $M=3$ and
\begin{equation*}
\begin{split}
&\beta(q_0)=331\ 0^{\infty}\\
&\beta(q_1)=331\  003\ 0^{\infty}\\
&\beta(q_2)=331\  003\  002\ 331\ 0^{\infty}\\
&\beta(q_3)=331\  003\  002\ 331\ 002\ 330\ 331\ 003\  0^{\infty}
\end{split}
\end{equation*}
We have $q_0\in\uuu\setminus\uu$ and $q_1, q_2, q_3\in\vv\setminus\uuu$.
For $q_0$, by the usual arguments, we have the following relations
\begin{equation*}
\theta_0<b_1<b_2<\theta_1=a_3<\eta_1<\theta_2<\eta_2<\theta_3<\eta_3=b_3<a_2<a_1<\eta_{4}.
\end{equation*}

Figure 11.1 (a) shows that $\tilde \gg(q_0)$ is strongly connected.
(This will also follow from Proposition \ref{p125} in the next section.) Furthermore, by Theorem \ref{t11} (i) we have
\begin{equation*}
\gg_{q_0}'=\vv_{q_0}'.
\end{equation*}

It follows from Theorem \ref{t13} that $\gg(q_0)$ is isomorphic to $\gg(q_1)$, and by Theorem \ref{t11} (ii) we have
$\gg_{q_1}'=\uu_{q_1}'$.
Thus we have
\begin{equation*}
\gg_{q_1}'=\uu_{q_1}'=\vv_{q_0}'=\gg_{q_0}'.
\end{equation*}
This is illustrated by Figures 11.1 (a) and (b).

Next we discuss the graph $\gg(q_2)$, shown in Figure 11.2.
We infer from Lemmas \ref{l61} (ii)-(iii) and \ref{l63} (iii)-(iv) the following properties, that may be read off from the figures:

\begin{itemize}
\item $\hat{\gg}(q_2)$ is isomorphic to $\gg(q_1)$; more precisely,
$\gg_{q_1}'=\uu_{q_1}'=\hat{\gg}_{q_2}'$.
\item $\gg(q_2)$ contains a cycle $\cc_2$ with $6$ vertices, and all sequences generated by $\cc_2$ end with $(331002)^\infty$.
\item The vertices of  the subgraph $\hat{\gg}(q_2)$ and of the cycle $\cc_2$  form a partition of the vertices of $\gg(q_2)$.
\item There exist edges from $\hat{\gg}(q_2)$ to $\cc_2$, but there is no  edge from $\cc_2$ to $\hat{\gg}(q_2)$.
Therefore
\begin{equation*}
\vv_{q_1}'=\gg_{q_2}'=\uu_{q_2}'.
\end{equation*}
\end{itemize}

Finally, we investigate the graph $\gg(q_3)$ shown in Figure 11.3.
Lemmas \ref{l61} (ii)-(iii) and \ref{l63} (iii)-(iv) imply the following properties that may be verified on the figures:
\begin{itemize}
\item $\hat{\gg}(q_3)$ is isomorphic to $\gg(q_2)$; more precisely,
$\gg_{q_2}'=\uu_{q_2}'=\hat{\gg}_{q_3}'$.
\item $\gg(q_3)$ contains a cycle $\cc_3$ with $12$ vertices, and all the sequences generated by $\cc_3$ end with $(331003002330)^\infty$.
\item The vertices of  the subgraph $\hat{\gg}(q_3)$ and of the cycle $\cc_3$  form a partition of the vertices of $\gg(q_3)$.
\item There exist edges from $\hat{\gg}(q_3)$ to $\cc_3$, but there is no  edge from $\cc_3$ to $\hat{\gg}(q_3)$.
Therefore
\begin{equation*}
\vv_{q_2}'=\gg_{q_3'}=\uu_{q_3}'.
\end{equation*}
\end{itemize}
\end{example}

\begin{figure}[ht]\label{f1101}
\centering
\includegraphics[scale=0.4]{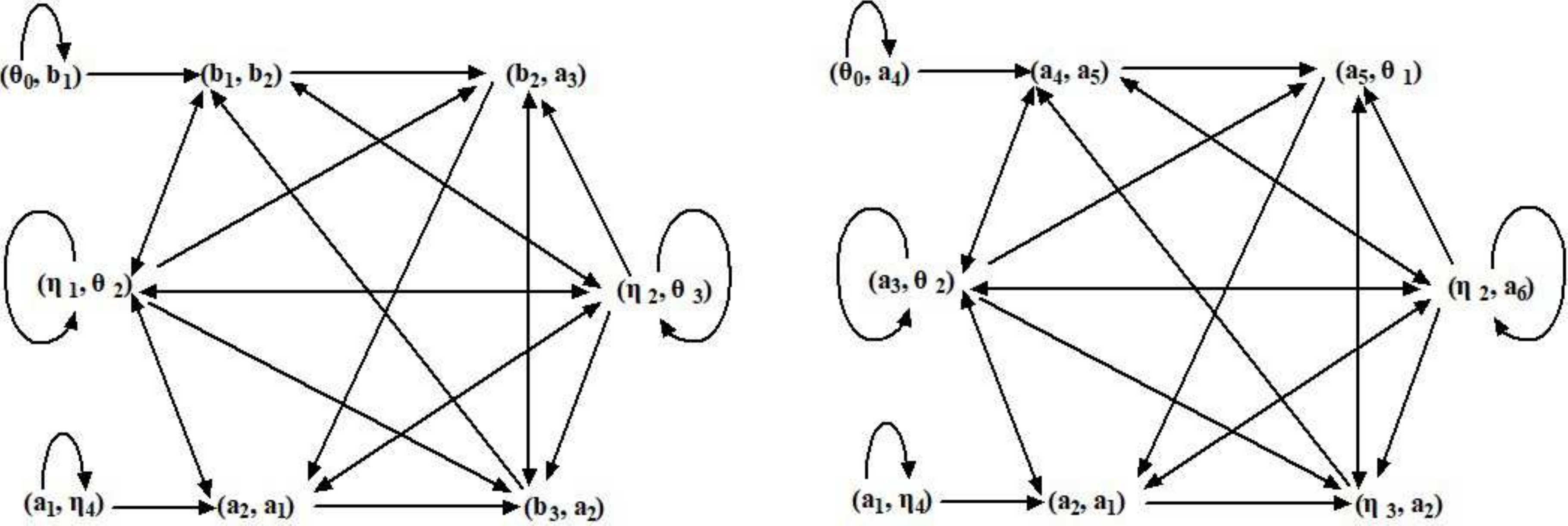}
\caption{(a) The left graph $\gg(q_0)$ associated with $\beta(q_0)=331\ 0^{\infty}$.
(b) The right graph $\gg(q_1)$ associated with $\beta(q_1)=331\ 003\ 0^{\infty}$.}
\end{figure}

\begin{figure}[ht]\label{f1102}
\centering
\includegraphics[scale=0.9]{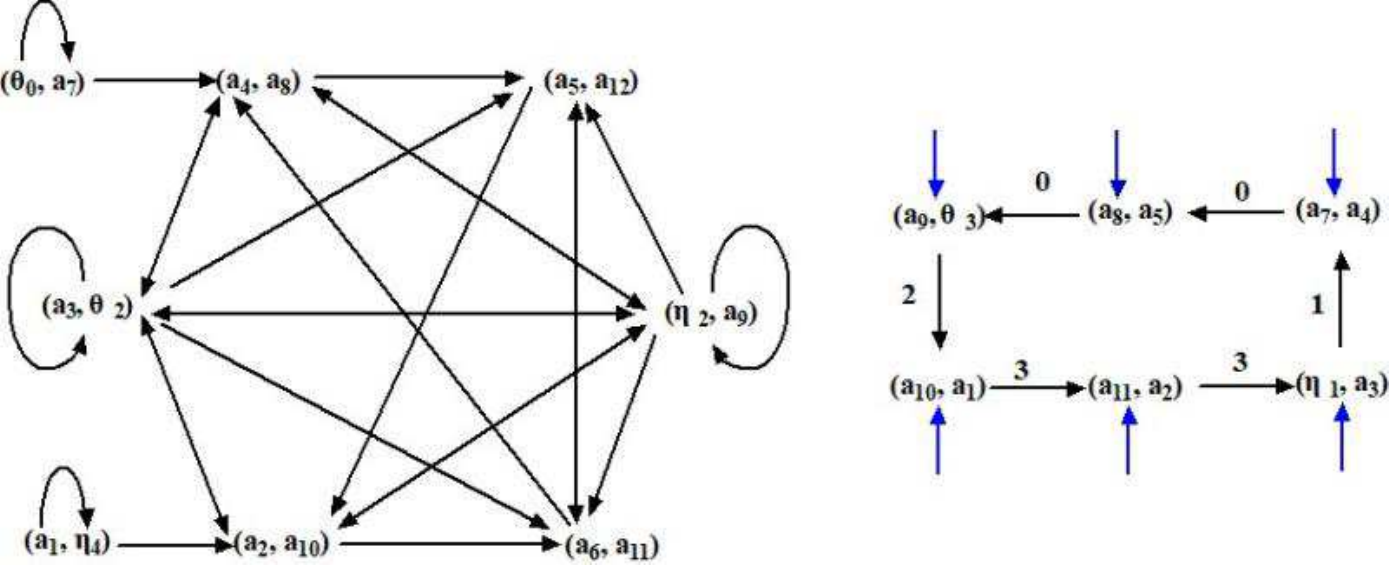}
\caption{The graph $\gg(q_2)$ associated with $$\beta(q_2)=331\ 003\ 002\ 331\ 0^{\infty}.$$
The blue arrows denote the edges from $\hat{\gg}(q_2)$ to $\cc_2$.}
\end{figure}

\begin{figure}[ht]\label{f1103}
\includegraphics[scale=0.45]{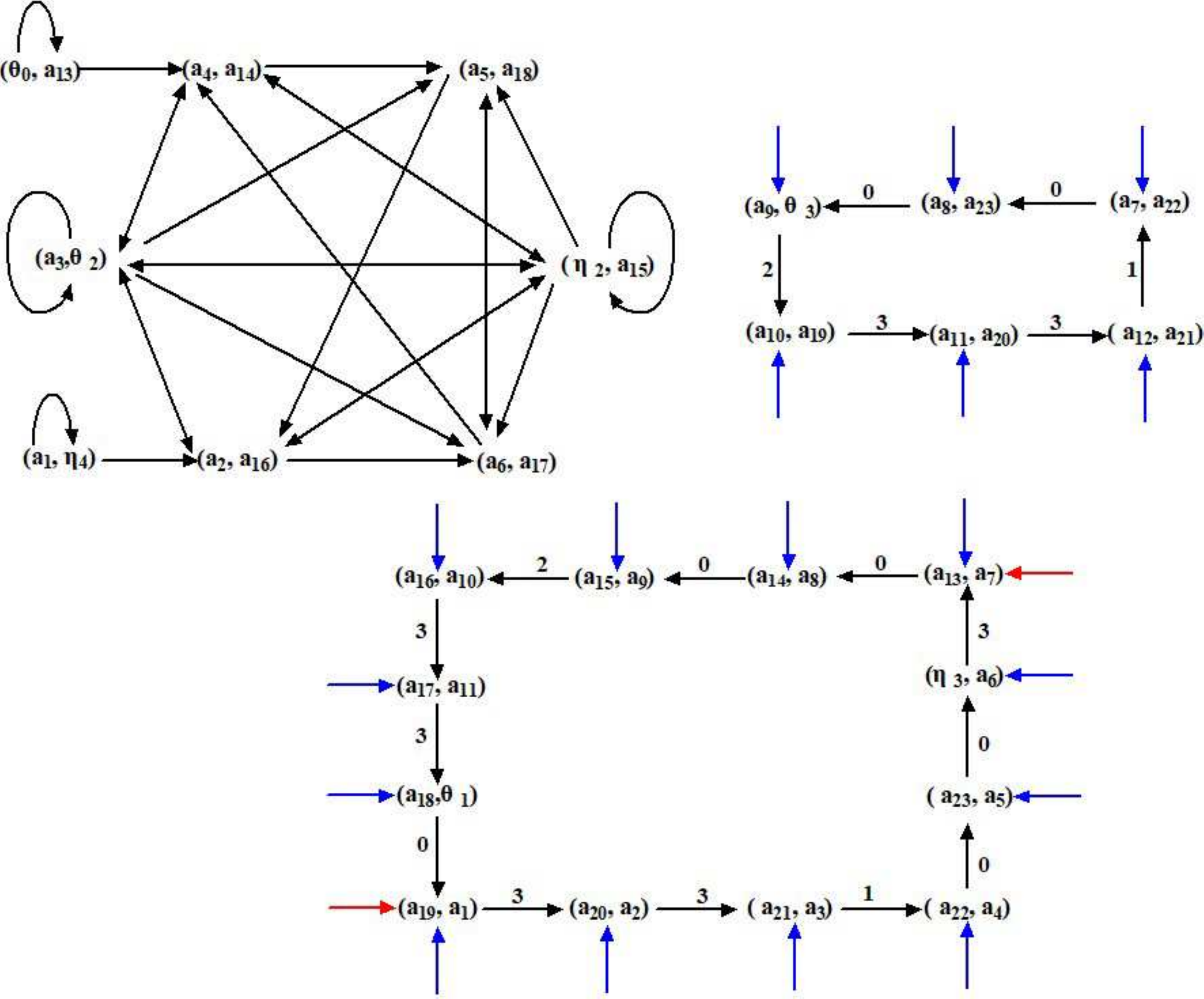}
\caption{The graph $\gg(q_3)$ associated with $$\beta(q_3)=331\ 003\ 002\ 331\ 002\ 330\ 331\ 003\ 0^{\infty}.$$
The blue arrows denote the edges from $\hat{\gg}(q_1)$ to $\cc_2$ and  $\cc_3$, respectively.
The red arrows denote the edges from the image $\hat\cc_2$ of $\cc_2$ to $\cc_3$. }
\end{figure}

\section{Comments and examples related to Theorems \ref{t18} and \ref{t110}}\label{s12}

\subsection{Strong connectedness of $\tilde\gg(q)$}\label{ss1201}

In connection with Theorems \ref{t18} and \ref{t110} we investigate the strong connectedness of $\tilde\gg(q)$ for $q\in \uuu\setminus\uu$ in view of Theorems \ref{t13} and \ref{t14}  in this section.

Let's recall some notation  in Sections 4 and  6.
We denote by $\tilde\gg(q)$ the subgraph that is  obtained from $\gg(q)$ by keeping only the vertices that are subintervals of $(b_1,a_1)$ and $\tilde\gg_1(q)$ the subgraph of $\tilde\gg(q)$,  formed by the vertices of the form $(a_i^-, a_i)$ and $(b_i, b_i^+)$.
Set $\tilde\gg'_q$ is the set of sequences generated by $\tilde\gg(q)$ and  $\tilde \gg_{q}:=\set{(c_i)_q: (c_i)\in \tilde \gg_{q}'}$.
$\tilde \gg_{1,q}', \tilde \gg_{1,q}$  have the same meaning as $\tilde \gg_{q}', \tilde \gg_{q}$.

There may be five different types of vertices of $\tilde \gg(q)$ as follows:
\begin{equation*}
(a_i^-, a_i),\quad
(b_i, b_i^+),\quad
(a_i, b_j),\quad
(\theta_i^-,\theta_i) \qtq{and}
(\eta_i, \eta_i^+).
\end{equation*}

The following result allows us to decide in many cases whether Theorem \ref{t18} may be applied.

\begin{proposition}\label{p121}
Let $q\in\uuu\setminus\uu$.
The graph $\tilde\gg(q)$ is strongly connected if and only if there exists from
$\tilde\gg_1(q)$ a path to each vertex of the form of  $(a_i, b_j)$ and $(\theta_i^-, \theta_i) $.
\end{proposition}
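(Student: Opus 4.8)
The plan is to prove both implications of Proposition \ref{p121}, starting from the structural description of $\tilde\gg(q)$ furnished by Lemma \ref{l41} and Lemma \ref{l101}. Recall that for $q\in\uuu\setminus\uu$ the subgraph $\tilde\gg_1(q)$ consists of exactly two cycles, one through the vertices $(a_i^-,a_i)$ generating $\alpha_1\cdots\alpha_N$ and one through the vertices $(b_i,b_i^+)$ generating $\overline{\alpha_1\cdots\alpha_N}$, and by Lemma \ref{l101} this subgraph is \emph{already} strongly connected. The only additional vertices of $\tilde\gg(q)$ beyond those of $\tilde\gg_1(q)$ are of the two remaining types $(a_i,b_j)$ and $(\theta_i^-,\theta_i)$ (together with their reflections $(\eta_i,\eta_i^+)$, which are handled by the symmetry \eqref{35}). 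Thus the whole question reduces to how these extra vertices connect to the strongly connected core.

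The forward (necessity) implication is immediate: if $\tilde\gg(q)$ is strongly connected, then by definition there is a path from any vertex to any other, in particular from any vertex of $\tilde\gg_1(q)$ to each vertex of the form $(a_i,b_j)$ or $(\theta_i^-,\theta_i)$, which is precisely the stated condition. For the converse (sufficiency), the plan is to verify the two defining halves of strong connectedness separately: that every vertex can be \emph{reached from} $\tilde\gg_1(q)$, and that every vertex can \emph{reach} $\tilde\gg_1(q)$. The hypothesis directly supplies paths \emph{from} $\tilde\gg_1(q)$ into every vertex of types $(a_i,b_j)$ and $(\theta_i^-,\theta_i)$; applying the reflection involution $(c_i)\mapsto\overline{(c_i)}$ of \eqref{35}, together with the fact that reflection interchanges the two cycles of $\tilde\gg_1(q)$ and sends vertices of type $(\theta_i^-,\theta_i)$ to type $(\eta_i,\eta_i^+)$ and type $(a_i,b_j)$ to itself, one obtains the analogous reachability statements for the reflected vertex types as well. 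Since every vertex of $\tilde\gg(q)$ is of one of these types (or lies in $\tilde\gg_1(q)$, which is internally strongly connected), every vertex is reachable from the core $\tilde\gg_1(q)$.

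It then remains to show that every vertex can reach $\tilde\gg_1(q)$, and this is where I expect the main work to lie. The key observation I would exploit is Lemma \ref{l34}(v) and Lemma \ref{l33}(iii): from any vertex one can follow the greedy (or quasi-greedy) orbit of its endpoints, and because $q\in\uuu\setminus\uu$ every such orbit eventually arrives at one of the boundary points $a_i$, $b_i$, i.e. lands on an endpoint belonging to a vertex of $\tilde\gg_1(q)$. Concretely, for a vertex of type $(a_i,b_j)$ one endpoint is some $a_i$, and the edge following the label determined by Lemma \ref{l39} carries it, via the cycle relations of Lemma \ref{l41}, back into the $(a_\bullet^-,a_\bullet)$ cycle; an analogous argument applies to $(\theta_i^-,\theta_i)$ using the relation $\theta_i\xmapsto{i-1}1=a_1$ of Lemma \ref{l33}(ii) together with Proposition \ref{p313}. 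Care must be taken that the chosen outgoing edges stay inside the subinterval $(b_1,a_1)$, i.e. inside $\tilde\gg(q)$ rather than escaping into the full graph $\gg(q)$; this is guaranteed by Lemma \ref{l42} and the characterization of $\tilde\vv_q$ in \eqref{41}, which confine the relevant orbits to $[b_1,a_1]$.

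The principal obstacle, and the point requiring the most careful bookkeeping, is establishing this universal reachability \emph{of} $\tilde\gg_1(q)$ from the extra vertices while controlling the labels so that no path leaves the window $(b_1,a_1)$. Once both reachability directions are in hand — every vertex reachable from $\tilde\gg_1(q)$ by hypothesis and reflection, and every vertex reaching $\tilde\gg_1(q)$ by the orbit argument — strong connectedness of $\tilde\gg(q)$ follows by concatenation of paths through the strongly connected core $\tilde\gg_1(q)$, completing the proof.
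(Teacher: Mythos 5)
Your proposal follows essentially the same route as the paper's proof: both rest on Lemma \ref{l101} (the core $\tilde\gg_1(q)$ is strongly connected), both establish unconditionally that every outside vertex can reach the core --- type $(a_i,b_j)$ via the first path of Lemma \ref{l41}, type $(\theta_i^-,\theta_i)$ via the edge $(\theta_i^-,\theta_i)\xrightarrow{i-1}(a_1^-,a_1)$ from Lemma \ref{l33} (ii), and type $(\eta_i,\eta_i^+)$ by reflection --- and both then conclude that strong connectedness is equivalent to the core reaching every outside vertex, with reflection reducing the $(\eta_i,\eta_i^+)$ case to $(\theta_i^-,\theta_i)$. The remaining differences are cosmetic: your ``orbit'' phrasing of the reach-the-core step is just Lemma \ref{l41} in disguise, and your appeal to Lemma \ref{l42} for keeping paths inside $(b_1,a_1)$ addresses a point the paper leaves implicit (it follows from Lemma \ref{l34} (iv)).
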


\begin{proof}
By Lemma \ref{l101} $\tilde\gg(q)$ contains a strongly connected subgraph $\tilde\gg_1(q)$, formed by the vertices of the form $(a_i^-, a_i)$ and $(b_i, b_i^+)$.
The vertices outside $\tilde\gg_1(q)$ are of the form of $(a_i, b_j)$, $(\theta_i^-, \theta_i)$ or $(\eta_i, \eta_i^+)$.
The first path of  Lemma \ref{l41} shows that there is a path from $(a_i, b_j)$ to $\tilde\gg_1(q)$.
Next, the relation $(\theta_i^-, \theta_i)\xrightarrow{i-1}(a_1^-, a_1)$ of Lemma \ref{l33} (ii) shows that there is a path from $(\theta_i^-, \theta_i)$ to the graph $\tilde\gg_1(q)$.
By reflection, there is also a path from $(\eta_i, \eta_i^+)$ to $\tilde\gg_1(q)$.

Therefore, $\tilde\gg(q)$ is strongly connected if and only if, conversely, there exists a path from $\tilde\gg_1(q)$ to each vertex of the form of  $(a_i, b_j)$, $(\theta_i^-, \theta_i)$ and $(\eta_i, \eta_i^+)$.
By reflection, we only need to consider   $(\theta_i^-, \theta_i) $.
\end{proof}

\begin{remark}\label{r122}
A different and deep necessary and sufficient condition follows from some recent results of Alcaraz Barrera, Baker and Kong \cite{ABBK2019}.
They characterized the transitivity of a special subshift $\tilde\vv_q'$ (see \eqref{41}).
\begin{itemize}
\item
If $q\in\uuu\setminus\uu$,
then $\tilde\vv_q'=\tilde\gg_q'$ by Corollary \ref{c43}, and therefore the transitivity of $\tilde\vv_q'$ is equivalent to the strong connectedness of $\tilde\gg(q)$.

\item If $q\in\vv\setminus\uuu$, then $\tilde\gg_q'$ is a proper subset of $\tilde\vv_q'$,  and $\tilde\vv_q'$ is not transitive by \cite[Lemma 3.16]{ABBK2019}, while $\tilde\gg_q'$ is transitive for some $q$.
More precisely, using the notations of Theorems \ref{t13} and \ref{t14} for any fixed $q_0\in\uuu\setminus\uu$, $\tilde\gg_{q_1}'=\tilde\gg_{q_0}'$ is transitive, while $\tilde\vv_{q_1}'=\tilde\gg_{q_2}'$ is not transitive because $\tilde\gg(q_m)$ is not strongly connected for any $m\ge 2$ by Theorem \ref{t14}; see, e.g., Figure 1.3 or Example \ref{e1101} and Figure 11.2 below.
\end{itemize}
\end{remark}

\begin{corollary}\label{c123}
Let $M=1$ and $q\in\uuu\setminus\uu$.
The graph $\tilde\gg(q)$ is strongly connected if and only if there exists from
$\tilde\gg_1(q)$ a path to each vertex of the form of $(a_i, b_j)$.

In particular, if $\tilde\gg(q)$ does not contain any vertex of the form of $(a_i, b_j)$, then $\tilde\gg(q)$ is strongly connected.
\end{corollary}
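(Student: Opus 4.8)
The plan is to derive the corollary directly from Proposition~\ref{p121}, whose necessary and sufficient condition requires (starting from $\tilde\gg_1(q)$) reachability of vertices of two shapes, $(a_i,b_j)$ and $(\theta_i^-,\theta_i)$. Since the present corollary retains only the first shape, the heart of the matter is to show that, when $M=1$, the requirement on vertices of the form $(\theta_i^-,\theta_i)$ is automatically met. First I would record that for $M=1$ the switch region reduces to the single interval $[\theta_1,\eta_1]$, so the only conceivable vertex of $\tilde\gg(q)$ of type $(\theta_i^-,\theta_i)$ is $(\theta_1^-,\theta_1)$.

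Next I would pin down $\theta_1$. Because $q\in\uuu\setminus\uu$, Proposition~\ref{p25}(i) gives $\alpha_N<M=1$, hence $\alpha_N=0$ and $\alpha_N^+=1$. Lemma~\ref{l34}(iii) (equivalently Lemma~\ref{l35}(iv)) then yields $a_N=\theta_{\alpha_N^+}=\theta_1$. Therefore the single candidate vertex satisfies $(\theta_1^-,\theta_1)=(a_N^-,a_N)$, i.e.\ it is a vertex of the form $(a_i^-,a_i)$, and so by Lemma~\ref{l101} it is one of the vertices of $\tilde\gg_1(q)$. Since Lemma~\ref{l101} also asserts that $\tilde\gg_1(q)$ is strongly connected, there is trivially a path inside $\tilde\gg_1(q)$---hence inside $\tilde\gg(q)$---from $\tilde\gg_1(q)$ to $(\theta_1^-,\theta_1)$.

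With this in hand, the reachability condition of Proposition~\ref{p121} for vertices of the form $(\theta_i^-,\theta_i)$ holds for free when $M=1$, so strong connectedness of $\tilde\gg(q)$ is equivalent to the existence of a path from $\tilde\gg_1(q)$ to every vertex of the form $(a_i,b_j)$, which is precisely the asserted criterion. The ``in particular'' clause then follows at once: if $\tilde\gg(q)$ possesses no vertex of the form $(a_i,b_j)$, the criterion holds vacuously, so $\tilde\gg(q)$ is strongly connected. The one point demanding care---the main, and rather mild, obstacle---is the bookkeeping identity $a_N=\theta_1$ together with the verification that $(\theta_1^-,\theta_1)$ is a genuine nondegenerate vertex contained in $(b_1,a_1)$ and therefore really belongs to $\tilde\gg_1(q)$ rather than degenerating to an endpoint; this is exactly what Lemmas~\ref{l34} and \ref{l35} guarantee for $q\in\uuu\setminus\uu$.
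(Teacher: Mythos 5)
Your proof is correct and follows essentially the same route as the paper: both arguments reduce the corollary to Proposition \ref{p121} by observing that for $M=1$ the only candidate vertex of type $(\theta_i^-,\theta_i)$ is $(\theta_1^-,\theta_1)=(a_N^-,a_N)$, which is already a vertex of the strongly connected subgraph $\tilde\gg_1(q)$, so its reachability is automatic. Your write-up is somewhat more explicit than the paper's (spelling out $\alpha_N^+=1$ and the nondegeneracy of the vertex), but the idea is identical.
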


\begin{proof}
In this case $\tilde\gg(q)$  contains only one vertex of the form of $(\theta_i^-, \theta_i)$, namely  $(\theta_1^-, \theta_1)=(a_N^-,a_N)$, and it belongs to $\tilde\gg(q)$.
Therefore, in applying Proposition \ref{p121} to check the strong connectedness of $\tilde\gg(q)$, it suffices to consider the vertices of the form of $(a_i, b_j)$.
\end{proof}

\begin{example}\label{e124}
We recall that the \emph{Multinacci numbers} are defined for $M=1$ by $\beta(q):=1^N0^{\infty}$, $N\ge 3$.
In particular, for $M=1$ and $N=3$ we get the \emph{Tribonacci number}.

We consider, more generally, the numbers defined by $\beta(q):=M^N\ 0^{\infty}$ for any $M\ge 1$ and $N\ge 3$.
By Lemma \ref{l34} (i) we have
\begin{equation*}
a_i=(\alpha_i\cdots\alpha_N^+)_q<
(\alpha_{i-1}\cdots\alpha_N^+
)_q=a_{i-1}
\qtq{and hence} b_{i-1}<b_i
\end{equation*}
for $i=2,\ldots,N$.
Using Proposition \ref{p21} (i) now  we get
\begin{equation}\label{121}
\theta_0<b_1<\cdots<b_{N-1}<a_N=\theta_1<b_N=\eta_1<a_{N-1}<\cdots<a_1<\eta_{M+1}
\end{equation}
if $M=1$, and
\begin{multline}\label{122}
\theta_0<b_1<\cdots<b_{N-1}<\theta_1<b_N=\eta_1<\theta_2<\eta_2<\cdots<\eta_{M-1}\\
<\theta_M=a_N<\eta_M<a_{N-1}<\cdots<a_1<\eta_{M+1}
\end{multline}
if $M>1$.

It follows from  \eqref{121} that $\tilde\gg(q)$ does not contain any vertex of the form of $(a_i,b_j)$.
Therefore  $\tilde\gg(q)$ is strongly connected by Corollary  \ref{c123}.

Next we show that $\tilde\gg(q)$ is strongly connected for $M>1$, too.
By  \eqref{122},  $\tilde\gg(q)$  does not contain any vertex of the form of $(a_i,b_j)$.
By Proposition \ref{p121} it remains to show that there exists a path from $\tilde \gg_1(q)$ to each vertex $(\theta_i^-, \theta_i)$.
This follows by deducing from \eqref{122} that
\begin{itemize}
\item $(b_{N-1}, \theta_1)$ is a vertex of  $\tilde\gg_1(q)$,
\item $T_0((b_{N-1}, \theta_1))=(b_N, a_1)$,
\item $(\theta_i^-, \theta_i)\subset (b_N, a_1)$ for $i=2,\ldots,M$.
\end{itemize}
\end{example}

We have also  a simple \emph{sufficient} condition  for the strong connectedness $\tilde \gg(q)$:

\begin{proposition}\label{p125}
If $q\in\uuu\setminus\uu$ and
\begin{equation}\label{123}
b_2<\min\set{a_i\ :\ 1<i< N},
\end{equation}
then $\tilde\gg(q)$ is strongly connected.
\end{proposition}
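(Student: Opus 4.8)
The plan is to apply Proposition \ref{p121}, which reduces the strong connectedness of $\tilde\gg(q)$ to the existence, for every vertex of the form $(a_i,b_j)$ and $(\theta_i^-,\theta_i)$, of a path from the subgraph $\tilde\gg_1(q)$ (which is strongly connected by Lemma \ref{l101}) to that vertex. Since $\tilde\gg(q)$ and $\tilde\gg_1(q)$ are invariant under the reflection $x\mapsto\frac{M}{q-1}-x$ by \eqref{35}, it is enough to show that every vertex $J$ of $\tilde\gg(q)$ not lying in $\tilde\gg_1(q)$ is contained in $T_k(I)$ for some vertex $I$ of $\tilde\gg_1(q)$ and some label $k$: this yields a direct edge $I\xrightarrow{k}J$, and composing with a path inside $\tilde\gg_1(q)$ gives the required path. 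Equivalently, I want the images $T_k(I)$, as $I$ ranges over the vertices of $\tilde\gg_1(q)$, to cover the whole interval $(b_1,a_1)$ up to its finitely many marked points.

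First I would produce a covering of the upper part $(b_2,a_1)$. By Lemma \ref{l34}~(v) we have $T_{\overline{\alpha_1}}(b_1)=b_2$ and $T_{\alpha_N}(a_N)=a_1$, and both $(b_1,b_1^+)$ and $(a_N^-,a_N)$ are vertices of $\tilde\gg_1(q)$; since the maps $T_k$ are increasing,
\begin{equation*}
T_{\overline{\alpha_1}}\bigl((b_1,b_1^+)\bigr)=\bigl(b_2,\,T_{\overline{\alpha_1}}(b_1^+)\bigr)
\qtq{and}
T_{\alpha_N}\bigl((a_N^-,a_N)\bigr)=\bigl(T_{\alpha_N}(a_N^-),\,a_1\bigr),
\end{equation*}
so the first image is anchored at $b_2$ and the second at $a_1$. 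The key point is that these two images, together with the images of the intermediate vertices of $\tilde\gg_1(q)$, leave no gap in $(b_2,a_1)$; the cleanest sufficient form of this is the inequality $T_{\alpha_N}(a_N^-)\le T_{\overline{\alpha_1}}(b_1^+)$, after which the union of the two images is all of $(b_2,a_1)$ minus marked points. This is precisely the step that uses the hypothesis $b_2<\min\set{a_i\ :\ 1<i<N}$, via Lemmas \ref{l34} and \ref{l35} and Proposition \ref{p25}~(v) to identify the endpoints $b_1^+$ and $a_N^-$ and to compare the resulting (quasi-)greedy expansions with $\alpha(q)$.

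Once $(b_2,a_1)$ is covered, applying the reflection (and Lemma \ref{l33}~(i) to transport edges) gives a covering of $(b_1,a_2)$ by images of $\tilde\gg_1(q)$-vertices. Because the hypothesis applied with $i=2$ gives $b_2<a_2$ while $b_1<b_2$ and $a_2<a_1$ hold by Lemma \ref{l34}~(iv), the two intervals overlap and
\begin{equation*}
(b_2,a_1)\cup(b_1,a_2)=(b_1,a_1).
\end{equation*}
Hence every vertex of $\tilde\gg(q)$ is contained in the image of some $\tilde\gg_1(q)$-vertex, so in particular every vertex of the form $(a_i,b_j)$ or $(\theta_i^-,\theta_i)$ receives an edge from $\tilde\gg_1(q)$, and Proposition \ref{p121} finishes the proof.

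The main obstacle is the covering of $(b_2,a_1)$, that is, the inequality $T_{\alpha_N}(a_N^-)\le T_{\overline{\alpha_1}}(b_1^+)$ together with the check that no target interval straddles a point left uncovered. This requires pinning down the predecessor $a_N^-$ of $a_N=\theta_{\alpha_N^+}$ and the successor $b_1^+$ of $b_1$ — each of which may a priori be an $a$-, $b$-, $\theta$- or $\eta$-point — and then translating $b_2<\min\set{a_i\ :\ 1<i<N}$ (and its reflected form $a_2>\max\set{b_i\ :\ 1<i<N}$) into the corresponding lexicographic inequality between the endpoint expansions. I expect a short case analysis on the local ordering of the points $a_i,b_i,\theta_j,\eta_j$ near $b_1$ and near $a_N$, in the spirit of the computations in Example \ref{e124} and Example \ref{e1101}, to be the delicate part.
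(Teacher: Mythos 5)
Your overall strategy---reduce to Proposition \ref{p121}, cover $(b_2,a_1)$ by images of $\tilde\gg_1(q)$-vertices, reflect to get $(b_1,a_2)$, and use $(b_1,a_2)\cup(b_2,a_1)=(b_1,a_1)$---matches the paper's skeleton, but the step you single out as the engine of the covering is false. The inequality $T_{\alpha_N}(a_N^-)\le T_{\overline{\alpha_1}}(b_1^+)$ does not follow from \eqref{123}. Take $M=1$ and $\beta(q)=11011\,0^{\infty}$ (Example \ref{e126}), where the ordering is $\theta_0<b_1<b_4<b_2<a_3<a_5=\theta_1<b_5=\eta_1<b_3<a_2<a_4<a_1$, so \eqref{123} holds ($b_2<a_3=\min\set{a_2,a_3,a_4}$). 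Here $b_1^+=b_4$ and $T_{\overline{\alpha_1}}(b_1^+)=T_0(b_4)=b_5$, while $a_N^-=a_3$ and $T_{\alpha_N}(a_N^-)=T_0(a_3)=a_4$; since $b_5<b_3<a_2<a_4$, your two anchored images $(b_2,b_5)$ and $(a_4,a_1)$ leave the gap $(b_5,a_4)$, which contains three vertices. The fallback clause ``together with the images of the intermediate vertices'' is where the whole content of the proposition lives, and you give no mechanism for it; in particular it cannot be reduced to a single endpoint comparison near $b_1$ and $a_N$.

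The paper closes this gap by exploiting that reachability is transitive, so one does not need a \emph{direct} edge from $\tilde\gg_1(q)$ to each target vertex. It splits into two cases according to whether some $\theta_k$ lies in $(b_1,b_2)$ or not. If $b_1<\theta_\ell<b_2$ for the least such $\ell$, then \eqref{123} forces every vertex inside $(b_1,\theta_\ell)$ to be of the form $(b_i,b_i^+)$, hence in $\tilde\gg_1(q)$, and the single map $T_{\ell-1}$ carries $(b_1,\theta_\ell)$ onto all of $(b_2,a_1)$, so every vertex in $(b_2,a_1)$ receives an edge from $\tilde\gg_1(q)$. If instead $\theta_{k-1}<b_1<b_2<\theta_k$, one iterates: $T_{k-1}$ maps $(b_1,b_2)\to(b_2,b_3)\to\cdots\to(b_i,\theta_k)\to(b_{i+1},a_1)$, and reachability propagates along this chain step by step until $(b_2,a_1)$ is exhausted. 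In both cases \eqref{123} is used exactly to guarantee that the starting interval $(b_1,b_2)$ (or $(b_1,\theta_\ell)$) contains no vertex outside $\tilde\gg_1(q)$, and again to get $b_2<a_2$ for the final union; it is not used to compare images near $a_N$ with images near $b_1$. If you want to salvage your write-up, replace the anchor inequality by this chain-of-images argument.
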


\begin{proof}
By Proposition  \ref{p121}, it suffices to show that there exists a path from $\tilde\gg_1(q)$ to every vertex of $\tilde\gg(q)$.
We distinguish two cases:
\begin{enumerate}[\upshape (a)]
\item There exists a  $k\ge 1$ such that $ b_1<\theta_k<b_2$;
\item  There exists a $k\ge 1$ such that $\theta_{k-1}<b_1<b_2<\theta_k$.
(We recall that $\theta_0=0$.)
\end{enumerate}

In Case (a) let $\ell\ge 1$ be the least integer satisfying $b_1<\theta_\ell$.
Then by our assumption \eqref{123} every vertex of $\tilde\gg(q)$, contained in $(b_1, \theta_\ell)$ is also a vertex of $\tilde\gg_1(q)$.
Since
\begin{equation*}
T_{\ell-1}((b_1, \theta_\ell))=(b_2, a_1),
\end{equation*}
it follows that there exists a path from $\tilde\gg_1(q)$ to every vertex of $\tilde\gg(q)$, contained in $(b_2, a_1)$.

By reflection, there exists a path from $\tilde\gg_1(q)$ to every vertex of $\tilde\gg(q)$, contained in $(b_1, a_2)$.

We conclude by observing that
\begin{equation*}
(b_1, a_1)=(b_1, a_2)\cup (b_2,a_1)
\end{equation*}
by \eqref{123}, and that every vertex of $\tilde\gg(q)$ is contained in $(b_1, a_1)$.
\medskip

In case (b) there exists a largest integer $i\ge 2$  such that
\begin{equation*}
\theta_{k-1}<b_1<b_2<\cdots<b_{i-1}<b_i<\theta_k,
\end{equation*}
and we have the following relations:
\begin{equation}\label{124}
\begin{split}
&T_{k-1}((b_1,b_2))=(b_2, b_3),\ldots, T_{k-1}((b_{i-1},b_i))=(b_i, b_{i+1})\supset(b_i,\theta_k)\\
&T_{k-1}((b_i, \theta_k))=(b_{i+1}, a_1).
\end{split}
\end{equation}
By our assumption \eqref{123} each vertex of $\tilde\gg(q)$, contained in  $(b_1, b_2)$ is also a vertex of  $\tilde\gg_1(q)$.
Using \eqref{124} it follows that there exists a path from $\tilde\gg_1(q)$ to every vertex of $\tilde\gg(q)$, contained in $(b_2,a_1)$.
By reflection there exists also a path from $\tilde\gg_1(q)$ to every vertex of $\tilde\gg(q)$, contained in $(b_2, a_1)$, and we conclude as in the preceding case.
\end{proof}

\begin{example}\label{e126}
Let $M=1$, $N=5$ and $\beta(q)=11011\ 0^\infty$. Then $q\in \uuu\setminus\uu$, and the greedy expansions of $a_1,\ldots,a_5$ and $b_1,\ldots,b_5$ are
\begin{align*}
&11011\ 0^{\infty},\quad
1011\ 0^{\infty},\quad
011\ 0^{\infty},\quad
11\ 0^{\infty},\quad
1\  0^{\infty}
\intertext{and}
&00101(00101)^\infty,\quad
0101(00101)^\infty,\quad
101(00101)^\infty,\quad
01(00101)^\infty,\quad
1(00101)^\infty,
\end{align*}
respectively.
Applying Proposition \ref{p21} (i)
we obtain the relations
\begin{equation*}
\theta_0<b_1<b_4<b_2<a_3<a_5=\theta_1<b_5=\eta_1<b_3<a_2<a_4<a_1<\eta_2.
\end{equation*}
The graph $\tilde\gg(q)$ is strongly connected by Corollary  \ref{c123}.

\end{example}

\begin{example}\label{e127}
Let $M=4$, $N=4$ and $\beta(q)=4331\ 0^\infty$. Then $q\in \uuu\setminus\uu$. By Lemmas \ref{l34} (i) and \ref{l35} (ii) the greedy expansions of $a_1,\ldots,a_{4}$ and $b_1,\ldots,b_{4}$ are
\begin{equation*}
4331\ 0^\infty,\ 331\ 0^\infty,\ 31\ 0^\infty,\ 1\ 0^\infty,
\end{equation*}
and
\begin{equation*}
0114\ (0114)^\infty,\ 114\ (0114)^\infty,\ 14\ (0114)^\infty,\  4\ (0114)^\infty.
\end{equation*}
Applying Proposition \ref{p21}(i)
we obtain the relations
\begin{multline*}
\theta_0<b_1<a_4=\theta_1<\eta_1<b_2<b_3<\theta_2\\
<\eta_2<\theta_3<\eta_3<a_3<a_2<\theta_4<\eta_4=b_4<a_1<\eta_{5}.
\end{multline*}
The $\tilde\gg(q)$ is strongly connected by Proposition \ref{p125}.
Note that we cannot apply Corollary \ref{c123} because $M>1$.
\end{example}

\begin{example}\label{e128}
Let $M=1$, $N=10$ and $\alpha(q)=1110011011\ 0^\infty$. By the usual argument we have the following  inequalities:
\begin{multline*}
\theta_0<b_1<b_6<b_2<a_4<b_9<b_7<a_8<b_3<a_5<a_{10}=\theta_1\\
<b_{10}=\eta_1<b_5<a_3<b_8<a_7<a_9<b_4<a_2<a_6<a_1<\eta_2.
\end{multline*}
Although $\tilde\gg(q)$ contains intervals of the form $(a_i,b_j)$, the graph is still strongly connected by Proposition \ref{p125}.
\end{example}

\begin{example}\label{e129}
Let $M=1$ and $\beta(q)=111\ 001\ 01\ 0^\infty$. We have the usual relations:
\begin{multline*}
0<b_1<a_4<b_2<a_7<a_5<b_6<b_3<\theta_1=a_8\\
<\eta_1=b_8<a_3<a_6<b_5<b_7<a_2<b_4<a_1<\eta_2.
\end{multline*}
Since $a_4<b_2$, we cannot apply Proposition \ref{p125}.
However, we will apply Proposition \ref{p121} to show that $\tilde\gg(q)$ is strongly connected.

We infer from the above relations that $\tilde \gg (q)$ has four vertices that do not belong to $\tilde\gg_1(q)$:
\begin{equation*}
(a_4, b_2),\quad (a_5, b_6),\quad
(a_6, b_5)\qtq{and}(a_2, b_4).
\end{equation*}
The relations show that $(a_3, a_6)$ is a vertex of $\tilde\gg_1(q)$, and
\begin{equation*}
T_1((a_3, a_6))=(a_4, a_7)\supset (a_4, b_2),
\end{equation*}
so that there is vertex from $\tilde\gg_1(q)$ to $(a_4, b_2)$, and then by reflection to $(a_2, b_4)$, too.

Furthermore,
\begin{equation*}
T_0((a_4, b_2))=(a_5, b_3)\supset (a_5, b_6),
\end{equation*}
so that there is path from $\tilde\gg_1(q)$ to $(a_5, b_6)$, and then by reflection to $(a_6, b_5)$, too.
\end{example}

\subsection{Comments on the validity of Theorems \ref{t18} and \ref{t110}}\label{ss1202}

In Example \ref{e1210} the graph $\gg(q)$ is not strongly connected, but Theorem \ref{t110} may be applied.
In Example \ref{e1211} we exhibit a base $q\in\uuu\setminus\uu$ for which none of Theorems \ref{t18} and \ref{t110} applies.

\begin{example}\label{e1210}
Let $M=4$ and $\beta(q_0)=322\ 0^\infty$.
By the usual arguments, we have the following relations:
\begin{equation*}
\theta_0<\theta_1<\eta_1<b_1<\theta_2=a_3<\eta_2<a_2<b_2<\theta_3<\eta_3=b_3<a_1<\theta_4<\eta_4<\eta_5.
\end{equation*}
The subgraph $\tilde \gg(q_0)$ has five vertices:
\begin{equation*}
(b_1, a_3),\ (\eta_2, a_2),\  (a_2, b_2),\ (b_2, \theta_3),\ (b_3, a_1),
\end{equation*}
and the following edges:
\begin{align*}
&(b_1, a_3)\xrightarrow {1}(b_2, \theta_3),
&&(b_1, a_3)\xrightarrow{1} (b_3, a_1),\\
&(\eta_2, a_2)\xrightarrow{2} (b_1, a_3),
&&(b_2, \theta_3)\xrightarrow{2} (b_3, a_1),\\
&(b_3, a_1)\xrightarrow {3}(b_1, a_3),
&&(b_3, a_1)\xrightarrow{3} (\eta_2, a_2),
\end{align*}
and
\begin{align*}
&(a_2, b_2)\xrightarrow {2}(\eta_2, a_2),
&&(a_2, b_2)\xrightarrow {2}(a_2, b_2),
&&(a_2, b_2)\xrightarrow{2} (b_2, \theta_3).
\end{align*}
Since there is no path to $(a_2, b_2)$ from any other vertex, $\tilde \gg(q_0)$ is not strongly connected.
On the other hand, we have two strongly connected components: $\tilde \gg_1(q_0)$ formed by the four vertices
\begin{equation*}
(b_1, a_3),\ (\eta_2, a_2),\ (b_2, \theta_3),\ (b_3, a_1),
\end{equation*}
and the subgraph $\tilde \gg_2(q_0)$ consisting of the only vertex $(a_2, b_2)$ and the only edge $(a_2, b_2)\xrightarrow {2}(a_2, b_2)$.
We have obviously $\dim \tilde \gg_{2,q_0}=0$, and therefore
\begin{equation*}
\dim \tilde \gg_{1,q_0}
=\dim \tilde \gg_{q_0}
=\dim \uu_{q_0},
\end{equation*}
so that Theorem \ref{t110} may be applied for  $q=r_k, k\geq 0$ with $\beta(r_k)=322\ (123)^k\ 0^\infty$.
\end{example}

\begin{example}\label{e1211}
Let $N=15$ and $\beta(q)=MMM\ 00M\ 000\ MMM\ 00M\ 0^\infty$, then $q\in \uuu\setminus\uu$.
We claim that the graph  $\tilde\gg(q)$ is  strongly connected if and only if $M>1$ and  for $M=1$ Theorems \ref{t18} and \ref{t110} do not apply.

First we show that  $\tilde\gg(q)$ is not strongly connected if  $M=1$.
We obtain by the usual arguments the relations
\begin{multline*}
\theta_0<b_1<b_{10}<a_7<a_{13}<a_4<b_2<b_{11}<a_8<a_{14}<a_5\\
<b_3<b_{12}<b_6<a_9<\theta_1=a_{15}
<\eta_1=b_{15}<b_{9}<a_6<a_{12}<a_3\\
<b_5<b_{14}<b_8<a_{11}<a_2<b_4<b_{13}<b_7<a_{10}<a_1<\eta_2.
\end{multline*}

These relations  implies that the subgraph $\tilde\gg_2(q)$ formed by the four vertices of the form $(a_i, b_j)$ is  strongly connected because of the relations
\begin{equation}\label{125}
\begin{split}
T_0((a_4, b_2))=(a_5,b_3),
T_0((a_5, b_3))=(a_6,b_4)\supset ((a_3,b_5)\cup(a_2,b_4)),\\
T_1((a_2, b_4))=(a_3,b_5),
T_1((a_3, b_5))=(a_4,b_6)\supset ((a_4,b_2)\cup(a_5,b_3))
\end{split}
\end{equation}
Furthermore, a sequence $(c_i)$ is generated by the graph  $\tilde\gg_2(q)$ if and only if
\begin{equation}\label{126}
(c_{n+i})\leq \alpha(q)=(110)^\infty
\qtq{and}
\overline{(c_{n+i})}\leq \alpha(q)=(110)^\infty
\qtq{for all} n\geq 0.
\end{equation}

The subgraph $\tilde\gg_1(q)$ formed by the vertices of the form $(a_i^-, a_i)$ and $(b_j, b_j^+
)$ is also strongly connected, and it contains the words $000\ 110\ 111\ 000\ 111$ by Lemma \ref{l101}.

We claim that there is no path going from $\tilde\gg_1(q)$ to $\tilde\gg_2(q)$, and therefore $\tilde\gg(q)$ is not strongly connected.
By the monotonicity of maps $T_0(x)=qx$ and $T_1(x)=qx-1$, there are at most two edges ending at an arbitrarily chosen vertex of $\tilde\gg(q)$.
Since
\begin{equation*}
T_0((0,b_1))\supset (a_4,b_2),
\quad T_1((a_3,b_5))\supset (a_4,b_2)\cup (a_5, b_3)\qtq{and}T_0((a_4, b_2))=(a_5,b_3)
\end{equation*}
(see \eqref{125}), there are no paths  from $\tilde\gg_1(q)$ to the vertices $(a_4, b_2)$ and $(a_5,b_3)$.
By symmetry, there are no paths  from $\tilde\gg_1(q)$ to $(a_2,b_4)$ and $(a_3,b_5)$ either.
Next we show that $\dim \tilde\gg_{2,q}>\dim\tilde\gg_{1,q}$ and therefore $\dim\uu_q=\dim \tilde\gg_{2,q}$.
The inequality follows from the relation $r(A_1)<r(A_2)$ between the spectral radii of the adjacency matrices corresponding to the subgraph $\tilde\gg_1(q)$ and $\tilde\gg_2(q)$, respectively.
Indeed, a direct computation shows that
$r(A_1)\approx 1.14798$ and $r(A_2)\approx 1.61803.$

If we want to apply Lemma \ref{l93}, then the condition \eqref{98} implies that each $(c_i)\in\ff'$ has to start with $c_1\cdots c_3\le 000$.
Since no sequence generated by $\tilde\gg_2(q)$ contains the word $000$ (see \eqref{126}), we have $\dim\ff_q\le\dim\tilde\gg_{1,q}<\dim\uu_q$.

Finally, we show that the graph  $\tilde\gg(q)$ is  strongly connected if  $M>1$.
We obtain by the usual arguments the relations
\begin{align*}
&\theta_0<b_1<b_{10}<a_7<a_{13}<a_4<b_2<b_{11}<a_8<a_{14}<a_5<b_3<b_{12}<b_6<a_9\\
&<\theta_1<\eta_1=b_{15}<\theta_2<\eta_2<\cdots<\theta_{M-1}<\eta_{M-1}<\theta_{M}=a_{15}<\eta_{M}\\
&<b_{9}<a_6<a_{12}<a_3<b_5<b_{14}<b_8<a_{11}<a_2<b_4<b_{13}<b_7<a_{10}<a_1<\eta_{2}.
\end{align*}

We recall that $(\theta_1, \eta_1),\ldots, (\theta_M, \eta_M)$ are  the switch intervals.
In view of Proposition \ref{p121} we need to prove that there exists a path from
$\tilde \gg_1(q)$  to each vertex of the form $(a_i, b_j)$ and $(\theta_i^-, \theta_i)$ of $\tilde \gg(q)$.
The above order relations show that
 $(b_{14}, b_8)$ is a vertex of $\tilde\gg_1(q)$, and
\begin{equation*}
(b_{14}, b_8)\xrightarrow{M}(\eta_1, \theta_{2}).
\end{equation*}
Furthermore, we have
\begin{equation*}
T_1((\eta_1, \theta_{2}))=(b_1, a_1)
\end{equation*}
by Lemma \ref{l33} (ii) and (iii),
Since $(b_1, a_1)$ contains all the vertices of $\tilde\gg(q)$, we conclude that there exists a path from the vertex $(b_{14}, b_8)$ to each vertex of $\tilde\gg(q)$.
\end{example}

\end{document}